\newcommand{\chairxauthorbibfont}{\textsc}
\newcommand{\chairxtitlebibfont}{\textit}
\newcommand{\chairxseriesbibfont}{\textit}
\newcommand*{\longrightsquigarrow}[1]{\ 
{\tikz \draw [->,
line join=round,
decorate, decoration={
    zigzag,
    segment length=4,
    amplitude=.9,
    post=lineto,
    post length=2pt
}] (0,0) -- (#1,0);}\ }
\newcommand{\tw}{\mathsf{tw}}   
\newcommand{\Mc}{\mathsf{MC}}   
\newcommand{\Def}{\mathsf{Def}} 
\newcommand{\poly}{{\scriptscriptstyle{\mathrm{poly}}}}    
\newcommand{\Tpoly}{T_\poly}  
\newcommand{\Dpoly}{D_\poly}  
\newcommand{\sh}{{\scriptscriptstyle{\mathrm{sh}}}}  
\title{An Introduction to $L_\infty$-Algebras and their Homotopy Theory}
\author{
  \textbf{Andreas Kraft}\thanks{\texttt{akraft@impan.pl}},\\[0.3cm]
	 Institute of Mathematics  \\
	  Polish Academy of Sciences \\
		ul. \'Sniadeckich 8 \\ 
		00-656 Warsaw \\
		Poland  \\[0.5cm]
  \textbf{Jonas Schnitzer}\thanks{\texttt{jonas.schnitzer@math.uni-freiburg.de}},\\[0.3cm]
  Department of Mathematics\\
   University of Freiburg\\
   Ernst-Zermelo-Straße 1\\
	D-79104 Freiburg\\
   Germany
}
\begin{document}
\selectlanguage{english}

\maketitle

\abstract{
In this review we give a detailed introduction to the theory of 
(curved) $L_\infty$-algebras and $L_\infty$-morphisms. In particular, we 
recall the notion of (curved) Maurer-Cartan elements, their equivalence classes 
and the twisting procedure. The main focus is then the study of the 
homotopy theory of $L_\infty$-algebras and $L_\infty$-modules. 
In particular, one can interpret $L_\infty$-morphisms and morphisms of 
$L_\infty$-modules as Maurer-Cartan elements in certain $L_\infty$-algebras, 
and we show that twisting the morphisms with equivalent Maurer-Cartan 
elements yields homotopic morphisms.
}

\tableofcontents

%
%
\section{Introduction}

$L_\infty$-algebras (also called strong homotopy Lie algebras or SH Lie 
algebras) are a generalization of differential graded Lie algebras 
(DGLAs), where the Jacobi identity only holds up to compatible higher 
homotopies. They were introduced in 
\cite{schlessinger.stasheff:1985a,stasheff:1992a,lada.stasheff:1993a}, 
and they appeared at first in a supporting role in deformation theory. 

The idea is the following: In a usual DGLA 
$(\liealg{g},\D,[\argument{,}\argument])$ one has a cochain
differential $\D$ on a $\mathbb{Z}$-graded vector space $\liealg{g}$ 
and a compatible graded
Lie bracket $[\argument{,}\argument]$, i.e. $\D$ is satisfies the graded 
Leibniz rule. In an $L_\infty$-algebra $(L,\{l_n\}_{n\in \mathbb{N}})$ 
one has instead a whole collection of antisymmetric maps 
$l_n \colon \Anti^n L \to L$ of degrees $2-n$, where $n\geq 1$, satisfying 
certain compatibilities that generalize those of DGLAs. These 
maps are called Taylor coefficients or structure maps. In particular, 
the binary map $l_2$ that corresponds to the Lie bracket 
$[\argument{,}\argument]$ satisfies the Jacobi identity only up to terms depending on the homotopy $l_3$, and so on. 
Using the Koszul duality between DGLAs and graded cocommutative coalgebras, 
it is shown in \cite{lada.markl:1994a} that $L_\infty$-algebra 
structures $\{l_n\}_{n\in\mathbb{N}}$ on a graded vector space $L$ are in 
one-to-one correspondence with codifferentials $Q$ of degree $1$ on the cofree 
cocommutative conilpotent coalgebra $\Sym(L[1])$ generated by the 
$L[1]$, where the 
degree is shifted by one. This dual coalgebraic formulation goes 
actually back to the BV-BRST formalism \cite{batalin.vilkovisky:1983a,batalin.fradkin:1983a}, see also \cite{stasheff:1997b}. 

As mentioned above, $L_\infty$-algebras play an important role in deformation 
theory. Here the basic philosophy is that, over a field of characteristic 
zero, every deformation problem is governed by a DGLA or more generally 
an $L_\infty$-algebra via solutions of the Maurer-Cartan equation modulo 
equivalences, see e.g. \cite{kontsevich.soibelman:book,manetti:2005a}. 
Staying for the moment for simplicity in the context of DGLAs, a 
Maurer-Cartan element $\pi$ in $(\liealg{g},\D,[\argument{,}\argument])$ is 
an element of degree one satisfying the Maurer-Cartan equation
\begin{equation*}
  0
	=
	\D \pi 
	+ 
	\frac{1}{2}[\pi,\pi].
\end{equation*}
This can be generalized to $L_\infty$-algebras, and there are suitable 
notions of gauge and homotopy equivalences of Maurer-Cartan elements, 
and in deformation theory one is interested in the 
transformation groupoid of the gauge action, the also called 
\emph{Goldman-Millson groupoid} or \emph{Deligne groupoid} 
\cite{goldman.millson:1988a}. Moreover, one can use 
Maurer-Cartan elements to twist the $L_\infty$-structure, i.e. change 
the $L_\infty$-structure in a certain way. For example, for a DGLA 
$(\liealg{g},\D,[\argument{,}\argument])$ with Maurer-Cartan element 
$\pi$, the twisted DGLA takes the following form 
$(\liealg{g}, \D + [\pi,\argument],[\argument{,}\argument])$, where 
the Maurer-Cartan equation implies that $\D + [\pi,\argument]$ 
squares to zero.

One nice feature of $L_\infty$-algebras is that they allow a 
more general notion of morphisms, so-called $L_\infty$-morphisms:  
an $L_\infty$-morphism from an $L_\infty$-algebra $(L,Q)$ to $(L',Q')$ is  
just a coalgebra morphism between the corresponding coalgebras 
$\Sym(L[1])$ and $\Sym(L'[1])$ that commutes with the codifferentials. 
In particular, $L_\infty$-morphisms are generalizations of Lie algebra morphisms 
and they are still compatible with Maurer-Cartan elements. Moreover, 
there is a notion of $L_\infty$-quasi-isomorphism generalizing the notion 
of quasi-isomorphisms between DGLAs. One can prove that 
such $L_\infty$-quasi-isomorphisms admit quasi-inverses, and 
that $L_\infty$-quasi-isomorphisms induce bijections on the 
equivalence classes of Maurer-Cartan elements, which is important for 
deformation theory. One 
can also twist $L_\infty$-morphisms by Maurer-Cartan elements, which gives 
$L_\infty$-morphisms between the twisted $L_\infty$-algebras. 
Note that the notion of $L_\infty$-algebras and 
$L_\infty$-morphisms can be generalized to algebras over 
general Koszul operads, see e.g. \cite{loday.vallette:2012a}. 
In addition, as one would expect, the generalization from DGLAs to $L_\infty$-algebras 
also leads to a generalization of the representation theory, i.e. from 
DG Lie modules to $L_\infty$-modules.

One famous deformation problem solved by $L_\infty$-algebraic 
techniques is the deformation quantization problem of Poisson manifolds, 
which was solved by Kontsevich's celebrated formality theorem 
\cite{kontsevich:2003a}, see also \cite{dolgushev:2005a,dolgushev:2005b} for the globalization of this result and the 
invariant setting of Lie group actions. More explicitly, the formality 
theorem provides an $L_\infty$-quasi-isomorphism between 
the differential graded Lie algebra of polyvector fields 
$\Tpoly(M)$ and the polydifferential operators $\Dpoly(M)$ on a smooth 
manifold $M$. As such, it induces a one-to-one correspondence 
between equivalence classes of Maurer-Cartan elements, i.e. 
between equivalence classes of (formal) Poisson structures 
and equivalence classes of star products. Using 
the language of $L_\infty$-modules, there has also been proven 
a formality theorem for Hochschild chains 
\cite{dolgushev:2006a,shoikhet:2003a}. 
Moreover, in the last years many additional developments have taken place, 
see e.g. \cite{calaque:2005a,calaque:2007a, liao:2019a}. 

Apart from that, $L_\infty$-algebras can be used to 
describe many more geometric deformation problems, e.g. deformations of 
complex manifolds \cite{manetti:2004a},  deformations of foliations 
\cite{vitagliano:2014a}, deformations of Dirac structures 
\cite{gualtieri.matviichuk.scott:2020a}, and many more. In addition, $L_\infty$-algebras are 
also an important tool in homological reduction theory, following the 
BV-BRST spirit, see e.g. \cite{schaetz:2008a,cattaneo.felder:2007a,esposito.kraft.schnitzer:2020a:pre,esposito.kraft.schnitzer:2022a:pre}, 
and in physics, where they occur for example in string theory and
in quantum field theory.

Another important observation from \cite{dolgushev:2007a} is that 
$L_\infty$-morphisms themselves correspond to 
Maurer-Cartan elements in a certain convolution-like $L_\infty$-algebra, 
which gives a way to speak of homotopic $L_\infty$-morphisms in 
the case of equivalent Maurer-Cartan elements. 
Homotopic $L_\infty$-morphisms share many features: for example, 
an $L_\infty$-morphism that is homotopic to an $L_\infty$-quasi-isomorphism 
is automatically an $L_\infty$-quasi-isomorphism itself, and 
homotopic $L_\infty$-morphisms induce the same maps on the equivalence 
classes of Maurer-Cartan elements. The second observation was used in 
\cite{bursztyn.dolgushev.waldmann:2012a} to prove that the globalization 
of the Kontsevich formality by Dolgushev \cite{dolgushev:2005a,dolgushev:2005b} 
is, at the level of equivalence classes, independent of the chosen connection.

Finally, note that there is a generalization of $L_\infty$-algebras 
to curved $L_\infty$-algebras $(L,\{l_n\}_{n\in \mathbb{N}_0})$, 
where one allows an additional zero-th structure map 
$l_0 \colon\mathbb{K} \to L[2]$, where $\mathbb{K}$ denotes the ground field. 
This corresponds to the curvature $l_0(1)\in L^2$. In this way, 
one can generalize the notion of curved Lie algebras $(\liealg{g},R,\D,[\argument{,}\argument])$, where the curvature $R\in \liealg{g}^2$ is closed, and 
where $\D^2=[R,\argument]$. In this curved setting, one can speak 
of curved Maurer-Cartan elements, 
and all the notions from the above (flat) $L_\infty$-algebras generalize 
to this setting. For example, the curved Maurer-Cartan equation in 
a curved Lie algebras reads
\begin{equation*}
  0
	=
	R +\D \pi + \frac{1}{2}[\pi,\pi].
\end{equation*}
In particular, one can now twist with general 
elements of degree one, not just with Maurer-Cartan elements, without 
leaving the setting of curved $L_\infty$-algebras. Moreover,  
one also obtains a more general notion of curved $L_\infty$-morphisms 
\cite{getzler:2018a}, that are no longer coalgebra morphisms as in the 
above (flat) setting, but only coalgebra morphisms up to a twist. 
Curved $L_\infty$-algebras play for example an important role 
in Tsygan's conjecture of an equivariant formality in deformation 
quantization \cite{tsygan:note}, where the fundamental vector 
fields of the Lie group action play the role of the curvature.

The aim of this paper is to give a review over the theory of 
curved $L_\infty$-algebra, Maurer-Cartan elements, $L_\infty$-modules, 
and their homotopy theories. We mainly collect known results from 
the literature, but also include some results that are to our 
knowledge new or at least folklore knowledge, but not yet written 
properly down. For example, following \cite{kraft:2021a} we show that 
$L_\infty$-algebras that are twisted with equivalent Maurer-Cartan 
elements are $L_\infty$-isomorphic, which we could only find in the 
literature for the case of DGLAs. Moreover, we study the homotopy 
theory of curved $L_\infty$-morphisms, where we show in particular 
that curved $L_\infty$-morphisms that 
are twisted with equivalent Maurer-Cartan elements are homotopic. 
This result for DGLAs allowed us in \cite{kraft.schnitzer:2021a:pre} 
to prove that Dolgushev's globalization procedure \cite{dolgushev:2005a,
dolgushev:2005b} of the Kontsevich formality \cite{kontsevich:2003a} 
with respect to different covariant derivatives yields homotopic 
$L_\infty$-quasi-isomorphisms. 

The paper is organized as follows: We start in 
Section~\ref{sec:Coalgebras} with the construction of cocommutative cofree 
conilpotent coalgebras and coderivations on them. This allows a 
compact definition of (curved) $L_\infty$-algebras and 
$L_\infty$-morphisms in 
Section~\ref{sec:DefLinfty} in terms of codifferentials $Q$ on 
$\Sym(L[1])$ and coalgebra morphisms commuting with the codifferentials. 
 In Section~\ref{sec:homotopytransfertheorem} we give explicit formulas for the homotopy transfer theorem, which provides a way to 
transfer $L_\infty$-structures along deformation retracts. There 
are many formulations and proofs for it, and in our special cases we 
use the symmetric tensor trick.
In Section~\ref{sec:MCandEquiv} we introduce the notion of 
Maurer-Cartan elements and compare the gauge equivalence 
of Maurer-Cartan elements in the setting of DGLAs and the more general 
notion of homotopy equivalence in $L_\infty$-algebras. Moreover, 
we recall the twisting procedure of curved $L_\infty$-algebras and 
of $L_\infty$-morphisms. In Section~\ref{sec:HomotopyTheoryLinftyAlgandMorph} 
we introduce the interpretation of $L_\infty$-morphisms as Maurer-Cartan 
elements and the notion of homotopic $L_\infty$-morphisms. 
We recall the homotopy classification of flat $L_\infty$-algebras 
and show that $L_\infty$-morphisms that are twisted with equivalent 
Maurer-Cartan elements are homotopic. In particular, we 
study curved $L_\infty$-morphisms, which are no longer coalgebra 
morphisms, but in some sense coalgebra morphisms up to  twist. 
They are still compatible with curved Maurer-Cartan elements and 
have an analogue homotopy theory as the strict $L_\infty$-morphisms in 
the flat case. Finally, we recall in 
Section~\ref{sec:LinftyModules} 
the notion of $L_\infty$-modules over $L_\infty$-algebras, 
$L_\infty$-module morphisms between them, and the 
corresponding notion of homotopy equivalence.

\textbf{Acknowledgements:}
  The authors are grateful to Severin Barmeier, Chiara Esposito and Luca Vitagliano 
	for many helpful comments and feedback which helped to improve the first version 
	of this review.

%
%
\section{Introduction to Coalgebras}
\label{sec:Coalgebras}

In this first section we want to recall the basic properties related to 
graded coalgebras. In view of $L_\infty$-algebras we are particularly interested 
in cocommutative conilpotent graded coalgebras. 
We mainly follow \cite{esposito:2015a,loday.vallette:2012a,waldmann:2019:note}.

\subsection{Reminder on (Free) Graded Algebras}

Before starting with coalgebras we recall some notions 
connected to graded algebras over some commutative unital ring 
$\mathbb{K}$, where we always assume $\mathbb{Q} \subseteq \mathbb{K}$. 
By grading we mean $\mathbb{Z}$-grading 
and we always apply the Koszul sign rule, 
e.g. for tensor products of homogeneous morphisms $\phi\colon V^\bullet 
\rightarrow \widetilde{V}^{\bullet + \abs{\phi}}$, $\psi \colon W^\bullet 
\rightarrow \widetilde{W}^{\bullet + \abs{\psi}}$ 
between graded $\mathbb{K}$-modules $V^\bullet,\widetilde{V}^\bullet,W^\bullet,
\widetilde{W}^\bullet$:
\begin{equation*}
  (\phi \otimes \psi)(v\otimes w)
	=
	(-1)^{\abs{\psi}\abs{v}} \phi(v)\otimes \psi(w),
\end{equation*}
where $v\in V^{\abs{v}}, w\in W$. Note that graded $\mathbb{K}$-modules with 
degree zero morphisms become a symmetric monoidal category with the 
graded tensor product and the graded switching map $\tau$. Recall that on homogeneous 
elements $v\in V, w\in W$ one has $\tau(v\otimes w) = (-1)^{\abs{v}\abs{w}} w \otimes v$. 
However, this monoidal structure is not compatible with the degree shift functor, 
where we write $V[i]^k = V^{i+k}$ and $V[i]^\bullet = (\mathbb{K}[i]\otimes V)^\bullet$.

By $(A^\bullet,\mu)$ we usually denote a graded algebra with 
associative product $\mu$ and by $1_A= 1 \colon \mathbb{K} \rightarrow A$ we 
denote a unit. Let $\phi \colon A^\bullet \rightarrow B^\bullet$ be a morphism 
of graded algebras, which is necessarily of degree zero. Then a graded derivation 
$D \colon A^\bullet \rightarrow B^{\bullet +k}$ of degree $k\in \mathbb{Z}$ 
along $\phi$ is a $\mathbb{K}$-linear homogeneous map $D$ such that
\begin{equation*}
  D \circ \mu_A
	=
	\mu_B \circ (\phi \otimes D + D \otimes \phi).
\end{equation*}
The tensor product $A\otimes B$ becomes a graded algebra with product
\begin{equation*}
  \mu_{A\otimes B}
	=
	(\mu_A \otimes \mu_B) \circ (\id_A \otimes \tau \otimes \id_B),
\end{equation*}
where $\tau$ is again the graded switching map. 
The notion of free objects will be used frequently in the following, whence 
we recall the free associative algebra generated by $\mathbb{K}$-modules, 
as well as the commutative analogues. We write $\mathrm{T}^k(V) = V^{\otimes k}$ for 
$k > 0 $ and $\mathrm{T}^0(V)=\mathbb{K}$ and obtain:

\begin{proposition}
  Let $V^\bullet$ be a graded $\mathbb{K}$-module. Then the tensor algebra 
  $\mathrm{T}^\bullet(V)= \bigoplus_{k=0}^\infty \mathrm{T}^k(V)$ with induced grading from 
	$V^\bullet$ and canonical inclusion 
  $\iota \colon  V = \mathrm{T}^1(V) \rightarrow \mathrm{T}(V)$	is the free graded associative 
  unital algebra generated by $V^\bullet$. More precisely, for every 
  homogeneous $\mathbb{K}$-linear map $\phi \colon V \rightarrow A$ from $V$ 
	to a unital graded associative algebra $A$, there exists a unique algebra 
  morphism $\Phi \colon \mathrm{T}(V)\rightarrow A$ such that the following diagram
  \begin{equation*}  
	  \begin{tikzcd}
	    \mathrm{T}(V) \arrow[r,"\exists ! \Phi"]  & A \\
	    V \arrow[ru,swap,"\phi"] \arrow[u,"\iota"]& 
	  \end{tikzcd}
  \end{equation*}
  commutes. 
\end{proposition}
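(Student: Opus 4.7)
The plan is to construct the algebra morphism $\Phi$ tensor-power by tensor-power, verify it is an algebra morphism by a direct computation, and then show that any candidate extension is forced by the fact that $V$ generates $\mathrm{T}(V)$ as a unital algebra.

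First I would define $\Phi$ componentwise on each graded summand $\mathrm{T}^k(V) = V^{\otimes k}$. On $\mathrm{T}^0(V) = \mathbb{K}$ set $\Phi(1) = 1_A$, and for $k\geq 1$ define
\begin{equation*}
  \Phi(v_1 \otimes \cdots \otimes v_k) = \mu_A^{(k)}\bigl(\phi(v_1) \otimes \cdots \otimes \phi(v_k)\bigr),
\end{equation*}
where $\mu_A^{(k)}$ denotes the iterated product in $A$, which is unambiguous by associativity. The right-hand side is $\mathbb{K}$-multilinear and homogeneous in each slot (each factor $\phi(v_i)$ carries degree $\abs{v_i}$ since $\phi$ is degree-preserving in the homogeneous sense of the statement), so the universal property of the graded tensor product produces a well-defined homogeneous $\mathbb{K}$-linear map on $\mathrm{T}^k(V)$. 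Taking the direct sum yields $\Phi \colon \mathrm{T}(V) \to A$ of degree zero, and by construction $\Phi \circ \iota = \phi$.

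Next I would verify that $\Phi$ is an algebra morphism. On generators this reduces to the identity
\begin{equation*}
  \Phi\bigl((v_1 \otimes \cdots \otimes v_p) \otimes (w_1 \otimes \cdots \otimes w_q)\bigr)
  = \Phi(v_1 \otimes \cdots \otimes v_p)\,\Phi(w_1 \otimes \cdots \otimes w_q),
\end{equation*}
which is immediate from the definition since the concatenation product in $\mathrm{T}(V)$ is mapped to the iterated product $\phi(v_1)\cdots\phi(v_p)\phi(w_1)\cdots\phi(w_q)$ in $A$. No Koszul signs appear here because the tensor algebra product is just concatenation and the factors $\phi(v_i)$, $\phi(w_j)$ are kept in order. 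Compatibility with units is built in by the $k=0$ definition.

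Finally, for uniqueness I would observe that $\mathrm{T}(V)$ is generated as a unital algebra by the image of $\iota$, so any algebra morphism $\Psi \colon \mathrm{T}(V) \to A$ satisfying $\Psi \circ \iota = \phi$ must agree with $\Phi$ on $\mathbb{K}$ and on $V$, and hence on all iterated products of elements of $V$, i.e. on all of $\mathrm{T}(V)$. I do not expect a genuine obstacle in this argument; the only minor care needed is in invoking the universal property of the graded tensor product to guarantee well-definedness on $V^{\otimes k}$, which is where the assumed $\mathbb{K}$-linearity and homogeneity of $\phi$ are used.
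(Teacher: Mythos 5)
Your proof is correct and follows essentially the same route as the paper, which simply records the explicit formula $\Phi(1)=1_A$ and $\Phi(v_1\otimes\cdots\otimes v_n)=\phi(v_1)\cdots\phi(v_n)$ without further elaboration. Your additional verifications (well-definedness via multilinearity, multiplicativity on concatenation, uniqueness from the fact that $V$ generates $\mathrm{T}(V)$ as a unital algebra) are exactly the details the paper leaves implicit.
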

Explicitly, the map $\Phi$ is given by 
\begin{equation*}
  \Phi(1) 
	= 
	1_A
	\quad \text{ and } \quad 
  \Phi(v_1\otimes\cdots \otimes v_n)
  =
	\phi(v_1)\cdots \phi(v_n).
\end{equation*}
In order to construct the free commutative algebra generated by $V^\bullet$, i.e. the 
analogue of the above proposition in the commutative setting, we 
can consider the (graded) symmetric algebra 
\begin{equation}
  \Sym(V)
	=
	\mathrm{T}(V) / \Span\{ x \otimes y - (-1)^{\abs{x}\abs{y}}y \otimes x\}
\end{equation}
with product denoted by $\vee$.
For later use we also recall the definition of the (graded) exterior algebra
\begin{equation}
  \Anti(V)
	=
	\mathrm{T}(V)/\Span\{x \otimes y + (-1)^{\abs{x}\abs{y}} y\otimes x\}
\end{equation}
with product denoted by $\wedge$. 
The following sign conventions are also needed:

\begin{definition}[Graded signature, Koszul sign]
\label{def:GradedSigns}
Let $\sigma \in S_n$ be a permutation, $V^\bullet$ a graded $\mathbb{K}$-module, 
and $x_1,\dots,x_n$ homogeneous elements of degree $\abs{x_i}$ for $i=1,\dots,n$. 
The \emph{Koszul sign} $\epsilon(\sigma)$ is defined by the relation
\begin{equation}
  \epsilon(\sigma) x_{\sigma(1)}\vee \cdots \vee x_{\sigma(n)}
	=
	x_1 \vee \cdots \vee x_n.
\end{equation}
By $\chi(\sigma)= \sign(\sigma)\epsilon(\sigma)$ we denote the \emph{antisymmetric Koszul sign}, i.e. 
\begin{equation}
  \chi(\sigma) x_{\sigma(1)}\wedge \cdots \wedge x_{\sigma(n)}
	=
	x_1 \wedge \cdots \wedge x_n.
\end{equation}
\end{definition}

Note that both signs depend on the degrees of the homogeneous elements, i.e. 
we should actually write $\epsilon(\sigma)=\epsilon(\sigma,x_1,\dots,x_n)$, 
analogously for $\chi(\sigma)$.
With these signs one can consider a right action of $S_n$ on 
$V^{\otimes n}$ given by the symmetrization
\begin{equation*}
  \Symmetrizer_n(v)
	=
	\frac{1}{n!} \sum_{\sigma \in S_n} v \racts \sigma,
\end{equation*}
where
\begin{equation*}
  (v_1\otimes \cdots \otimes v_n) \racts \sigma
	=
	\epsilon(\sigma) v_{\sigma(1)}\otimes \cdots \otimes v_{\sigma(n)}.
\end{equation*}
It turns out that $\bigoplus_n \image \Symmetrizer_n \cong \Sym(V)$ is indeed the 
free associative graded commutative unital algebra generated by $V$.

These free algebras also satisfy a universal property with respect to derivations:

\begin{proposition}
  Let $V^\bullet$ be a graded $\mathbb{K}$-module.
	\begin{propositionlist}
	  \item Let $\phi \colon V^\bullet \rightarrow A^\bullet$ be a homogeneous map 
		      of degree zero into a graded associative unital algebra $A$ and let 
					$\D \colon V^\bullet \rightarrow A^{\bullet +k}$ be a linear map of 
					degree $k\in \mathbb{Z}$. Then there exists a unique graded derivation
					\begin{equation*}
					  D \colon 
						\mathrm{T}(V)^\bullet
						\rightarrow 
						A^{\bullet + k}
					\end{equation*}
					along $\Phi \colon \mathrm{T}(V) \rightarrow A$ such that $D\at{V} = \D$.
		\item If $A$ is in addition graded commutative, then there exists a unique graded 
		      derivation
					\begin{equation*}
					  D \colon 
						\Sym(V)^\bullet
						\rightarrow
						A^{\bullet + k}
					\end{equation*}
					along $\Phi \colon \Sym(V) \rightarrow A$ such that $D\at{V} = \D$.
	\end{propositionlist}
\end{proposition}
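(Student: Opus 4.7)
The plan is to define the derivation $D$ explicitly on homogeneous tensors by the graded Leibniz rule forced on us along $\Phi$, then verify well-definedness, the derivation property, and uniqueness separately.

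For part (i), on a pure tensor $v_1 \otimes \cdots \otimes v_n \in \mathrm{T}^n(V)$ I would set
\begin{equation*}
  D(v_1 \otimes \cdots \otimes v_n)
  =
  \sum_{i=1}^n (-1)^{k(\abs{v_1}+\cdots+\abs{v_{i-1}})}
  \phi(v_1)\cdots \phi(v_{i-1})\cdot \D(v_i) \cdot \phi(v_{i+1})\cdots \phi(v_n),
\end{equation*}
and $D(1)=0$. This is well defined because $\mathrm{T}(V)$ is free as a graded $\mathbb{K}$-module on pure tensors, and it is homogeneous of degree $k$ by construction. The derivation identity $D \circ \mu_{\mathrm{T}(V)} = \mu_A \circ (\Phi \otimes D + D \otimes \Phi)$ is then checked directly on two pure tensors $x = v_1 \otimes \cdots \otimes v_n$ and $y = w_1 \otimes \cdots \otimes w_m$: splitting the defining sum for $D(x\otimes y)$ into the terms where the $\D$ falls on a $v_i$ and those where it falls on a $w_j$ reproduces, with the correct Koszul signs coming from moving $D$ past $x$, exactly $D(x)\Phi(y) + (-1)^{k\abs{x}}\Phi(x)D(y)$. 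Uniqueness is immediate: any graded derivation $D'$ along $\Phi$ extending $\D$ must agree with $D$ on $V$, and then the Leibniz rule forces the value on every $v_1\otimes \cdots \otimes v_n$ by induction on $n$.

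For part (ii), the quickest route is to use part (i) to obtain a derivation $\widetilde{D}\colon \mathrm{T}(V) \to A$ along the algebra morphism $\mathrm{T}(V)\to \Sym(V)\xrightarrow{\Phi} A$, and then to show that $\widetilde{D}$ vanishes on the two-sided ideal generated by $x\otimes y - (-1)^{\abs{x}\abs{y}} y\otimes x$. Since $\widetilde{D}$ is a derivation, it suffices to check this on the generators themselves, i.e.\ to verify
\begin{equation*}
  \widetilde{D}(v\otimes w) = (-1)^{\abs{v}\abs{w}} \widetilde{D}(w\otimes v)
\end{equation*}
for homogeneous $v,w \in V$. Expanding both sides via the formula above and using that $A$ is graded commutative to move $\phi(v),\phi(w)$ past $\D(v),\D(w)$ (which have shifted degree $\abs{v}+k$, $\abs{w}+k$) yields the same expression on both sides, so $\widetilde{D}$ descends to the desired derivation $D\colon \Sym(V)\to A$ along $\Phi$. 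Uniqueness follows as in part (i) from the fact that $\Sym(V)$ is generated as an algebra by $V$.

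The routine but sign-heavy step — and the only place where the hypotheses meaningfully interact — is checking that the Leibniz extension descends to the symmetric quotient; this is exactly where the graded commutativity of $A$ is indispensable, as the sign $(-1)^{\abs{v}\abs{w}}$ has to be produced from the Koszul signs $(-1)^{k\abs{v}}$ and $(-1)^{(\abs{v}+k)\abs{w}}$ together with the commutation in $A$. Everything else is formal manipulation with the free property from the previous proposition and an induction on tensor length.
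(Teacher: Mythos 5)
Your proposal is correct and follows essentially the same route as the paper: define $D$ on pure tensors by the forced Leibniz formula with the Koszul signs $(-1)^{k(\abs{v_1}+\cdots+\abs{v_{r-1}})}$, get uniqueness from generation by $V$, and for the symmetric case observe that the ideal $I(V)$ lies in the kernel so that $D$ descends to $\Sym(V)$. Your reduction to checking the generators $v\otimes w-(-1)^{\abs{v}\abs{w}}w\otimes v$ (valid because the composite $\mathrm{T}(V)\to\Sym(V)\to A$ kills them, so the derivation property disposes of the rest of the ideal) and the sign bookkeeping in $A$ are exactly the content the paper leaves implicit.
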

The proof is straightforward, one defines $D$ on homogenous factors 
by 
\begin{equation*}
  D(v_1\otimes \cdots \otimes v_n)
	=
	\sum_{r=1}^n (-1)^{k(\abs{v_1}+\cdots+\abs{v_{r-1}})}
	\phi(v_1)\cdots \D(v_r)\cdots \phi(v_n)
\end{equation*}
and $D(1)=0$ and notes that in the commutative case 
\begin{equation}  
  \label{eq:SymmIdeal}
  I(V) 
	=
	\Span\{ x \otimes y - (-1)^{\abs{x}\abs{y}}y \otimes x\} 
\end{equation}	
is in the kernel, whence 
$D$ passes to the quotient $\Sym(V)$.

%
%
\subsection{Definition and First Properties of Graded Coalgebras}
\label{sec:gradedcoalgebras}
 
Now we want to recall the analogous constructions for graded coalgebras.
We start with the definition of graded coalgebras and some basic properties.

\begin{definition}[Graded Coalgebra]
  A \emph{graded coassociative coalgebra} over $\mathbb{K}$ is a 
	graded $\mathbb{K}$-module $C^\bullet$ 
	equipped with a binary co-operation, i.e. a linear map
	\begin{equation}
	  \Delta \colon
		C^\bullet
		\longrightarrow 
		C^\bullet \otimes C^\bullet
	\end{equation}
	of degree zero that is \emph{coassociative}, i.e. 
	\begin{equation}
	  (\Delta \otimes \id) \circ \Delta
		=
		(\id \otimes \Delta) \circ \Delta.
	\end{equation}
\end{definition}
The map $\Delta$ is called \emph{coproduct} and we have
\begin{equation}
  \Delta(C^i)
	\subset
  \bigoplus_{j+k=i} C^j \otimes C^k.
\end{equation}
To simplify the notation we use \emph{Sweedler's notation}
\begin{equation*}
  \Delta^n(x)
	=
	\sum x_{(1)} \otimes \cdots \otimes x_{(n+1)} 
	=
	x_{(1)} \otimes \cdots \otimes x_{(n+1)} 
	\in C^{\otimes n+1},
\end{equation*}
where $\Delta^n = (\Delta \otimes \id \otimes \cdots \otimes \id)\circ \Delta^{n-1}$ 
denotes the iterated coproduct $\Delta^n \colon C \rightarrow C^{\otimes n+1}$ with 
$\Delta^1 = \Delta$ and $\Delta^0 = \id$. Note that by the 
coassociativity we have 
\begin{equation}
  \Delta^n
	=
	(\id\otimes \cdots \otimes \id \otimes \Delta \otimes \id \otimes \cdots 
	\otimes \id) \circ \Delta^{n-1},
\end{equation}
whence the notation makes sense.
The coassociative coalgebra $C$ is called \emph{counital} if it is 
equipped with a \emph{counit}, i.e. a linear map of degree zero with
\begin{equation}
  \epsilon \colon
	C
	\longrightarrow
	\mathbb{K},
	\quad  \text{ satisfying }\quad
	(\epsilon \otimes \id) \circ \Delta
	=
	\id
	=
	(\id \otimes \epsilon)\circ \Delta.
\end{equation}
For example, $\mathbb{K}$ is itself a coassociative coalgebra with 
$\Delta(1)=1 \otimes 1$. 
A \emph{morphism} of graded coalgebras $f \colon C \rightarrow C'$ is a linear 
map of degree zero that commutes with the coproducts and in the counital case also 
with the counits, i.e. 
\begin{equation}
  (f\otimes f) \circ \Delta_C
	=
	\Delta_{C'} \circ f, 
	\quad \quad 
	\epsilon_{C'} \circ f 
	=
	\epsilon_C.
\end{equation}
The graded coalgebra $(C,\Delta)$ is called cocommutative if 
$\tau \circ \Delta=\Delta$, where $\tau$ denotes again the graded switching map. 
Denoting the graded dual by $(C^*)^\bullet = \Hom_\mathbb{K}^\bullet 
(C,\mathbb{K}) $, one can 
check that the inclusion $C^* \otimes C^* \rightarrow (C\otimes C)^*$ induces 
always an algebra structure on $(C^*)^\bullet$. The converse is generally not true; 
e.g., if $\mathbb{K}$ is a field, then the converse is only true in the case of 
finite dimensional vector spaces over $\mathbb{K}$.

Another useful interplay of coalgebras and algebras is the convolution algebra:

\begin{definition}[Convolution product]
\label{def:ConvProduct}
  Let $(A^\bullet,\mu,1)$ be a graded associative unital algebra over 
	$\mathbb{K}$ and 
	let $(C^\bullet,\Delta,\epsilon)$ be a graded coassociative counital algebra over 
	$\mathbb{K}$. Then one defines for $\phi,\psi \in \Hom^\bullet_\mathbb{K}(C,A)$ 
	their \emph{convolution} by 
	\begin{equation}
		\label{eq:convolution}
		\phi \star \psi
		=
		\mu \circ (\phi \otimes\psi) \circ \Delta.
	\end{equation}
\end{definition}
One can directly check that $(\Hom^\bullet_\mathbb{K}(C,A), \star,1\epsilon)$ 
is a graded associative unital algebra, see e.g. 
\cite[Proposition~1.6.1]{loday.vallette:2012a}. 
Dually to ideals and subalgebras of algebras one can consider 
coideals and subcoalgebras.

\begin{definition}[Coideal and Subcoalgebra]
  Let $(C^\bullet,\Delta)$ be a graded coalgebra over $\mathbb{K}$.
	\begin{definitionlist}
	  \item A graded subspace $I^\bullet \subseteq C^\bullet$ is called \emph{coideal} 
		      if
					\begin{equation}
						\label{eq:coideal}
						\Delta(I) 
						\subseteq
						I \otimes C + C \otimes I
						\quad \text{ and } \quad
						I \subseteq \ker \epsilon.
					\end{equation}
		\item A graded subspace $U^\bullet \subseteq C^\bullet$ is called 
		      \emph{subcoalgebra} if
					\begin{equation}
						\label{eq:subcoalgebra}
						\Delta(U) \subseteq U \otimes U.
					\end{equation}
	\end{definitionlist}
\end{definition}
Note that if $\mathbb{K}$ is just a ring and not a field, then $I\otimes C$ might 
not be mapped injectively into $C\otimes C$ due to some torsion effects. We 
always assume that our modules have enough flatness properties and ignore these 
subtleties. 
As expected, the image of a coalgebra morphism is a subcoalgebra and the 
kernel is a coideal. We can quotient by coideals and every subcoalgebra $U$ with 
$U \subseteq \ker \epsilon$ is automatically a coideal.

There are special elements in coalgebras:

\begin{definition}[Group-like elements]
  Let $(C^\bullet,\Delta,\epsilon)$ be a graded coalgebra. An element $g\in C$ is 
	called \emph{group-like} if
	\begin{equation}
	  \label{eq:grouplikeElement}
		\Delta(g)
		=
		g\otimes g
		\quad \quad \text{ and } \quad \quad
		\epsilon(g) = 1.
	\end{equation}
\end{definition}
The second condition is just to exclude the trivial case $g=0$, since 
$\Delta(g)=g\otimes g$ implies $g = \epsilon(g)g$ and since we assume torsion-freeness. Note that a morphism 
of coalgebras maps grouplike elements to grouplike elements.
In view of $L_\infty$-algebras, coderivations will play an important role. 

\begin{definition}[Coderivation]
  Let $\Phi \colon C^\bullet \rightarrow E^\bullet$ be a morphism of graded 
	coalgebras. A \emph{graded coderivation} along $\Phi$ is 
  a linear map $D\colon C^\bullet \rightarrow E^{\bullet + k}$ of degree $k$ such that
	\begin{equation}
    \Delta\circ D
		=
		(D \otimes \Phi +\Phi \otimes D) \circ \Delta.
  \end{equation}
\end{definition}
One can check that the set of coderivations of $C$ along the identity is a graded 
Lie subalgebra of $\End^\bullet_\mathbb{K}(C)$. 

\begin{proposition}
  \label{prop:epsilonDzero}
  A coderivation $D \colon C^\bullet \rightarrow E^{\bullet +k}$ along 
	$\Phi$ satisfies $\epsilon \circ D = 0$.
\end{proposition}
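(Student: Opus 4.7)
The plan is to exploit the counit by applying $\epsilon_E \otimes \epsilon_E$ to both sides of the defining coderivation identity
\begin{equation*}
  \Delta_E \circ D
  =
  (D \otimes \Phi + \Phi \otimes D) \circ \Delta_C
\end{equation*}
and using the fact that $\Phi$, being a morphism of (counital) coalgebras, satisfies $\epsilon_E \circ \Phi = \epsilon_C$. The identity $(\epsilon \otimes \id)\circ \Delta = \id = (\id \otimes \epsilon)\circ \Delta$ will do essentially all the work.

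First I would observe the auxiliary identity $(\epsilon_E \otimes \epsilon_E)\circ \Delta_E = \epsilon_E$: applying $\epsilon_E$ to $(\epsilon_E \otimes \id)\circ \Delta_E = \id$ yields exactly this. Hence applying $\epsilon_E \otimes \epsilon_E$ to the left-hand side of the coderivation equation returns $\epsilon_E \circ D$.

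For the right-hand side, I would expand both summands using the Koszul rule; since $\Phi$ has degree zero, the first summand produces no sign, while the second produces a sign $(-1)^{k|x_{(1)}|}$. After applying $\epsilon_E \otimes \epsilon_E$, the first summand becomes $\sum \epsilon_E(D(x_{(1)}))\,\epsilon_C(x_{(2)})$, which by the counit axiom $(\id \otimes \epsilon_C)\circ \Delta_C = \id$ collapses to $\epsilon_E(D(x))$. For the second summand, $\epsilon_C(x_{(1)})$ vanishes unless $x_{(1)} \in C^0$, in which case the sign is $1$; so this summand equals $\sum \epsilon_C(x_{(1)})\,\epsilon_E(D(x_{(2)}))$, which by $(\epsilon_C \otimes \id)\circ \Delta_C = \id$ also collapses to $\epsilon_E(D(x))$.

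Combining both sides yields $\epsilon_E \circ D = 2\,\epsilon_E \circ D$, and since $\mathbb{Q} \subseteq \mathbb{K}$ the element $2$ is invertible, giving $\epsilon_E \circ D = 0$. There is no real obstacle here; the only thing to double-check is the bookkeeping of Koszul signs in the second summand, which is harmless because the only surviving contribution comes from the degree-zero component where the sign is trivial.
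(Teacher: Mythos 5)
Your proof is correct and follows essentially the same route as the paper: the paper applies $(\id\otimes\epsilon)\circ\Delta$ to $D(x)$, expands via the coderivation identity and counitality of $\Phi$, and then applies $\epsilon$, which is exactly your composition with $\epsilon\otimes\epsilon$ leading to $\epsilon\circ D = 2\,\epsilon\circ D$. Note only that the invertibility of $2$ is not actually needed, since $a = 2a$ already forces $a=0$ by subtraction.
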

\begin{proof}
We compute
\begin{align*}
  D(x)
	& = 
	(\id\otimes \epsilon)\circ \Delta \circ D(x)
	=
	 D(x) \pm \Phi(x_{(1)})  \epsilon(D(x_{(2)}))
\end{align*}
and thus applying $\epsilon$ gives the result.
\end{proof}

There are examples of coalgebras with many grouplike elements, e.g. 
group coalgebras. However, we are mainly interested in coalgebras with one specific 
group-like element $1$, called coaugmented. 

\begin{definition}
  A counital graded coalgebra $(C,\Delta,\epsilon)$ is called \emph{coaugmented} 
	if there exists a coalgebra morphism $u \colon \mathbb{K} \rightarrow C$.
\end{definition}
The element $1=u(1)$ is indeed group-like since we know $\Delta \circ u = 
(u \otimes u)\circ \Delta$ and we obtain a non-full 
subcategory of coaugmented coalgebras, where the morphisms satisfy $\Phi(1)=1$. 
Moreover, the definition implies $\id_\mathbb{K} = \epsilon \circ u$ and we get  
\begin{equation}
  C 
	=
	\cc{C} \oplus \mathbb{K}1
	=
	\ker \epsilon \oplus \mathbb{K}1
\end{equation}
via $c \mapsto (c - \epsilon(c)1) + \epsilon(c)1$.
In this case one can define a \emph{reduced coproduct} 
$\cc{\Delta}\colon \cc{C} \rightarrow \cc{C}\otimes \cc{C}$ by
\begin{equation}
  \cc{\Delta}(x)
	= 
	\Delta(x) - 1 \otimes x - x \otimes 1
\end{equation}
and we directly check for $x \in \cc{C}$
\begin{equation*}
  (\epsilon \otimes \id) (\cc{\Delta}(x))
	=
	x - \epsilon(1)x - \epsilon(x)1 
	=
	0
	=
	(\id \otimes \epsilon )(\cc{\Delta}(x)).
\end{equation*}
An element $x\in C$ is called \emph{primitive} if 
\begin{equation}
  \Delta(x)
	=
	x \otimes 1 + 1 \otimes x,
\end{equation}
or equivalently $x\in \cc{C}$ with $\cc{\Delta}(x)=0$. The set of primitive elements is 
denoted by $\mathrm{Prim}(C)$.

\begin{proposition}
  The map $\cc{\Delta} \colon C \rightarrow C \otimes C$ is coassociative 
	and restricts to $\cc{\Delta} \colon \cc{C}\to \cc{C}\otimes\cc{C}$. 
	Thus $(\cc{C},\cc{\Delta})$ becomes a coalgebra without counit, the so-called 
	\emph{reduced coalgebra}.
\end{proposition}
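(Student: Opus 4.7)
The plan is to prove both assertions directly from the definition $\bar{\Delta} = \Delta - 1 \otimes \id - \id \otimes 1$, using the coassociativity and counit axioms of $(C,\Delta,\epsilon)$ together with the splitting $C = \overline{C} \oplus \mathbb{K}\,1$ afforded by the coaugmentation.

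For the restriction $\bar{\Delta}(\overline{C}) \subseteq \overline{C} \otimes \overline{C}$, I would fix $x \in \overline{C} = \ker \epsilon$ and decompose $\Delta(x)$ along the induced splitting of $C \otimes C$, writing
\[
  \Delta(x) = c + b \otimes 1 + 1 \otimes a + \lambda\,(1 \otimes 1),
  \qquad a,b \in \overline{C},\ c \in \overline{C}\otimes\overline{C},\ \lambda \in \mathbb{K}.
\]
Applying the counit axiom $(\epsilon \otimes \id)\Delta(x) = x$ and using $\epsilon(a)=\epsilon(b)=0$ yields $b + \lambda\cdot 1 = x$, which forces $\lambda=0$ and $b=x$; the symmetric computation with $(\id \otimes \epsilon)$ gives $a = x$. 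Substituting back and subtracting $1 \otimes x + x \otimes 1$ leaves $\bar{\Delta}(x) = c \in \overline{C} \otimes \overline{C}$, as claimed.

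For coassociativity of $\bar{\Delta} \colon C \to C \otimes C$, I would expand both $(\bar{\Delta} \otimes \id)\bar{\Delta}(x)$ and $(\id \otimes \bar{\Delta})\bar{\Delta}(x)$ by brute force, substituting the definition and using Sweedler notation. One useful preliminary is the explicit value $\bar{\Delta}(1) = \Delta(1) - 2(1 \otimes 1) = -1 \otimes 1$, which takes care of the contributions coming from applying $\bar{\Delta}$ to the factors $1$ appearing in $\bar{\Delta}(x)$. After expansion, each of the two sides becomes a sum of seven terms: a ``principal'' iterated-coproduct term $(\Delta \otimes \id)\Delta(x)$ respectively $(\id \otimes \Delta)\Delta(x)$, which match by coassociativity of $\Delta$, together with six ``boundary'' terms of the shapes $1 \otimes \Delta(x)$, $\Delta(x) \otimes 1$, $\sum x_{(1)} \otimes 1 \otimes x_{(2)}$, $1 \otimes 1 \otimes x$, $1 \otimes x \otimes 1$ and $x \otimes 1 \otimes 1$, all of which occur with identical signs on both sides, and hence cancel pairwise in the difference.

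The only genuine obstacle is bookkeeping: one must track seven terms on each side and verify that they line up once the sign from $\bar{\Delta}(1)$ has been accounted for. No clever idea is required; the result is a direct consequence of coassociativity and the counit axiom for $\Delta$, and the coassociativity of $\bar{\Delta}$ on the subspace $\overline{C}$ then follows by restriction from the global statement on $C$.
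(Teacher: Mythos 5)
Your proof is correct and fills in what the paper leaves as a direct check: the paper records only the computation $(\epsilon \otimes \id)(\cc{\Delta}(x)) = 0 = (\id \otimes \epsilon)(\cc{\Delta}(x))$, which is exactly the counit argument you run (in decomposed form, using $C=\cc{C}\oplus\mathbb{K}1$) for the restriction, and your coassociativity expansion with $\cc{\Delta}(1)=-1\otimes 1$ and the seven matching terms is the standard direct verification. One harmless slip: with your decomposition $\Delta(x)=c+b\otimes 1+1\otimes a+\lambda(1\otimes 1)$, applying $(\epsilon\otimes\id)$ gives $a+\lambda\,1=x$ and $(\id\otimes\epsilon)$ gives $b+\lambda\,1=x$, i.e.\ the two counit identities determine $a$ and $b$ in the opposite order from what you wrote, but the conclusion $a=b=x$, $\lambda=0$, hence $\cc{\Delta}(x)=c\in\cc{C}\otimes\cc{C}$, is unaffected.
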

In particular, a map $\Phi\colon (C,\Delta,\epsilon,1) \rightarrow (E,\Delta,\epsilon,1)$ 
is a morphism of coaugmented coalgebras, i.e. a coalgebra morphism with $\Phi(1)=1$, 
if and only if 
\begin{equation*}
  ( \Phi \otimes \Phi )\circ \cc{\Delta}
  =
  \cc{\Delta} \circ\Phi,
\end{equation*}
i.e. if and only if $\Phi$ induces a coalgebra morphism $\Phi \colon (\cc{C},\cc{\Delta})
\rightarrow (\cc{E},\cc{\Delta})$. The analogue holds for coderivations: a linear map  
$D \colon C^\bullet \rightarrow E^{\bullet +k}$ with $D(1)=0$ is a coderivation 
along $\Phi$ if and only if 
\begin{equation*}
  (D\otimes \Phi + \Phi \otimes D) \circ \cc{\Delta}
	=
	\cc{\Delta} \circ D.
\end{equation*}

A coaugmented graded coalgebra $(C,\Delta,\epsilon,1)$ is said to be 
\emph{conilpotent} if for each $x \in \cc{C}$ there exists 
$n \in \mathbb{N}$ such that $\cc{\Delta}^m(x)=0$ for all $m\geq n$. This is 
equivalent to the \emph{coradical filtration} being exhaustive, 
i.e.
\begin{equation*}
  C^\bullet
	=
	\bigcup_{k\in\mathbb{Z}} F_k C^\bullet.
\end{equation*}
Here $F_0C = \mathbb{K}1$ and 
\begin{equation*}
  F_kC
	=
	\mathbb{K}1 \oplus \{c \in \cc{C} \mid \cc{\Delta}^{k}c=0\}.
\end{equation*}
In particular, $F_1C = \mathbb{K}1 \oplus \mathrm{Prim}(C)$. Note that 
this filtration is canonical and that morphisms of coaugmented graded coalgebras 
are automatically compatible with this filtration. Moreover, one can check that 
in the case of conilpotent coalgebras the group-like element is unique.

\begin{proposition}
  Let $(C^\bullet,\Delta,\epsilon,1)$ be a conilpotent coaugmented graded coalgebra 
	over a field $\mathbb{K}$. Then $1$ is the only group-like element.
\end{proposition}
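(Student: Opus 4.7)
The plan is to write any group-like element $g$ in the form $g = 1 + \cc{g}$ with $\cc{g} \in \cc{C}$---which is possible because $\epsilon(g)=1$ forces $g - 1 \in \ker\epsilon = \cc{C}$---and then to show $\cc{g}=0$. First I would compute the reduced coproduct of $\cc{g}$: expanding
\begin{equation*}
  g\otimes g = (1 + \cc{g})\otimes(1+\cc{g}) = 1\otimes 1 + 1\otimes \cc{g} + \cc{g}\otimes 1 + \cc{g}\otimes \cc{g},
\end{equation*}
and comparing with $\Delta(g) = \Delta(1) + \Delta(\cc{g}) = 1\otimes 1 + \Delta(\cc{g})$, I obtain $\Delta(\cc{g}) = 1\otimes \cc{g} + \cc{g}\otimes 1 + \cc{g}\otimes \cc{g}$, so by definition of the reduced coproduct $\cc{\Delta}(\cc{g}) = \cc{g}\otimes \cc{g}$.

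Next I would prove by induction on $n\geq 1$ that $\cc{\Delta}^n(\cc{g}) = \cc{g}^{\otimes(n+1)}$, the base case being the identity just derived and the inductive step following from $\cc{\Delta}^n = (\cc{\Delta}\otimes \id \otimes \cdots \otimes \id)\circ \cc{\Delta}^{n-1}$. By the conilpotence hypothesis there exists $n\in\mathbb{N}$ with $\cc{\Delta}^n(\cc{g}) = 0$, and hence $\cc{g}^{\otimes(n+1)} = 0$ in $\cc{C}^{\otimes(n+1)}$.

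The remaining---and, to my mind, only substantive---step is to conclude $\cc{g}=0$ from the vanishing of one of its tensor powers. Here the assumption that $\mathbb{K}$ is a field enters essentially: if $\cc{g}\neq 0$, one extends $\{\cc{g}\}$ to a $\mathbb{K}$-basis of $\cc{C}$, with respect to which $\cc{g}^{\otimes(n+1)}$ is itself a basis element of $\cc{C}^{\otimes(n+1)}$ and therefore nonzero, contradicting the previous step. I expect this to be the main obstacle conceptually, since over a general commutative ring the implication can fail because of torsion phenomena, which is precisely why this proposition is restricted to the field case rather than stated for the more general ground ring $\mathbb{K}$ used elsewhere in the section.
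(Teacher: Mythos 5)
Your argument is correct and follows the paper's own proof essentially verbatim: decompose $g = 1 + c$ with $c \in \cc{C}$, deduce $\cc{\Delta}^k(c) = c^{\otimes(k+1)}$, and invoke conilpotency. The only difference is that you spell out the final step (a nonzero vector over a field has nonzero tensor powers), which the paper leaves implicit, and your remark about why the field hypothesis matters is a sensible gloss on that.
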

\begin{proof}
Let $g\in C$ be a group-like element and set $c = g - 1 \in \cc{C}$ since 
$\epsilon(g)=1$. Then
\begin{equation*}
  \Delta(g)
	=
	\Delta(1+c)
	=
	1\otimes 1 + \Delta(c)
	\quad \text{ and }\quad
	\Delta(g)
	=
	(1+c)\otimes (1+c)
\end{equation*}
imply 
\begin{equation*}
  \cc{\Delta}^k(c)
	=
	c\otimes \cdots \otimes c
\end{equation*}
for any $k$. By the conilpotency $\cc{\Delta}^k(c)=0$ for some $k$, and thus $c=0$.
\end{proof}

We want to list some other immediate features of coaugmented coalgebras. 

\begin{lemma}
  Let $(C^\bullet,\Delta,\epsilon,1)$ be a coaugmented coalgebra and let 
	$V^\bullet$ be a graded $\mathbb{K}$-module. Then 
	\begin{equation*}
	  (\phi_1\otimes \cdots \otimes \phi_n)\circ \Delta^{n-1}
		=
		(\phi_1\otimes \cdots \otimes \phi_n)\circ \cc{\Delta}^{n-1}
	\end{equation*}
	for all $\mathbb{K}$-linear maps $\phi_i \colon C \rightarrow V$ with 
	$\phi_i(1)=0$.
\end{lemma}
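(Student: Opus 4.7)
The plan is to decompose $C = \cc{C} \oplus \mathbb{K}1$ and split every tensor factor of the iterated coproduct according to this decomposition, then observe that $(\phi_1 \otimes \cdots \otimes \phi_n)$ annihilates every summand that has at least one tensor factor equal to $1$. The pure $\cc{C}^{\otimes n}$ component should exactly recover $\cc{\Delta}^{n-1}$, and everything else should be killed by the $\phi_i$'s.

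First I would dispose of the easy case $x = 1$: since $\Delta(1) = 1 \otimes 1$, one has $\Delta^{n-1}(1) = 1 \otimes \cdots \otimes 1$, so the left-hand side vanishes because $\phi_i(1) = 0$, and extending $\cc{\Delta}^{n-1}$ to $C$ by zero on $\mathbb{K}1$ makes the right-hand side vanish as well. So it suffices to work with $x \in \cc{C}$.

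The heart of the argument is the following claim, which I would prove by induction on $n \geq 1$:
\begin{equation*}
  \Delta^{n-1}(x) \;=\; \cc{\Delta}^{n-1}(x) \;+\; R_n(x)
  \qquad \text{for } x \in \cc{C},
\end{equation*}
where $R_n(x)$ lies in $\sum_{i=1}^{n} C^{\otimes (i-1)} \otimes \mathbb{K}1 \otimes C^{\otimes (n-i)}$, i.e. in the span of elementary tensors with at least one factor equal to $1$. The base case $n=2$ is just the definition $\Delta(x) = \cc{\Delta}(x) + 1\otimes x + x\otimes 1$. For the induction step I would write $\Delta^{n} = (\Delta \otimes \id^{\otimes (n-1)}) \circ \Delta^{n-1}$, substitute the inductive decomposition, and use $\Delta(\bar y) = \cc{\Delta}(\bar y) + 1\otimes \bar y + \bar y \otimes 1$ on each $\cc{C}$-valued Sweedler leg, together with $\Delta(1) = 1\otimes 1$ on each $\mathbb{K}1$-valued leg. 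The terms coming from $\cc{\Delta}(\bar y)$ on all legs of $\cc{\Delta}^{n-1}(x)$ assemble into $\cc{\Delta}^{n}(x)$, and every other term manifestly contributes a tensor factor of $1$, hence lies in $R_{n+1}(x)$.

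Once the claim is established, applying $\phi_1 \otimes \cdots \otimes \phi_n$ to $R_n(x)$ produces zero because any elementary tensor in $R_n(x)$ has some slot containing $1$, on which the corresponding $\phi_i$ vanishes by hypothesis; this yields the desired equality. The only mildly delicate bookkeeping, which I expect to be the main obstacle, is verifying in the induction step that the ``purely reduced'' part of $(\Delta \otimes \id^{\otimes (n-1)})\bigl(\cc{\Delta}^{n-1}(x)\bigr)$ really is $\cc{\Delta}^{n}(x)$ as opposed to some related but different iterate, which is guaranteed by the coassociativity identity for $\cc{\Delta}$ on $\cc{C}$ together with the recursive definition of $\cc{\Delta}^{n}$.
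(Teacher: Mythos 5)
The paper actually states this lemma without proof, so there is no argument of the authors to compare yours against; it is treated as an immediate consequence of the definitions. Your proposal is correct and is precisely the natural argument one would supply: the decomposition $\Delta^{n-1}(x)=\cc{\Delta}^{n-1}(x)+R_n(x)$ with $R_n(x)\in\sum_i C^{\otimes(i-1)}\otimes\mathbb{K}1\otimes C^{\otimes(n-i)}$ holds by induction (using $\Delta(x)=\cc{\Delta}(x)+1\otimes x+x\otimes1$ on the reduced leg, $\Delta(1)=1\otimes1$, and coassociativity of $\cc{\Delta}$ to identify the purely reduced part with $\cc{\Delta}^n(x)$), and every summand of $R_n(x)$ dies under $\phi_1\otimes\cdots\otimes\phi_n$ since the slot carrying $1$ is annihilated by the corresponding $\phi_i$, Koszul signs being irrelevant once one factor vanishes. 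Two cosmetic remarks: with the paper's formula $\cc{\Delta}(x)=\Delta(x)-1\otimes x-x\otimes1$ applied to $x=1$ one gets $\cc{\Delta}(1)=-1\otimes1$ rather than $0$, but this is immaterial because both sides of the claimed identity vanish on $\mathbb{K}1$ after applying the $\phi_i$; and in the induction step only the first tensor leg receives $\Delta$ under the convention $\Delta^{n}=(\Delta\otimes\id^{\otimes(n-1)})\circ\Delta^{n-1}$, so your phrase ``on each leg'' should be read as treating the first leg of each summand according to whether it lies in $\cc{C}$ or $\mathbb{K}1$ — the argument goes through exactly as you describe.
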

This implies for the convolution the following simplification:

\begin{lemma}
  Let $(C^\bullet,\Delta,\epsilon,1)$ be a coaugmented coalgebra and let 
	$(A^\bullet,\mu,1)$ be an algebra. Then one has
	\begin{equation}
	 	\label{eq:CoaugmentedConvolution}
		\phi_1\star \cdots\star \phi_n
		=
		\mu^{n-1} \circ(\phi_1\otimes\cdots\otimes \phi_n)\circ \cc{\Delta}^{n-1}
	\end{equation}
	for all $\mathbb{K}$-linear maps $\phi_i \colon C \rightarrow A$ with 
	$\phi_i(1)=0$.
\end{lemma}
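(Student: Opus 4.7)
The plan is to show that $\Delta^{n-1}(x)-\cc{\Delta}^{n-1}(x)$ lies in the subspace
\[
  \sum_{i=1}^{n} C^{\otimes (i-1)} \otimes \mathbb{K}1 \otimes C^{\otimes (n-i)} \subseteq C^{\otimes n},
\]
i.e.\ in the span of elementary tensors having at least one factor equal to $1$. Since $\phi_i(1)=0$, the operator $\phi_1\otimes\cdots\otimes\phi_n$ vanishes on this subspace, and the claim follows.

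First I would reduce to the case $x\in\cc{C}$. Using the splitting $C=\cc{C}\oplus\mathbb{K}1$, write $x=(x-\epsilon(x)1)+\epsilon(x)1$; since $\phi_i(1)=0$ we have $\phi_i(x)=\phi_i(x-\epsilon(x)1)$, so we may assume $x\in\cc{C}$. (On the $\mathbb{K}1$-component both sides are already zero: $\Delta^{n-1}(1)=1^{\otimes n}$ is killed by $\phi_1\otimes\cdots\otimes\phi_n$, and $\cc{\Delta}$ has been defined on $\cc{C}$ only.)

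Next, I would prove by induction on $n\ge 1$ that for $x\in\cc{C}$,
\[
  \Delta^{n-1}(x) \;=\; \cc{\Delta}^{n-1}(x) \;+\; R_n(x),
\]
where $R_n(x)$ lies in the subspace above. The base case $n=2$ is exactly the defining identity $\Delta(x)=\cc{\Delta}(x)+1\otimes x+x\otimes 1$. For the inductive step, use coassociativity to write $\Delta^{n-1}=(\Delta\otimes\id^{\otimes(n-2)})\circ\Delta^{n-2}$; by induction
\[
  \Delta^{n-2}(x) \;=\; \cc{\Delta}^{n-2}(x) + R_{n-1}(x),
\]
and applying $\Delta\otimes\id^{\otimes(n-2)}$ expands the first factor of $\cc{\Delta}^{n-2}(x)$ (which already lies in $\cc{C}^{\otimes(n-1)}$) via $\Delta=\cc{\Delta}+1\otimes\id+\id\otimes 1$, producing $(\cc{\Delta}\otimes\id^{\otimes(n-2)})\cc{\Delta}^{n-2}(x)=\cc{\Delta}^{n-1}(x)$ (by coassociativity of $\cc{\Delta}$) plus further terms with a $1$ in the first or second slot. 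The contribution of $R_{n-1}(x)$ contains a $1$ in one of the slots $\ge 2$ already, and applying $\Delta\otimes\id^{\otimes(n-2)}$ preserves this (note that if the $1$ happens to be in the first slot, $\Delta(1)=1\otimes 1$ still has a $1$ everywhere, so the property is maintained). Collecting everything into $R_n(x)$ gives the desired decomposition.

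The only delicate point—and the main obstacle—is the bookkeeping of which tensor slots contain a $1$ after applying $\Delta\otimes\id^{\otimes(n-2)}$, and checking that the remainder $R_n(x)$ really lies in the prescribed subspace. Once this is in place, composing with $\phi_1\otimes\cdots\otimes\phi_n$ kills $R_n(x)$ and the identity is immediate.
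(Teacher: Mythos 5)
Your argument is correct and is essentially the paper's own route: the preceding lemma in the paper asserts exactly that $(\phi_1\otimes\cdots\otimes\phi_n)\circ\Delta^{n-1}=(\phi_1\otimes\cdots\otimes\phi_n)\circ\cc{\Delta}^{n-1}$ whenever $\phi_i(1)=0$, and your induction (difference of the iterated coproducts lies in tensors with a factor in $\mathbb{K}1$) is precisely a proof of that fact. Combined with the standard identity $\phi_1\star\cdots\star\phi_n=\mu^{n-1}\circ(\phi_1\otimes\cdots\otimes\phi_n)\circ\Delta^{n-1}$, which you use implicitly and which follows from associativity of $\mu$ and coassociativity of $\Delta$, this yields the lemma exactly as in the paper.
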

\begin{remark}
  This lemma motivates the following convention. Whenever we have a 
	$\mathbb{K}$-linear map $\phi\colon \cc{C}^\bullet \rightarrow V^\bullet$ 
	we understand it to be extended to $C$ by zero.
\end{remark}

These observations allow us to define power series in $\Hom_\mathbb{K}(\cc{C},A)$: 

\begin{lemma}
  Let $(C^\bullet,\Delta,\epsilon,1)$ be a conilpotent coalgebra and let 
	$A^\bullet$ be an algebra. For $\phi\in \Hom_\mathbb{K}(\cc{C},A)$ the 
	map
	\begin{equation}
		\label{eq:FormalSeriesinConvolution}
		\mathbb{K}[[x]] \ni a 
		=
		\sum_{n=0}^\infty a_n x^n
		\longmapsto 
		a_\star(\phi)
		=
		\sum_{n=0}^\infty a_n \phi^{\star n} \in 
		\Hom_\mathbb{K}(C,A)
	\end{equation}
	is a well-defined unital algebra morphism by the conilpotency, where 
	$\phi^{\star 0} = 1\epsilon$.
\end{lemma}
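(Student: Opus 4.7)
The plan is to leverage the preceding lemma that rewrites iterated convolutions of maps vanishing on $1$ in terms of the reduced coproduct $\cc{\Delta}$, so that conilpotency immediately guarantees termwise finiteness. First I would extend $\phi$ to all of $C$ by declaring $\phi(1)=0$, as per the remark. Then for $n\ge 1$ the identity \eqref{eq:CoaugmentedConvolution} gives
\begin{equation*}
  \phi^{\star n}
  =
  \mu^{n-1}\circ \phi^{\otimes n}\circ \cc{\Delta}^{n-1},
\end{equation*}
while $\phi^{\star 0} = 1\epsilon$ by definition. Evaluating on a fixed $c \in C$, the conilpotency hypothesis produces an integer $n_0$ with $\cc{\Delta}^{m}(c - \epsilon(c)1) = 0$ for all $m \ge n_0$, so all but finitely many terms $a_n\phi^{\star n}(c)$ vanish. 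This shows that $a_\star(\phi)\in \Hom_\mathbb{K}(C,A)$ is a well-defined $\mathbb{K}$-linear map, and that the assignment $a\mapsto a_\star(\phi)$ is $\mathbb{K}$-linear in $a$.

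Next I would verify unitality: the unit of $\mathbb{K}[[x]]$ corresponds to $a_0=1$, $a_n=0$ for $n\ge 1$, so $a_\star(\phi) = 1\epsilon$, which is exactly the unit of the convolution algebra $\Hom_\mathbb{K}(C,A)$ from Definition~\ref{def:ConvProduct}. For the multiplicativity, I would use that convolution is associative with $\phi^{\star n}\star \phi^{\star m} = \phi^{\star (n+m)}$, so that formally
\begin{equation*}
  a_\star(\phi)\star b_\star(\phi)
  =
  \sum_{n,m\ge 0} a_n b_m\, \phi^{\star(n+m)}
  =
  \sum_{k\ge 0}\Bigl(\sum_{n+m=k} a_n b_m\Bigr)\phi^{\star k}
  =
  (ab)_\star(\phi).
\end{equation*}
The reordering of sums is justified by pointwise evaluation: on any $c\in C$ only finitely many terms contribute, since $\cc{\Delta}^{k-1}(c)$ vanishes for $k$ large, so both double series collapse to genuine finite sums where the Cauchy product manipulation is legitimate.

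The only real subtlety, and hence the main obstacle, is ensuring well-definedness carefully in the edge case $n=0$: the formula from \eqref{eq:CoaugmentedConvolution} is stated for maps vanishing on $1$, whereas $\phi^{\star 0}=1\epsilon$ does not vanish there. This is handled by splitting $C = \mathbb{K}1\oplus \cc{C}$ and checking the two cases separately — on $1$ one has $a_\star(\phi)(1) = a_0$, while on $\cc{C}$ the $n=0$ term vanishes automatically since $\epsilon\at{\cc{C}}=0$ and the remaining sum terminates by conilpotency. Once this bookkeeping is done, the unitality and multiplicativity are essentially formal consequences of the binomial/Cauchy-product identities combined with associativity of the convolution product.
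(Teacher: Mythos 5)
Your argument is correct and is exactly the route the paper intends: the lemma is stated there without proof as an immediate consequence of the preceding identity \eqref{eq:CoaugmentedConvolution} together with conilpotency, which is precisely what you use for pointwise finiteness, unitality, and the Cauchy-product computation. Your extra care with the $n=0$ term and the splitting $C=\mathbb{K}1\oplus\cc{C}$ is a sensible way to make the implicit bookkeeping explicit.
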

\begin{example}
  This allows us to define for $\phi \in \Hom_\mathbb{K}(\cc{C},A)$ 
	\begin{equation}
	  \label{eq:ConvolutionExpandLog}
		\log_\star(1\epsilon + \phi)
		=
		\sum_{n=1}^\infty \frac{(-1)^{n-1}}{n}\phi^{\star n} 
		\quad \quad \text{ and } \quad \quad
		\exp_\star(\phi)
		=
		\sum_{n=0}^\infty \frac{1}{n!} \phi^{\star n}
	\end{equation}
	and they are inverse to each other by the usual combinatorial formulas.
\end{example}

A nice consequence is that in the setting of conilpotent coaugmented 
coalgebras we can easily construct the cofree objects.

%
%
\subsection{Cofree Coalgebras}

We state at first the universal property that the cofree conilpotent 
coalgebra should satisfy.

\begin{definition}[Cofree conilpotent coalgebra]
  Let $V$ be a graded $\mathbb{K}$-module and let $C$ be a conilpotent 
	graded coassociative coalgebra. 
  The \emph{cofree conilpotent coassociative coalgebra} over $V$ is a 
  conilpotent coassociative coalgebra $\mathcal{F}^c(V)$ equipped with 
  a linear map $p \colon \mathcal{F}^c(V) \rightarrow V$ such that 
  $1 \mapsto 0$ and such that the following universal condition holds 
	for all $C$: Any linear map $\phi \colon C \rightarrow V$ with $\phi(1)=0$ 
	extends uniquely to a coaugmented coalgebra morphism $\Phi\colon C \rightarrow 
	\mathcal{F}^c(V)$ such that the following diagram
  \begin{equation*}  
	  \begin{tikzcd}
	    C \arrow[d,swap,"\exists ! \Phi"] \arrow[rd,"\phi"] & \\
		  \mathcal{F}^c(V) \arrow[r,"p"] & V
	  \end{tikzcd}
  \end{equation*}
  commutes.
\end{definition}
In other words, $\mathcal{F}^c$ is right adjoint to the forgetful functor. 
Moreover, recall that conilpotent coalgebras are by definition 
coaugmented. Now we want to construct the cofree coalgebra, where the 
uniqueness up to isomorphism follows as usual for universal properties.

Let $V^\bullet$ be a graded $\mathbb{K}$-module, then the 
\emph{tensor coalgebra} $(\mathrm{T}^c(V),\Delta)$ 
is given by the tensor module $\mathrm{T}^c(V) = \mathrm{T}(V)
= \mathbb{K}1 \oplus V \oplus V^{\otimes 2} \oplus \cdots$ with 
\emph{deconcatenation coproduct}
\begin{equation}
  \Delta(v_1\cdots v_n)
	=
	\sum_{i=0}^n( v_1 \cdots v_i )\otimes (v_{i+1}\cdots v_n) \in 
	\mathrm{T}(V) \otimes \mathrm{T}(V)
	\quad \text{and} \quad
	\Delta(1) = 1 \otimes 1,
\end{equation}
where $v_1,\dots,v_n \in V$. The counit $\epsilon\colon \mathrm{T}^c(V)\rightarrow \mathbb{K}$ 
is the identity on $\mathbb{K}$ and zero otherwise, and the coalgebra 
turns out to be coassociative and counital. Moreover, it is coaugmented 
via the inclusion $\iota \colon \mathbb{K} \rightarrow \mathrm{T}^c(V)$ and thus
\begin{equation}
  \mathrm{T}^c(V) 
	\cong \cc{T}^c(V) \oplus \mathbb{K}1
\end{equation}
with reduced tensor module $\cc{T}^c(V) = \bigoplus_{i \geq 1} V^{\otimes i}$. 
The reduced coproduct is given by
\begin{equation}
  \cc{\Delta}(v_1\cdots v_n)
	=
	\sum_{i=1}^{n-1} v_1 \cdots v_i \otimes v_{i+1}\cdots v_n, 
	\quad \text{in particular}\quad
	\cc{\Delta}(v) = 0.
\end{equation}
Thus $(\mathrm{T}^c(V),\Delta,\epsilon,1)$ is conilpotent and 
the construction is functorial in the following sense:

\begin{lemma}
  Let $\phi \colon V^\bullet \rightarrow W^\bullet$ be a homogeneous 
	$\mathbb{K}$-linear map of degree zero. Then the map 
	$\Phi \colon \mathrm{T}(V) \rightarrow 
	\mathrm{T}(W)$ is a morphism of coaugmented coalgebras.
\end{lemma}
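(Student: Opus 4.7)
The plan is to exhibit $\Phi$ explicitly and then verify the three conditions for a morphism of coaugmented coalgebras: compatibility with coproducts, with counits, and with coaugmentations. The natural candidate is
\begin{equation*}
  \Phi(1) = 1, \qquad \Phi(v_1 \otimes \cdots \otimes v_n) = \phi(v_1) \otimes \cdots \otimes \phi(v_n),
\end{equation*}
extended $\mathbb{K}$-linearly. Since $\phi$ is homogeneous of degree zero, $\Phi$ is well-defined, $\mathbb{K}$-linear, and of degree zero, and no Koszul signs intervene in what follows.

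First I would check the easy conditions. The identity $\Phi(1) = 1$ is exactly the coaugmentation compatibility $\Phi \circ u_V = u_W$, and $\epsilon_W \circ \Phi = \epsilon_V$ follows because both sides are the identity on $\mathbb{K}\, 1$ and vanish on $\cc{T}^c(V)$, by the definitions of $\Phi$ and of the counits as projection onto the degree-zero component.

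The main step is the coproduct compatibility $(\Phi \otimes \Phi) \circ \Delta_V = \Delta_W \circ \Phi$. On $1$ both sides yield $1 \otimes 1$. On an elementary tensor $v_1 \otimes \cdots \otimes v_n$, I would compute directly using the deconcatenation formula:
\begin{align*}
  (\Phi \otimes \Phi) \circ \Delta_V (v_1 \otimes \cdots \otimes v_n)
    &= \sum_{i=0}^n \Phi(v_1 \otimes \cdots \otimes v_i) \otimes \Phi(v_{i+1} \otimes \cdots \otimes v_n) \\
    &= \sum_{i=0}^n \phi(v_1) \otimes \cdots \otimes \phi(v_i) \otimes \phi(v_{i+1}) \otimes \cdots \otimes \phi(v_n) \\
    &= \Delta_W\bigl(\phi(v_1) \otimes \cdots \otimes \phi(v_n)\bigr)
     = \Delta_W \circ \Phi(v_1 \otimes \cdots \otimes v_n),
\end{align*}
where in the first equality I used that $\Phi(1) = 1$ to handle the boundary terms $i = 0$ and $i = n$, and in the second equality the fact that $\phi$ has degree zero so that $\Phi \otimes \Phi$ introduces no Koszul sign.

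There is no real obstacle here: the deconcatenation coproduct is manifestly functorial in $V$ because it only permutes tensor factors without reshuffling them, so the graded switching map never appears in the computation. The only subtlety worth flagging is the degree-zero hypothesis on $\phi$, which ensures that $\Phi \otimes \Phi$ acts tensor-factor-wise without extra signs and that the grading is preserved, so $\Phi$ qualifies as a coalgebra morphism (which by definition must be of degree zero).
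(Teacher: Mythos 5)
Your proof is correct: the factorwise definition of $\Phi$, the counit and coaugmentation checks, and the direct verification against the deconcatenation coproduct (with the observation that no Koszul signs arise since $\phi$ has degree zero and deconcatenation never permutes factors) are exactly the routine verification the paper has in mind, which is why it states the lemma without proof. Nothing is missing.
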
 
We also have the projection $\pr_V \colon \mathrm{T}^c(V) \rightarrow V$ that is the 
identity on $V$ and $0$ elsewhere, giving the following useful property:

\begin{lemma}
  \label{lemma:projectiononTn}
  Let $V^\bullet$ be a graded $\mathbb{K}$-module. Then one has for all 
	$n\in \mathbb{N}$
	\begin{equation}
  \label{eq:identitycofreecoalg}
	\pr_{\mathrm{T}^n(V)}
	=
	\mu^{n-1}\circ(\pr_V \otimes \cdots \otimes \pr_V)\circ \Delta^{n-1}	
	=
	\mu^{n-1}\circ(\pr_V \otimes \cdots \otimes \pr_V)\circ \cc{\Delta}^{n-1},
\end{equation}
where $\mu=\otimes$ denotes the tensor product.
\end{lemma}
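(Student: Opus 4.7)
My plan is to verify the identity by evaluating both sides on a homogeneous element $v_1 \cdots v_m \in \mathrm{T}^m(V)$ and on $1$, using the explicit deconcatenation formula for $\Delta$.

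First I would dispose of the second equality. Since $\pr_V$ vanishes on $1$, the preceding lemma on coaugmented convolutions (together with the convention that maps vanishing at $1$ are extended by zero) immediately gives
\begin{equation*}
  (\pr_V \otimes \cdots \otimes \pr_V) \circ \Delta^{n-1}
  =
  (\pr_V \otimes \cdots \otimes \pr_V) \circ \cc{\Delta}^{n-1},
\end{equation*}
so only the first equality requires work.

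For the first equality, I would compute $\Delta^{n-1}(v_1 \cdots v_m)$ explicitly by iterating the deconcatenation coproduct: it is the sum over all ordered partitions of the word $v_1 \cdots v_m$ into $n$ (possibly empty) blocks, where an empty block contributes the factor $1 \in \mathbb{K}$. Applying $\pr_V \otimes \cdots \otimes \pr_V$ kills every summand that contains either an empty block (because $\pr_V(1)=0$) or a block of length $\geq 2$ (because $\pr_V$ vanishes on $\mathrm{T}^k(V)$ for $k \geq 2$). The only partitions that survive are those in which every block has length exactly one, which forces $m = n$ and leaves exactly one surviving summand, namely $v_1 \otimes v_2 \otimes \cdots \otimes v_n$. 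Postcomposing with $\mu^{n-1}$ (concatenation of tensors) returns $v_1 \cdots v_n$, which is precisely $\pr_{\mathrm{T}^n(V)}(v_1 \cdots v_m)$ when $m=n$, and zero otherwise, matching the left-hand side. The case of $1$ is handled separately: $\pr_{\mathrm{T}^n(V)}(1) = 0$ for $n \geq 1$, and on the right $\Delta^{n-1}(1) = 1^{\otimes n}$, which is annihilated by $\pr_V^{\otimes n}$.

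There is no real obstacle here; the only mildly delicate point is bookkeeping the empty blocks in the iterated deconcatenation and noting that every such summand is killed by $\pr_V$, which is exactly why the identity works simultaneously for $\Delta$ and for $\cc{\Delta}$.
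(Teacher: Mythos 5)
Your proof is correct. The paper does not actually spell out an argument for this lemma (it is stated as an immediate property of the deconcatenation coproduct), and your computation is exactly the intended one: iterating $\Delta$ gives the sum over ordered decompositions of $v_1\cdots v_m$ into $n$ possibly empty blocks, $\pr_V^{\otimes n}$ annihilates every summand containing an empty block or a block of length at least two, so only the single summand $v_1\otimes\cdots\otimes v_n$ with $m=n$ survives, and $\mu^{n-1}$ reassembles it; the case of $1$ and the passage from $\Delta^{n-1}$ to $\cc{\Delta}^{n-1}$ via the earlier lemma on maps vanishing at $1$ are handled correctly (and no Koszul signs intervene since $\pr_V$ has degree zero).
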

Thus we can write the projection $\pr_{\mathrm{T}^n(V)} = \pr_V^{\star n}$ as 
convolution, which also holds for $n=0$ since $\pr_{\mathrm{T}^0(V)} = 1 \epsilon$, and 
we write $x_n = \pr_{\mathrm{T}^n(V)}x$ for $x \in \mathrm{T}(V)$. 
We can show that we constructed indeed the cofree conilpotent coalgebra, 
compare \cite[Proposition~1.2.1]{loday.vallette:2012a}:

\begin{theorem}[Cofree conilpotent coalgebra]
  \label{thm:CofreeConilpotentCoalg}
  Let $V^\bullet$ be a graded module over $\mathbb{K}$ and let 
	$(C^\bullet,\Delta,\epsilon,1)$ be a conilpotent graded coassociative 
	coalgebra over $\mathbb{K}$.
	\begin{theoremlist}
	  \item The tensor coalgebra $(\mathrm{T}^c(V),\Delta,\epsilon,1)$ is cofree and cogenerated 
		      by $V$ in the category of conilpotent coalgebras. Explicitly, for every 
					homogeneous $\mathbb{K}$-linear map $\phi \colon \cc{C} \rightarrow V$ of 
					degree zero extended to $C$ by $\phi(1)=0$ there exists a unique 
					counital coalgebra morphism $\Phi \colon C \rightarrow \mathrm{T}^c(V)$ such that 
					$\pr_V \circ \Phi = \phi$.
		\item If in addition $\D \colon \cc{C}^\bullet \rightarrow V^{\bullet +k}$ is 
		      a homogeneous $\mathbb{K}$-linear map of degree $k$ extended to $C$ by 
					$\D(1)=0$, then there exists a unique coderivation $D \colon 
					C^\bullet \rightarrow \mathrm{T}^c(V)^{\bullet+k}$ along $\Phi$ vanishing on $1$ 
					such that $\pr_V \circ D = \D$.
	\end{theoremlist}
\end{theorem}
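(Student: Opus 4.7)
The plan is to build the morphism $\Phi$ and the coderivation $D$ component by component along the tensor grading of $\mathrm{T}^c(V)$, with uniqueness forced by Lemma~\ref{lemma:projectiononTn} and existence guaranteed by conilpotency.

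For part (i), I would first extract uniqueness. Any counital coalgebra morphism $\Phi\colon C\to\mathrm{T}^c(V)$ satisfies $\Phi(1)=1$ and $\Delta^{n-1}\circ\Phi=\Phi^{\otimes n}\circ\Delta^{n-1}$. Composing with $\pr_V^{\otimes n}$ and invoking Lemma~\ref{lemma:projectiononTn}, the $\mathrm{T}^n(V)$-component of $\Phi$ is forced to be
\begin{equation*}
  \Phi_n = (\pr_V\circ\Phi)^{\otimes n}\circ \cc{\Delta}^{n-1} = \phi^{\otimes n}\circ\cc{\Delta}^{n-1},
\end{equation*}
where the reduction from $\Delta^{n-1}$ to $\cc{\Delta}^{n-1}$ is legitimate because $\phi(1)=0$. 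Conilpotency makes the formal sum $\Phi = u\circ\epsilon + \sum_{n\geq 1}\phi^{\otimes n}\circ\cc{\Delta}^{n-1}$ finite on every element, so $\Phi$ is well-defined, and $\pr_V\circ\Phi=\phi$ by construction. The remaining task is to verify $\Delta_{\mathrm{T}^c}\circ\Phi=(\Phi\otimes\Phi)\circ\Delta_C$. Using the deconcatenation formula, this amounts to the identity
\begin{equation*}
  \sum_{i+j=n}(\phi^{\otimes i}\otimes\phi^{\otimes j})\circ\cc{\Delta}^{n-1}
  =(\Phi_i\otimes\Phi_j)\circ\Delta,
\end{equation*}
which follows from iterated coassociativity of $\cc{\Delta}$ together with the fact that $\phi$ kills $1$.

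For part (ii), the same strategy works. Iterating the coderivation relation $\Delta\circ D=(D\otimes\Phi+\Phi\otimes D)\circ\Delta$ yields
\begin{equation*}
  \Delta^{n-1}\circ D
  =\sum_{i=0}^{n-1}\bigl(\Phi^{\otimes i}\otimes D\otimes\Phi^{\otimes(n-1-i)}\bigr)\circ\Delta^{n-1},
\end{equation*}
and composing with $\pr_V^{\otimes n}$ forces
\begin{equation*}
  D_n
  =\sum_{i=0}^{n-1}\bigl(\phi^{\otimes i}\otimes \D\otimes\phi^{\otimes(n-1-i)}\bigr)\circ\cc{\Delta}^{n-1},
\end{equation*}
once more a finite sum on each element by conilpotency. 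This gives uniqueness, and defining $D$ by this formula with $D(1)=0$ produces the candidate. Checking the coderivation identity then reduces, after expanding $\Delta_{\mathrm{T}^c}$ by deconcatenation, to comparing the two sides slot-by-slot: a single $\D$ sits in one factor while all other factors carry a $\phi$, and coassociativity of $\cc{\Delta}_C$ combined with the already-proven identity for $\Phi$ matches the two expressions.

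The main obstacle in both parts is the existence verification, specifically the bookkeeping of how the deconcatenation coproduct on $\mathrm{T}^c(V)$ interacts with the iterated reduced coproduct $\cc{\Delta}^{n-1}$ on $C$. Neither is deep, but the combinatorics of splitting $\cc{\Delta}^{n-1}$ as $(\cc{\Delta}^{i-1}\otimes\cc{\Delta}^{j-1})\circ\cc{\Delta}$ via coassociativity, together with correctly handling the boundary cases $i=0$ or $j=0$ (where one factor becomes the coaugmentation $1$), is the only nontrivial step and must be done with care so that the sign conventions from Definition~\ref{def:GradedSigns} are transported correctly through the shift in part (ii).
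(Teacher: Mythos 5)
Your proposal follows essentially the same route as the paper's proof: you force the $\mathrm{T}^n(V)$-components of $\Phi$ and $D$ via Lemma~\ref{lemma:projectiononTn} and the (co)morphism/coderivation identities, use conilpotency to make the resulting sums finite and hence well-defined, and leave existence to a direct verification of the deconcatenation compatibility, exactly as the paper does. The formulas you derive for $\Phi_n$ and $D_n$ agree with those in the paper, so this is correct and not a genuinely different argument.
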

\begin{proof}
The morphism $\Phi \colon C \rightarrow \mathrm{T}^c(V)$ has to satisfy for $x\in \cc{C}$
\begin{itemize}
  \item $\Phi(1)  = 1$ since $\Phi$ maps the group-like element to the group-like 
	      element,
	\item $\Phi(x)_0 = 0$ by counitality,
	\item $\Phi(x)_1 = \phi(x)$ by the universal property,
	\item $\Phi(x)_n = \sum \phi(x_{(1)}) \otimes \cdots \otimes \phi(x_{(n)})$ 
	      by the coalgebra morphism property, since
			  \begin{equation*}
				  \pr_{\mathrm{T}^n(V)}\Phi(x)
					=
					\mu^{n-1} \circ(\phi \otimes \cdots \otimes \phi) \circ \cc{\Delta}^{n-1}(x).
				\end{equation*}
\end{itemize}
As $C$ is conilpotent, there is only a finite number of nontrivial 
$\Phi(x)_n$ and $\Phi(x)=\sum_n \Phi(x)_n$ gives a well-defined linear map 
of degree zero. This shows the uniqueness and a direct computation 
shows that $\Phi$ is indeed a well-defined coalgebra morphism. For the 
second part we show the uniqueness by the same arguments: Suppose $D$ is 
such a coderivation, then since $\epsilon \circ D = 0$ by 
Proposition~\ref{prop:epsilonDzero} we know
\begin{equation*}
  D(c)
	=
	\sum_{n=1}^\infty D(c)_n
\end{equation*}
with $D(c)_1 = \D(c)$ for $c\in C$. For $n>1$ we get with the Leibniz rule
\begin{align*}
  D(c)_n
	& = 
	\mu^{n-1}\circ(\pr_V \otimes \cdots \otimes \pr_V) \circ 
	\left(\sum_{r=0}^{n-1} \Phi \otimes \cdots\otimes\Phi \otimes D \otimes \Phi 
	\otimes \Phi\right) \circ \Delta^{n-1} (c) \\
	& =
	\mu^{n-1}\circ 
	\left(\sum_{r=0}^{n-1} \phi \otimes \cdots\otimes\phi \otimes \D \otimes \phi 
	\otimes \phi\right) \circ \cc{\Delta}^{n-1} (c)
\end{align*}
This shows that necessarily $D(1)=0$ and a straightforward computation shows that 
this is indeed a well-defined coderivation.
\end{proof}

\begin{corollary}
  For the coalgebra morphism $\Phi$ and the coderivation $D$ along $\Phi$ 
	from the above theorem one has
	\begin{equation*}
	  \Phi 
		=
		1\epsilon + \phi + \phi \star \phi + \dots
		=
		\frac{1}{1-\phi}\star 
	\end{equation*}
	and
	\begin{equation*}
		D 
		=
		\D + \phi \star \D + \D \star \phi + \phi \star \D \star\phi + \dots
		=
		\Phi \star \D \star \Phi.
	\end{equation*}
\end{corollary}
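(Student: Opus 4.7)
The plan is to derive both identities directly from the explicit componentwise formulas for $\Phi$ and $D$ established in the proof of Theorem~\ref{thm:CofreeConilpotentCoalg}, reading them as convolution products in $\Hom^\bullet_\mathbb{K}(C,\mathrm{T}^c(V))$ with respect to the concatenation algebra structure on $\mathrm{T}(V)$.

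First I would handle $\Phi$. By Theorem~\ref{thm:CofreeConilpotentCoalg}, the $n$-th graded component of $\Phi(x)$ for $x\in\cc{C}$ satisfies $\Phi(x)_n = \mu^{n-1}\circ(\phi\otimes\cdots\otimes\phi)\circ\cc{\Delta}^{n-1}(x)$, while $\Phi(1)=1$ and $\Phi(x)_0=0$. Using \eqref{eq:CoaugmentedConvolution} (with $A=\mathrm{T}(V)$, concatenation as product, and noting that $\phi(1)=0$), the right-hand side is exactly the $n$-fold convolution $\phi^{\star n}(x)$. Since $\phi^{\star 0}=1\epsilon$ accounts for $\Phi(1)=1$, and conilpotency of $C$ ensures that for each $x$ only finitely many $\phi^{\star n}(x)$ are nonzero, the formal series $\sum_{n\ge 0}\phi^{\star n}$ converges pointwise to a well-defined element of $\Hom^\bullet_\mathbb{K}(C,\mathrm{T}^c(V))$ and coincides with $\Phi$. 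By the lemma on power series \eqref{eq:FormalSeriesinConvolution}, this is precisely $\frac{1}{1-\phi}_\star$.

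For $D$, I would use the second display in the proof of Theorem~\ref{thm:CofreeConilpotentCoalg}, which gives
\begin{equation*}
  D(c)_n
  =
  \mu^{n-1}\circ\Bigl(\sum_{r=0}^{n-1} \phi^{\otimes r}\otimes\D\otimes\phi^{\otimes(n-1-r)}\Bigr)\circ\cc{\Delta}^{n-1}(c).
\end{equation*}
Each summand, again by \eqref{eq:CoaugmentedConvolution}, equals the $n$-fold convolution $\phi^{\star r}\star\D\star\phi^{\star(n-1-r)}$. Summing over $r$ and $n\ge 1$ and reindexing by $(r,s)=(r,n-1-r)\in\mathbb{N}_0^2$, one obtains
\begin{equation*}
  D
  =
  \sum_{r,s\ge 0}\phi^{\star r}\star\D\star\phi^{\star s}
  =
  \Bigl(\sum_{r\ge 0}\phi^{\star r}\Bigr)\star\D\star\Bigl(\sum_{s\ge 0}\phi^{\star s}\Bigr)
  =
  \Phi\star\D\star\Phi,
\end{equation*}
where associativity and distributivity of $\star$ are used, and $D(1)=0$ matches the fact that every term in the expansion sends $1$ to $0$ because $\D(1)=0$ and $\phi(1)=0$.

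The only subtle point, and the place I would be most careful, is verifying that the formal rearrangements are legitimate: this follows because for each fixed $c\in C$ conilpotency produces an $N$ with $\cc{\Delta}^k(c)=0$ for $k\ge N$, so only finitely many terms $\phi^{\star r}\star\D\star\phi^{\star s}(c)$ with $r+s+1\le N$ are nonzero. Hence there is no real convergence issue, and the identities hold in $\Hom^\bullet_\mathbb{K}(C,\mathrm{T}^c(V))$ term by term.
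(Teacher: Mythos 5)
Your proposal is correct and follows essentially the same route the paper intends: the corollary is read off from the componentwise formulas $\Phi(x)_n=\mu^{n-1}\circ(\phi\otimes\cdots\otimes\phi)\circ\cc{\Delta}^{n-1}(x)$ and $D(c)_n=\mu^{n-1}\circ\bigl(\sum_r\phi\otimes\cdots\otimes\D\otimes\cdots\otimes\phi\bigr)\circ\cc{\Delta}^{n-1}(c)$ established in the proof of Theorem~\ref{thm:CofreeConilpotentCoalg}, rewritten as convolution powers via \eqref{eq:CoaugmentedConvolution}, with conilpotency guaranteeing pointwise finiteness of the series and \eqref{eq:FormalSeriesinConvolution} identifying $\sum_n\phi^{\star n}$ with $\frac{1}{1-\phi}_\star$. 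Your handling of the unit $\phi^{\star 0}=1\epsilon$, the reindexing $\sum_{r,s}\phi^{\star r}\star\D\star\phi^{\star s}=\Phi\star\D\star\Phi$, and the vanishing on $1$ are all as required.
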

For $C = \mathrm{T}^c(V)$ itself and $\phi = \pr_V$, i.e. $\Phi = \id$, 
we get the following nice property.

\begin{corollary}
  \label{cor:coderivationcogenerators}
  The coderivations of $\mathrm{T}^c(V)$ vanishing on $1$ form a Lie subalgebra of the 
	endomorphisms. This Lie algebra is in bijection to $\Hom^\bullet_\mathbb{K}(
	\cc{\mathrm{T}^c(V)},V)$ via
	\begin{equation*}
	  \Hom^\bullet_\mathbb{K}(\cc{\mathrm{T}^c(V)},V) \ni 
		\D
		\longmapsto 
		D = \id \star \D \star \id \in 
		\mathrm{CoDer}^\bullet_0(\mathrm{T}^c(V)).
	\end{equation*}
\end{corollary}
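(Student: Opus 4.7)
The corollary is a direct specialization of Theorem~\ref{thm:CofreeConilpotentCoalg} to the case $C = \mathrm{T}^c(V)$, together with a small verification for the Lie subalgebra statement. The plan is first to identify the relevant morphism $\Phi$, then read off the bijection and the explicit formula, and finally check closure under the graded commutator.

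First I would apply part~(i) of Theorem~\ref{thm:CofreeConilpotentCoalg} with $\phi = \pr_V \colon \cc{\mathrm{T}^c(V)} \to V$ (extended by $\pr_V(1)=0$). Since the identity $\id_{\mathrm{T}^c(V)}$ is trivially a counital coalgebra morphism satisfying $\pr_V \circ \id = \pr_V$, the uniqueness part of the theorem forces the coalgebra morphism determined by $\phi = \pr_V$ to be $\Phi = \id$. Now part~(ii) of the same theorem, applied to this $\Phi = \id$, states that every homogeneous $\mathbb{K}$-linear map $\D \colon \cc{\mathrm{T}^c(V)} \to V$ of degree $k$ lifts uniquely to a coderivation $D$ along $\id$ vanishing on $1$ with $\pr_V \circ D = \D$. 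This already produces a bijection, with inverse $D \mapsto \pr_V \circ D$, and the explicit formula $D = \id \star \D \star \id$ is then exactly the specialization $\Phi = \id$ of the formula $D = \Phi \star \D \star \Phi$ stated in the preceding corollary.

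For the Lie subalgebra statement I would verify directly that the graded commutator of two coderivations along $\id$ vanishing on $1$ is again of the same type. Given coderivations $D_1, D_2$ along $\id$, I would expand $\Delta \circ D_1 \circ D_2$ by iterating the coderivation identity, obtaining
\begin{equation*}
  \Delta \circ D_1 D_2
  =
  \bigl(D_1 D_2 \otimes \id + D_1 \otimes D_2 + (-1)^{\abs{D_1}\abs{D_2}} D_2 \otimes D_1 + \id \otimes D_1 D_2\bigr)\circ \Delta,
\end{equation*}
using the Koszul rule $(\id \otimes D_1)(D_2 \otimes \id) = (-1)^{\abs{D_1}\abs{D_2}} D_2 \otimes D_1$, and the analogous expression with $D_1, D_2$ interchanged. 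The cross terms cancel in the graded commutator $[D_1,D_2] = D_1 D_2 - (-1)^{\abs{D_1}\abs{D_2}} D_2 D_1$, leaving precisely the coderivation identity for $[D_1,D_2]$ along $\id$. That $[D_1,D_2]$ vanishes on $1$ is immediate since both $D_1(1)=D_2(1)=0$.

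There is no real obstacle here: the whole argument is a straightforward unpacking of the universal property of $\mathrm{T}^c(V)$ already proved in Theorem~\ref{thm:CofreeConilpotentCoalg}. The only place that requires care is the sign bookkeeping in the commutator computation, which is a routine application of the Koszul convention.
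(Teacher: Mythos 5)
Your proposal is correct and follows exactly the route the paper intends: specialize Theorem~\ref{thm:CofreeConilpotentCoalg} to $C=\mathrm{T}^c(V)$ with $\phi=\pr_V$, so that $\Phi=\id$ by uniqueness, read off the bijection $\D \mapsto D$ with inverse $D \mapsto \pr_V\circ D$ and the formula $D=\id\star\D\star\id$ from the preceding corollary. Your graded-commutator computation correctly supplies the Lie-subalgebra verification that the paper leaves as "one can check" after the definition of coderivations.
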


Let now $D \in \mathrm{CoDer}^\bullet_0(\mathrm{T}^c(V))$, then the above corollary implies 
that it is completely determined by its projection $\pr_V \circ D = \D$. 
Here one has
\begin{equation*}
  \D 
	=
	\sum_{n=1}^\infty \D_n 
	\quad \quad \text{ with } \quad \quad
	\D_n = \pr_V \circ D \circ \pr_{\mathrm{T}^n(V)}.
\end{equation*}
These maps $\D_n \colon \mathrm{T}^n(V) \rightarrow V$ are called \emph{Taylor coefficients} 
of $D$.

\begin{remark}[$A_\infty$-structures]
  \label{rem:Ainftystructures}
  By the above corollary we have the following isomorphism 
	\begin{equation*}
	  \mathrm{CoDer}^\bullet_0(\mathrm{T}^c(V)) 
		\cong 
	  \Hom_\mathbb{K}^\bullet(\cc{\mathrm{T}^c(V)},V).
	\end{equation*}
	We can use it to induce on $\Hom_\mathbb{K}^\bullet(\cc{\mathrm{T}^c(V)},V)$ 
	a Lie algebra structure. The induced bracket is just the Gerstenhaber bracket 
	$[\argument{,}\argument]_G$. 
	In particular, for $V = A[1]$, where $(A,\mu)$ is an associative algebra, 
	one gets on the right hand side the space of Hochschild cochains, and the 
	product	$\mu$ of $A$ corresponds to a coderivation 
	on $\mathrm{T}^c(A[1])$. The associativity of $\mu$ is equivalent to $[\mu,\mu]_G=0$, i.e. 
	the induced coderivation is even a codifferential. 
	More generally, for a graded module $A$, 
	a codifferential $D$ on $\mathrm{T}^c(A[1])$ of degree $+1$ is equivalent to a 
	Maurer-Cartan element in $(\Hom^\bullet_\mathbb{K}(\cc{\mathrm{T}^c(A[1])},A[1]),0,
	[\argument{,}\argument]_G)$. This is called \emph{$A_\infty$-structure} on $A$, 
	compare e.g. \cite[Chapter~9]{loday.vallette:2012a},
	and it is determined by a sequence of maps $\mu_n \colon 
	\mathrm{T}^n(A[1]) \rightarrow A[2]$, where $n\geq 1$, with 
	\begin{equation*}
	  \sum_{q=1}^n \sum_{j=0}^{p-1}\pm
		\mu_{n-q+1}(a_1,\dots, a_j,\mu_q(a_{j+1},\dots,a_{j+q}),a_{j+q+1},\dots,a_n)
		=
		0
	\end{equation*}
	because of $D^2=0$. In the case where we also allow $D(1)\neq 0$ for a 
	coderivation on $\mathrm{T}^c(A[1])$ 
	we get curved $A_\infty$-algebras with $\mu_0 \neq 0$. 
  In operadic terms, this definition of $A_\infty$-algebras is motivated by the 
	fact that the Koszul dual of the operad encoding associative 
	algebras is the cooperad encoding coassociative coalgebras. 
\end{remark}

Now we want to conclude this introductory section with the construction 
of the cofree cocommutative conilpotent coalgebras. From the point of 
$L_\infty$-algebras they are of great interest since they are the structures 
which one uses to define $L_\infty$-structures. The abstract reason for this 
is that the Koszul dual of the Lie operad is the cooperad encoding  
cocommutative coalgebras, see e.g. \cite[Chapter~10]{loday.vallette:2012a} for 
the general setting.

Consider again a graded  $\mathbb{K}$-module $V^\bullet$. We can use 
the tensor algebra $\mathrm{T}(V)$ of $V$ to define a new coproduct, different from the 
deconcatenation coproduct $\Delta$ from above. Explicitly, we equip 
$\mathrm{T}(V)$ with the structure of a bialgebra, i.e. we construct the new 
coproduct as an algebra morphism $\Delta_\sh  \colon \mathrm{T}(V) \rightarrow \mathrm{T}(V) \otimes \mathrm{T}(V)$. 
Since $\mathrm{T}(V)$ is the free algebra, we specify $\Delta_\sh $ on the generators by 
\begin{equation*}
  \Delta_\sh (v)
	=
	v\otimes 1 + 1 \otimes v
\end{equation*}
for $v\in V$ and $\Delta_\sh (1) = 1 \otimes 1$. One can check with the signs 
from Definition~\ref{def:GradedSigns} that we get
\begin{equation}
  \label{eq:cocommutativegradedcoproduct}
  \Delta_\sh (v_1  \cdots  v_n)
	=
	\sum_{k=0}^{n} \sum_{\sigma \in Sh(k,n-k)}	\epsilon(\sigma)
	(v_{\sigma(1)}  \cdots  v_{\sigma(k)}) \otimes
	(v_{\sigma(k+1)}  \cdots  v_{\sigma(n)}).	
\end{equation}
Here $Sh(k,n-k)\subset S_n$ denotes the set of $(k,n-k)$-\emph{shuffles}, 
i.e. $\sigma(1) < \cdots < \sigma(k)$ and 
$\sigma(k+1) < \cdots < \sigma(n)$. 
We set $Sh(0,n)=Sh(n,0)=\{\id\}$, 
and the above coproduct is well-defined because 
of $S_n \cong Sh(k,n-k) \circ (S_k \times S_{n-k})$. Moreover, 
$\Delta_\sh $ is coassociative, counital and graded cocommutative with respect 
to the usual counit $\epsilon \colon \mathrm{T}(V) \rightarrow \mathbb{K}$ and we call 
it \emph{shuffle coproduct}. 
Since $\Delta_\sh $ is an algebra morphism, it is sufficient to show all these claims 
on generators. 
In particular, we can again consider the reduced coproduct
\begin{equation}
  \label{eq:redcocommutativegradedcoproduct}
  \cc{\Delta_\sh }(v_1  \cdots  v_n)
	=
	\sum_{k=1}^{n-1} \sum_{\sigma \in Sh(k,n-k)}	\epsilon(\sigma)
	(v_{\sigma(1)} \cdots  v_{\sigma(k)}) \otimes
	(v_{\sigma(k+1)} \cdots  v_{\sigma(n)})
\end{equation}
and as it splits the tensor factors without using the unit $1$ we get again
\begin{equation*}
  \cc{\Delta_\sh }^n(v_1\cdots v_n)
	=
	0.
\end{equation*}

\begin{proposition}
  The tensor algebra $(\mathrm{T}(V),\Delta_\sh ,\epsilon,1)$ is a conilpotent 
	cocommutative coalgebra. In fact, it is even a bialgebra with respect to 
	the usual tensor product $\mu = \otimes$ and the usual unit $1$.
\end{proposition}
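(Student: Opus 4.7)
The plan is to exploit the fact that $\mathrm{T}(V)$ is the free graded associative algebra on $V$, so every algebra morphism out of it is uniquely determined by its values on the generators $V \subset \mathrm{T}(V)$. Since $\Delta_\sh$ was \emph{defined} as the algebra morphism with $\Delta_\sh(v) = v \otimes 1 + 1 \otimes v$ for $v \in V$ and $\Delta_\sh(1) = 1 \otimes 1$, every coalgebra/bialgebra axiom that can be written as an equality between algebra morphisms reduces to a trivial check on generators.

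First I would verify coassociativity, counitality and cocommutativity. Both $(\Delta_\sh \otimes \id) \circ \Delta_\sh$ and $(\id \otimes \Delta_\sh) \circ \Delta_\sh$ are algebra morphisms $\mathrm{T}(V) \to \mathrm{T}(V)^{\otimes 3}$ (for the tensor-product algebra structure from the earlier subsection), and on a generator $v$ both produce $v \otimes 1 \otimes 1 + 1 \otimes v \otimes 1 + 1 \otimes 1 \otimes v$, hence they coincide. Similarly $(\epsilon \otimes \id) \circ \Delta_\sh$ and $(\id \otimes \epsilon) \circ \Delta_\sh$ are algebra morphisms restricting to $\id_V$ on generators, which forces both to equal $\id_{\mathrm{T}(V)}$. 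Cocommutativity $\tau \circ \Delta_\sh = \Delta_\sh$ is again a comparison of two algebra morphisms (using that $\tau$ is itself a morphism of graded algebras) that agree on $V$. The explicit shuffle formula \eqref{eq:cocommutativegradedcoproduct} then drops out by induction on $n$: applying the algebra morphism property to $v_1 \cdots v_n = v_1 \cdot (v_2 \cdots v_n)$ generates precisely the sum over $(k,n-k)$-shuffles, with the Koszul signs $\epsilon(\sigma)$ arising exactly because in $\mathrm{T}(V) \otimes \mathrm{T}(V)$ the product is twisted by the graded switching map $\tau$.

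Conilpotency can then be read off directly from the reduced formula \eqref{eq:redcocommutativegradedcoproduct}: for a word of length $n$, every summand of $\cc{\Delta_\sh}(v_1 \cdots v_n)$ is a tensor of two strictly shorter non-unit words, so iterating $n-1$ times leaves only tensors of single generators, and one more application of $\cc{\Delta_\sh}$ annihilates everything. Together with the standard coaugmentation $u \colon \mathbb{K} \hookrightarrow \mathrm{T}(V)$, this establishes that $(\mathrm{T}(V), \Delta_\sh, \epsilon, 1)$ is a conilpotent cocommutative coalgebra.

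For the bialgebra statement what remains is to check that $\Delta_\sh$ and $\epsilon$ are algebra morphisms for $(\mu, 1)$ and that units and counits match. The map $\Delta_\sh$ was \emph{constructed} as an algebra morphism, so there is nothing to do. The counit $\epsilon$ is the projection onto $\mathrm{T}^0(V) = \mathbb{K} \cdot 1$, and it is an algebra morphism because $\mu = \otimes$ sends $\mathrm{T}^i(V) \otimes \mathrm{T}^j(V)$ into $\mathrm{T}^{i+j}(V)$, with nontrivial $\mathbb{K}$-contribution only when $i = j = 0$. The compatibilities $\epsilon(1) = 1$ and $\Delta_\sh(1) = 1 \otimes 1$ hold by definition. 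The only real obstacle in the argument is the bookkeeping of Koszul signs when extending from generators to arbitrary tensor words, but these are precisely forced by the graded tensor-product algebra structure and produce exactly the shuffle coefficients in \eqref{eq:cocommutativegradedcoproduct}.
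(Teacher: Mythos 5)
Your proposal is correct and follows essentially the same route as the paper, which likewise reduces coassociativity, counitality and cocommutativity to checks on generators via the algebra-morphism property of $\Delta_\sh$, and deduces conilpotency from the fact that the reduced coproduct splits a word of length $n$ into strictly shorter non-unit factors, so that $\cc{\Delta_\sh}^{\,n}(v_1\cdots v_n)=0$.
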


In analogy to Lemma~\ref{lemma:projectiononTn} we get the following result:

\begin{lemma}
  \label{lemma:projectiononTnCocom}
	Let $V^\bullet$ be a graded $\mathbb{K}$-module. For $n\in \mathbb{N}$ one 
	has
	\begin{equation}
	  \label{eq:projectiononTnCocom1}
		(\pr_V\otimes \cdots \otimes \pr_V) 
		\circ \Delta_\sh ^{n-1} (v_1\cdots v_n)
		=
		\sum_{\sigma\in S_n}
		\epsilon(\sigma) v_{\sigma(1)} \otimes \cdots \otimes v_{\sigma(n)}.
	\end{equation}
\end{lemma}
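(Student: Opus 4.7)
My plan is to prove this by exploiting the bialgebra structure of $(\mathrm{T}(V),\otimes,\Delta_\sh)$. The key observation is that $\Delta_\sh$ was defined as an algebra morphism $\mathrm{T}(V) \to \mathrm{T}(V) \otimes \mathrm{T}(V)$, and by coassociativity the iterated coproduct $\Delta_\sh^{n-1} \colon \mathrm{T}(V) \to \mathrm{T}(V)^{\otimes n}$ is likewise an algebra morphism when $\mathrm{T}(V)^{\otimes n}$ is equipped with the graded tensor product algebra structure (using the graded swap $\tau$). Consequently $\Delta_\sh^{n-1}$ is determined by its values on the generators $V$.

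First I would establish, by a short induction on $n$, the identity
\begin{equation*}
  \Delta_\sh^{n-1}(v)
  =
  \sum_{k=1}^{n} 1^{\otimes (k-1)} \otimes v \otimes 1^{\otimes (n-k)}
  \qquad \text{for } v \in V,
\end{equation*}
which is immediate from $\Delta_\sh(v) = v \otimes 1 + 1 \otimes v$ and the compatibility $\Delta_\sh^{n-1} = (\Delta_\sh \otimes \id^{\otimes (n-2)}) \circ \Delta_\sh^{n-2}$. Using the algebra-morphism property then yields
\begin{equation*}
  \Delta_\sh^{n-1}(v_1 \cdots v_n)
  =
  \Delta_\sh^{n-1}(v_1) \cdots \Delta_\sh^{n-1}(v_n),
\end{equation*}
the product being taken in $\mathrm{T}(V)^{\otimes n}$.

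Next I would expand this product. Each factor $\Delta_\sh^{n-1}(v_i)$ is a sum indexed by a choice of slot $k_i \in \{1,\dots,n\}$, so the expanded product is indexed by functions $f \colon \{1,\dots,n\} \to \{1,\dots,n\}$, with $f(i)$ specifying the slot occupied by $v_i$. After applying $\pr_V^{\otimes n}$ the only surviving summands are those for which each of the $n$ slots receives exactly one entry, i.e.\ $f$ is a bijection. Writing $\sigma = f^{-1} \in S_n$ so that slot $k$ holds $v_{\sigma(k)}$, the surviving terms are in bijection with $S_n$.

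The only remaining point, and the one requiring care, is the sign. Each multiplication $(1^{\otimes (k_i-1)} \otimes v_i \otimes 1^{\otimes (n-k_i)})$ against the accumulated product generates a Koszul sign each time a $v_j$ of lower slot index has to commute past some $v_i$ of higher slot index (by the definition of the graded tensor product). One checks that the total sign produced by assembling the permutation $\sigma$ in this way is precisely the Koszul sign $\epsilon(\sigma)$ governing the passage from $v_1 \cdots v_n$ to $v_{\sigma(1)} \cdots v_{\sigma(n)}$; indeed, both signs tally graded transpositions between pairs $(i,j)$ with $i<j$ whose slot indices are out of order. This matches the sign appearing in formula~\eqref{eq:cocommutativegradedcoproduct} when restricted to shuffles with singleton blocks, and provides the only real bookkeeping step of the proof. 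Collecting the surviving terms gives exactly $\sum_{\sigma \in S_n} \epsilon(\sigma)\, v_{\sigma(1)} \otimes \cdots \otimes v_{\sigma(n)}$, as desired.
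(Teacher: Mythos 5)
Your proof is correct. The paper states this lemma without an explicit proof, the intended argument being to iterate the explicit shuffle formula \eqref{eq:cocommutativegradedcoproduct}: after applying $\pr_V\otimes\cdots\otimes\pr_V$ only the summands in which every tensor factor is a single generator survive, and the nested shuffles reassemble into all of $S_n$ via the decomposition $S_n \cong Sh(k,n-k)\circ(S_k\times S_{n-k})$. You take a slightly different, and arguably cleaner, route that stays closer to how the paper derives \eqref{eq:cocommutativegradedcoproduct} in the first place: you never invoke the shuffle formula, but instead use that $\Delta_\sh^{n-1}$ is an algebra morphism into the graded tensor-product algebra $\mathrm{T}(V)^{\otimes n}$, compute it on generators, and expand $\Delta_\sh^{n-1}(v_1)\cdots\Delta_\sh^{n-1}(v_n)$. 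The price is that the combinatorial work migrates from shuffle bookkeeping into the Koszul signs produced by the multiplication in $\mathrm{T}(V)^{\otimes n}$, and this is exactly the point you flag: the sign attached to the surviving term indexed by $\sigma$ is $(-1)$ raised to the sum of $\abs{v_i}\abs{v_j}$ over pairs $i<j$ whose slots are out of order, which is precisely $\epsilon(\sigma)$ as defined in Definition~\ref{def:GradedSigns}, since both count the same graded transpositions. Your identification of the surviving terms with bijections (non-injective $f$ produces a slot of tensor length at least two, non-surjective $f$ leaves a slot equal to $1$, and both are annihilated by $\pr_V$) is also correct, so the argument is complete; either approach is acceptable, and yours avoids having to verify the coassociative recombination of shuffles explicitly.
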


This is not exactly the property we would like to have since the right 
hand side is not the tensor we started with. Recall that this property was exactly 
the statement of Lemma~\ref{lemma:projectiononTn} that 
we used extensively in Theorem~\ref{thm:CofreeConilpotentCoalg} to show 
that $(\mathrm{T}(V),\Delta)$ is the cofree conilpotent coalgebra. However, 
we see that the right hand side of \eqref{eq:projectiononTnCocom1} is 
$n! \Symmetrizer_n(v_1\otimes \cdots \otimes v_n)$, whence we would get 
the desired equation if we pass from $\mathrm{T}(V)$ to $\Sym(V)$. This is indeed possible.

\begin{lemma}
  Let $V^\bullet$ be a graded $\mathbb{K}$-module. Then the ideal $I(V) \subseteq \mathrm{T}(V)$ 
	from \eqref{eq:SymmIdeal} is a coideal with respect to $\Delta_\sh$ and $\epsilon$.
\end{lemma}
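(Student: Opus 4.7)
My plan is to exploit the observation, emphasized in the paragraph before \eqref{eq:cocommutativegradedcoproduct}, that $\Delta_\sh$ is by construction an algebra morphism with respect to the tensor product on $\mathrm{T}(V)$ and the graded tensor product algebra structure on $\mathrm{T}(V)\otimes \mathrm{T}(V)$. Combined with the fact that $\Sym(V)\otimes \Sym(V)$ is graded commutative, this will reduce both required conditions to essentially formal consequences of the universal property of $\Sym(V)$.

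First I would dispatch the easier condition $I(V)\subseteq \ker \epsilon$: every generator of $I(V)$ lies in tensor degree at least two, while the counit $\epsilon$ vanishes on $\bigoplus_{n\geq 1} V^{\otimes n}$, so the inclusion is automatic.

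For the coideal condition, let $\pi\colon \mathrm{T}(V)\to \mathrm{T}(V)/I(V)=\Sym(V)$ denote the canonical projection, which is a morphism of graded algebras. The standing flatness assumption on graded $\mathbb{K}$-modules (stated in Section~\ref{sec:gradedcoalgebras}) yields
\[
  \ker(\pi\otimes \pi)
  =
  I(V)\otimes \mathrm{T}(V) + \mathrm{T}(V)\otimes I(V),
\]
so it suffices to prove that $(\pi\otimes \pi)\circ \Delta_\sh$ annihilates $I(V)$. Now $(\pi\otimes \pi)\circ \Delta_\sh\colon \mathrm{T}(V)\to \Sym(V)\otimes \Sym(V)$ is a composition of algebra morphisms, hence itself an algebra morphism, and its target is graded commutative because the graded tensor product of two graded commutative algebras is again graded commutative. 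By the universal property of $\Sym(V)$ as the free graded commutative algebra on $V$ applied to the restriction $V\hookrightarrow \mathrm{T}(V)\to \Sym(V)\otimes \Sym(V)$, the morphism $(\pi\otimes \pi)\circ \Delta_\sh$ factors through $\Sym(V)$ and in particular annihilates $\ker \pi = I(V)$.

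The only real subtlety, and it is very minor, is the identification of $\ker(\pi\otimes \pi)$ with $I(V)\otimes \mathrm{T}(V)+\mathrm{T}(V)\otimes I(V)$, which requires the flatness hypothesis the paper has already assumed. Everything else is a formal manipulation of universal properties, so I do not anticipate any serious obstacle.
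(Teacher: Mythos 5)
Your proof is correct, but it takes a genuinely different route from the paper. The paper argues directly: it computes $\Delta_\sh$ on the quadratic generators $vw-(-1)^{\abs{v}\abs{w}}wv$ with $v,w\in V$ and finds that the cross terms cancel, so each generator is primitive, i.e. $\Delta_\sh(g)=g\otimes 1+1\otimes g\in I(V)\otimes \mathrm{T}(V)+\mathrm{T}(V)\otimes I(V)$; since $\Delta_\sh$ is an algebra morphism and $I(V)\otimes \mathrm{T}(V)+\mathrm{T}(V)\otimes I(V)$ is an ideal of $\mathrm{T}(V)\otimes\mathrm{T}(V)$, the coideal property for the whole ideal follows. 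You instead avoid any computation: you observe that $(\pi\otimes\pi)\circ\Delta_\sh$ is an algebra morphism into the graded commutative algebra $\Sym(V)\otimes\Sym(V)$, so by the universal property of $\Sym(V)$ (and uniqueness of extensions from $\mathrm{T}(V)$) it factors through $\pi$ and hence kills $I(V)=\ker\pi$, which gives the coideal condition once one identifies $\ker(\pi\otimes\pi)$ with $I(V)\otimes\mathrm{T}(V)+\mathrm{T}(V)\otimes I(V)$. Both arguments hinge on $\Delta_\sh$ being an algebra morphism, but the trade-offs differ: the paper's computation is completely elementary, works verbatim over any ring, and exhibits the extra fact that the generators are primitive; your argument is slicker and generator-free, but it imports the flatness hypothesis (which the paper has indeed declared it assumes throughout this section) to control $\ker(\pi\otimes\pi)$, and it relies on $I(V)$ being exactly $\ker\pi$, i.e. on treating $I(V)$ as the defining ideal of $\Sym(V)$ rather than on its explicit generators.
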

\begin{proof}
The property $I(V) \subseteq \ker\epsilon$ is obvious, and for $v,w\in V^\bullet$ 
we have 
\begin{equation*}
  \Delta_\sh( vw - (-1)^{\abs{v}\abs{w}} wv)
	=
	1 \otimes (vw - (-1)^{\abs{v}\abs{w}}wv) + 
	(vw - (-1)^{\abs{v}\abs{w}} wv) \otimes 1 \in 
	I(V) \otimes \mathrm{T}(V) + \mathrm{T}(V)\otimes I(V)
\end{equation*}
and the algebra morphism property shows the result.
\end{proof}

This gives us a bialgebra structure on the quotient.

\begin{proposition}
  \label{prop:Symwithshuffle}
  Let $V^\bullet$ be a graded $\mathbb{K}$-module. 
	\begin{propositionlist}
	  \item The shuffle coproduct and the counit pass to the quotient by $I(V)$ 
		      and yield a 
		      bialgebra structure $(\Sym(V),\mu_S,1,\Delta_\sh,\epsilon)$, where 
					$\mu_S = \vee$.
		\item The bialgebra $(\Sym(V),\mu_S,1,\Delta_\sh,\epsilon)$ is graded commutative 
		      and graded cocommutative.
		\item The coalgebra $(\Sym(V),\Delta_\sh,\epsilon,1)$ is coaugmented and 
		      conilpotent with coradical filtration
					\begin{equation}
					  F_n(\Sym(V)) = \bigoplus_{k=0}^n \Sym^k(V).
					\end{equation}
	\end{propositionlist}
\end{proposition}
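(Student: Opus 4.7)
For part (i), the plan is to invoke the preceding lemma: $I(V)$ is a two-sided coideal contained in $\ker\epsilon$, so the standard quotient construction descends the counit and the shuffle coproduct to $\Sym(V) = \mathrm{T}(V)/I(V)$. Because $I(V)$ is simultaneously the defining ideal of the symmetric product $\mu_S = \vee$, the bialgebra axioms (coproduct as an algebra morphism, counit as a unital algebra morphism) pass to the quotient automatically from $\mathrm{T}(V)$. Part (ii) then reduces to two checks: graded commutativity is the very definition of $\Sym(V)$, and for graded cocommutativity it is enough to verify $\tau \circ \Delta_\sh = \Delta_\sh$ on algebra generators, since both sides are algebra morphisms into the graded tensor product algebra $\mathrm{T}(V) \otimes \mathrm{T}(V)$; on $v \in V$ the identity $\Delta_\sh(v) = v \otimes 1 + 1 \otimes v$ is manifestly $\tau$-invariant, so cocommutativity already holds on $\mathrm{T}(V)$ and descends to $\Sym(V)$.

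For part (iii), coaugmentation is realized by the inclusion $\mathbb{K} = \Sym^0(V) \hookrightarrow \Sym(V)$. The key structural observation is that the reduced shuffle coproduct preserves the total arity and strictly distributes it: formula~\eqref{eq:redcocommutativegradedcoproduct} shows $\cc{\Delta_\sh}(\Sym^n(V)) \subseteq \bigoplus_{k=1}^{n-1} \Sym^k(V) \otimes \Sym^{n-k}(V)$. Iterating, $\cc{\Delta_\sh}^m(\Sym^n(V))$ lies in the sum of $\Sym^{k_1}(V) \otimes \cdots \otimes \Sym^{k_{m+1}}(V)$ indexed by compositions $k_1 + \cdots + k_{m+1} = n$ with each $k_i \geq 1$; for $m \geq n$ this index set is empty, so $\cc{\Delta_\sh}^n$ vanishes on $\bigoplus_{k=0}^n \Sym^k(V)$. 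Conilpotency is immediate and one obtains the easy inclusion $\bigoplus_{k=0}^n \Sym^k(V) \subseteq F_n(\Sym(V))$.

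The converse inclusion is the step I expect to be the main obstacle. My plan is to decompose an arbitrary $x \in F_n(\Sym(V))$ into arity-homogeneous pieces $x = \sum_k x_k$; since $\cc{\Delta_\sh}^n$ preserves total arity, the hypothesis $\cc{\Delta_\sh}^n(x) = 0$ splits into $\cc{\Delta_\sh}^n(x_k) = 0$ for every $k$. For $k \leq n$ this is automatic, while for $k > n$ coassociativity lets one obtain $\cc{\Delta_\sh}^{k-1}(x_k) = 0$ by applying further reduced coproducts to $\cc{\Delta_\sh}^n(x_k)$. The counterpart in $\Sym(V)$ of Lemma~\ref{lemma:projectiononTnCocom} then shows that $(\pr_V^{\otimes k}) \circ \cc{\Delta_\sh}^{k-1}$ produces $k!$ times the graded symmetrizer, which under the standard identification $\Sym^k(V) \cong \image\,\Symmetrizer_k \subseteq V^{\otimes k}$ acts as a nonzero scalar multiple of the identity, hence is injective once $\mathbb{Q} \subseteq \mathbb{K}$. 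Thus $x_k = 0$ for all $k > n$, establishing the inclusion $F_n(\Sym(V)) \subseteq \bigoplus_{k=0}^n \Sym^k(V)$ and completing the description of the coradical filtration.
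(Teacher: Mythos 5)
Your proposal is correct and follows essentially the route the paper intends: part (i) and (ii) rest on the preceding coideal lemma together with the fact that $\Delta_\sh$ is an algebra morphism (so all identities are checked on generators), and part (iii) on the observation that the reduced shuffle coproduct strictly distributes the symmetric degree, exactly as in the surrounding text (the paper states the proposition without spelling out a proof). Your extra care with the converse inclusion $F_n(\Sym(V)) \subseteq \bigoplus_{k=0}^n \Sym^k(V)$, via the arity decomposition and the projection formula of Lemma~\ref{lemma:projectiononSnCocom} (which only depends on part (i), so there is no circularity), fills in the one detail the paper leaves implicit and is valid given $\mathbb{Q}\subseteq\mathbb{K}$.
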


In particular, Lemma~\ref{lemma:projectiononTnCocom} implies now immediately 
the desired identity:

\begin{lemma}
  \label{lemma:projectiononSnCocom}
	Let $V^\bullet$ be a graded $\mathbb{K}$-module. For $n\in \mathbb{N}$ one 
	has
	\begin{equation}
	  \label{eq:projectiononTnCocom}
		\pr_{\Sym^n(V)} 
		=
		\frac{1}{n!} \mu_S^{n-1} \circ (\pr_V\otimes \cdots \otimes \pr_V) 
		\circ \Delta_\sh ^{n-1}
		=
		\frac{1}{n!} \mu_S^{n-1} \circ (\pr_V\otimes \cdots \otimes \pr_V) 
		\circ \cc{\Delta_\sh}^{n-1}.
	\end{equation}
\end{lemma}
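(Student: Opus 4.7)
The plan is to verify the equality on each homogeneous component by reducing to Lemma~\ref{lemma:projectiononTnCocom} together with the defining identity of the Koszul sign from Definition~\ref{def:GradedSigns}. Both sides are $\mathbb{K}$-linear, so it suffices to evaluate them on a generator $v_1 \vee \cdots \vee v_m \in \Sym^m(V)$ and check that the answer is the identity on $\Sym^n(V)$ and zero elsewhere.

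First I would handle the degree-count that kills the cases $m \neq n$. Since $\Delta_\sh$ descends from $\mathrm{T}(V)$ to $\Sym(V)$ and is a morphism of algebras, one has $\Delta_\sh^{n-1}(v_1 \vee \cdots \vee v_m) = \Delta_\sh^{n-1}(v_1) \cdots \Delta_\sh^{n-1}(v_m)$, and each factor $\Delta_\sh^{n-1}(v_i)$ is the sum $\sum_{j=1}^n 1 \otimes \cdots \otimes v_i \otimes \cdots \otimes 1$ with $v_i$ placed in slot $j$. Expanding the product, $\Delta_\sh^{n-1}(v_1 \vee \cdots \vee v_m)$ becomes a signed sum over maps $\{1,\ldots,m\} \to \{1,\ldots,n\}$, each contributing in the $j$-th tensor slot the symmetric product of the $v_i$'s sent to $j$. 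Applying $\pr_V$ in every slot forces each slot to contain exactly one $v_i$, which is possible only when $m = n$. Hence for $m \neq n$ the composite vanishes, matching $\pr_{\Sym^n(V)}|_{\Sym^m(V)} = 0$; the case $m = 0$ is immediate from $\pr_V(1) = 0$.

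For $m = n$, the same computation used in the proof of Lemma~\ref{lemma:projectiononTnCocom} carries through verbatim after passing to the quotient and yields $(\pr_V \otimes \cdots \otimes \pr_V) \circ \Delta_\sh^{n-1}(v_1 \vee \cdots \vee v_n) = \sum_{\sigma \in S_n} \epsilon(\sigma)\, v_{\sigma(1)} \otimes \cdots \otimes v_{\sigma(n)}$. Applying $\mu_S^{n-1}$ and invoking the defining relation $\epsilon(\sigma)\, v_{\sigma(1)} \vee \cdots \vee v_{\sigma(n)} = v_1 \vee \cdots \vee v_n$ from Definition~\ref{def:GradedSigns} collapses the sum to $n!\, v_1 \vee \cdots \vee v_n$, and dividing by $n!$ gives the first equality. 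The second equality, with $\cc{\Delta_\sh}^{n-1}$ in place of $\Delta_\sh^{n-1}$, follows directly from the convention introduced just before \eqref{eq:CoaugmentedConvolution}: the extra terms in $\Delta_\sh^{n-1} - \cc{\Delta_\sh}^{n-1}$ each feature the group-like element $1$ in at least one slot and are therefore annihilated by the corresponding $\pr_V$ factor. The only real subtlety is the sign bookkeeping under the quotient $\mathrm{T}(V) \twoheadrightarrow \Sym(V)$, but this is exactly what the Koszul sign convention is designed to absorb, so there is no serious obstacle.
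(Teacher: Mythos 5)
Your argument is correct and follows the same route the paper intends: the lemma is deduced directly from Lemma~\ref{lemma:projectiononTnCocom} by passing to the quotient, applying $\mu_S^{n-1}$, and collapsing the $n!$ summands via the defining relation of the Koszul sign, with the reduced-coproduct version following from $\pr_V(1)=0$. Your explicit check that the composite vanishes on $\Sym^m(V)$ for $m\neq n$ merely spells out what the paper leaves implicit, so there is nothing to add.
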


This allows us to show that $\Sym(V)$ is the cofree cocommutative conilpotent 
coalgebra:

\begin{theorem}[Cofree cocommutative conilpotent coalgebra]
  \label{thm:CofreeCocomConilpotentCoalg}
  Let $V^\bullet$ be a graded module over $\mathbb{K}$ and let 
	$(C^\bullet,\Delta,\epsilon,1)$ be a conilpotent graded cocommutative 
	coalgebra over $\mathbb{K}$.
	\begin{theoremlist}
	  \item The cocommutative conilpotent coalgebra 
		      $(\Sym(V),\Delta_\sh,\epsilon,1)$ is the cofree cocommutative conilpotent 
					coalgebra cogenerated by $V$, i.e. the free object in the category 
					of conilpotent cocommutative coalgebras. Explicitly, for every 
					homogeneous $\mathbb{K}$-linear map $\phi \colon C \rightarrow V$ of 
					degree zero with $\phi(1)=0$ there exists a unique 
					coalgebra morphism $\Phi \colon C \rightarrow \Sym(V)$ such that
					\begin{equation*}  
	          \begin{tikzcd}
	            C \arrow[d,swap," \Phi"] \arrow[rd,"\phi"] & \\
		          \Sym(V) \arrow[r,"\pr_V"] & V,
	          \end{tikzcd}
          \end{equation*}
	        commutes, i.e. $\pr_V \circ \Phi = \phi$. Explicitly, one has 
					$\Phi = \exp_\star(\phi)$ for the convolution product of 
					$\Hom^\bullet(C,\Sym(V))$ with respect to $\mu_S$. 
		\item Let $\D \colon C^\bullet \rightarrow V^{\bullet +k}$ be 
		      a homogeneous $\mathbb{K}$-linear map of degree $k$, then there exists 
					a unique coderivation $D \colon 
					C^\bullet \rightarrow \Sym(V)^{\bullet+k}$ along $\Phi$ 
					such that
					\begin{equation*}  
	          \begin{tikzcd}
	            C^\bullet \arrow[d,swap,"D"] \arrow[rd,"\D"] & \\
		          \Sym(V)^{\bullet + k} \arrow[r,"\pr_V"] & V^{\bullet +k},
	          \end{tikzcd}
          \end{equation*}
	        commutes. Explicitly, one has
					$  D 	=		\Phi \star \D		=		\D \star \Phi$ and 
					$D(1) = 0$ if and only if $\D(1)=0$.
	\end{theoremlist}
\end{theorem}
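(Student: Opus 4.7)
The plan is to follow closely the proof of Theorem~\ref{thm:CofreeConilpotentCoalg}, replacing $\mathrm{T}^c(V)$ by $\Sym(V)$, the deconcatenation coproduct by the shuffle coproduct $\Delta_\sh$, and using Lemma~\ref{lemma:projectiononSnCocom} instead of Lemma~\ref{lemma:projectiononTn}. The additional factor $1/n!$ appearing in Lemma~\ref{lemma:projectiononSnCocom} is exactly what turns the resulting formal series into the exponential $\exp_\star(\phi)$.

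For part (i), uniqueness: assume $\Phi$ is a counital coalgebra morphism with $\pr_V\circ\Phi=\phi$. Since $\Phi$ must send the unique group-like element to the unique group-like element, $\Phi(1)=1$ and $\epsilon\circ\Phi=\epsilon$. For $x\in\cc{C}$ and $n\geq 1$, applying Lemma~\ref{lemma:projectiononSnCocom} to $\Phi(x)$ and using the coalgebra morphism property to commute $\cc\Delta_\sh^{\,n-1}$ past $\Phi$ gives
\[
  \pr_{\Sym^n(V)}\circ\Phi(x)
  =
  \tfrac{1}{n!}\,\mu_S^{n-1}\circ(\phi\otimes\cdots\otimes\phi)\circ\cc\Delta_C^{\,n-1}(x)
  =
  \tfrac{1}{n!}\,\phi^{\star n}(x),
\]
whence $\Phi=\exp_\star(\phi)$; the conilpotency of $C$ makes this a finite sum on each element. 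For existence I would take $\Phi:=\exp_\star(\phi)$, which is a well-defined degree-zero map into $\Sym(V)$, and verify the coalgebra morphism property by a direct computation: since $\Delta_\sh$ is an algebra morphism with respect to $\mu_S$ and since $\phi(c)\in V$ consists of primitive elements, one expands $\Delta_\sh\circ\phi^{\star n}$ into a binomial-type sum; the cocommutativity of $\Delta_C$ then lets one reassemble this sum as $(\exp_\star(\phi)\otimes\exp_\star(\phi))\circ\Delta_C$. This is the usual fact that the exponential of a ``primitive'' element in the convolution algebra $(\Hom^\bullet(C,\Sym(V)),\star)$ is group-like.

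For part (ii), uniqueness: by Proposition~\ref{prop:epsilonDzero}, $\epsilon\circ D=0$, so $D$ decomposes as $\sum_{n\geq 1}\pr_{\Sym^n(V)}\circ D$. Projecting onto $\Sym^n(V)$ via Lemma~\ref{lemma:projectiononSnCocom} and iterating the coderivation identity $\Delta_\sh\circ D=(D\otimes\Phi+\Phi\otimes D)\circ\Delta_C$ produces a sum of $n$ terms in which $\D$ occupies one slot and $\phi$ occupies the remaining $n-1$. Graded commutativity of $\mu_S$ together with cocommutativity of $\Delta_C$ identifies these $n$ terms, so the factor $n$ cancels the $1/n!$ and yields
\[
  \pr_{\Sym^n(V)}\circ D
  =
  \tfrac{1}{(n-1)!}\,\mu_S^{n-1}\circ(\D\otimes\phi\otimes\cdots\otimes\phi)\circ\cc\Delta_C^{\,n-1},
\]
which sums to $D=\D\star\Phi=\Phi\star\D$. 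For existence I would define $D:=\Phi\star\D$ and check the Leibniz rule along $\Phi$ directly, using that $\Phi$ is already a coalgebra morphism and that $\mu_S$ is graded commutative so that $\D\star\Phi=\Phi\star\D$. The final equivalence $D(1)=0\iff\D(1)=0$ is immediate from $D(1)=\Phi(1)\vee\D(1)=\D(1)$.

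The main obstacle I anticipate is the combinatorial verification that $\exp_\star(\phi)$ is indeed a coalgebra morphism: the shuffle signs, Koszul signs and the symmetrizer factors must all conspire, and this is precisely the step where the cocommutativity hypothesis on $C$ is actually used. Once this ``exponential of primitive is group-like'' identity is established, the coderivation statement follows by a formal differentiation of the group-like relation, and the rest of the argument is bookkeeping parallel to Theorem~\ref{thm:CofreeConilpotentCoalg}.
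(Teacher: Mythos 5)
Your proposal is correct and follows essentially the same route as the paper, which proves this theorem by declaring it completely analogous to the tensor-coalgebra case of Theorem~\ref{thm:CofreeConilpotentCoalg}, with Lemma~\ref{lemma:projectiononSnCocom} supplying exactly the $\frac{1}{n!}$ that turns the series into $\exp_\star(\phi)$, and with the cocommutativity of $C$ entering precisely where you say it does. One small correction in part (ii): in your displayed formula for $\pr_{\Sym^n(V)}\circ D$ you must use the full iterated coproduct $\Delta^{n-1}$ of $C$ rather than $\cc{\Delta}^{n-1}$, since $\D$ is not assumed to vanish on $1$; the terms of $\Delta^{n-1}(x)$ with the group-like element $1$ in the $\D$-slot survive and are exactly what the curved case adds (e.g.\ for primitive $x$ one gets a nonzero $\Sym^2$-component $\pm\,\D(1)\vee\phi(x)$, which the reduced coproduct would miss). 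Your final expression $D=\Phi\star\D=\D\star\Phi$, taken with respect to the full coproduct as in Definition~\ref{def:ConvProduct}, is the correct one and already incorporates these terms, as your evaluation $D(1)=\D(1)$ confirms.
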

\begin{proof} 
The proof is completely analogue to the proof of Theorem~\ref{thm:CofreeConilpotentCoalg}.
\end{proof}

For $C = \Sym(V)$ itself and $\phi = \pr_V$, i.e. $\Phi = \id$, 
we get analoguously to 
Corollary \ref{cor:coderivationcogenerators}.

\begin{corollary}
  \label{cor:coderivationcogeneratorscocom}
  Coderivations of $\Sym(V)$ form a Lie subalgebra of the 
	endomorphisms. This Lie algebra is in bijection to $\Hom^\bullet_\mathbb{K}(
	\Sym(V),V)$ via
	\begin{equation*}
	  \Hom^\bullet_\mathbb{K}(\Sym(V),V) \ni 
		\D
		\longmapsto 
		D = \D  \star \id \in 
		\mathrm{CoDer}^\bullet(\Sym(V)).
	\end{equation*}
\end{corollary}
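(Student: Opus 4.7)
The plan is to extract this as a direct specialization of Theorem~\ref{thm:CofreeCocomConilpotentCoalg}(ii) applied to $C = \Sym(V)$ together with the choice $\phi = \pr_V$, which by Theorem~\ref{thm:CofreeCocomConilpotentCoalg}(i) forces $\Phi = \id_{\Sym(V)}$. Indeed, if $\D \colon \Sym(V)^\bullet \to V^{\bullet+k}$ is any homogeneous map, the second part of the theorem produces a unique coderivation $D \colon \Sym(V)^\bullet \to \Sym(V)^{\bullet+k}$ along $\id$ with $\pr_V \circ D = \D$, and delivers the explicit formula $D = \Phi \star \D = \id \star \D = \D \star \id$ (the last equality by cocommutativity, or equivalently by the symmetry statement in the theorem). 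Conversely, any coderivation $D$ of $\Sym(V)$ along $\id$ is uniquely determined by its component $\D := \pr_V \circ D$, again by the uniqueness clause. This already establishes the stated bijection.

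For the Lie subalgebra claim, I would verify the standard fact that if $D_1, D_2$ are coderivations of $\Sym(V)$ along $\id$ of degrees $k_1, k_2$, then so is the graded commutator $[D_1, D_2] = D_1 \circ D_2 - (-1)^{k_1 k_2} D_2 \circ D_1$. Applying the coderivation identity twice gives
\begin{equation*}
  \Delta_\sh \circ D_1 \circ D_2
  =
  (D_1 \otimes \id + \id \otimes D_1)(D_2 \otimes \id + \id \otimes D_2) \circ \Delta_\sh,
\end{equation*}
and expanding this (paying attention to the Koszul sign when $D_1$ passes $D_2$ in the middle two terms) produces the sum $(D_1 D_2 \otimes \id + \id \otimes D_1 D_2) \circ \Delta_\sh$ plus mixed terms $\pm(D_2 \otimes D_1 + D_1 \otimes D_2) \circ \Delta_\sh$. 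Subtracting the analogous expression with $D_1, D_2$ swapped (with the sign $(-1)^{k_1 k_2}$) cancels the mixed terms and yields the coderivation identity for $[D_1, D_2]$.

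Finally, the bijection transports this bracket to $\Hom^\bullet_\mathbb{K}(\Sym(V), V)$ via $\D_1, \D_2 \mapsto \pr_V \circ [D_1, D_2]$, giving it the structure of a graded Lie algebra. None of the steps involve genuine obstacles; the only thing that requires care is tracking Koszul signs when splitting the coproduct across the composition $D_1 \circ D_2$, but this is a completely mechanical check.
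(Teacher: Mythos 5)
Your proposal is correct and follows essentially the same route as the paper: the corollary is obtained by specializing Theorem~\ref{thm:CofreeCocomConilpotentCoalg} to $C=\Sym(V)$ with $\phi=\pr_V$, hence $\Phi=\id$, giving $D=\D\star\id$ together with uniqueness, while the closure of coderivations under the graded commutator is the standard Koszul-sign check the paper asserts after the definition of coderivation. No gaps.
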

Let now $D \in \mathrm{CoDer}^\bullet(\Sym(V))$, then we know again that it is completely 
determined by its projection $\pr_V \circ D = \D$. 
Here 
\begin{equation*}
  \D 
	=
	\sum_{n=0}^\infty \D_n 
	\quad \quad \text{ with } \quad \quad
	\D_n = \pr_V \circ D \circ \pr_{\mathrm{T}^n(V)}
\end{equation*}
and the maps $\D_n \colon \mathrm{T}^n(V) \rightarrow V$ are again called \emph{Taylor coefficients} 
of $D$.
By the above corollary we have the isomorphism 
\begin{equation*}
	\mathrm{CoDer}^\bullet(\Sym(V)) 
	\cong 
	\Hom_\mathbb{K}^\bullet(\Sym(V),V)
\end{equation*}
that we can use to induce on $\Hom_\mathbb{K}^\bullet(\Sym(V),V)$ 
a Lie algebra structure. The induced bracket is called 
\emph{Nijenhuis-Richardson bracket}. Similarly to the case of 
associativity in Remark~\ref{rem:Ainftystructures} this 
bracket encodes now the Jacobi identity. For example, for a 
graded Lie algebra $(L,[\argument{,}\argument])$ the 
rescaled bracket $\widetilde{\mu}(x,y) = -(-1)^{\abs{x}}[x,y]$, 
where $\abs{x}$ denotes the degree of $x$ in $L[1]$, is 
a Maurer-Cartan element in $(\Hom_\mathbb{K}^\bullet(\Sym(L[1]),L[1]),0,
[\argument{,}\argument]_{NR})$, inducing a codifferential of degree $+1$ on 
$\Sym(L[1])$. Consequently, in the general case a (curved) $L_\infty$-structure 
on a graded $\mathbb{K}$-module $L$ is a Maurer-Cartan element in the DGLA 
$(\Hom_\mathbb{K}^\bullet(\Sym(L[1]),L[1]),0,[\argument{,}\argument]_{NR})$, 
which in turn is equivalent to 
a codifferential of degree $+1$ on $\Sym(L[1])$. As mentioned above, this is 
due to the fact that the Koszul dual of the operad encoding Lie algebras is the 
cooperad encoding cocommutative coalgebras, and we want to study these 
structures now in more detail.

%
%
\section{Introduction to $L_\infty$-algebras}
\label{sec:DefLinfty}

%
%
\subsection{Definition and First Properties}

After this introduction to coalgebras we are now ready to study $L_\infty$-algebras. 
We start with (flat) $L_\infty$-structures, i.e. those corresponding to coderivations 
$Q$ with $Q(1)=0$. For simplicity, we assume from now on that $\mathbb{K}$ is a field 
of characteristic zero and we follow mainly \cite{canonaco:1999a,dolgushev:2005b}.

\begin{definition}[$L_\infty$-algebra]
  A (flat) $L_\infty$-\emph{algebra} is a graded vector space $L$ over $\mathbb{K}$ 
	endowed with a degree one codifferential $Q$ on the reduced 
	symmetric coalgebra $(\cc{\Sym}(L[1]),\cc{\Delta_\sh})$.
  An $L_\infty$-morphism between two $L_\infty$-algebras $F\colon (L,Q) 
  \rightarrow (L',Q')$ is a morphism of graded coalgebras
  \begin{equation}
    F \colon 
		\cc{\Sym}(L[1])
		\longrightarrow
		\cc{\Sym}(L'[1])
  \end{equation}
	such that $F \circ Q = Q' \circ F$.
\end{definition}
By Corollary~\ref{cor:coderivationcogeneratorscocom} we can characterize 
the coderivation by its Taylor coefficients, also called structure maps.

\begin{proposition}
  \label{prop:coderivationsymmetricsequence}
  An $L_\infty$-algebra $(L,Q)$ is a graded vector space $L$ endowed 
	with a sequence of maps
	\begin{equation}
	  Q_n^1 \colon
		\Sym^n (L[1]) 
		\longrightarrow 
		L[2]
	\end{equation}
	for $n> 0$. The coderivation $Q$ is given by 
	\begin{align}
	  \label{eq:QinSymmetricviaTaylor}
	  \begin{split}
	  Q(x_1 \vee \cdots \vee x_n)
		& =
		\sum_{k=1}^{n} \sum_{\sigma\in Sh(k,n-k)}
		\epsilon(\sigma) Q_{k}^1(x_{\sigma(1)}\vee\cdots\vee x_{\sigma(k)}) 
		\vee x_{\sigma(k+1)} \vee \cdots  \vee	x_{\sigma(n)}
		\end{split}
	\end{align}
	for homogeneous vectors $x_1,\dots,x_n \in L$, and $Q^2=0$ is equivalent to
	\begin{equation}
	\label{eq:QsquaredZero}
	  \sum_{k=1}^n  \sum_{\sigma\in Sh(k,n-k)} 
		\epsilon(\sigma) 
		Q_{n-k+1}^1(Q_k^1(x_{\sigma(1)}\vee\cdots\vee x_{\sigma(k)})\vee 
		x_{\sigma(k+1)}\vee \dots \vee 	x_{\sigma(n)})
		=
		0.
	\end{equation}
\end{proposition}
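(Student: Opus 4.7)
The plan is to reduce everything to the cofreeness statements in Theorem~\ref{thm:CofreeCocomConilpotentCoalg} and Corollary~\ref{cor:coderivationcogeneratorscocom}. The first assertion is essentially a bookkeeping consequence of the explicit formula $D = \D \star \id$, while the second relies on the observation that $Q^2$ is itself a coderivation and is therefore determined by its cogenerator component.

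For the first part, I would apply Corollary~\ref{cor:coderivationcogeneratorscocom} to $V = L[1]$. A degree one coderivation $Q$ on $\Sym(L[1])$ vanishing on $1$ is uniquely determined by its projection $\pr_{L[1]} \circ Q \colon \Sym(L[1]) \to L[1]$, which decomposes as the sum of the Taylor components $Q_n^1 \colon \Sym^n(L[1]) \to L[2]$. The formula $Q = \D \star \id$ from the corollary, combined with the definition of the convolution product from Definition~\ref{def:ConvProduct} and the explicit form of the reduced shuffle coproduct $\cc{\Delta_\sh}$ in \eqref{eq:redcocommutativegradedcoproduct}, expands to
\begin{equation*}
  Q(x_1 \vee \cdots \vee x_n)
  =
  \mu_S \circ (\D \otimes \id) \circ \Delta_\sh(x_1 \vee \cdots \vee x_n),
\end{equation*}
which after splitting the shuffle sum by cardinality $k$ of the left piece and using $\D(1) = 0$ gives precisely \eqref{eq:QinSymmetricviaTaylor}. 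The only subtlety is tracking Koszul signs, which are already encoded in $\epsilon(\sigma)$ from Definition~\ref{def:GradedSigns}.

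For the second part, the key observation is that $Q^2 = \frac{1}{2}[Q,Q]$ is again a coderivation on $\Sym(L[1])$ of degree $2$, because coderivations form a graded Lie subalgebra by Corollary~\ref{cor:coderivationcogeneratorscocom}. Moreover $Q^2(1) = 0$ trivially. By the uniqueness in Theorem~\ref{thm:CofreeCocomConilpotentCoalg}(ii), $Q^2 = 0$ is equivalent to $\pr_{L[1]} \circ Q^2 = 0$, and it suffices to check this identity separately on each $\Sym^n(L[1])$.

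Finally, I would compute $\pr_{L[1]} \circ Q^2$ on $\Sym^n(L[1])$ by inserting the formula \eqref{eq:QinSymmetricviaTaylor} twice. The inner $Q$ turns $x_1 \vee \cdots \vee x_n$ into a shuffle sum of elements in $\Sym^{n-k+1}(L[1])$, and for the outer $\pr_{L[1]} \circ Q$ to land in $L[1]$ it must swallow the whole symmetric product, i.e.\ act via the single structure map $Q_{n-k+1}^1$ applied to the entire expression $Q_k^1(x_{\sigma(1)}\vee\cdots\vee x_{\sigma(k)}) \vee x_{\sigma(k+1)} \vee \cdots \vee x_{\sigma(n)}$. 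Collecting these contributions with the induced Koszul signs $\epsilon(\sigma)$ yields \eqref{eq:QsquaredZero}. The main obstacle, and really the only nontrivial bookkeeping step, is verifying that no cross terms survive — that is, that only the ``full'' outer Taylor coefficient contributes — which follows from the fact that each term produced by the first application of $Q$ has exactly one ``new'' factor in $L[2]$ that the second projection must target, and the combinatorics of shuffles of shuffles reduce to a single shuffle by the standard identity $S_n \cong Sh(k,n-k)\circ(S_k\times S_{n-k})$ used in constructing $\Delta_\sh$.
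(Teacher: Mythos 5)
Your proposal is correct and follows essentially the same route the paper intends: the proposition is presented as a direct consequence of the cofreeness results, i.e. Theorem~\ref{thm:CofreeCocomConilpotentCoalg} and Corollary~\ref{cor:coderivationcogeneratorscocom}, with \eqref{eq:QinSymmetricviaTaylor} coming from expanding $Q = Q^1 \star \id$ via the shuffle coproduct and \eqref{eq:QsquaredZero} from the fact that $Q^2=\frac{1}{2}[Q,Q]$ is again a coderivation, hence vanishes iff $\pr_{L[1]}\circ Q^2$ does. The only cosmetic remark is that your final appeal to a ``shuffles of shuffles'' identity is unnecessary: since $\pr_{L[1]}\circ Q\at{\Sym^m(L[1])}=Q_m^1$ by definition, the projection of the outer $Q$ automatically kills all terms except the top Taylor coefficient, which already yields \eqref{eq:QsquaredZero}.
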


In particular, \eqref{eq:QsquaredZero} implies $(Q_1^1)^2=0$, i.e. that $Q_1^1$ 
defines a differential on $L$ of degree one. The next order shows that 
$Q_2^1$ satisfies a Leibniz rule with respect to $Q_1^1$, and the third one 
can be interpreted as $Q_2^1$ satisfying a Jacobi identity up to terms depending on 
$Q_3^1$.
We also write $Q_k = Q_k^1$ and following
\cite{canonaco:1999a} we denote by $Q_n^i$ the component 
\begin{equation*}
  Q_n^i 
	=
	\pr_{\Sym^i(L[1])} \circ \, Q\at{\Sym^n(L[1])}
\colon \Sym^n( L[1] )\longrightarrow \Sym^i (L[2])
\end{equation*} 
of $Q$. It is given by
\begin{equation}
  \label{eq:Qniformula}
  Q_n^i(x_1\vee \cdots \vee x_n)
	=
	\sum_{\sigma \in \mathrm{Sh}(n+1-i,i-1)}
	\epsilon(\sigma) Q_{n+1-i}^1(x_{\sigma(1)}\vee \cdots\vee  x_{\sigma(n+1-i)})\vee
	x_{\sigma(n+2-i)} \vee \cdots \vee x_{\sigma(n)},
\end{equation}
where $Q_{n+1-i}^1$ are the usual structure maps. 
$L_\infty$-algebras have also been called strong homotopy Lie algebras 
as they are DGLAs up to higher homotopies.

\begin{example}[DGLA]
  \label{ex:DGLAasLinfty}
  A differential graded Lie algebra (DGLA) 
	$(\liealg{g},\D,[\argument{,}\argument])$ is an $L_\infty$-algebra 
	with $Q_1 = -\D$ and $Q_2(\gamma \vee \mu) = 
	-(-1)^{\abs{\gamma}}[\gamma,\mu]$ and $Q_n=0$ for all 
	$n > 2$, where $\abs{\gamma}$ denotes the degree 
	in $\liealg{g}[1]$.
\end{example}

This means that every DGLA is an $L_\infty$-algebra and for 
later use we consider the following example: 

\begin{example}\label{ex:EndDGLA}
Let $(M^\bullet,\D)$ be a cochain complex over $\mathbb{K}$. Then we define 
	\begin{align*}
	\End^k(M)=\{\phi\colon M^\bullet\to M^{\bullet+k}\ | \ \phi \text{ linear }\}
	\end{align*}
and $\End^\bullet(M)=\bigoplus_{i\in\mathbb{Z}} \End^i(M)$. For elements 
$A_i\in \End^{|A_i|}(M)$, we define
	\begin{align*}
	[A_1,A_2]:=A_1\circ A_2 -(-1)^{|A_1||A_2|}A_2\circ A_1,
	\end{align*}	 
which is a graded Lie bracket. Finally, setting $D=[\D,\argument]$, 
one sees that $(\End^\bullet(M),D,[\argument{,}\argument])$ is a DGLA and hence an $L_\infty$-algebra. 
\end{example}

\begin{remark}
\label{rem:CohomologyLieAlg}
  Note that $(Q_1^1)^2=0$ allows us to study the cohomology 
	$\mathrm{H}(L)$ of the cochain complex $(L,Q^1_1)$. In particular,  
	\eqref{eq:QsquaredZero} implies that $\mathrm{H}(L)$ interits 
	a Lie algebra structure since the bracket induced by $Q^1_2$ satisfies 
	the usual Jacobi identity.
\end{remark}

\begin{remark}[Antisymmetric formulation]
  Using the d\'ecalage-isomorphism 
  \begin{align*}
    dec^n \colon 
	  \Sym^n (L) 
	  & \longrightarrow 
	  \Anti^n(L[-1])[n]
  \end{align*}
	one can show that an $L_\infty$-algebra structure on $L$ is equivalently given 
	by a sequence of 	maps
	\begin{equation}
	  Q^a_n \colon
		\Anti^n L 
		\longrightarrow 
		L[2-n]
	\end{equation}
	for $n> 0$ with 
	\begin{equation}
	  \sum_{k=1}^n (-1)^{n-k} \sum_{\sigma\in Sh(k,n-k)} 
		\chi(\sigma) 
		Q^a_{n-k+1}(Q^a_k(x_{\sigma(1)}\wedge\dots\wedge x_{\sigma(k)})\wedge 
		x_{\sigma(k+1)}\wedge \dots \wedge 	x_{\sigma(n)})
		=
		0
	\end{equation}
	for $n \geq 1$, compare 
	e.g. \cite[Theorem~1.3.2]{manetti:note}.
  Because of the grading shift from $\Sym(L[1])$ to $\Anti L$  
	one sometimes also speaks of $L_\infty[1]$-algebra structures on $L[1]$ 
	in the symmetric setting, and of $L_\infty$-algebra structures on $L$ 
	in the antisymmetric setting. 
	But since they are equivalent we refer to them just as $L_\infty$-algebras.
	Moreover, note that the symmetric interpretation of the structure maps of 
  $L_\infty$-morphisms is more natural in the sense of the definition via 
  the cocommutative cofree coalgebra. However, if one sees $L_\infty$-algebras 
	as a generalization of DGLAs the antisymmetric interpretation is the most natural one.
\end{remark}

Similarly, we know from Theorem~\ref{thm:CofreeCocomConilpotentCoalg} that every 
$L_\infty$-morphism is characterized by its 
Taylor coefficients, i.e. by a sequence of maps.

\begin{proposition}
  \label{prop:linftymorphismsequence}
  An $L_\infty$-morphism $F\colon (L,Q) \rightarrow (L',Q')$ is 
	uniquely determined by the collection of 
	multilinear graded maps
	\begin{equation}
	  F_n^1
		=
		\pr_{L[1]} \circ F \at{\Sym^n(L[1])}		
		\colon
		\Sym^n (L[1])
		\longrightarrow
		L'[1]
	\end{equation}
	for $n \geq 1$. Setting $F_0^1=0$, it follows that $F$ is given by
  \begin{align}
	  \begin{split}
	  F(x_1 \vee \cdots \vee x_n)
		& 
		=
		\exp_\star(F^1)(x_1 \vee \cdots \vee x_n)
		=
		\sum_{p \geq 1} \sum_{k_1 + \cdots + k_p=n, k_i\geq 1} 
		\sum_{\sigma\in Sh(k_1,\dots,k_p)}
		\frac{\epsilon(\sigma)}{p!} \\
		& \quad \quad F_{k_1}^1(x_{\sigma(1)}\vee \dots\vee x_{\sigma(k_1)}) 
		\vee \cdots  \vee F_{k_p}^1(x_{\sigma(n-k_p+1)}\vee \cdots \vee 
		x_{\sigma(n)})
		\end{split}
	\end{align}
	and the compatibility with the coderivations leads to further constraints. 
	In particular, one has in lowest order $F_1^1 \circ Q_1^1 =  (Q')_1^1\circ F_1^1$.
\end{proposition}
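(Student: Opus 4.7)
The plan is to reduce everything to Theorem~\ref{thm:CofreeCocomConilpotentCoalg}. By definition an $L_\infty$-morphism $F$ is a morphism of graded coalgebras $\cc{\Sym}(L[1]) \to \cc{\Sym}(L'[1])$, and extending it by $F(1) = 1$ turns it into a morphism of coaugmented cocommutative conilpotent coalgebras. Part (i) of Theorem~\ref{thm:CofreeCocomConilpotentCoalg}, applied with $C = \Sym(L[1])$ and $V = L'[1]$, then states that $F$ is uniquely determined by $\phi = \pr_{L'[1]} \circ F$, whose restrictions to $\Sym^n(L[1])$ are precisely the Taylor coefficients $F_n^1$. Moreover, the same theorem gives the closed form $F = \exp_\star(\phi)$ for the convolution product on $\Hom^\bullet(\Sym(L[1]), \Sym(L'[1]))$ induced by $\Delta_\sh$ on the source and $\mu_S = \vee$ on the target. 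This takes care of the existence and uniqueness assertions simultaneously.

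Next, to unfold $\exp_\star(F^1) = \sum_{p \geq 0} \frac{1}{p!} (F^1)^{\star p}$ into the displayed formula, I would use the simplification \eqref{eq:CoaugmentedConvolution} available for maps vanishing on $1$, which yields
\begin{equation*}
  (F^1)^{\star p}
  =
  \mu_S^{p-1} \circ (F^1)^{\otimes p} \circ \cc{\Delta_\sh}^{p-1}.
\end{equation*}
Iterating \eqref{eq:redcocommutativegradedcoproduct}, I would show that $\cc{\Delta_\sh}^{p-1}(x_1 \vee \cdots \vee x_n)$ equals the sum over ordered compositions $k_1 + \cdots + k_p = n$ with $k_i \geq 1$ and over $\sigma \in Sh(k_1, \dots, k_p)$ of $\epsilon(\sigma)$ times the tensor product of the corresponding consecutive blocks. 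Applying $(F^1)^{\otimes p}$ and then the $(p-1)$-fold product $\mu_S^{p-1}$ reproduces the displayed formula, the prefactor $\tfrac{1}{p!}$ coming from $\exp_\star$. The $p = 0$ term is $1\epsilon$ and vanishes on $\cc{\Sym}(L[1])$, so the sum effectively starts at $p = 1$.

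Finally, for the lowest-order constraint I would apply $\pr_{L'[1]}$ to the intertwining condition $F \circ Q = Q' \circ F$ and evaluate on $\Sym^1(L[1]) = L[1]$. Since $\cc{\Delta_\sh}(x) = 0$ for $x \in L[1]$, only the unary Taylor coefficients contribute to both sides and one is left with $F_1^1 \circ Q_1^1 = (Q')_1^1 \circ F_1^1$. The main obstacle in this proof is really only the bookkeeping in the middle step: one has to verify that the combinatorics of iterated $(k_1, \ldots, k_p)$-shuffles produced by $(p-1)$-fold application of $\cc{\Delta_\sh}$ exhausts each ordered composition exactly once and that the accumulated Koszul sign agrees with $\epsilon(\sigma)$ for the full shuffle. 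Once this combinatorial identity is set up, both the explicit formula and the compatibility constraints follow immediately from Theorem~\ref{thm:CofreeCocomConilpotentCoalg}.
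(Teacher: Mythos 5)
Your proposal is correct and follows essentially the same route as the paper, which derives this proposition directly from the cofreeness statement of Theorem~\ref{thm:CofreeCocomConilpotentCoalg} (uniqueness from $\pr_{L'[1]}\circ F$ and the closed form $F=\exp_\star(F^1)$), with the shuffle formula coming from unwinding the iterated reduced coproduct and the lowest-order constraint from projecting $F\circ Q=Q'\circ F$ onto $L'[1]$ in symmetric degree one.
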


We also write $F_k = F_k^1$ and we get coefficients $F_n^j =\pr_{\Sym^j(L[1])} \circ F 
\at{\Sym^n(L[1])}
\colon \Sym^n (L[1] )\rightarrow 
\Sym^j (L'[1])$ of $F$. Note that $F_n^j$ depends only on $F_k^1 = F_k$ for $k\leq n-j+1$. 

\begin{remark}
  Analogously to the case of coderivations, one can interpret 
	an $L_\infty$-morphism as a sequence of multilinear maps
	\begin{equation}
	  F_n \colon
		\Anti^n L
		\longrightarrow
		L'[1-n],
	\end{equation}
	satisfying certain compatibility relations. In the case of DGLAs 
	$(\liealg{g}_1,\D_1,[\argument{,}\argument]_1)$ and 
	$(\liealg{g}_2,\D_2,[\argument{,}\argument]_2)$ one can show 
	that the compatibility with the differentials takes the following form
	\begin{align}
	  \begin{split}
		\label{eq:linftymorphismdiff}
		  \D_2 F_n(x_1,\dots,x_n) 
			& = 
			\sum_{i=1}^n (-1)^{k_1+ \cdots + k_{i-1}+1-n} 
			F_n(x_1,\dots,\D_1 x_i,\dots,x_n)   \\
			& \quad 
			+ \frac{1}{2} \sum_{k=1}^{n-1} \sum_{\sigma\in Sh(k,n-k)} 
			\pm [F_k(x_{\sigma(1)},\dots,x_{\sigma(k)}),F_{n-k}(x_{\sigma(k+1)},\dots,
			x_{\sigma(n)})]_2 \\
			& \quad
			- \sum_{i \neq j} \pm F_{n-1}([x_i,x_j]_1,x_1,\dots,\widehat{x_i},\dots,
			\widehat{x_j},\dots,x_n)
		\end{split}
	\end{align}
	for $x_i \in \liealg{g}_1^{k_i}$, compare \cite{dolgushev:2005a}.
\end{remark}

In order to gain a better understanding, we consider at first 
$L_\infty$-isomorphisms and their inverses. We 
follow \cite[Section~2]{canonaco:1999a}.

\begin{proposition}
  \label{prop:Linftyiso}
  An $L_\infty$-morphism $F$ between $L_\infty$-algebras $(L,Q)$ and 
	$(L',Q')$ is an isomorphism if and only if $F_1^1$ is an isomorphism. 
\end{proposition}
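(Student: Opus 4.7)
My plan is to prove the two directions of the equivalence separately, with the nontrivial content being the construction of a coalgebra inverse to $F$ from the invertibility of $F_1^1$.

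For the ``only if'' direction, suppose $F$ has an $L_\infty$-inverse $G$. Since both $F$ and $G$ are coalgebra morphisms preserving the group-like element $1$, they restrict to morphisms of the reduced coalgebras and hence respect the coradical filtration of Proposition~\ref{prop:Symwithshuffle}. On the first filtration level, which is essentially $L[1]$ and $L'[1]$ (the primitive elements), $G\circ F$ induces the identity, and taking the Taylor coefficient in degree one gives $G_1^1\circ F_1^1=\id_{L[1]}$ and similarly $F_1^1\circ G_1^1=\id_{L'[1]}$.

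For the ``if'' direction, assume $F_1^1$ is a linear isomorphism and construct an inverse $G$ by defining its Taylor coefficients $G_n^1\colon \Sym^n(L'[1])\to L[1]$ recursively. By the cofree property of $\Sym$ (Theorem~\ref{thm:CofreeCocomConilpotentCoalg}), specifying the $G_n^1$ uniquely determines a coalgebra morphism $G\colon \cc{\Sym}(L'[1])\to\cc{\Sym}(L[1])$. I would compute the Taylor coefficient of $G\circ F$ using $G=\exp_\star(G^1)$ and the formulas from Proposition~\ref{prop:linftymorphismsequence} to obtain
\begin{equation*}
(G\circ F)_n^1
=
\sum_{k=1}^n G_k^1\circ F_n^k,
\end{equation*}
and demand that this equal $\delta_{n,1}\id_{L[1]}$, which is the Taylor coefficient of the identity morphism. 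In order $n=1$ this forces $G_1^1=(F_1^1)^{-1}$. For $n\geq 2$, observe that $F_n^n$ is the $n$-fold symmetric product of $F_1^1$, hence invertible, while the remaining terms involve only $G_k^1$ with $k<n$ that are already defined by induction. Thus
\begin{equation*}
G_n^1
=
-\Bigl(\sum_{k=1}^{n-1} G_k^1\circ F_n^k\Bigr)\circ (F_n^n)^{-1}
\end{equation*}
defines $G_n^1$ uniquely, giving $G\circ F=\id$. An analogous construction produces a right inverse $H$, and the standard argument $H=(G\circ F)\circ H=G\circ(F\circ H)=G$ shows $G$ is a two-sided coalgebra inverse.

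It remains to check that $G$ is in fact an $L_\infty$-morphism, i.e.\ that it intertwines the codifferentials. Starting from $F\circ Q=Q'\circ F$ and using the two-sided coalgebra identities $G\circ F=\id$ and $F\circ G=\id$ just established, one computes
\begin{equation*}
Q\circ G
=
G\circ F\circ Q\circ G
=
G\circ Q'\circ F\circ G
=
G\circ Q',
\end{equation*}
which is the desired compatibility. The main bookkeeping obstacle is verifying the formula for $(G\circ F)_n^1$ with the correct signs and shuffle combinatorics inherited from \eqref{eq:projectiononTnCocom} and the expansion in Proposition~\ref{prop:linftymorphismsequence}; once that is in hand, the recursion and the intertwining identity are routine.
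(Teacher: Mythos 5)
Your proposal is correct and follows essentially the same route as the paper's proof: extract $G_1^1=(F_1^1)^{-1}$ from the first Taylor coefficient of $G\circ F=\id$ for the easy direction, recursively solve $(G\circ F)_n^1=0$ using the invertibility of $F_n^n=(F_1^1)^{\vee n}$ for the converse, upgrade the one-sided inverse to a two-sided one by the standard associativity trick, and deduce compatibility with the codifferentials from $F\circ Q=Q'\circ F$ by conjugation. The only cosmetic difference is that the paper obtains the second one-sided inverse by applying the same recursion to $G$ rather than constructing a right inverse of $F$ directly, which changes nothing of substance.
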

\begin{proof}
We recall the proof from \cite[Proposition~2.2]{canonaco:1999a}. 
We only have to show that $F$ is invertible as coalgebra morphism if 
and only if $F_1^1$ is invertible with inverse $(F^1_1)^{-1}$. The fact that 
$F^{-1}$ is an $L_\infty$-morphism follows then directly:
\begin{equation*}
  F^{-1}Q'
	=
	F^{-1}Q' FF^{-1}
	=
	F^{-1}FQF^{-1}
	=
	QF^{-1}.
\end{equation*}
Let therefore $F$ be an isomorphism with inverse $F^{-1}$. Then
\begin{equation*}
  \id_{L[1]}
	=
	(\id_{\cc{\Sym} (L[1])})^1_1
	=
	(F^{-1}F)^1_1
	=
	(F^{-1})^1_1 F_1^1,
\end{equation*}
analogously $\id_{L'[1]}=F^1_1(F^{-1})_1^1$, whence $F_1^1$ is an isomorphism.

Suppose now that $F_1^1$ is an isomorphism. We construct a left inverse 
$G$ of $F$ by defining recursively its coefficient functions $G_n$. 
Starting with $G_1 = (F_1^1)^{-1}$ we want for $n>1$
\begin{equation*}
  (GF)_n
	=
	\sum_{i=1}^n G^1_i F^i_n
	\stackrel{!}{=}
	(\id_{\cc{\Sym}(L[1])})^1_n
	=
	0,
\end{equation*}
which is fulfilled for 
\begin{equation*}
  G_n
	=
	G^1_n
	=
	- \left( \sum_{i=1}^{n-1} G^1_i F^i_n\right) (F^n_n)^{-1}.
\end{equation*}
Note that $F_n^n$ is invertible since $F_1^1$ is invertible. By the same 
argument there exists a coalgebra morphism $F'$ with $F'G = \id$ and 
thus $F' = F' GF = F$.
\end{proof}

We saw in Proposition~\ref{prop:linftymorphismsequence} that 
the first Taylor coefficient resp. structure map $F_1$ of an $L_\infty$-morphism 
is a morphism of cochain complexes $(L,Q_1)$ and $(L,Q'_1)$, which leads us to the 
following definition.

\begin{definition}[$L_\infty$-quasi-isomorphism]
\label{def:Linftyquis}
  An \emph{$L_\infty$-quasi-isomorphism} between two $L_\infty$-algebras $(L,Q)$ and 
	$(L',Q')$ is a morphism $F$ of $L_\infty$-algebras such that the first 
	structure map $F_1$ is a quasi-isomorphism of cochain complexes. 
\end{definition}

\begin{example}[DGLA II]
  A DGLA quasi-isomorphism $\phi \colon \liealg{g} \rightarrow \liealg{g}'$ 
	is an $L_\infty$-quasi-isomorphism $F$ with only non-vanishing structure 
	map $F_1 = \phi$.
\end{example}
 
The notion of $L_\infty$-quasi-isomorphisms allows us to introduce 
the notion of formal $L_\infty$-algebras.

\begin{definition}
  An $L_\infty$-algebra $(L,Q)$ is called \emph{formal}, if there exists 
	$L_\infty$-quasi-isomorphism $(L,Q)\to (\mathrm{H}(L),Q_\mathrm{H})$, 
	where $Q_\mathrm{H}$ denotes the Lie algebra structure on the cohomolology 
	$\mathrm{H}(L)$ from Remark~\ref{rem:CohomologyLieAlg}. 
\end{definition}

\begin{example}[Kontsevich's Formality Theorem]
  In \cite{kontsevich:2003a} Kontsevich proved that there exists an 
	$L_\infty$-quasi-isomorphism 
	from the DGLA of polyvector fields $T_\poly(\mathbb{R}^d)$ to the DGLA 
	of polydifferential operators $D_\poly(\mathbb{R}^d)$, 
	compare Example~\ref{ex:tpoly} 
	and Example~\ref{ex:dpoly}. This implies that the DGLA of 
	polydifferential operators is formal, explaining the name 
	"formality theorem".
\end{example}

One reason why $L_\infty$-algebras are particularly useful is that all
$L_\infty$-quasi-isomorphisms admit \emph{$L_\infty$-quasi-inverses}, 
i.e. $L_\infty$-quasi-isomorphisms in the other direction that 
induce the inverse isomorphism in cohomology, see 
Theorem~\ref{thm:QuisInverse} below. Moreover, $L_\infty$-quasi-isomophisms 
are important for the homotopy classification of $L_\infty$-algebras, compare 
Section~\ref{sec:HomClassLinftyAlgs}, for which also another construction 
is needed: the direct sum resp. 
direct product of $L_\infty$-algebras. 

\begin{lemma}
\label{lem:DirectSumLinftyAlg}
  Let $(L,Q)$ and $(L',Q')$ be two $L_\infty$-algebras. Then 
	\begin{equation}
	   \label{eq:directsumCoder}
		 \widehat{Q}
		=
		i \circ Q \circ p + i' \circ Q' \circ p'
	\end{equation}
	defines a codifferential on $\Sym(\widehat{L}[1])$, where 
	$\widehat{L} = L \oplus L'$, $i \colon \cc{\Sym}(L[1]) \rightarrow 
	\cc{\Sym}(\widehat{L}[1])$ is 
	the inclusion	and $p \colon \cc{\Sym}( \widehat{L}[1]) \rightarrow 
	\cc{\Sym}(L[1])$ is the projection, analogously for $i',p'$. Moreover,  
	$(\widehat{L},\widehat{Q})$ is the direct product in the category 
	of conilpotent cocommutative differential graded coalgebras without counit.
\end{lemma}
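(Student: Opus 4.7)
The plan is to exploit the biproduct relations satisfied by the underlying vector-space maps, $\pi\iota = \id_L$, $\pi'\iota' = \id_{L'}$, $\pi\iota' = 0$, $\pi'\iota = 0$, which by functoriality of the reduced symmetric coalgebra construction lift to $pi = \id$, $p'i' = \id$, $pi' = 0$, $p'i = 0$ for the induced coalgebra morphisms. I would rely on Theorem~\ref{thm:CofreeCocomConilpotentCoalg} throughout to reduce all checks to the corestriction $\pr_{\widehat L[1]}$.

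First I would verify that $\widehat Q$ is a degree-one codifferential. Its degree is immediate. To see that it is a coderivation, I would identify its Taylor coefficients,
\begin{equation*}
  \pr_{\widehat L[1]} \circ \widehat Q \at{\Sym^n(\widehat L[1])}
	=
	\iota \circ Q_n \circ \Sym^n(\pi) + \iota' \circ Q'_n \circ \Sym^n(\pi'),
\end{equation*}
and invoke the cofree property to recover $\widehat Q$ as the unique coderivation with these corestrictions. The equation $\widehat Q^2 = 0$ then follows by direct expansion of $(iQp + i'Q'p')^2$: the diagonal terms collapse via $pi = \id$ and $p'i' = \id$ to $iQ^2p$ and $i'(Q')^2p'$ respectively, both vanishing by the codifferential properties of $Q$ and $Q'$; the two cross terms vanish because $pi' = 0$ and $p'i = 0$.

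For the universal property, given a conilpotent cocommutative DG coalgebra $(D, Q_D)$ without counit together with DG coalgebra morphisms $F \colon D \to \cc{\Sym}(L[1])$ and $F' \colon D \to \cc{\Sym}(L'[1])$, the desired factorization $\widehat F \colon D \to \cc{\Sym}(\widehat L[1])$ with $p \circ \widehat F = F$ and $p' \circ \widehat F = F'$ is forced by the cofree property to have corestriction $\pr_{\widehat L[1]} \circ \widehat F = \iota \circ \pr_{L[1]} \circ F + \iota' \circ \pr_{L'[1]} \circ F'$. This yields both uniqueness and, by cofreeness, a well-defined coalgebra morphism $\widehat F$, with the two factorization identities following from the orthogonality relations at the level of corestrictions. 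Compatibility with the differentials, $\widehat F \circ Q_D = \widehat Q \circ \widehat F$, reduces again to a computation of corestrictions governed by the separate DG properties of $F$ and $F'$.

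The main obstacle I anticipate is the careful matching of the operator formula $iQp + i'Q'p'$ with its coderivation-theoretic content via the cofree property; once this is settled, the remaining calculations are routine consequences of the orthogonality relations and the codifferential identities for $Q$ and $Q'$.
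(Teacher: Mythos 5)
The gap is in the first half: you conflate the operator $T = i\circ Q\circ p + i'\circ Q'\circ p'$ with the codifferential $\widehat{Q}$ it is meant to define. Since $i$ and $p$ are coalgebra morphisms and $Q$ is a coderivation, $i\circ Q\circ p$ is a coderivation \emph{along} $i\circ p$, not along the identity, and $i\circ p \neq \id$ on symmetric degree $\geq 2$ of $\Sym(\widehat{L}[1])$: concretely, for $x\in L$ and $y\in L'$ one has $T(x\vee y)=0$, whereas the coderivation with the Taylor coefficients you wrote down gives $Q_1^1(x)\vee y \pm x \vee (Q')_1^1(y)$. So $T$ itself is not a coderivation, and cofreeness only supplies a (different) coderivation $\widehat{Q}$ with the same corestriction — it does not turn $T$ into one. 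Consequently your verification of $\widehat{Q}^2=0$ by expanding $T^2$ with the relations $p\circ i=\id$, $p\circ i'=0$, etc., computes the square of the wrong map.

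What actually has to be checked is $\widehat{Q}^1\circ\widehat{Q}=0$ (this suffices because $\widehat{Q}\circ\widehat{Q}$ is again a coderivation, hence vanishes iff its corestriction does). The only nontrivial case is a mixed monomial: there every term of $\widehat{Q}(x_1\vee\cdots\vee x_n)$ still contains factors from both $L$ and $L'$, because the mixed Taylor coefficients vanish and the pure ones take values in $L$ resp.\ $L'$, so $\widehat{Q}^1$ annihilates it; on pure monomials one is left with $Q^1\circ Q=0$ resp.\ $(Q')^1\circ Q'=0$. Equivalently, one may first prove $p\circ\widehat{Q}=Q\circ p$ and $p'\circ\widehat{Q}=Q'\circ p'$ — which requires the same ``mixed stays mixed'' observation — and only then does an operator-style computation become legitimate. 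This case analysis is precisely what the paper's explicit description of $\widehat{Q}^1$ (equal to $Q^1$ on monomials from $L$, to $Q'^1$ on monomials from $L'$, and zero on mixed ones) encodes. Your treatment of the universal property is essentially the paper's argument (corestriction $F^1\oplus F'^1$, uniqueness by cofreeness, differentials compared on corestrictions) and is fine as a sketch, but note that there too the identity $\widehat{Q}^1\circ\widehat{F}=Q^1 F\oplus Q'^1F'$ hinges on the vanishing of $\widehat{Q}^1_n$ on mixed products, which you should make explicit rather than attribute to the relations $p\circ i'=0$, $p'\circ i=0$ alone.
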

\begin{proof}
Explicitly, one has for $x_1,\dots,x_m \in L$ and $x_{m+1},\dots,x_n 
\in L'$
\begin{equation*}
  \widehat{Q}^1(x_1\vee\cdots\vee x_n)
	=
	\begin{cases}
	  Q^1 (x_1\vee \cdots \vee x_n) \quad & \text{ if } m=n \\
		Q'^1 (x_1\vee \cdots\vee x_n) \quad & \text{ if } m=0 \\
		0 \quad \quad & \text{ if } 0 <m <n,
	\end{cases}
\end{equation*}
and one can directly check $\widehat{Q}\widehat{Q}=0$. In order to show that 
$(\widehat{L},\widehat{Q})$ is the direct product of $(L,Q)$ and $(L',Q')$, 
consider two morphisms $F \colon C \rightarrow 
\cc{\Sym}(L[1])$ and $F' \colon C \rightarrow \cc{\Sym}(L'[1])$, 
where $(C,D,\nabla)$ is a conilpotent cocommutative DG coalgebra without 
counit. Then $\widehat{F} \colon C \rightarrow \cc{\Sym}(\widehat{L}[1])$ 
defined by $\widehat{F}^1 = F^1 \oplus F'^1$ is the only coalgebra morphism 
with $p\widehat{F}=F$ and $p'\widehat{F}=F'$. We only have to check 
$\widehat{Q}\widehat{F} = \widehat{F}D$, where we get with 
Lemma~\ref{lemma:projectiononSnCocom}
\begin{align*}
  \widehat{Q}^1\widehat{F}
	& =
	\sum_{n> 0} \frac{1}{n!}\widehat{Q}^1_n  
	\mu_S^{n-1}(F^1 \oplus F'^1)^{\otimes n}\Delta^{(n-1)}_\sh  \\
	& =
	\sum_{n>0} \frac{1}{n!} \left(  
	Q^1_n \mu_S^{n-1}(F^1)^{\otimes n} \Delta^{(n-1)}_\sh \oplus 
	Q'^1_n \mu_S^{n-1}(F'^1)^{\otimes n} \Delta^{(n-1)}_\sh  \right) \\
	& = 
	Q^1F \oplus Q'^1F'
	=
	F^1 D \oplus F'^1 D
	= 
	\widehat{F}^1D.
\end{align*}
This shows the desired equality.
\end{proof}

\subsection{Curved $L_\infty$-algebras}
\label{subsec:CurvedLinftyAlgs}

As mentioned above, we can use the whole power of 
Theorem~\ref{thm:CofreeCocomConilpotentCoalg} by considering 
coderivations of $\Sym(L[1])$ that do not vanish on the unit, 
yielding the notion of curved $L_\infty$-algebras. 

\begin{definition}[Curved $L_\infty$-algebra]
  A \emph{curved} $L_\infty$-\emph{algebra} is a graded vector space  
	$L$ over $\mathbb{K}$ endowed with a degree one codifferential $Q$ 
	on the cofree conilpotent coalgebra $(\Sym(L[1]),\Delta_\sh)$ 
	cogenerated by $L[1]$.
\end{definition}
This codifferential $Q$ is equivalent to a sequence of maps $Q_n$ with 
$n= 0,1,\dots$, where the sum \eqref{eq:QinSymmetricviaTaylor}
starts now at $k=0$. In particular, $Q^2=0$ implies 
\begin{equation}
  Q_1(Q_0(1))
	=
	0
	\quad \text{ and } \quad
  Q_1(Q_1(x))
	=
	- Q_2(Q_0(1)\vee x),
\end{equation}
i.e. $Q_0(1)$ is always closed with respect to $Q_1$, but 
$Q_1$ is in general no longer a coboundary operator. However, 
if $Q_0(1)$ is \emph{central}, i.e. 
\begin{equation*}
  Q_{n+1}(Q_0(1)\vee x_1 \vee \cdots\vee x_n)
	=
	0
\end{equation*}
for all $n \geq 1$, then we have again $(Q_1)^2=0$. Morphisms of 
curved $L_\infty$-algebras are degree $0$ counital coalgebra morphisms $F$  
such that $F \circ Q = Q' \circ F$. As in the flat setting they are 
characterized by a sequence of maps $F_n$ with $n\geq 1$ satisfying the 
properties of Proposition~\ref{prop:linftymorphismsequence} and the fact that 
$F(1)=1$. Note that this last property is clear since we consider a 
morphism between conilpotent counital coalgebras. These have unique 
grouplike elements $1$ and $F$ has to map one to the other. 
Finally, note that a curved $L_\infty$-algebra with $Q_0=0$ is just a 
flat $L_\infty$-algebra as expected.

\begin{example}[Curved Lie algebra]
  The basic example is a curved Lie algebra 
	$(\liealg{g},R,\D,[\,\cdot\,{,}\,\cdot\,])$, i.e. a graded Lie algebra 
	with derivation $\D$ of degree $+1$ and $\D^2 = \ad(R)= [R,\argument]$ as well as $\D R=0$. 
	The element $R\in \liealg{g}^2$ is 
	also called \emph{curvature}.
	By setting $Q_0(1)=- R$ and with higher orders as in Example~\ref{ex:DGLAasLinfty} 
	we obtain a curved 
	$L_\infty$-algebra and $\D$ is a differential, i.e. $\D^2=0$, if and only if 
	$R$ is central. 
	Morphisms of curved Lie algebras are Lie algebra morphisms $f\colon \liealg{g}\rightarrow 
	\liealg{g}'$ such that $f \circ \D = \D' \circ f$ and $f(R)=R'$. 
\end{example}

\begin{remark}[Curved morphisms of curved Lie algebras]
  \label{rem:curvedMorphisms}
  Note that there exists a more general notion of \emph{curved morphisms} $(f,\alpha) 
	\colon (\liealg{g},R,\D)\rightarrow (\liealg{g}',R',\D')$ of curved 
	Lie algebras where $\alpha \in \liealg{g}'^{1}$ and where for all $x\in\liealg{g}$
	\begin{align*}
	  \D' f(x)
		=
		f(\D x) + [\alpha,f(x)]
		\quad \text{ and } \quad
		R'
		=
		f(R) + \D'\alpha - \frac{1}{2}[\alpha,\alpha],
	\end{align*}
	see \cite[Definition~4.3]{maunder:2017a}. The usual case with $\alpha=0$ is called 
	\emph{strict} and for a curved Maurer-Cartan element $x\in \liealg{g}^1$, see 
	Section~\ref{sec:MCinDGLA} for the definition, one gets a curved 
	Maurer-Cartan element $(f,\alpha)(x)=f(x)-\alpha \in \liealg{g}'^1$. In particular, 
	$(\id,\alpha)$ corresponds to twisting with $\alpha$ since 
	\begin{equation*}
	  R'
		=
		R +  \D \alpha + [\alpha,\alpha]- \frac{1}{2}[\alpha,\alpha]
		=
		R^\alpha,
	\end{equation*}
	and $(f,\alpha)$ can be seen as strict morphism into the twisted 
	curved Lie algebra $(\liealg{g}',R'^{-\alpha},\D'^{-\alpha}) =
	(\liealg{g}', R'- \D'\alpha + \frac{1}{2}[\alpha,\alpha], 
	\D' - [\alpha,\argument])$, 
	see again Section~\ref{sec:MCinDGLA} for more details on the twisting 
	procedure. 
\end{remark}

\begin{remark}[Curved morphisms of curved $L_\infty$-algebras]
  The above curved morphisms of curved Lie algebras can be generalized to 
	curved $L_\infty$-algebras by allowing zero-th Taylor coefficients
	$F_0^1 \colon \mathbb{K} \rightarrow L'[1]$ with $F_0^1(1)=\alpha\in L'[1]^0$. 
	These curved morphisms of $L_\infty$-algebras are no longer coalgebra morphisms, 
	but they still have some nice properties. In order to fully understand them we need 
	to introduce Maurer-Cartan elements and the concept of twisting, which is done in the 
	next sections. Afterwards, we will investigate the curved 
	morphisms of curved $L_\infty$-algebras in Remark~\ref{rem:CurvedMorphLinfty}, 
	see also \cite{positselski:2018a} 
	for the case of curved $A_\infty$-algebras.
\end{remark}

Concerning the compatibility of flat $L_\infty$-morphisms with the curvature 
one has the following relation:

\begin{proposition}
  \label{prop:CompatibilityMorphismCurvatur}
  Let $F$ be an $L_\infty$-morphism of flat $L_\infty$-algebras $(L,Q)$ 
	and $(L',Q')$. In addition, let $Q_0 
	\in L[1]^1, Q'_0 \in L'[1]^1$ be closed and
	central with respect to the $L_\infty$-structures $Q$ and $Q'$, 
	with induced curved $L_\infty$-structures $\widetilde{Q}$ on $L$ and 
	$\widetilde{Q'}$ on $L'$. Then the structure maps of $F$ 
	induces a morphism of curved $L_\infty$-algebras if and only if 
	\begin{equation}
	  F_1^1(Q_0) 
		=
		Q_0' 
		\quad \quad \text{ and } \quad \quad 
		F_k^1(Q_0 \vee \argument)
		=
		0
		\quad \quad \forall \; k>1.
	\end{equation}
\end{proposition}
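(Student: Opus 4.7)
The plan is to recast the curved morphism equation $F\circ\widetilde{Q}=\widetilde{Q'}\circ F$ as an equality of projections onto $L'[1]$. Since $\widetilde{Q}$ and $\widetilde{Q'}$ are coderivations along the identity, a direct application of $\Delta_\sh\circ F=(F\otimes F)\circ\Delta_\sh$ shows that both $F\circ\widetilde{Q}$ and $\widetilde{Q'}\circ F$ are coderivations along $F$, and hence so is their difference. By the uniqueness part of Theorem~\ref{thm:CofreeCocomConilpotentCoalg}, a coderivation along $F$ is completely determined by its projection to the cogenerators $L'[1]$, so the morphism property is equivalent to $\pr_{L'[1]}\circ(F\circ\widetilde{Q}-\widetilde{Q'}\circ F)=0$ on all of $\Sym(L[1])$.

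Next I would decompose $\widetilde{Q}=Q+K$, where $K$ is the unique coderivation on $\Sym(L[1])$ with only nontrivial Taylor coefficient $K^1_0(1)=Q_0$; concretely, $K(x_1\vee\cdots\vee x_n)=Q_0\vee x_1\vee\cdots\vee x_n$. Analogously $\widetilde{Q'}=Q'+K'$, with $K'$ given by wedging with $Q'_0$. Because $F$ is a flat $L_\infty$-morphism, $F\circ Q=Q'\circ F$ holds on $\cc{\Sym}(L[1])$, and on the unit both sides vanish since $Q(1)=0=Q'(1)$. Therefore the condition reduces to $\pr_{L'[1]}\circ(F\circ K-K'\circ F)=0$ on all of $\Sym(L[1])$.

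It now remains to evaluate this on each symmetric component separately. On the grouplike element $1\in\Sym^0(L[1])$ one obtains $\pr_{L'[1]}(F(Q_0))-\pr_{L'[1]}(Q'_0)=F^1_1(Q_0)-Q'_0$, whose vanishing is precisely the first stated condition. On $x_1\vee\cdots\vee x_n$ with $n\geq 1$, the first term projects to $F^1_{n+1}(Q_0\vee x_1\vee\cdots\vee x_n)$, while the second equals $\pr_{L'[1]}(Q'_0\vee F(x_1\vee\cdots\vee x_n))$; since $F$ sends $\cc{\Sym}(L[1])$ into $\cc{\Sym}(L'[1])$, the product $Q'_0\vee F(x_1\vee\cdots\vee x_n)$ lies in $\Sym^{\geq 2}(L'[1])$ and hence projects to zero. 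This yields $F^1_{n+1}(Q_0\vee x_1\vee\cdots\vee x_n)=0$ for all $n\geq 1$, i.e.\ the second condition. The main subtlety is in the very first step: one has to verify carefully that $F\circ\widetilde{Q}-\widetilde{Q'}\circ F$ is indeed a coderivation along $F$, so that the cofreeness of $\Sym(L'[1])$ can be invoked to reduce an identity of coalgebra-valued maps to a collection of identities of linear maps into $L'[1]$.
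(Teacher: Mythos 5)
Your proof is correct and takes essentially the same route as the paper's: reduce the identity $F\circ\widetilde{Q}=\widetilde{Q'}\circ F$ to its projection onto the cogenerators $L'[1]$, cancel the flat parts using $F\circ Q=Q'\circ F$, and evaluate separately on $1$ and on $v_1\vee\cdots\vee v_n$ with $n\geq 1$, where the term $Q'_0\vee F(\cdots)$ projects to zero. The only difference is that you spell out the justification for the reduction step (both sides are coderivations along $F$, hence determined by their projections by cofreeness), which the paper's proof uses implicitly by writing the condition directly as $F^1\circ\widetilde{Q}=\widetilde{Q'}^1\circ F$.
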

\begin{proof} 
  One only has to check if $F$ is compatible with the codifferentials 
	$\widetilde{Q}$ and $\widetilde{Q'}$, where one gets for 
	$v_1\vee \cdots \vee v_n$ with $ v_i \in L[1]^{k_i}$ and $n>0$
	\begin{align*}
	  F^1 \circ \widetilde{Q} (v_1 \vee \cdots \vee v_n)
		& =
		F^1 ( Q_0 \vee v_1 \vee \cdots \vee v_n 
		+ Q(v_1\vee \cdots \vee v_n))  \\
		& =
		F_{n+1}^1( Q_0 \vee v_1 \vee \cdots \vee v_n)  
		+  (Q')^1\circ F (v_1\vee \cdots \vee v_n) \\
		& \stackrel{!}{=}
		\widetilde{Q'}^1 \circ F(v_1\vee \cdots \vee v_n).
	\end{align*}
	In addition, one has
	\begin{equation*}
	  Q'_0
		=
	  \widetilde{Q'} \circ F(1)
		\stackrel{!}{=}
		F \circ \widetilde{Q}(1)
		=
		F_1^1(Q_0),
	\end{equation*}
	which directly yields the above identities. 
\end{proof}

In the following, if we speak of $L_\infty$-algebras we allow curved 
$L_\infty$-algebras, in cases where flatness is required we speak of 
flat $L_\infty$-algebras. 

\section{The Homotopy Transfer Theorem and the Minimal Model of a $L_\infty$-algebra}
\label{sec:homotopytransfertheorem}

It is well-known that given a
homotopy retract one can transfer $L_\infty$-structures, see 
e.g. \cite[Section~10.3]{loday.vallette:2012a}. Explicitly, a homotopy retract (also 
called homotopy equivalence data) consists of two cochain complexes $(A,\D_A)$ and 
$(B,\D_B)$ with chain maps $i,p$ and 
homotopy $h$ such that 
\begin{equation}
  \label{eq:homotopyretract}
  \begin{tikzcd} 
	(A,\D_A)
	\arrow[rr, shift left, "i"] 
  &&   
  \left(B, \D_B\right)
  \arrow[ll, shift left, "p"]
	\arrow[loop right, "h"] 
\end{tikzcd} 
\end{equation}
with $h \circ \D_B + \D_B \circ h = \id - i \circ p$, and such that $i$ and $p$ are
quasi-isomorphisms. Then the homotopy transfer theorem
states that if there exists
a flat $L_\infty$-structure on $B$, then one can transfer it to $A$ in
such a way that $i$ extends to an $L_\infty$-quasi-isomorphism. By 
the invertibility of $L_\infty$-quasi-isomorphisms, a statement that we 
prove in Theorem~\ref{thm:QuisInverse} below, there also exists an 
$L_\infty$-quasi-isomorphism into $A$ denoted by $P$, see
e.g. \cite[Proposition~10.3.9]{loday.vallette:2012a}.

\subsection{Homotopy Transfer Theorem via Symmetric Tensor Trick}
\label{sec:HTTSymmetricTensorTrick}

We want to state different versions of this statement. 
For simplicity, we assume that we have a deformation retract, i.e. we 
are in the situation \eqref{eq:homotopyretract} with additionally
\begin{equation*}
  p\circ i
  =
  \id_A.
\end{equation*}
By \cite[Remark~2.1]{huebschmann:2011a} we can assume that we have even a 
special deformation retract, also called \emph{contraction}, where
\begin{equation*}
  h^2 
  =
  0,
  \quad \quad 
  h \circ i
  =
  0
  \quad \text{ and } \quad 
  p\circ h 
  = 
  0.
\end{equation*} 
Assume now that $(B,Q_B)$ is an $L_\infty$-algebra with $(Q_{B})_1^1=-\D_B$. 
In the following we
give a more explicit description of the transferred $L_\infty$-structure $Q_A$ on $A$ and 
of the $L_\infty$-projection $P\colon (B,Q_B)\rightarrow (A,Q_A)$ inspired by the 
symmetric tensor 
trick \cite{berglund:2014a,huebschmann:2010a,huebschmann:2011a,manetti:2010a}, 
see also \cite{merkulov:1999a} for the case of $A_\infty$-algebras.
The map $h$ extends to a homotopy $H_n
\colon \Sym^n (B[1]) \rightarrow \Sym^n(B[1])[-1]$ with respect to
$Q_{B,n}^n \colon \Sym^n (B[1]) \rightarrow \Sym^n(B[1])[1]$, see
e.g. \cite[p.~383]{loday.vallette:2012a} for the construction on the
tensor algebra, which adapted to our setting works as follows:
we define the operator 
	\begin{align*}
	K_n
	\colon 
	\Sym^n(B[1])
	\longrightarrow 
	\Sym^n(B[1])
	\end{align*}
by 
	\begin{align*}
	K_n(x_1\vee\cdots\vee x_n)
	=
	\frac{1}{n!} 
	\sum_{i=0}^{n-1}
	\sum_{\sigma\in S_n}
	\frac{\epsilon(\sigma)}{n-i}ipX_{\sigma(1)}\vee\cdots\vee ipX_{\sigma(i)}
	\vee X_{\sigma(i+1)}\vee X_{\sigma(n)}.
	\end{align*}
Note that here we sum over the whole symmetric group and 
not the shuffles, since in this case the formulas are easier. We extend $-h$ to a 
coderivation to  $\Sym(B[1])$, i.e.
	\begin{align*}
	\tilde{H}_n(x_1\vee\cdots\vee x_n):=
	-\sum_{\sigma\in \mathrm{Sh}(1,n-1)}
	\epsilon(\sigma) \;
	hx_{\sigma(1)}\vee x_{\sigma(2)}\vee\cdots\vee x_{\sigma(n)}.
	\end{align*}
\begin{lemma}
With the definition above, we have
\begin{align}
 \label{eq:extendedHomotopy}
    K_n\circ Q_{B,n}^n
	=
	Q_{B,n}^n\circ K_n
	 \quad 
	\text{ and } 
	\quad
	K_n\circ \tilde{H}_n
	=
	\tilde{H}_n\circ K_n
	,
	\end{align}
	where $Q_{B,n}^n$ is the extension of the differential $Q_{B,1}^1 = - \D_B$ to 
$\Sym^n(B[1])$ as a coderivation.
\end{lemma}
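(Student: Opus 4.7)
The plan is to reduce both identities to slot-wise statements about the three operators $ip$, $-\D_B$ and $-h$, and then appeal to the symmetrization that is built into $K_n$. The relevant elementary facts, all consequences of the contraction data, are:
\begin{equation*}
  \D_B \circ ip = i \D_A p = ip \circ \D_B,
	\qquad
	h \circ ip = (hi)p = 0,
	\qquad
	ip \circ h = i(ph) = 0.
\end{equation*}

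For the first identity, view $Q_{B,n}^n$ on $\Sym^n(B[1])$ as the sum, over the $n$ tensor slots, of the coderivation extension that applies $-\D_B$ to one factor (with the appropriate Koszul signs). Each summand of $K_n$, after fixing $i$ and $\sigma\in S_n$, is the operator that applies $ip$ to $i$ specific slots and $\id$ to the remaining $n-i$ slots (followed by an overall permutation, which is itself a symmetry of $\Sym^n(B[1])$ and hence commutes with $Q_{B,n}^n$). Since on each individual slot $ip$ commutes with $-\D_B$ and $\id$ commutes with $-\D_B$, each such slot-operator commutes with $Q_{B,n}^n$. Summing with the coefficients $\tfrac{\epsilon(\sigma)}{n!(n-i)}$ therefore yields $K_n\circ Q_{B,n}^n = Q_{B,n}^n\circ K_n$.

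For the second identity, the same slot-wise viewpoint applies to $\tilde H_n$: it is (up to the overall sign) the coderivation extension of $h$, and on the representative summand (after shuffle) it applies $-h$ to one distinguished slot and $\id$ to the others. In the composition $K_n\circ \tilde H_n$, any term in which the distinguished $-h$-slot of $\tilde H_n$ lies among the $i$ slots on which $K_n$ applies $ip$ gives a factor $ip\circ h = 0$ on that slot and hence vanishes. Dually, in $\tilde H_n\circ K_n$ the terms in which the distinguished $-h$-slot coincides with an $ip$-slot contribute $h\circ ip = 0$ and vanish. In all remaining terms, $-h$ and $ip$ act on disjoint slots, and therefore commute trivially up to the Koszul sign coming from transposing $-h$ past the other slot-operators; these signs are identical in both orders because the slots are disjoint. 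It remains to check that the surviving configurations are weighted identically on both sides. This is a bookkeeping step: both $K_n\tilde H_n$ and $\tilde H_n K_n$ are averages, over $\sigma\in S_n$ and over the index $i$ of $K_n$, of operators labelled by a choice of an $h$-slot and an $i$-element set of $ip$-slots disjoint from it, and the combinatorial factor $\tfrac{1}{n!(n-i)}$ in $K_n$ together with the shuffle sum in $\tilde H_n$ produces the same overall weight in each case.

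The main obstacle is precisely this last bookkeeping: keeping track of the factor $\tfrac{1}{n-i}$ in $K_n$, the shuffle prefactor in $\tilde H_n$, and the Koszul signs $\epsilon(\sigma)$ as one pulls the $-h$-slot past the $ip$-slots. The clean way to do it is to rewrite the double sum $\tfrac{1}{n!}\sum_{i,\sigma}$ in $K_n$ as a sum over pairs (subset $S\subseteq\{1,\dots,n\}$ of size $i$, overall permutation) and to rewrite $\tilde H_n K_n$ and $K_n \tilde H_n$ as sums over triples (distinguished $h$-slot, $ip$-subset disjoint from it, permutation). Once both compositions are expressed in that common form, the equality is visible and the two identities in \eqref{eq:extendedHomotopy} follow simultaneously.
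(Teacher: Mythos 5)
Your argument is correct and is essentially the paper's own proof: the first identity is reduced to the fact that $i$ and $p$ are chain maps (so $ip$ commutes with $\D_B$ slot-wise, and the permutations implicit in $K_n$ act trivially on $\Sym^n(B[1])$), while the second follows from $h\circ i=0$ and $p\circ h=0$, so that $ip$ and $h$ annihilate one another whenever they hit the same slot and commute (with no Koszul sign, as $ip$ has degree zero) when they hit disjoint slots, the surviving configurations being weighted only by the size of the $ip$-subset on either side. The paper records exactly these two observations without carrying out the subset/weight bookkeeping you sketch, so your write-up is, if anything, slightly more detailed than the original.
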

\begin{proof}
Since $i$ and $p$ are chain maps, it is clear that 
$K_n\circ Q_{B,n}^n=Q_{B,n}^n\circ K_n$. The second equation follows from the fact that
 we have $h\circ i=0$ and $p\circ h=0$. 
 \end{proof}
With the definition 
	\begin{align*}
	H_n
	:=
	K_n\circ \tilde{H}_n
	=
	\tilde{H}_n\circ K_n
	\end{align*}
we have, together with equations \eqref{Eq: ExtHomotopy}, that
	\begin{align*}
	Q_{B,n}^n H_n + H_n Q_{B,n}^n
	=
	(n\cdot\id-ip)\circ K_n,
	\end{align*}
where $ip$ is extended as a coderivation to $\Sym(B[1])$.

\begin{proposition}\label{Prop: CoAlgHom}
In the above setting one has
\begin{equation}
	\label{Eq: ExtHomotopy}
  	Q_{B,n}^n H_n + H_n Q_{B,n}^n
	=
	\id - (ip)^{\vee n}.
\end{equation}
\end{proposition}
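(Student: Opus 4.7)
The plan is to reduce the claim to a purely combinatorial identity. The lemma stated just before the proposition gives
\begin{equation*}
Q_{B,n}^{n}H_{n}+H_{n}Q_{B,n}^{n} \;=\; (n\cdot\id - (ip)^{\mathrm{coder}})\circ K_{n},
\end{equation*}
where $(ip)^{\mathrm{coder}}$ denotes the coderivation extension of $ip$ to $\Sym(B[1])$. Hence it suffices to prove the identity
\begin{equation*}
(n\cdot\id - (ip)^{\mathrm{coder}})\circ K_{n} \;=\; \id - (ip)^{\vee n}
\end{equation*}
on $\Sym^{n}(B[1])$. The crucial algebraic fact to exploit is that $ip$ is an idempotent endomorphism of $B[1]$, since $pi=\id_{A}$ implies $(ip)(ip)=i(pi)p=ip$.

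I would first decompose $K_{n}$ according to the number of $ip$-factors by introducing, for $0\le i\le n$,
\begin{equation*}
K_{n}^{(i)}(x_{1}\vee\cdots\vee x_{n}) \;=\; \frac{1}{n!}\sum_{\sigma\in S_{n}}\epsilon(\sigma)\,ipx_{\sigma(1)}\vee\cdots\vee ipx_{\sigma(i)}\vee x_{\sigma(i+1)}\vee\cdots\vee x_{\sigma(n)},
\end{equation*}
so that $K_{n}=\sum_{i=0}^{n-1}\tfrac{1}{n-i}K_{n}^{(i)}$ by the very definition of $K_{n}$. The boundary cases are $K_{n}^{(0)}=\id$ and $K_{n}^{(n)}=(ip)^{\vee n}$. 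Applying $(ip)^{\mathrm{coder}}$ to a single summand of $K_{n}^{(i)}$, the idempotency $(ip)^{2}=ip$ means that the $i$ already-projected slots each reproduce the same tensor, contributing $i\cdot K_{n}^{(i)}$. On each of the remaining $n-i$ slots one obtains a new $ip$-factor; using graded cocommutativity of $\vee$ and the full symmetry of the sum over $S_{n}$, the transposition moving this new $ip$-factor into position $i+1$ produces a sign that cancels exactly against the change of Koszul sign $\epsilon(\sigma)$, so each such contribution equals $K_{n}^{(i+1)}$. This gives the recursion
\begin{equation*}
(ip)^{\mathrm{coder}}\circ K_{n}^{(i)} \;=\; i\cdot K_{n}^{(i)} + (n-i)\cdot K_{n}^{(i+1)}.
\end{equation*}

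Finally, summing the recursion with weights $\tfrac{1}{n-i}$ yields $(ip)^{\mathrm{coder}}\circ K_{n} = K_{n}^{(n)} + n\sum_{i=1}^{n-1}\tfrac{1}{n-i}K_{n}^{(i)}$, so that subtracting from $n\cdot K_{n} = K_{n}^{(0)} + n\sum_{i=1}^{n-1}\tfrac{1}{n-i}K_{n}^{(i)}$ produces precisely $K_{n}^{(0)}-K_{n}^{(n)}=\id-(ip)^{\vee n}$, as required. The only real obstacle is the sign bookkeeping in the step where one identifies the new $ip$-contributions with $K_{n}^{(i+1)}$; once the cancellation between the swap sign and the Koszul sign is made explicit, the remaining argument is a straightforward telescoping computation.
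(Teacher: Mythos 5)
Your proposal is correct and follows essentially the same route as the paper: both reduce, via the preceding lemma, to the identity $(n\cdot\id - (ip)^{\mathrm{coder}})\circ K_n = \id - (ip)^{\vee n}$ and then verify it by exploiting the idempotency of $ip$ and the $S_n$-symmetrization. Your decomposition $K_n=\sum_i \tfrac{1}{n-i}K_n^{(i)}$ with the recursion $(ip)^{\mathrm{coder}}\circ K_n^{(i)} = i\,K_n^{(i)} + (n-i)\,K_n^{(i+1)}$ is just a cleanly organized, telescoping version of the paper's direct comparison of summands.
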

\begin{proof}
Because of the previous results, it is enough to show that
$\id - (ip)^{\vee n}=(n\cdot\id-ip)\circ K_n$.  
Let $X_1\vee\dots \vee X_n\in \Sym^n(B[1])$, then we have 
	\begin{align*}
	(n\cdot\id-&ip)\circ K_n (X_1\vee\dots \vee X_n)\\&
	=
	\frac{1}{(n-1)!} 
	\sum_{i=0}^{n-1}
	\sum_{\sigma\in S_n}
	\frac{\epsilon(\sigma)}{n-i}ipX_{\sigma(1)}\vee\cdots\vee ipX_{\sigma(i)}
	\vee X_{\sigma(i+1)}\vee X_{\sigma(n)}\\&
	\quad-
	\frac{1}{n!} 
	\sum_{i=0}^{n-1}
	\sum_{\sigma\in S_n}
	\frac{i\,\epsilon(\sigma)}{n-i}ipX_{\sigma(1)}\vee\cdots\vee ipX_{\sigma(i)}
	\vee X_{\sigma(i+1)}\vee X_{\sigma(n)}\\&
	\quad-
	\frac{1}{n!} 
	\sum_{i=0}^{n-1}
	\sum_{\sigma\in S_n}
	\frac{\epsilon(\sigma)}{n-i}ipX_{\sigma(1)}\vee\cdots\vee ipX_{\sigma(i)}
	\vee ip(X_{\sigma(i+1)}\vee X_{\sigma(n)})\\&
	=
	\frac{1}{n!} 
	\sum_{i=0}^{n-1}
	\sum_{\sigma\in S_n}
	\epsilon(\sigma)ipX_{\sigma(1)}\vee\cdots\vee ipX_{\sigma(i)}
	\vee X_{\sigma(i+1)}\vee X_{\sigma(n)}\\&
	\quad -
	\frac{1}{n!} 
	\sum_{i=0}^{n-1}
	\sum_{\substack{\sigma\in S_n,\\\tau \in Sh(1,n-i-1)}}\hspace*{-5pt}
	\frac{\epsilon(\sigma\circ (\id^i\times\tau))}{n-i} 
	ipX_{\sigma(1)}\vee\cdots\vee ipX_{\sigma(i)}
	\vee ipX_{\sigma(\tau(i+1))}\vee X_{\sigma(\tau(n))}\\&
	=
	\frac{1}{n!} 
	\sum_{i=0}^{n-1}
	\sum_{\sigma\in S_n}
	\epsilon(\sigma)ipX_{\sigma(1)}\vee\cdots\vee ipX_{\sigma(i)}
	\vee X_{\sigma(i+1)}\vee X_{\sigma(n)}\\&
	\quad -
	\frac{1}{n!} 
	\sum_{i=0}^{n-1}
	\sum_{\substack{\sigma\in S_n,\\\tau \in Sh(1,n-i-1)}}
	\frac{\epsilon(\sigma)}{n-i}ipX_{\sigma(1)}\vee\cdots\vee ipX_{\sigma(i)}
	\vee ipX_{\sigma(i+1)}\vee X_{\sigma(n)}\\&
	=
	\frac{1}{n!} 
	\sum_{i=0}^{n-1}
	\sum_{\sigma\in S_n}
	\epsilon(\sigma)ipX_{\sigma(1)}\vee\cdots\vee ipX_{\sigma(i)}
	\vee X_{\sigma(i+1)}\vee X_{\sigma(n)}\\&
	\quad -
	\frac{1}{n!} 
	\sum_{i=0}^{n-1}
	\sum_{\sigma\in S_n}
	\epsilon(\sigma)ipX_{\sigma(1)}\vee\cdots\vee ipX_{\sigma(i)}
	\vee ipX_{\sigma(i+1)}\vee X_{\sigma(n)}.
	\end{align*}
The finalization of the proof is just a comparison of the summands. 
\end{proof}
Suppose now that we already have constructed a codifferential $Q_A$ and a 
morphism of coalgebras $P$ with structure maps $P_\ell^1 \colon \Sym^\ell(B[1]) \rightarrow A[1]$ 
such that $P$ is an $L_\infty$-morphism up to order $k$, i.e.
\begin{equation*}
  \sum_{\ell=1}^m P^1_\ell \circ Q_{B,m}^\ell
  =
  \sum_{\ell=1}^m Q_{A,\ell}^1\circ P^\ell_{m}
\end{equation*}
for all $m \leq k$. Then we have the following statement.
\begin{lemma}
\label{lemma:Linftyuptok}
  Let $P \colon \Sym(B[1]) \rightarrow \Sym (A[1])$ be an
  $L_\infty$-morphism up to order $k\geq 1$ . Then
  \begin{equation}
    \label{eq:Linftykplusone}
    L_{\infty,k+1}
    =
    \sum_{\ell = 2}^{k+1} Q_{A,\ell}^1 \circ P^\ell_{k+1} - \sum_{\ell =1}^{k} 
    P_\ell^1 \circ 	Q^\ell_{B,k+1}
  \end{equation}
  satisfies
  \begin{equation}
    \label{eq:linftycommuteswithq}
    L_{\infty,k+1} \circ Q_{B,k+1}^{k+1}
    =
    -Q_{A,1}^1 \circ L_{\infty,k+1}.
  \end{equation}
\end{lemma}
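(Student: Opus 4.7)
The strategy is to realize $L_{\infty,k+1}$ as the cogenerator projection of an ``obstruction coderivation'' built from $P$, $Q_A$ and $Q_B$, and then exploit a Bianchi-type identity stemming from $Q_A^2 = 0 = Q_B^2$.

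First I would introduce
\[
\Phi \; := \; P \circ Q_B - Q_A \circ P \; \colon \; \cc{\Sym}(B[1]) \longrightarrow \cc{\Sym}(A[1])
\]
and check, using that $P$ is a coalgebra morphism and $Q_A$, $Q_B$ are coderivations along the identity, that $\Phi$ is a coderivation along $P$ of degree $+1$. By the cofree property of Theorem~\ref{thm:CofreeCocomConilpotentCoalg}, $\Phi$ is then determined by its projection $\pr_{A[1]}\circ\Phi$, and an explicit computation with the shuffle coproduct yields
\[
\Phi(x) = \pr_{A[1]}\Phi(x) + \sum_{(x)} P(x_{(1)}) \vee \pr_{A[1]}\Phi(x_{(2)})
\]
for $x\in\Sym^m(B[1])$, where the sum runs over $\cc{\Delta_\sh}$. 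The hypothesis ``$P$ is an $L_\infty$-morphism up to order $k$'' says exactly that $\pr_{A[1]}\circ\Phi$ vanishes on $\Sym^j(B[1])$ for $j\leq k$; since any non-trivial splitting of $x\in\Sym^{k+1}(B[1])$ produces $x_{(2)}$ of degree $\leq k$, it follows that $\Phi|_{\Sym^m(B[1])}=0$ for $m\leq k$ and that $\Phi|_{\Sym^{k+1}(B[1])}=\pr_{A[1]}\circ\Phi|_{\Sym^{k+1}(B[1])}$ lands entirely in $A[1]\subset\cc{\Sym}(A[1])$.

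Next I would derive from $Q_A^2=0=Q_B^2$ the identity
\[
\Phi \circ Q_B + Q_A \circ \Phi \; = \; P Q_B^2 - Q_A P Q_B + Q_A P Q_B - Q_A^2 P \; = \; 0,
\]
restrict to $\Sym^{k+1}(B[1])$, and project to $A[1]$. On the $\Phi\circ Q_B$ piece, $Q_B$ decomposes along $\bigoplus_{\ell=1}^{k+1}\Sym^\ell(B[1])$, and by the vanishing just proved only the diagonal summand $\ell=k+1$ survives, contributing $\bigl(\pr_{A[1]}\circ\Phi|_{\Sym^{k+1}}\bigr)\circ Q_{B,k+1}^{k+1}$. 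On the $Q_A\circ\Phi$ piece, since $\Phi|_{\Sym^{k+1}}$ lands in $A[1]$, only the differential $Q_{A,1}^1$ contributes to the projection, giving $Q_{A,1}^1\circ\bigl(\pr_{A[1]}\circ\Phi|_{\Sym^{k+1}}\bigr)$.

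Finally, a direct expansion of the definition shows
\[
\pr_{A[1]}\circ\Phi|_{\Sym^{k+1}(B[1])} \; = \; P_{k+1}^1 \circ Q_{B,k+1}^{k+1} - Q_{A,1}^1 \circ P_{k+1}^1 - L_{\infty,k+1},
\]
obtained by separating the terms involving $P_{k+1}^1$ in $\sum_\ell (P_\ell^1 Q_{B,k+1}^\ell - Q_{A,\ell}^1 P_{k+1}^\ell)$. Substituting this into the identity from the previous paragraph, the contributions proportional to $P_{k+1}^1$ cancel thanks to $(Q_{B,k+1}^{k+1})^2=0$ (the symmetric-coderivation extension of $(Q_{B,1}^1)^2=0$) and $(Q_{A,1}^1)^2=0$ (which follows from $Q_A^2=0$ in the flat setting), leaving exactly \eqref{eq:linftycommuteswithq}. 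The main technical obstacle is the collapse argument in the first step: one needs to combine the explicit formula for $P$-coderivations with $P(1)=1$ and the inductive vanishing of $\pr_{A[1]}\circ\Phi$ to conclude that $\Phi$ itself (not merely its projection) vanishes below degree $k+1$, since only then does the projected Bianchi identity reduce to the desired two-term cocycle relation.
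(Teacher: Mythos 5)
Your proof is correct, and every step checks out against the conventions of Section~\ref{sec:Coalgebras}: $\Phi=P\circ Q_B-Q_A\circ P$ is indeed a coderivation along $P$ (degree $+1$, $\Phi(1)=0$ in the flat setting), the order-$k$ hypothesis is exactly the vanishing of $\pr_{A[1]}\circ\Phi$ on $\Sym^{\leq k}(B[1])$, and your identity $\pr_{A[1]}\Phi|_{\Sym^{k+1}(B[1])}=P^1_{k+1}\circ Q^{k+1}_{B,k+1}-Q^1_{A,1}\circ P^1_{k+1}-L_{\infty,k+1}$ together with the projected relation $\Phi\circ Q_B+Q_A\circ\Phi=0$ gives \eqref{eq:linftycommuteswithq} after the $P^1_{k+1}$-terms cancel. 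The paper's own proof is the component-level shadow of this: it expands $L_{\infty,k+1}\circ Q^{k+1}_{B,k+1}$ directly, trades $P^\ell_{k+1}\circ Q^{k+1}_{B,k+1}$ for $(P\circ Q_B)^\ell_{k+1}$ and $Q^\ell_{B,k+1}\circ Q^{k+1}_{B,k+1}$ for the lower components via $Q_B^2=0$, then uses the hypothesis in the two forms $(P\circ Q_B)^\ell_{k+1}=(Q_A\circ P)^\ell_{k+1}$ for $\ell\geq 2$ and $(P\circ Q_B)^1_i=(Q_A\circ P)^1_i$ for $i\leq k$, and finishes with $Q_A^2=0$ and $(Q^1_{A,1})^2=0$. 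Your first structural claim (that $\Phi|_{\Sym^{k+1}(B[1])}$ is concentrated in the cogenerators) is precisely the $\ell\geq 2$ statement the paper uses implicitly, so the two arguments rest on the same facts; what your packaging buys is a conceptual reading — the leading obstruction is a cocycle for $X\mapsto Q^1_{A,1}\circ X+X\circ Q^{k+1}_{B,k+1}$ — and it dovetails with the way the lemma is exploited in Theorem~\ref{thm:HTTJonas} (there $P^1_{k+1}=L_{\infty,k+1}\circ H_{k+1}$ is chosen exactly so that this obstruction vanishes), while the paper's version is shorter index bookkeeping. Two cosmetic remarks: $(Q^{k+1}_{B,k+1})^2=0$ is obtained most quickly as the $(k+1,k+1)$-component of $Q_B^2=0$ (your remark about the coderivation extension of a square-zero map is also fine), and it is worth saying explicitly that $\Phi(1)=0$, so working on $\cc{\Sym}(B[1])$ rather than $\Sym(B[1])$ is harmless and the uniqueness part of Theorem~\ref{thm:CofreeCocomConilpotentCoalg} justifies your explicit formula for $\Phi$ in terms of $\pr_{A[1]}\circ\Phi$ and $P$.
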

\begin{proof}
  The statement follows from a straightforward computation. For
  convenience we omit the index of the differential:
  \begin{align*}
    L_{\infty,k+1} Q_{k+1}^{k+1}
    & = 
    \sum_{\ell = 2}^{k+1} Q_{\ell}^1 (P\circ Q)^\ell_{k+1}  
    - \sum_{\ell = 2}^{k+1} \sum_{i=1}^k Q_{\ell}^1 P^\ell_i Q^i_{k+1}
    + \sum_{\ell =1}^{k}\sum_{i=1}^k P_\ell^1 Q^\ell_i Q_{k+1}^i  \\
    & =
    \sum_{\ell = 2}^{k+1} Q_{\ell}^1 (Q\circ P)^\ell_{k+1}  
    - \sum_{\ell = 2}^{k+1} \sum_{i=1}^k Q_{\ell}^1 P^\ell_i Q^i_{k+1}
    + \sum_{\ell =1}^{k}\sum_{i=1}^k Q_\ell^1 P^\ell_i Q_{k+1}^i  \\
    & =
    -Q_1^1 (Q \circ P)^1_{k+1} + Q_1^1 \sum_{i=1}^k P^1_{i}Q^i_{k+1} 
    = 
    - Q_1^1 L_{\infty,k+1},
  \end{align*}
  where the last equality follows from $Q_1^1Q_1^1 = 0$. 
\end{proof}

This allows us to prove one version of the homotopy transfer theorem 
as formulated in \cite[Theorem~B.2]{esposito.kraft.schnitzer:2022a:pre}.

\begin{theorem}[Homotopy transfer theorem]
  \label{thm:HTTJonas}
  Let $(B,Q_B)$ be a flat $L_\infty$-algebra with $(Q_B)^1_1=-\D_B$ and contraction 
	\begin{equation}
    \begin{tikzcd} 
	  (A,\D_A)
	  \arrow[rr, shift left, "i"] 
    &&   
    \left(B, \D_B\right)
    \arrow[ll, shift left, "p"]
	  \arrow[loop right, "h"] 
  \end{tikzcd} 
  \end{equation}
	Then 
	\begin{align}
	  \begin{split}
		\label{eq:HTTTransferredStructures}
		  (Q_A)_1^1
			& =
			- \D_A,
			\quad \quad\quad\quad\;
			(Q_A)^1_{k+1}
			=
			\sum_{i=1}^{k} P^1_i \circ (Q_B)^i_{k+1} \circ i^{\vee (k+1)},\\
			P_1^1
			& =
			p, 
			\quad \quad\quad\quad\quad\quad\;\;
			P^1_{k+1}
			=
			L_{\infty,k+1}\circ H_{k+1}
			\quad \text{ for  } k\geq 1
		\end{split}
	\end{align}
	turns $(A,Q_A)$ into an $L_\infty$-algebra with 
	$L_\infty$-quasi-isomorphism $P\colon (B,Q_B)\rightarrow (A,Q_A)$. 
	Moreover, one has $P^1_k \circ i^{\vee k} =0$ for $k\neq 1$.
\end{theorem}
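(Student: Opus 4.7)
The plan is to prove everything by a simultaneous induction on the arity $k \geq 1$, defining $(Q_A)^1_k$ and $P^1_k$ recursively via \eqref{eq:HTTTransferredStructures} and establishing at each step both the $L_\infty$-morphism relation $\sum_{\ell=1}^{k}\bigl(P^1_\ell \circ (Q_B)^\ell_k - (Q_A)^1_\ell \circ P^\ell_k\bigr) = 0$ and the auxiliary vanishing $P^1_k \circ i^{\vee k} = 0$ for $k > 1$. The base case $k=1$ is simply the chain map property $p \circ \D_B = \D_A \circ p$. The vanishing $P^1_{k+1} \circ i^{\vee(k+1)} = 0$ at higher order follows at once from the structure of $H_{k+1} = K_{k+1} \circ \widetilde{H}_{k+1}$, since each summand of $\widetilde{H}_{k+1}$ applies $h$ to one tensor factor and $h \circ i = 0$ by the contraction assumption.

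The heart of the induction is verifying the $L_\infty$-morphism relation at order $k+1$, which, after isolating the top and bottom summands, reduces to showing
\begin{equation*}
L_{\infty, k+1} = P^1_{k+1} \circ (Q_B)^{k+1}_{k+1} - (Q_A)^1_1 \circ P^1_{k+1}.
\end{equation*}
Substituting $P^1_{k+1} = L_{\infty, k+1} \circ H_{k+1}$, and using Lemma~\ref{lemma:Linftyuptok} together with Proposition~\ref{Prop: CoAlgHom}, the right-hand side rewrites as $L_{\infty, k+1}\circ\bigl(H_{k+1}(Q_B)^{k+1}_{k+1} + (Q_B)^{k+1}_{k+1}H_{k+1}\bigr) = L_{\infty, k+1} - L_{\infty, k+1} \circ (ip)^{\vee(k+1)}$. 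So the identity holds if and only if $L_{\infty, k+1} \circ i^{\vee(k+1)} = 0$, and this is precisely what the defining formula for $(Q_A)^1_{k+1}$ accomplishes: by the inductive hypothesis $P^1_j \circ i^{\vee j} = 0$ for $1 < j \leq k$ while $P^1_1 \circ i = \id_A$, the only partition contributing to $P^\ell_{k+1} \circ i^{\vee(k+1)}$ has all blocks of size $1$, forcing $\ell = k+1$ and giving $P^{k+1}_{k+1}\circ i^{\vee(k+1)} = \id$. Hence $L_{\infty, k+1} \circ i^{\vee(k+1)} = (Q_A)^1_{k+1} - \sum_{\ell=1}^{k} P^1_\ell \circ (Q_B)^\ell_{k+1} \circ i^{\vee(k+1)}$, which vanishes by the very definition of $(Q_A)^1_{k+1}$.

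Once $P \circ Q_B = Q_A \circ P$ is established at the level of first Taylor coefficients, it holds as an equality of coderivations along $P$ by the universal property of Theorem~\ref{thm:CofreeCocomConilpotentCoalg}. Then $Q_A^2 = 0$ follows from $Q_A^2 \circ P = P \circ Q_B^2 = 0$: the inclusion $i$ induces a coalgebra morphism $I \colon \Sym(A[1]) \to \Sym(B[1])$ whose composition with $P$ has Taylor coefficients $P^1_j \circ i^{\vee j} = \delta_{j,1}\,\id_A$, so $P \circ I = \id$, and therefore $Q_A^2 = Q_A^2 \circ P \circ I = 0$. The quasi-isomorphism property of $P$ is inherited directly from $P^1_1 = p$. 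I expect the main obstacle to be the bookkeeping in the inductive step: one must recognize that the formula for $(Q_A)^1_{k+1}$ in \eqref{eq:HTTTransferredStructures} is not ad hoc but forced precisely by the requirement that $L_{\infty, k+1} \circ i^{\vee(k+1)}$ vanish, which is exactly what allows Proposition~\ref{Prop: CoAlgHom} to close the homotopy identity.
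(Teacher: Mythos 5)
Your overall architecture is close to the paper's: the vanishing $P^1_{k+1}\circ i^{\vee(k+1)}=0$ from $h\circ i=0$, the reduction of the order-$(k+1)$ morphism relation to $L_{\infty,k+1}=P^1_{k+1}\circ(Q_B)^{k+1}_{k+1}-(Q_A)^1_1\circ P^1_{k+1}$, the use of Proposition~\ref{Prop: CoAlgHom}, the observation that $L_{\infty,k+1}\circ i^{\vee(k+1)}=0$ by the very definition of $(Q_A)^1_{k+1}$, and the concluding $P\circ I=\id$ trick are all as in the paper. The gap is in the step where you invoke Lemma~\ref{lemma:Linftyuptok} to replace $-(Q_A)^1_1\circ L_{\infty,k+1}\circ H_{k+1}$ by $L_{\infty,k+1}\circ(Q_B)^{k+1}_{k+1}\circ H_{k+1}$. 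The identity $L_{\infty,k+1}\circ(Q_B)^{k+1}_{k+1}=-(Q_A)^1_1\circ L_{\infty,k+1}$ is \emph{not} a consequence of the morphism relation up to order $k$ alone: unwinding the computation in the proof of that lemma (using only $Q_B^2=0$ and the order-$k$ morphism relation) one finds
\begin{equation*}
  L_{\infty,k+1}\circ(Q_B)^{k+1}_{k+1}+(Q_A)^1_1\circ L_{\infty,k+1}
  =
  \sum_{i=2}^{k+1}(Q_AQ_A)^1_i\circ P^i_{k+1},
\end{equation*}
so one needs $Q_A$ to be a codifferential up to order $k+1$ (the lemma is stated under the standing assumption that $Q_A$ is a codifferential; with the codifferential property known only up to order $k$, the right-hand side is $(Q_AQ_A)^1_{k+1}\circ p^{\vee(k+1)}$, which does not vanish unless $(Q_AQ_A)^1_{k+1}=0$). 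Your induction hypothesis contains no such statement, and your only argument for $Q_A^2=0$ comes \emph{after} the induction, via $Q_A^2=Q_A^2\circ P\circ I$, which already uses the full identity $P\circ Q_B=Q_A\circ P$ — so the argument as written is circular.

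The missing piece is exactly the first (and longest) half of the paper's inductive step: before applying the computation of Lemma~\ref{lemma:Linftyuptok}, one proves $(Q_AQ_A)^1_{k+1}=0$ from the order-$k$ hypotheses, starting from $(Q_AQ_A)^1_{k+1}=(Q_AQ_A)^1_{k+1}\circ P^{k+1}_{k+1}\circ i^{\vee(k+1)}$ and repeatedly trading $Q_A\circ P$ for $P\circ Q_B$ up to controlled error terms. So the induction has to run simultaneously over three statements — $Q_A$ is a codifferential up to order $k$, $P$ is an $L_\infty$-morphism up to order $k$, and $P^1_k\circ i^{\vee k}=0$ for $k>1$ — rather than the two you list. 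Once $(Q_AQ_A)^1_{k+1}=0$ is in hand, your remaining steps go through and coincide with the paper's proof.
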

\begin{proof}
 We observe $P_{k+1}^1(ix_1 \vee \cdots \vee i x_{k+1}) = 0$ for all
 $k\geq 1$ and $x_i \in A$, which directly follows from $h\circ i =
 0$ and thus $H_{k+1} \circ i^{\vee (k+1)} = 0$. Suppose that 
$Q_A$ is a codifferential up to order $k\geq 1$, i.e. 
$\sum_{\ell=1}^m (Q_A)^1_\ell(Q_A)^\ell_m=0$ for all $m \leq k$, 
and that $P$ is an $L_\infty$-morphism 
up to order $k\geq 1$. We know that these conditions are satisfied for $k=1$ and we show that they hold for $k+1$. Starting with $Q_A$ we compute
\begin{align*}
  (Q_AQ_A)^1_{k+1}
	& =
	(Q_AQ_A)^1_{k+1} \circ P^{k+1}_{k+1}\circ i^{\vee (k+1)} 
	=
	\sum_{\ell=1}^{k+1} (Q_AQ_A)^1_\ell P^\ell_{k+1} i^{\vee (k+1)} 
	=
	(Q_AQ_AP)^1_{k+1}i^{\vee (k+1)} \\
	& =
	\sum_{\ell=2}^{k+1} (Q_A)^1_\ell (Q_AP)^\ell_{k+1} i^{\vee (k+1)} 
	+(Q_A)^1_1(Q_AP)^1_{k+1} i^{\vee (k+1)}  \\
	& =
	\sum_{\ell=2}^{k+1} (Q_A)^1_\ell (PQ_B)^\ell_{k+1} i^{\vee (k+1)} 
	+(Q_A)^1_1(Q_A)^1_{k+1}  \\
	& =
	(Q_APQ_B)^1_{k+1}i^{\vee (k+1)}  -(Q_A)^1_1(Q_A)^1_{k+1} + (Q_A)^1_1(Q_A)^1_{k+1} \\
	& =
	\sum_{\ell=1}^{k} (Q_AP)^1_\ell (Q_B)^\ell_{k+1}i^{\vee (k+1)} 
	+ (Q_AP)^1_{k+1} (Q_B)^{k+1}_{k+1}i^{\vee (k+1)} \\
	& = 
	\sum_{\ell=1}^{k} (PQ_B)^1_\ell (Q_B)^\ell_{k+1}i^{\vee (k+1)} 
	+ (Q_AP)^1_{k+1}i^{\vee (k+1)}  (Q_A)^{k+1}_{k+1}\\
	& =
	- (PQ_B)^1_{k+1}i^{\vee (k+1)}  (Q_A)^{k+1}_{k+1} + 
	(Q_AP)^1_{k+1}i^{\vee (k+1)}  (Q_A)^{k+1}_{k+1} \\
	& =
	- (Q_A)^1_{k+1}(Q_A)^{k+1}_{k+1} + 
	(Q_A)^1_{k+1}(Q_A)^{k+1}_{k+1}
	=
	0.
\end{align*}
By the same computation as in Lemma~\ref{lemma:Linftyuptok}, where one in fact only needs 
that $Q_A$ is a codifferential up to order $k+1$, it follows that 
\begin{equation*}
  L_{\infty,k+1} \circ Q_{B,k+1}^{k+1}
  =
  -Q_{A,1}^1 \circ L_{\infty,k+1}.
\end{equation*}
It remains to show that $P$ is an $L_\infty$-morphism
 up to order $k + 1$. We have 
  \begin{align*}
    P_{k+1}^1 \circ (Q_B)^{k+1}_{k+1}
    & = 
    L_{\infty,k+1} \circ H_{k+1} \circ (Q_B)_{k+1}^{k+1} \\
    &=
    L_{\infty,k+1} - L_{\infty,k+1} \circ (Q_B)_{k+1}^{k+1}\circ H_{k+1} 
    -L_{\infty,k+1} \circ (i\circ p)^{\vee (k+1)} \\
    & =
    L_{\infty,k+1} + (Q_A)_1^1 \circ P_{k+1}^1 
  \end{align*}
	since
	\begin{align*}
	  L_{\infty,k+1} \circ (i\circ p)^{\vee (k+1)}
		& =
		\left(\sum_{\ell = 2}^{k+1} Q_{A,\ell}^1 \circ P^\ell_{k+1} - \sum_{\ell =1}^{k} 
    P_\ell^1 \circ 	Q^\ell_{B,k+1}\right) \circ (i\circ p)^{\vee (k+1)}  \\
		& =
		(Q_A)^1_{k+1} \circ p^{\vee (k+1)} 
		- (Q_A)^1_{k+1}\circ p^{\vee(k+1)}
		=
		0.
	\end{align*}
  Therefore
  \begin{equation*}
    P_{k+1}^1 \circ (Q_B)^{k+1}_{k+1} -  (Q_A)_1^1 \circ P_{k+1}^1
    =
    L_{\infty,k+1},
  \end{equation*}
  i.e. $P$ is an $L_\infty$-morphism up to order $k+1$, and the
  statement follows inductively.
\end{proof}

\begin{remark}[HTT vs. HPL]
In view of Lemma \ref{Prop: CoAlgHom}, we can define $H\colon \Sym(B[1])\to \Sym(B[1])$ by 
$H\at{\Sym^n(B[1])}=H_n$ to obtain 
\begin{equation*}
  \begin{tikzcd} 
	(S(A[1]),\hat{Q}_A)
	\arrow[rr, shift left, "I"] 
  &&   
  \left(S(B[1]), \hat{Q}_B\right)
  \arrow[ll, shift left, "P"]
	\arrow[loop right, "H"] 
\end{tikzcd} 
\end{equation*}
where $I$ (resp. $P$) is the map $i$ (resp. $p$) extended as a coalgebra morphism
and $\hat{Q}_A$ (resp. $\hat{Q}_B$) is the differential $-\D_A$ (resp. $-\D_B$) extended as a coderivation. 
Note that if $h^2=0$ we also have $H^2=0$.  Now we can see the higher brackets $Q_B-\hat{Q}_B$ as a pertubation of the coderivation $\hat{Q}_B$. 
Since $(Q_B-\hat{Q}_B)\circ H$ decreases the symmetric degree, 
it is locally nilpotent, and we can apply the homological pertubation lemma, see e.g. 
\cite{crainic:2004} and references therein. 
Note, that it is not clear why the resulting deformation retract 
gives maps which are compatible with the coalgebra structure.  
\end{remark}

With the homotopy transfer theorem we can show that every contraction 
induces a splitting of the $L_\infty$-algebra in the following sense, 
compare Lemma~\ref{lem:DirectSumLinftyAlg} for the notion of a
direct sum of $L_\infty$-algebras.

\begin{theorem}
\label{thm:ContractionSplitting}
  In the above setting one has an $L_\infty$-isomorphism 
	\begin{equation}
	  L \colon
		B
		\longrightarrow
		A \oplus \image [\D_B,h],
	\end{equation}
	where the $L_\infty$-structure on $\image [\D_B,h]$ is given by just the differential 
	$Q_1^1= -\D_B$ and the $L_\infty$-structure on $A \oplus \image [\D_B,h]$ is the product 
	$L_\infty$-structure of the transferred one on $A$ and the differential on 
	$ \image [\D_B,h]$.
\end{theorem}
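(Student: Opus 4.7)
The plan is to construct $L$ componentwise by invoking the universal property of the direct product from Lemma~\ref{lem:DirectSumLinftyAlg}: specifying $L$ amounts to specifying two $L_\infty$-morphisms $L^A\colon B\to A$ and $L^{\ker p}\colon B\to \ker p$ (the latter equipped with just $-\D_B$). The contraction axioms $hi=0$, $ph=0$, and $\D_Bh+h\D_B=\id-ip$ imply $\ker p=\image[\D_B,h]$, that $\ker p$ is a subcomplex of $(B,\D_B)$, and that $B=\image i\oplus \ker p$ via $b\mapsto(pb,b-ipb)$. For $L^A$ I take the $L_\infty$-quasi-isomorphism $P$ produced by Theorem~\ref{thm:HTTJonas}, whose first Taylor coefficient is $p$.

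The non-trivial piece is the construction of $L^{\ker p}$. Its first Taylor coefficient is forced to be $L_1^{1,\ker p}=\id-ip$, which is a chain map into $\ker p$ since $p$ is. For $n\geq 2$ I would define inductively
\begin{equation*}
  L_n^{1,\ker p}
  \;:=\;
  -\,h\circ R_n,
  \qquad\text{where}\qquad
  R_n
  \;:=\;
  \sum_{\ell=1}^{n-1} L_\ell^{1,\ker p}\circ Q_{B,n}^{\ell}.
\end{equation*}
Since $ph=0$, the image of $h$ is contained in $\ker p$, so $L_n^{1,\ker p}$ automatically takes values in $\ker p[1]$. A direct manipulation, separating off the term $\ell=n$ in which $Q_{B,n}^n$ equals the extension $-\D_B^{(n)}$ of the differential as a coderivation to $\Sym^n(B[1])$, shows that the $L_\infty$-morphism equation at order $n$ is equivalent to $\partial L_n^{1,\ker p}=-R_n$, where $\partial F:=\D_B\circ F-F\circ\D_B^{(n)}$ is the Hom-complex differential.

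The induction then rests on two claims: (a)~$R_n$ is $\partial$-closed, and (b)~$-h\circ R_n$ is a primitive for $-R_n$. For (a) I would combine the inductive hypothesis $\partial L_\ell^{1,\ker p}=-R_\ell$ (with $R_1=0$) with the identity $\partial Q_{B,n}^\ell=\sum_{\ell<j<n}Q_{B,j}^\ell\circ Q_{B,n}^j$ extracted from $Q_B^2=0$. A telescoping sum analogous to the one in the proof of Lemma~\ref{lemma:Linftyuptok} then collapses everything to $\partial R_n=-R_1\circ Q_{B,n}^1=0$. For (b) I would use $\partial h=\D_Bh+h\D_B=\id-ip$ on $B[1]$: since $R_n$ has image in $\ker p$ and is $\partial$-closed, $\partial(h\circ R_n)=(\id-ip)\circ R_n+h\circ \partial R_n=R_n$, so that $\partial L_n^{1,\ker p}=-R_n$ as required.

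Finally, the first Taylor coefficient $L_1^1=(p,\id-ip)\colon B[1]\to(A\oplus\ker p)[1]$ is a linear isomorphism with inverse $(a,k)\mapsto ia+k$, using $pi=\id_A$. Proposition~\ref{prop:Linftyiso} then promotes the coalgebra morphism $L$ built from these Taylor coefficients to an $L_\infty$-isomorphism. The main technical obstacle I anticipate is the sign-bookkeeping in the telescoping computation for $\partial R_n$, which requires consistent conventions for the Hom-complex differential and for the coderivation extensions $\D_B^{(n)}$; structurally, however, the argument is the standard obstruction-theoretic extension of a coalgebra morphism along a contraction.
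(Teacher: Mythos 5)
Your proposal is correct and follows essentially the same route as the paper: you take $P$ from Theorem~\ref{thm:HTTJonas} for the $A$-component, define the second component into $\image[\D_B,h]=\ker p$ by the identical recursion $F_1^1=\id-ip$, $F_n^1=-h\circ\sum_{\ell<n}F_\ell^1\circ Q_{B,n}^\ell$, glue via the product universal property of Lemma~\ref{lem:DirectSumLinftyAlg}, and conclude with Proposition~\ref{prop:Linftyiso}. Your obstruction-theoretic phrasing (closedness of $R_n$ plus $h$ as a primitive, using $\D_Bh+h\D_B=\id-ip$ and $ph=0$) is just a repackaging of the paper's direct inductive verification, so only the sign bookkeeping you already flag remains to be fixed.
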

\begin{proof}
By Theorem~\ref{thm:HTTJonas} we already have an $L_\infty$-morphism $P\colon B 
\rightarrow A$ with first structure map $p$. Now we construct an $L_\infty$-morphism 
$F \colon B \rightarrow \image [\D_B,h] = C$ by setting
\begin{equation*}
  F_1^1
	=
	[\D_B,h]
	\quad\quad \quad \text{ and }\quad
	\quad \quad 
	F^1_n
	=
	- h \circ \sum_{i=1}^{n-1} F_i^1 (Q_B)^i_n 
	\quad \text{ for } n>1.
\end{equation*}
It is an $L_\infty$-morphism up to order one. Suppose it is one up to order 
$n\geq 1$, then we get
\begin{align*}
  (Q_C)_1^1 F^1_{n+1}
	& =
	\D_B\at{C} \circ h \circ \sum_{i=1}^{n} F_i^1 (Q_B)^i_{n+1}  
	= 
	(\id - ip - h \circ\D_B\at{C})\sum_{i=1}^{n} F_i^1 (Q_B)^i_{n+1}\\
	& =
	\sum_{i=1}^{n} F_i^1 (Q_B)^i_{n+1} + h \circ \sum_{i=1}^{n} (FQ_B)^1_i (Q_B)^i_{n+1} 
	=
	\sum_{i=1}^{n+1} F_i^1 (Q_B)^i_{n+1},
\end{align*}
thus by induction $F$ is an $L_\infty$-morphism. 
The universal property of the product gives the desired 
$L_\infty$-morphism $L= P\oplus F$ which is even an $L_\infty$-isomorphism since its first 
structure map $p \oplus (\D_B h + h\D_B)$ is an isomorphism 
with inverse $i \oplus \id$, see Theorem~\ref{prop:Linftyiso}. 
\end{proof}

We also want to give an explicit formula for a $L_\infty$-quasi-inverse of 
$P$, where we follow \cite[Proposition~B.3]{esposito.kraft.schnitzer:2022a:pre}.

\begin{proposition}
  \label{prop:Infinityinclusion}
  The coalgebra map $I\colon \Sym^\bullet (A[1])\to \Sym^\bullet
  (B[1])$ recursively defined by the maps $I_1^1=i$ and $I_{k+1}^1=h\circ
  L_{\infty,k+1}$ for $k\geq 1$ is an $L_\infty$-quasi inverse of $P$.
  Since $h^2=0= h\circ i$, one even has $I_{k+1}^1 = h \circ \sum_{\ell = 2}^{k+1} Q_{B,\ell}^1 \circ I^\ell_{k+1}$ and $P \circ I = \id_A$.
\end{proposition}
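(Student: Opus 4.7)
The plan is to mirror the inductive proof of Theorem~\ref{thm:HTTJonas} with the roles of the two sides of the contraction exchanged. Since $i$ is a quasi-isomorphism (as the inclusion of a deformation retract) and $p\circ i=\id_A$, once $I$ is established as an $L_\infty$-morphism it is automatically an $L_\infty$-quasi-isomorphism whose induced map on cohomology inverts that of $P$; hence it is a quasi-inverse.

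In analogy with Lemma~\ref{lemma:Linftyuptok}, introduce the obstruction
\[
\widetilde{L}_{\infty,k+1} \;=\; \sum_{\ell=2}^{k+1} Q^1_{B,\ell}\circ I^\ell_{k+1} \;-\; \sum_{\ell=1}^{k} I^1_\ell\circ Q^\ell_{A,k+1},
\]
so that $I$ satisfies the $L_\infty$-morphism condition at order $k+1$ exactly when $I^1_{k+1}\circ Q^{k+1}_{A,k+1}+\D_B\circ I^1_{k+1}=\widetilde{L}_{\infty,k+1}$. The argument proceeds by induction on $k$, with base case given by the chain-map identity $\D_B\circ i=i\circ\D_A$. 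For the inductive step, the computation in the proof of Lemma~\ref{lemma:Linftyuptok} applies verbatim after swapping the roles of $A$ and $B$, yielding $\widetilde{L}_{\infty,k+1}\circ Q^{k+1}_{A,k+1}=\D_B\circ\widetilde{L}_{\infty,k+1}$. Substituting the recursive definition $I^1_{k+1}=h\circ\widetilde{L}_{\infty,k+1}$ and applying the contraction identity $\D_B h+h\D_B=\id-ip$ gives
\[
I^1_{k+1}\circ Q^{k+1}_{A,k+1}+\D_B\circ I^1_{k+1}\;=\;\widetilde{L}_{\infty,k+1}-i\circ p\circ\widetilde{L}_{\infty,k+1},
\]
so the induction closes provided $p\circ\widetilde{L}_{\infty,k+1}=0$.

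Verifying this sub-claim is the principal technical obstacle. Since $p\circ h=0$, the terms $I^1_\ell=h\widetilde{L}_{\infty,\ell}$ for $\ell\geq 2$ die upon composition with $p$, while $p\circ I^1_1=p\circ i=\id_A$; hence the second sum in $\widetilde{L}_{\infty,k+1}$ contributes only $Q^1_{A,k+1}$ after applying $p$. The claim therefore reduces to the identity
\[
\sum_{\ell=2}^{k+1} p\circ Q^1_{B,\ell}\circ I^\ell_{k+1}\;=\;Q^1_{A,k+1},
\]
which expresses the transferred structure formula of Theorem~\ref{thm:HTTJonas} through the genuine coalgebra morphism $I$ rather than through the naive symmetric extension $i^{\vee(k+1)}$. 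I would establish this by expanding each $I^\ell_{k+1}$ using the coalgebra-morphism formula of Proposition~\ref{prop:linftymorphismsequence}: the contributions in which every factor equals $I^1_1=i$ reproduce precisely the right-hand side, while the remaining contributions necessarily contain at least one factor $I^1_m=h\widetilde{L}_{\infty,m}$ with $m\geq 2$ and are shown to cancel by combining the induction hypothesis with $p\circ h=0$. Finally, the companion identity $P\circ I=\id_A$ follows by a strictly parallel induction, using $P^1_\ell\circ i^{\vee\ell}=0$ for $\ell\neq 1$ from Theorem~\ref{thm:HTTJonas} together with $p\circ h=0$; conceptually, both statements reflect the homological perturbation lemma applied to the symmetric-tensor extension of the contraction, the novelty being that the perturbed maps genuinely respect the coalgebra structure.
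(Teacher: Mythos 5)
Your proposal follows the same skeleton as the paper's proof: the obstruction $\widetilde{L}_{\infty,k+1}$ (which the paper simply calls $L_{\infty,k+1}$), the identity $\widetilde{L}_{\infty,k+1}\circ Q^{k+1}_{A,k+1}=\D_B\circ \widetilde{L}_{\infty,k+1}$ from the role-swapped Lemma~\ref{lemma:Linftyuptok}, the contraction identity turning the defect into $(\id-i\circ p)\circ\widetilde{L}_{\infty,k+1}$, and the reduction to the sub-claim $p\circ\widetilde{L}_{\infty,k+1}=0$, i.e. $\sum_{\ell=2}^{k+1}p\circ Q^1_{B,\ell}\circ I^\ell_{k+1}=Q^1_{A,k+1}$, all coincide with the paper. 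The gap is in how you propose to prove this sub-claim. The contributions to $I^\ell_{k+1}$ in which every factor equals $I^1_1=i$ occur only for $\ell=k+1$, so they give $p\circ Q^1_{B,k+1}\circ i^{\vee(k+1)}$, which is only the $\ell=1$ summand of the transferred structure $Q^1_{A,k+1}=\sum_{\ell=1}^{k}P^1_\ell\circ Q^\ell_{B,k+1}\circ i^{\vee(k+1)}$ of Theorem~\ref{thm:HTTJonas}; they do not reproduce the whole right-hand side. Correspondingly, the mixed contributions containing a factor $I^1_m=h\circ\widetilde{L}_{\infty,m}$ with $m\geq 2$ do not cancel: they are precisely what generates the nested corrections $P^1_\ell\circ Q^\ell_{B,k+1}\circ i^{\vee(k+1)}$ with $\ell\geq 2$. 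Already for $k+1=3$ with $B$ a DGLA the mixed term is, up to signs, $p\,Q^1_{B,2}\bigl((h\,Q^1_{B,2}(ix_1\vee ix_2))\vee ix_3\bigr)$ (the part $i\circ Q^1_{A,2}$ of $\widetilde{L}_{\infty,2}$ being killed by $h\circ i=0$); this equals the $\ell=2$ term of $Q^1_{A,3}$ and is nonzero in general — it cancels only in special situations such as Proposition~\ref{prop:InfinityinclusionDGLA}, where $p$ is a DGLA morphism. A term-by-term cancellation based on $p\circ h=0$ and the induction hypothesis cannot reach these terms, because $p$ never meets the homotopies $h$ hidden inside the factors $I^1_m$.

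To close the induction you need the paper's (or an equivalent) manipulation: write $p=P^1_1$, so that $p\circ Q^1_{B,\ell}=(P\circ Q_B)^1_\ell-\sum_{j=2}^{\ell}P^1_j\circ Q^j_{B,\ell}$; replace $(P\circ Q_B)^1_\ell$ by $(Q_A\circ P)^1_\ell$ since $P$ is a full $L_\infty$-morphism; use the induction hypothesis on $I$ to trade $\sum_\ell Q^j_{B,\ell}\circ I^\ell_{k+1}$ for $\sum_\ell I^j_\ell\circ Q^\ell_{A,k+1}$ in the components $j\geq 2$; and finally use the vanishing $P^1_j\circ I^j_m=0$ for $j\geq 2$, which holds because every factor of $I^j_m$ lies in $\image i\cup\image h$ and the homotopy $H_j$ annihilates such products by $h\circ i=0$ and $h^2=0$. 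Note that this last vanishing statement (not merely $P^1_\ell\circ i^{\vee\ell}=0$ together with $p\circ h=0$) is also what your "strictly parallel induction" for $P\circ I=\id_A$ actually requires, since for $m>j$ not all factors of $I^j_m$ are of the form $i(\,\cdot\,)$.
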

\begin{proof}
	We proceed by induction: assume that $I$ is an $L_\infty$-morphism up to 
	order $k$, then we have 
	\begin{align*}
		I^1_{k+1}Q_{A,k+1}^{k+1} - Q_{B,1}^1I^1_{k+1}&= 
		-Q_{B,1}^1\circ h\circ L_{\infty,{k+1}}
		+h\circ L_{\infty,{k+1}}\circ Q_{A,k+1}^{k+1}\\&
		=-Q_{B,1}^1\circ h\circ L_{\infty,{k+1}}-h\circ Q_{B,1}^{1}\circ L_{\infty,{k+1}}\\&
		=(\id-i\circ p)L_{\infty,{k+1}}.
	\end{align*}
	We used that $Q_{B,1}^1=-\D_B$ and the homotopy equation of $h$. 
	Moreover, we get with $p\circ h=0$
	\begin{align*}
	  p \circ L_{\infty,{k+1}}
		& = 
		p\circ  \left( \sum_{\ell = 2}^{k+1} Q_{B,\ell}^1 \circ I^\ell_{k+1} 
		- \sum_{\ell =1}^{k}   I_\ell^1 \circ 	Q^\ell_{A,k+1} \right) \\
		& =
		\sum_{\ell = 2}^{k+1} (P\circ Q_B)^1_\ell \circ I^\ell_{k+1} 
		- \sum_{\ell =2}^{k+1}\sum_{i=2}^{\ell} P^1_i\circ Q^i_{B,\ell} \circ 
		I^\ell_{k+1} - 
		Q^1_{A,k+1} \\
		& =
		\sum_{\ell = 2}^{k+1} (Q_A \circ P)^1_\ell \circ I^\ell_{k+1} 
		- \sum_{i =2}^{k+1}\sum_{\ell=i}^{k+1} P^1_i\circ Q^i_{B,\ell} \circ 
		I^\ell_{k+1} - 		Q^1_{A,k+1} \\
		& =
		Q^1_{A,k+1} - \sum_{i =2}^{k+1}\sum_{\ell=i}^{k+1} 
		P^1_i\circ I^i_{\ell} \circ 	Q^\ell_{A,k+1} - 		Q^1_{A,k+1}
		=
		0,
	\end{align*}
	and therefore $I$ is an $L_\infty$-morphism. 
\end{proof}

\begin{remark}
  Note that in the homotopy transfer theorem the property $h^2=0$ is not 
	needed, and that one can also adapt the above construction of $I$ 
	to this more general case.
\end{remark}

%
%
%

Following \cite{esposito.kraft.schnitzer:2020a:pre}, 
let us now consider the special case of contractions of
DGLAs. More explicitly, let now $A,B$ be two DGLAs and assume in addition 
that $i$ is a DGLA morphism. Then the homotopy transfer theorem immediately yields:
\begin{proposition}
  \label{prop:Infinityprojection}
  Defining $P_1^1 = p$ and $P_{k+1}^1 = L_{\infty,k+1} \circ H_{k+1}$
  for $k \geq 1$ yields an $L_\infty$-quasi-isomorphism $P \colon
  (B,Q_B) \rightarrow (A,Q_A)$ that is quasi-inverse to $i$. Here the codifferentials are 
	induced by the respective DGLA structures.
\end{proposition}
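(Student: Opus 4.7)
The plan is to invoke Theorem~\ref{thm:HTTJonas} verbatim with $(B,Q_B)$ the $L_\infty$-algebra coming from the DGLA structure on $B$, and then verify that the transferred codifferential $Q_A$ produced by the theorem coincides with the one induced by the DGLA structure on $A$. Once this is established, $P$ is automatically an $L_\infty$-morphism, and the quasi-isomorphism (and quasi-inverse) statements reduce to the cochain-level fact that $p$ and $i$ are mutually quasi-inverse, which is built into the contraction data.

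The first step I would take is to exploit the fact that $B$ is a DGLA to drastically simplify the formula for $(Q_A)^1_{k+1}$ in \eqref{eq:HTTTransferredStructures}. Since $(Q_B)^1_n = 0$ for $n \geq 3$, the full Taylor component $(Q_B)^i_{k+1}$, which by \eqref{eq:Qniformula} is built from $(Q_B)^1_{k+2-i}$, vanishes unless $k+2-i \in \{1,2\}$. Combined with the range $1 \leq i \leq k$ in \eqref{eq:HTTTransferredStructures}, only the term $i = k$ survives, yielding
\begin{equation*}
  (Q_A)^1_{k+1}
  =
  P^1_k \circ (Q_B)^k_{k+1} \circ i^{\vee(k+1)} \qquad (k \geq 1).
\end{equation*}

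Second, I would handle $k=1$ by a direct computation. Because $i$ is a DGLA morphism, $(Q_B)^1_2(ix \vee iy) = -(-1)^{|x|}[ix,iy]_B = -(-1)^{|x|}\, i[x,y]_A$, and applying $P^1_1 = p$ together with $p \circ i = \id_A$ reproduces exactly $(Q_A)^1_2(x \vee y) = -(-1)^{|x|}[x,y]_A$, i.e.\ the DGLA bracket of $A$ in $L_\infty$-convention. For $k \geq 2$ the key observation is that every summand of $(Q_B)^k_{k+1}(ix_1 \vee \cdots \vee ix_{k+1})$ is a shuffled product of one factor $[ix_a,ix_b]_B = i[x_a,x_b]_A$ with $k-1$ factors $ix_c$, hence lies in the image of $i^{\vee k}$. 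The final clause of Theorem~\ref{thm:HTTJonas}, $P^1_k \circ i^{\vee k} = 0$ for $k \neq 1$, then forces $(Q_A)^1_{k+1} = 0$. This shows that the transferred $L_\infty$-structure on $A$ is precisely the DGLA structure on $A$, so the codifferential $Q_A$ in the statement and the one produced by the HTT agree.

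Finally, Theorem~\ref{thm:HTTJonas} asserts that the coalgebra morphism $P$ defined by the recursion in \eqref{eq:HTTTransferredStructures} is an $L_\infty$-morphism $(B,Q_B) \to (A,Q_A)$; since $P^1_1 = p$ is a quasi-isomorphism (indeed a deformation-retract projection with $i$ as chain-level inverse), $P$ is an $L_\infty$-quasi-isomorphism, and $P$ and $i$ induce mutually inverse isomorphisms on cohomology, making them quasi-inverse to each other. The main obstacle is the third step above: recognising that strictness of $i$ as a DGLA morphism is precisely what is required to push each contribution of $(Q_B)^k_{k+1}$ into the image of $i^{\vee k}$, at which point the general vanishing $P^1_k \circ i^{\vee k} = 0$ from the HTT kills all higher brackets. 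Without strictness one would a priori obtain genuine higher $L_\infty$-components on $A$, and the identification with the original DGLA structure would fail.
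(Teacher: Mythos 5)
Your proposal is correct and follows the same route as the paper: the paper's proof is the one-line observation that, because $i$ is a DGLA morphism, the transferred structure \eqref{eq:HTTTransferredStructures} on $A$ is just its DGLA structure, so Theorem~\ref{thm:HTTJonas} applies verbatim. Your computation (only the $i=k$ term survives, the $k=1$ term reproduces the bracket via $p\circ i=\id_A$, and the higher terms die by $P^1_k\circ i^{\vee k}=0$) is exactly the detail the paper leaves implicit.
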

\begin{proof}
  The transferred $L_\infty$-structure on $A$ from \eqref{eq:HTTTransferredStructures} 
	is indeed just the DGLA structure since $i$ is a DGLA morphism.
\end{proof}

Let us now assume that $p\colon B\to A$ in the contraction
\eqref{eq:homotopyretract} is a DGLA morphism and that $i$ is just a chain
map. Then we can analogously give a formula for the extension $I$ of
$i$ to an $L_\infty$-quasi-isomorphism.

\begin{proposition}
  \label{prop:InfinityinclusionDGLA}
  The coalgebra map $I\colon \Sym^\bullet (A[1])\to \Sym^\bullet
  (B[1])$ recursively defined by the maps $I_1^1=i$ and $I_k^1=h\circ
  L_{\infty,k}$ for $k\geq 2$ is an $L_\infty$-quasi inverse of $p$.
  Since $h^2=0= h\circ i$, one even has $I_k^1 = h \circ Q^1_{2} \circ
  I^2_{k}$.
\end{proposition}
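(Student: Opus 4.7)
The plan is to mirror the inductive argument of Proposition~\ref{prop:Infinityinclusion}, with the roles of $A$ and $B$ swapped and the strict DGLA morphism now being $p$ rather than the transferred projection. We proceed by induction on $k$, showing that $I$ is an $L_\infty$-morphism up to order $k$. The base case $k=1$ reduces to $i\circ \D_A = \D_B\circ i$, which is the hypothesis that $i$ is a chain map.

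For the inductive step, assume $I$ is an $L_\infty$-morphism up to order $k$. We must verify
\begin{equation*}
  I_{k+1}^1 \circ Q^{k+1}_{A,k+1} - Q^1_{B,1} \circ I_{k+1}^1 = L_{\infty,k+1},
\end{equation*}
where $L_{\infty,k+1} = \sum_{\ell=2}^{k+1} Q^1_{B,\ell}\circ I^\ell_{k+1} - \sum_{\ell=1}^{k} I_\ell^1 \circ Q^\ell_{A,k+1}$. Substituting the recursive definition $I_{k+1}^1 = h\circ L_{\infty,k+1}$ and using $Q^1_{B,1} = -\D_B$ together with the homotopy identity $\D_B h + h\D_B = \id_B - ip$, the left-hand side becomes
\begin{equation*}
  h\circ L_{\infty,k+1}\circ Q^{k+1}_{A,k+1} + \D_B \circ h\circ L_{\infty,k+1},
\end{equation*}
which collapses to $L_{\infty,k+1} - i\circ p\circ L_{\infty,k+1}$, provided the commutation identity $L_{\infty,k+1}\circ Q^{k+1}_{A,k+1} = -Q^1_{B,1}\circ L_{\infty,k+1}$ holds. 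The latter is the direct analogue of~\eqref{eq:linftycommuteswithq} with the roles of $A$ and $B$ exchanged, and its proof uses $Q_A^2 = Q_B^2 = 0$ and the inductive hypothesis exactly as in Lemma~\ref{lemma:Linftyuptok}.

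The decisive new ingredient is the identity $p\circ L_{\infty,k+1} = 0$, which is where the hypothesis that $p$ is a strict DGLA morphism enters. Extending $p$ to the strict coalgebra morphism $P = \exp_\star(p)\colon \Sym(B[1]) \to \Sym(A[1])$, one has $P^\ell_j = 0$ unless $\ell = j$, and $P$ satisfies $P\circ Q_B = Q_A\circ P$; in particular, $p\circ Q^1_{B,\ell} = Q^1_{A,\ell}\circ P^\ell_\ell$ for $\ell \in \{1,2\}$. Together with $p\circ h = 0$ and $h^2 = 0$, this gives $p\circ I_\ell^1 = 0$ for $\ell \geq 2$ while $p\circ I_1^1 = \id_A$. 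By the universal property of $\Sym(A[1])$, the coalgebra morphism $P\circ I$ must therefore equal $\id_{\Sym(A[1])}$, which translates into $P^\ell_\ell\circ I^\ell_n = \delta_{\ell,n}\,\id$. A direct telescoping then collapses both sums in $p\circ L_{\infty,k+1}$ to $Q^1_{A,k+1}$, and they cancel. I expect this step to be the main obstacle, as it requires careful bookkeeping of the interaction between the strict morphism $P$ and the non-strict $I$, but no new ideas beyond those in Proposition~\ref{prop:Infinityinclusion} are needed.

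For the simplified formula, observe $h\circ I_1^1 = hi = 0$ and $h\circ I_\ell^1 = h^2 L_{\infty,\ell} = 0$ for $\ell\geq 2$, so applying $h$ to the second sum in $L_{\infty,k+1}$ gives zero. Since $B$ is a DGLA, $Q^1_{B,\ell} = 0$ for $\ell\geq 3$, so the first sum reduces to $Q^1_{B,2}\circ I^2_{k+1}$. Hence $I_{k+1}^1 = h\circ Q^1_{B,2}\circ I^2_{k+1}$. Finally, the identity $P\circ I = \id$ established above gives $p\circ I_1^1 = \id_A$; since $I_1^1 = i$ is a quasi-isomorphism by the assumption on the contraction data, $I$ is an $L_\infty$-quasi-isomorphism and a quasi-inverse of $p$.
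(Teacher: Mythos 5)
Your argument is correct, but it takes a different route from the paper: the paper disposes of this proposition in one line, declaring it a special case of Proposition~\ref{prop:Infinityinclusion}, whereas you re-run the induction of that proposition directly in the DGLA setting. The trade-off is as follows. The paper's reduction implicitly requires checking that, when $p$ is a DGLA morphism, the structure transferred by Theorem~\ref{thm:HTTJonas} to $A$ coincides with the given DGLA structure and the transferred projection $P$ collapses to the strict morphism $p$ (the $p$-analogue of Proposition~\ref{prop:Infinityprojection}); this verification is left to the reader. Your direct proof avoids that specialization argument entirely: strictness of $p$ makes the decisive identity $p\circ L_{\infty,k+1}=0$ a two-line computation --- the DGLA hypothesis on $B$ kills all terms with $\ell\geq 3$, the morphism property converts $p\circ Q^1_{B,2}$ into $Q^1_{A,2}\circ P^2_2$, and $P\circ I=\id$ (which, as you note, follows from $p\circ h=0$ and $pi=\id$ alone, hence needs no induction) finishes it --- whereas in the general proof of Proposition~\ref{prop:Infinityinclusion} this step genuinely uses the inductive hypothesis on $I$. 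The remaining ingredients (the defect $I^1_{k+1}\circ Q^{k+1}_{A,k+1}-Q^1_{B,1}\circ I^1_{k+1}=(\id-ip)\circ L_{\infty,k+1}$ via the homotopy relation, and the commutation identity from the analogue of Lemma~\ref{lemma:Linftyuptok}) match the paper's argument for the general case. Two cosmetic points: the ``roles of $A$ and $B$ swapped'' refers to Lemma~\ref{lemma:Linftyuptok}, not to Proposition~\ref{prop:Infinityinclusion}, where $I$ already maps $\Sym(A[1])\to\Sym(B[1])$; and your phrase that both sums in $p\circ L_{\infty,k+1}$ ``collapse to $Q^1_{A,k+1}$'' is literally true but worth noting that for $k+1\geq 3$ both sides are simply zero, the bracket cancellation occurring only at order two.
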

\begin{proof}
	This is just a special case of Proposition~\ref{prop:Infinityinclusion}.
\end{proof}

\subsection{The Minimal Model and the Existence of $L_\infty$-Quasi-Inverses}

As a first application of the homotopy transfer theorem we want to show
 that $L_\infty$-algebras split into the direct product 
of two special ones, whence we recall some definitions.

\begin{definition}
  An $L_\infty$-algebra $(L,Q)$ is called \emph{minimal} if $Q_1^1 = 0$ 
	and \emph{linear contractible} if $Q_n^1=0$ for $n>1$ with acyclic $Q_1^1$.
\end{definition}
An $L_\infty$-algebra is called contractible if it is isomorphic to 
a linear contractible one, and we can show that every $L_\infty$-algebra 
is isomorphic to the direct sum of these two types 
\cite[Proposition~2.8]{canonaco:1999a}:

\begin{proposition}
  \label{prop:StandardFormLinfty}
	Any $L_\infty$-algebra is isomorphic to the direct sum of a 
	minimal and of a linear contractible one.
\end{proposition}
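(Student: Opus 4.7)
The plan is to produce the decomposition by applying the homotopy transfer theorem and the splitting result (Theorem~\ref{thm:ContractionSplitting}) to a Hodge-type contraction built from the differential $Q_1^1$. First, I would work with the cochain complex $(L, -Q_1^1)$ and choose subspaces realising a decomposition
\begin{equation*}
  L = H \oplus C \oplus Q_1^1(C),
\end{equation*}
where $C$ is any complement of $\ker Q_1^1$ in $L$ (so $Q_1^1|_C \colon C \to Q_1^1(C)$ is an isomorphism) and $H$ is any complement of $\image Q_1^1 = Q_1^1(C)$ inside $\ker Q_1^1$. Such complements exist because we are working over a field; note however that one must be careful to choose them in a homogeneous way with respect to the grading so that $i,p,h$ below are maps of graded vector spaces.

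Next I would define the contraction: let $A = H$ with $\D_A = 0$, let $i \colon H \hookrightarrow L$ be the inclusion, let $p \colon L \to H$ be the projection along $C \oplus Q_1^1(C)$, and let $h \colon L \to L[-1]$ be zero on $H \oplus C$ and the inverse of $Q_1^1|_C$ on $Q_1^1(C)$. A direct verification gives the homotopy relation $h Q_1^1 + Q_1^1 h = \id - i \circ p$ together with $h^2 = 0$, $h \circ i = 0$ and $p \circ h = 0$, so we are in the setting of a special deformation retract as required in Section~\ref{sec:HTTSymmetricTensorTrick}.

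Applying Theorem~\ref{thm:ContractionSplitting} produces an $L_\infty$-isomorphism
\begin{equation*}
  L \;\cong\; H \oplus \image[Q_1^1, h],
\end{equation*}
where $H$ carries the transferred $L_\infty$-structure $Q_H$ from Theorem~\ref{thm:HTTJonas} and $\image[Q_1^1, h]$ carries only the differential induced by $Q_1^1$. Since $(A,\D_A) = (H,0)$, the first structure map of the transferred structure is $(Q_H)_1^1 = -\D_A = 0$, so $(H, Q_H)$ is minimal. On the other hand, the homotopy relation gives $[Q_1^1, h] = \id - i\circ p$, so $\image[Q_1^1, h] = \ker p = C \oplus Q_1^1(C)$; on this subspace $Q_1^1$ sends $C$ isomorphically onto $Q_1^1(C)$ and annihilates $Q_1^1(C)$, so its kernel coincides with its image and the differential is acyclic. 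Therefore $(\image[Q_1^1, h], Q_1^1)$ is linear contractible, finishing the proof.

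The main obstacle is not any deep calculation but rather the bookkeeping around the Hodge-style splitting: one has to make sure that the chosen complements are graded, that the resulting $(i,p,h)$ really satisfies the side conditions $h^2 = 0$, $hi = 0$, $ph = 0$ needed to invoke Theorem~\ref{thm:ContractionSplitting} directly, and that the induced structure on the acyclic summand is indeed only the linear differential (which is exactly what that theorem provides).
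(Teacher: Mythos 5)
Your proof is correct and follows essentially the same route as the paper: the paper's proof simply notes that over a field of characteristic zero one can choose a contraction of $(L,\D=-Q_1^1)$ onto its cohomology $(\mathrm{H}(L),0)$ and then applies Theorem~\ref{thm:ContractionSplitting}, exactly as you do. Your Hodge-type splitting $L=H\oplus C\oplus Q_1^1(C)$ just makes explicit the standard construction of that contraction (with the side conditions $h^2=0$, $hi=0$, $ph=0$), up to the harmless sign adjustment $h\mapsto -h$ needed to match the convention $\D_B=-(Q_B)_1^1$.
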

\begin{proof} Let $(L,Q)$ be a $L_\infty$-algebra and let 
$(L,\D = - Q^1_1)$ be the underlying cochain complex. Since we are 
working over a field $\mathbb{K}$ of characteristic zero, we can find a deformation retract

\begin{equation}
    \begin{tikzcd} 
	  (\mathrm{H}(L),0)
	  \arrow[rr, shift left, "i"] 
    &&   
    \left(L, \D\right)
    \arrow[ll, shift left, "p"]
	  \arrow[loop right, "h"]
  \end{tikzcd} 
  \end{equation}
where $\mathrm{H}(L)$ denotes the cohomology of $(L,\D)$.
Then we apply Theorem \ref{thm:ContractionSplitting} and get the result.
\end{proof}

Denoting the transferred minimal $L_\infty$-structure on 
$\mathrm{H}(L)$ by $Q_\mathrm{H}$, the above proposition gives in particular 
an $L_\infty$-quasi-isomorphism 
$(L,Q) \to (\mathrm{H}(L),Q_\mathrm{H})$. For this reason, 
$(\mathrm{H}(L),Q_\mathrm{H})$ is also called \emph{minimal model} of 
$(L,Q)$. This result allows us to explicitly invert 
$L_\infty$-quasi-isomorphisms.

\begin{theorem}
  \label{thm:QuisInverse}
  If $F$ is an $L_\infty$-quasi-isomorphism from $(L,Q)$ to $(L',Q')$, 
	then there exists an $L_\infty$-morphism $G$ in the other direction, inducing 
	the inverse isomorphism in cohomology.
\end{theorem}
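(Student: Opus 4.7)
The plan is to use the minimal model decomposition from Proposition~\ref{prop:StandardFormLinfty} to reduce the inversion problem to the invertibility criterion of Proposition~\ref{prop:Linftyiso}, which only requires that the first Taylor coefficient be an isomorphism. More precisely, by Proposition~\ref{prop:StandardFormLinfty} we obtain $L_\infty$-isomorphisms
\begin{equation*}
  \Phi \colon (L,Q) \longrightarrow (\mathrm{H}(L),Q_\mathrm{H}) \oplus (C, Q_C),
  \qquad
  \Phi' \colon (L',Q') \longrightarrow (\mathrm{H}(L'),Q'_\mathrm{H}) \oplus (C', Q'_{C'}),
\end{equation*}
where $(C,Q_C)$ and $(C',Q'_{C'})$ are linear contractible. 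Denote by $i_\mathrm{H}$, $p_\mathrm{H}$ and $i'_\mathrm{H}$, $p'_\mathrm{H}$ the canonical $L_\infty$-inclusions and $L_\infty$-projections associated with these direct product decompositions (which exist and are strict $L_\infty$-morphisms by the universal property of Lemma~\ref{lem:DirectSumLinftyAlg}).

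Next I would form the composite
\begin{equation*}
  \bar F \;=\; p'_\mathrm{H} \circ \Phi' \circ F \circ \Phi^{-1} \circ i_\mathrm{H} \colon
  (\mathrm{H}(L),Q_\mathrm{H}) \longrightarrow (\mathrm{H}(L'),Q'_\mathrm{H}).
\end{equation*}
Its first Taylor coefficient $\bar F_1^1$ is, up to the degree shift, precisely the isomorphism on cohomology induced by $F_1^1$, because the inclusion/projection pairs $i_\mathrm{H},p_\mathrm{H}$ and $i'_\mathrm{H},p'_\mathrm{H}$ identify the minimal model factors with the cohomology and $F$ is by assumption an $L_\infty$-quasi-isomorphism. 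Therefore $\bar F_1^1$ is a linear isomorphism, and Proposition~\ref{prop:Linftyiso} gives an $L_\infty$-inverse $\bar G \colon (\mathrm{H}(L'),Q'_\mathrm{H}) \to (\mathrm{H}(L),Q_\mathrm{H})$.

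Finally, I would define
\begin{equation*}
  G \;=\; \Phi^{-1} \circ i_\mathrm{H} \circ \bar G \circ p'_\mathrm{H} \circ \Phi'
  \colon (L',Q') \longrightarrow (L,Q).
\end{equation*}
By construction, $G$ is a composition of $L_\infty$-morphisms and hence an $L_\infty$-morphism, and its first Taylor coefficient computes on cohomology to $\bar G_1^1 = (\bar F_1^1)^{-1}$, i.e.\ to the inverse of the isomorphism $H(F_1^1)$ induced by $F$. This is exactly the statement of the theorem.

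The main conceptual step — and the only place where the proof could go wrong — is the identification of the first Taylor coefficient of $\bar F$ with $H(F_1^1)$; everything else is formal manipulation of already-established results. This identification is not entirely obvious because the splittings $\Phi,\Phi'$ of Proposition~\ref{prop:StandardFormLinfty} were built via the homotopy transfer theorem, so unpacking what $p'_\mathrm{H}\circ \Phi'$ and $\Phi^{-1}\circ i_\mathrm{H}$ look like on the level of cochain complexes (namely, as the projection onto cohomology chosen in the splitting and its section) is the small but essential check. Once this is observed, the invocation of Proposition~\ref{prop:Linftyiso} finishes the argument without any further work.
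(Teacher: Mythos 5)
Your proposal is correct and follows essentially the same route as the paper: decompose both $L_\infty$-algebras into a minimal part plus a linear contractible part via Proposition~\ref{prop:StandardFormLinfty}, observe that the induced morphism between the minimal parts has invertible first Taylor coefficient (the differentials there vanish, so quasi-isomorphism means isomorphism on the nose), invert it with Proposition~\ref{prop:Linftyiso}, and conjugate back with the inclusions and projections. The paper's proof is exactly your argument written with $F_{min}=p'Fi$ and $G=i(F_{min})^{-1}p'$, so no further comment is needed.
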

\begin{proof}
We know that both $L$ and $L'$ are isomorphic to direct sums of minimal 
$L_\infty$-algebras $L_{min}$ and $L'_{min}$ and linear 
contractible ones. In particular, the inclusions and projections 
\begin{equation*}
  i \colon
	(L_{min},Q_{min}) \longrightarrow (L,Q),
	\quad \quad \quad
	p \colon 
	(L,Q) \longrightarrow (L_{min},Q_{min})
\end{equation*}
are $L_\infty$-quasi-isomorphisms, analogously for $i'$ and $p'$. 
In particular, 
\begin{equation*}
  F_{min} = p'Fi \colon
	(L_{min},Q_{min}) \longrightarrow (L'_{min},Q'_{min})
\end{equation*}
is an $L_\infty$-quasi-isomorphism. But since $(Q_{min})_1^1 = 0 
=(Q'_{min})_1^1$ we know that $(F_{min})_1^1$ is an isomorphism, thus $F_{min}$ is 
an $L_\infty$-isomorphism by Proposition~\ref{prop:Linftyiso}, and we can 
set $G= i(F_{min})^{-1} p'$ for the $L_\infty$-quasi-isomorphism in the 
other direction.
\end{proof}

%
%
\section{Maurer-Cartan Elements, their Equivalence Classes and Twisting}
\label{sec:MCandEquiv}

In this section we recall the notion of Maurer-Cartan elements and different 
notions of equivalences between them. One important application is the 
twisting of $L_\infty$-algebras and $L_\infty$-morphisms.

%
%
\subsection{Maurer-Cartan Elements in DGLAs}
\label{sec:MCinDGLA}

We start with the case of Maurer-Cartan elements in DGLAs.

\begin{definition}[Maurer-Cartan elements]
  \label{def:MCelementDGLA}
  Let $(\liealg{g},\D ,[\argument{,}\argument])$ be a DGLA. Then 
	$\pi \in \liealg{g}^1$ is called \emph{Maurer-Cartan element} if it 
	satisfies the Maurer-Cartan equation
	\begin{equation}
	  \D \pi + \frac{1}{2}[\pi,\pi]
		=
		0.
	\end{equation}
\end{definition}
The set of Maurer-Cartan elements is denoted by $\Mc(\liealg{g})$ and 
we directly see that for a Maurer-Cartan element $\pi$ the map 
$\D + [\pi,\,\cdot\,]$ is again a differential, the so-called 
\emph{twisted} differential. Taking a general element $x \in \liealg{g}^1$ 
the derivation $\D + [x,\,\cdot\,]$ yields a curved Lie algebra with curvature 
\begin{equation*}
  R^x 
	=
	\D x + \frac{1}{2}[x,x].
\end{equation*}
Starting with a curved Lie algebra $(\liealg{g},R,\D,[\argument{,}\argument])$, 
the twisting yields the new curvature $R^x = R +\D x + \frac{1}{2}[x,x]$ and one calls $\pi \in \liealg{g}^1$ 
\emph{curved Maurer-Cartan element} if one has 
\begin{equation*}
  R^\pi
	=
	R + \D \pi + \frac{1}{2} [\pi,\pi]
	=
	0.
\end{equation*}
In this case the twisted DGLA $(\liealg{g}, R^\pi=0, \D + [\pi,\argument],
[\argument{,}\argument])$ is a flat DGLA. One example for 
curved Maurer-Cartan elements are principal connections on principal bundles, 
see e.g. \cite[Section~11]{kolar.michor.slovak:1993a} for more details.
 
\begin{example}[Principal Connection]
  Let $\group{G}$ be a Lie group and $\pi \colon P \to M$ be a 
	smooth principal $\group{G}$-bundle. Then one way to define a 
	\emph{principal $\group{G}$-connection} on $P$ is as an 
	equivariant $\liealg{g}$-valued differential $1$-form 
	$\omega \in \Omega^1(P,\liealg{g})^\group{G}$, where the 
	equivariance is taken with respect to the product of the action on 
	$P$ and the adjoint action on the Lie algebra $\liealg{g}$ of 
	$\group{G}$, satisfying $\omega(\xi_P) = \xi$ for all $\xi \in \liealg{g}$, 
	where $\xi_P$ denotes the fundamental vector field. The \emph{curvature form} 
	$\Omega \in \Omega^2(P,\liealg{g})^\group{G}$ is given by
	\begin{equation}
	  \Omega
		=
		\D \omega + \frac{1}{2} [\omega,\omega],
	\end{equation}
	where $\D$ denotes the de Rham differential, and where the Lie bracket 
	$[\argument{,}\argument]$ is induced by the $\wedge$ product of ordinary 
	differential forms and the Lie bracket on $\liealg{g}$. 
	In other words, the principal connection $\omega$ satisfies the Maurer-Cartan equation 
	in the curved Lie algebra $(\Omega(P,\liealg{g})^\group{G}, -\Omega, 
	\D,[\argument{,}\argument])$.
\end{example}

Two other examples of DGLAs and Maurer-Cartan elements that 
we want to mention are the DGLAs of polyvector fields 
and the DGLA of polydifferential operators. They play an important role in 
(formal) deformation quantization \cite{bayen.et.al:1978a,kontsevich:2003a,waldmann:2007a}.

\begin{example}[Polyvector fields]
  \label{ex:tpoly}
  Let $M$ be a smooth manifold. The \emph{polyvector fields} are 
  the sections $\Tpoly^\bullet(M)=\Secinfty(\Anti^{\bullet +1} TM)$. 
  Together with the Schouten bracket $[\argument{,}\argument]_S$ they form 
  a graded Lie algebra, and together with the zero differential a DGLA
  \begin{equation}
    T_\poly^\bullet(M)
		=
		(\Secinfty(\Anti^{\bullet +1} TM),0,[\argument{,}\argument]_S).
  \end{equation}
  The Maurer-Cartan elements are given by bivectors $\pi \in 
  \Secinfty(\Anti^2 TM)$ satisfying the Maurer-Cartan equation $[\pi,\pi]_S=0$, 
  i.e. by Poisson structures. We denote 
	by $\{\argument,\argument\}$ the corresponding Poisson brackets on $\Cinfty(M)$. 
	In deformation quantization one is interested in formal deformations, 
	whence one considers formal 
	polyvector fields $\Tpoly^\bullet(M)[[\hbar]]
	= \Secinfty(\Anti^{\bullet +1} TM)[[\hbar]]$ in the real formal parameter $\hbar$. 
	Formal Poisson structures are then formal power series
	\begin{equation}
	  \pi_\hbar
		=
		\pi_0 + \hbar \pi_1 + \cdots \in
		\Secinfty(\Anti^2 TM)[[\hbar]]
	\end{equation}
	with $[\pi_\hbar,\pi_\hbar]_S = 0$. In lowest order this implies in particular 
	$[\pi_0,\pi_0]_S=0$. Two such formal Poisson structures $\pi_\hbar$ and 
	$\widetilde{\pi}_\hbar$ are called \emph{equivalent} if there exists a 
	formal diffeomorphism such that 
	\begin{equation}
	  \pi_\hbar
		=
		\exp(\hbar [X,\,\cdot\,]_S) \widetilde{\pi}_\hbar,
	\end{equation}
	where $X\in \Secinfty(TM)[[\hbar]]$. In particular, $\pi_\hbar$ and 
	$\widetilde{\pi}_\hbar$ deform the same $\pi_0$. In view of later applications, 
	it turns out to be useful to 
	consider formal Poisson structures $\pi_\hbar = \hbar\pi_1 + \cdots$ that 
	start in the first order of $\hbar$, i.e. that deform the zero Poisson structure. 
	Consequently, one considers formal Maurer-Cartan elements $\Mc^\hbar 
	= \Mc \cap \hbar \Tpoly^1(M)[[\hbar]]$. We denote the equivalence classes 
	by 
	\begin{equation}
	  \Def(\Tpoly(M)[[\hbar]])
		=
		\frac{\Mc^\hbar(\Tpoly(M))}{\group{G}^0(\Tpoly(M)[[\hbar]])},
	\end{equation}
	where $\group{G}^0(\Tpoly(M)[[\hbar]])= 
	\{ \exp([X,\,\cdot\,]_S) \mid X \in \hbar \Tpoly^0(M)[[\hbar]]\}$ is called 
	\emph{gauge group}. 
\end{example}

In deformation quantization, the polyvector fields with their formal Poisson structures as Maurer-Cartan elements 
corresponds to the classical side. From the quantum point of view one looks 
for star products on $\Cinfty(M)[[\hbar]]$ that can be interpreted as 
formal power series of bidifferential operators. Before we can introduce 
the corresponding DGLA of polydifferential operators we have to 
recall the Hochschild cochains:

\begin{example}[Hochschild cochains]
  \label{ex:Hochschild}
  For a unital associative algebra $(A,\mu_0,1)$ we recall the Hochschild cochains 
with a shifted grading
\begin{equation}
  C^n(A)
	=
	\Hom(A^{\otimes(n+1)},A)
\end{equation}
for $n \geq 0$ and $C^{-1}(A)=A$. There are different 
operations for cochains $D$ and $E$, the \emph{cup-product}
\begin{equation}
  D \cup E (a_0,\dots,a_{d+e+1})
	=
	D(a_0,\dots, a_d)E(a_{d+1},\dots,a_{d+e+1}),
\end{equation}
where $a_0,\dots,a_{d+e+1}\in A$, the concatenation
\begin{equation}
  D \circ E (a_0,\dots,a_{d+e}) 
  = 
	\sum_{i=0}^{\abs{ D}} (-1)^{i \abs{ E}} 
	D(a_0,\dots, a_{i-1}, E(a_i,\dots,a_{i+e}),a_{i+e+1},\dots,a_{d+e})
\end{equation}
and the \emph{Gerstenhaber bracket}
\begin{equation}
  \label{eq:GerstenhaberBracketClassical}
  [D,E]_G
	=
	(-1)^{\abs{E}\abs{D}} \left(D \circ E - (-1)^{\abs{D} \abs{E}} E \circ D\right).
\end{equation}
Note that we use the sign convention from \cite{bursztyn.dolgushev.waldmann:2012a}, 
not the original one from \cite{gerstenhaber:1963a}.

This yields a graded Lie algebra $(C^{\bullet}(A),[\argument{,}\argument]_G)$ 
and a graded associative algebra $(C^{\bullet-1}(A),\cup)$. 
The product $\mu_0$ is an element of $C^1(A)$ 
and one notices that the associativity of $\mu_0$ is equivalent to 
$[\mu_0,\mu_0]_G=0$, compare Remark~\ref{rem:Ainftystructures}. 
In particular, we get an induced differential 
$\del \colon C^\bullet(A) \rightarrow C^{\bullet +1}(A)$ via 
$\del = [\mu_0,\argument]_G$, the so-called \emph{Hochschild differential}, 
and thus a DLGA structure. One can check that $\pi \in C^1(A)$ is a Maurer-Cartan 
element if and only if $\mu_0 + \pi$ is an associative product. 
The cohomology of $(C^\bullet(A),\del)$ 
inherits even the structure of a Gerstenhaber algebra:
More explicitly, in cohomology 	the $\cup$-product is graded commutative 
and one has the following Leibniz rule:
\begin{equation*}
  [F,D \cup E]_G
	=
	[F,D]_G \cup E + (-1)^{\abs{F} (\abs{D} -1)}  D \cup [F,E]_G,
\end{equation*}
compare \cite{gerstenhaber:1963a} and \cite[Satz~6.2.18]{waldmann:2007a}.
\end{example}

In the context of deformation quantization \cite{bayen.et.al:1978a} we 
are interested in star products. They are Maurer-Cartan elements in the 
DGLA of polydifferential operators, i.e. in the differential Hochschild cochain complex:

\begin{example}[Polydifferential operators]
  \label{ex:dpoly}
  Recall that a star product $\star$ on a Poisson manifold $(M,\pi)$ is an associative 
	product on $\Cinfty(M)[[\hbar]]$ of the form 
	\begin{equation}
	  f \star g 
		=
		\mu_0(f, g) + \sum_{r=1}^\infty \hbar^r C_r(f,g) \in 
		\Cinfty(M)[[\hbar]],
	\end{equation}
	where $C_1(f,g)-C_1(g,f) =  \{f,g\}$, and where all $C_r$ are bidifferential 
	operators vanishing on constants. Two star products $\star$ and $\star'$ 
	are equivalent if there 
	exists an $\hbar$-linear isomorphism $S = \id + \sum_{r=1}^\infty \hbar^r S_r$
	with differential operators $S_r$ and
	\begin{equation}
	  S(f \star g)
		=
		Sf \star' Sg.
	\end{equation}
	To describe the notion of star products in terms of a DGLA we consider 
	the associated Hochschild DGLA $(C^\bullet(\Cinfty(M)),\del,
	[\argument{,}\argument]_G)$. To incorporate the bidifferentiability we restrict 
	ourselves to the \emph{polydifferential operators}
	\begin{equation}
	  \Dpoly^\bullet(M)[[\hbar]]
		=
		\bigoplus_{n=-1}^\infty \Dpoly^{n}(M)[[\hbar]],
	\end{equation}
	were $\Dpoly^{n}(M)[[\hbar]]= \Hom_{\mathrm{diff}}(\Cinfty(M)^{\otimes n+1},
	\Cinfty(M))[[\hbar]]$ are polydifferential operators vanishing on constants. 
	We know that star products can be interpreted as $\star = \mu_0 + \sum_{r=1}^\infty 
	\hbar^r C_r= \mu_0 +\hbar m_\star \in \Dpoly^{1}(M)[[\hbar]]$ and the associativity leads to 
	\begin{equation}
	  \label{eq:starproductmc}
	  0
		=
		[\star,\star]_G
		=
		2[\mu_0,\hbar m_\star] + [\hbar m_\star,\hbar m_\star].
	\end{equation}
	Thus star products correspond to Maurer-Cartan elements 
	$\hbar m_\star \in \hbar \Dpoly^{1}(M)[[\hbar]]$, and the equivalence of $\star$ 
	and $\star'$ is equivalent to 
	\begin{equation}
	  \exp(\hbar [T,\cdot]_G) \star
		=
		\star',
	\end{equation}
	where $\hbar T = \log S \in \hbar 
	\Dpoly^{(0)}(M)[[\hbar]]$, see \cite[Proposition~6.2.20]{waldmann:2007a}. 
	Consequently, the equivalence classes 
	of star products are given by 	
	\begin{equation}
	  \Def(\Dpoly(M)[[\hbar]])
		=
		\frac{\Mc^\hbar(\Dpoly(M)[[\hbar]])}{\group{G}^0(\Dpoly(M)[[\hbar]])},
	\end{equation}
	where the gauge group action of $\group{G}^0(\Dpoly(M)[[\hbar]])= 
	\{ \exp([\hbar T,\,\cdot\,]_G) \mid \hbar T\in \hbar \Dpoly^{0}(M)[[\hbar]] \}$ 
	is given by
	\begin{equation*}
	  \hbar m_{\star'}
		=
		\exp([\hbar T,\argument]_G) \acts \hbar m_\star 
		=
		\exp([\hbar T,\argument]_G)(\mu_0 +\hbar m_\star) - \mu_0,
	\end{equation*}
	and where we consider again 
	only formal Maurer-Cartan elements $\hbar m_\star \in \Mc^\hbar = \Mc \cap 
	\hbar \Dpoly^{1}(M)[[\hbar]]$, i.e. starting in order one of $\hbar$.
\end{example}

In the above examples we have seen that it is useful to identify 
'equivalent' Maurer-Cartan elements by actions of elements of degree $0$. 
In both settings the exponential maps were well-defined because of 
the complete filtration induced by the formal power series. 
Therefore, we restrict ourselves to (curved) 
Lie algebras with complete descending filtrations $\mathcal{F}^\bullet\liealg{g}$ satisfying
\begin{equation}
  \cdots 
	\supseteq 
	\mathcal{F}^{-2}\liealg{g}
	\supseteq 
	\mathcal{F}^{-1}\liealg{g}
	\supseteq 
	\mathcal{F}^{0}\liealg{g}
	\supseteq 
	\mathcal{F}^{1}\liealg{g}
	\supseteq 
	\cdots,
	\quad \quad
	\liealg{g}
	\cong
	\varprojlim \liealg{g}/\mathcal{F}^n\liealg{g},
\end{equation}  
and 
\begin{equation}
  \D(\mathcal{F}^k\liealg{g})
	\subseteq
	\mathcal{F}^k\liealg{g}
	\quad \quad \text{ and } \quad \quad
	[\mathcal{F}^k\liealg{g},\mathcal{F}^\ell\liealg{g}]
	\subseteq 
	\mathcal{F}^{k+\ell}\liealg{g}.
\end{equation}
In most cases the filtration will be bounded below, i.e. 
bounded from the left with $\liealg{g}=\mathcal{F}^k\liealg{g}$ for some 
$k\in \mathbb{Z}$, preferably $k=0$ or $k=1$. If the filtration is unbounded, 
then we assume in addition that it is exhaustive, i.e. that
\begin{equation}
  \liealg{g}
	=
	\bigcup_n \mathcal{F}^n\liealg{g},
\end{equation}
even if we do not mention it explicitly. Note that instead of considering 
filtered DGLAs one can tensorize the DGLAs with nilpotent algebras:

\begin{remark}[Nilpotent DGLA]
\label{rem:nilpotentdgla}
Alternatively to the filtration, one can tensorize the DGLA $(\liealg{g},\D,
[\argument{,}\argument])$ by a graded commutative associative 
$\mathbb{K}$-algebra $\liealg{m}$, compare \cite[Section~2.4]{esposito:2015a}: 
\begin{align*}
  (\liealg{g} \otimes \liealg{m})^n
	& = 
	\bigoplus_i (\liealg{g}^i \otimes \liealg{m}^{n-i})  \\
	\D(x \otimes m)
	& = 
	\D x \otimes m   \\
	[x \otimes m, y \otimes n]
	& = 
	(-1)^{\abs{m}\abs{y}} [x,y] \otimes mn.
\end{align*}
In particular, if $\liealg{m}$ is nilpotent, the DGLA $\liealg{g}\otimes 
\liealg{m}$ is nilpotent, too. Thus under the assumption that 
$\liealg{g}^0 \otimes \liealg{m}$ is nilpotent, the above exponential maps 
are also in the non-filtered setting well-defined. For further details on this approach and more details on the deformation functor 
see \cite{canonaco:1999a,manetti:2005a}. 
\end{remark}

\begin{example}[Formal power series]
  If we consider formal power series $\liealg{G}=\liealg{g}[[\hbar]]$ of a 
  DGLA $\liealg{g}$, where all maps are $\hbar$-linearly extended, 
  then we can choose the filtration 
  $\mathcal{F}^k \liealg{G} = \hbar^k \liealg{G}$ and the completeness 
  is trivially fulfilled.
\end{example}

As in the above examples we set for the curved Maurer-Cartan elements in 
$(\liealg{g},R,\D,[\argument{,}\argument])$ 
\begin{equation}
  \Mc^1(\liealg{g})
	=
	\{\pi \in \mathcal{F}^1\liealg{g}^1 \mid 	R +\D \pi + \frac{1}{2}[\pi,\pi]=0\}
\end{equation}
and we want to define a group action by the gauge group 
\begin{equation}
  \group{G}^0(\liealg{g})
	=
	\{ \Phi = e^{[a,\,\cdot\,]} \colon \liealg{g} 
	\longrightarrow \liealg{g} \mid a \in \mathcal{F}^1\liealg{g}^0\},
\end{equation}	
where we consider again only elements of filtration order $1$. 
In order to define the gauge action, we consider as in \cite{manetti:2005a} 
the DGLA $(\liealg{g}_{\D},[\argument{,}\argument]_{\D},0)$ with 
$\liealg{g}_{\D} = \liealg{g} \oplus \mathbb{K}\D$ and 
\begin{equation}
  \label{eq:gplusd}
  [x+ r\D, y+s\D]_{\D}
	=
	[x,y] + r \D(y) -(-1)^{\abs{x}}s\D(x) + 2rs R.
\end{equation}
Then $\pi$ is a Maurer-Cartan element in $\liealg{g}$ if and only if 
$\phi(\pi)= \pi + \D$ is a Maurer-Cartan element in $\liealg{g}_{\D}$ with zero 
differential. But in $(\liealg{g}_{\D},[\argument{,}\argument]_{\D},0)$ 
we already have an action of $\group{G}^0(\liealg{g})$ on the 
Maurer-Cartan elements by the adjoint representation. 
Pulling this action back on $\liealg{g}$ yields the following:

\begin{proposition}[Gauge action]
  \label{prop:GaugeactionDGLA}
  Let $(\liealg{g},R,\D,[\argument{,}\argument])$ be a curved Lie algebra with complete 
	descending filtration. The \emph{gauge group} $ \group{G}^0(\liealg{g})$
	acts on $\Mc^1(\liealg{g})$ via
	\begin{equation}
	  \label{eq:gaugeactiong0}
	  \exp([g,\,\cdot\,]) \acts \pi 
		=
		\sum_{n=0}^\infty \frac{( [g,\,\cdot\,])^n}{n!}(\pi) 
		-
		\sum_{n=0}^\infty \frac{([g,\,\cdot\,])^n}{(n+1)!} (\D g)
		=
		\pi + \sum_{n=0}^\infty \frac{([g,\,\cdot\,])^n}{(n+1)!} ([g,\pi]-\D g).
	\end{equation}
	The equivalence classes of Maurer-Cartan elements are denoted by 
	\begin{equation}
	  \Def(\liealg{g})
		=
		\frac{\Mc^1(\liealg{g})} {\group{G}^0(\liealg{g})}.
	\end{equation}
\end{proposition}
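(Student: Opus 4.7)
The strategy is to follow the embedding trick already hinted at just before the statement: enlarge the curved Lie algebra to $(\liealg{g}_\D,[\argument{,}\argument]_\D,0)$ with $\liealg{g}_\D = \liealg{g}\oplus \mathbb{K}\D$, and transport the question about a (curved) gauge action on $\Mc^1(\liealg{g})$ to the much easier question of the adjoint action on Maurer–Cartan elements of a flat DGLA with zero differential. The bijection $\pi \longleftrightarrow \pi + \D$ should intertwine the two pictures, and the formula to be proved will then fall out of unwinding $\exp([g,\argument]_\D)(\pi + \D)$ in the two summands $\liealg{g}$ and $\mathbb{K}\D$.

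First I would verify that $(\liealg{g}_\D,[\argument{,}\argument]_\D,0)$ is a (flat) graded Lie algebra. Antisymmetry is immediate, while the graded Jacobi identity requires exactly the two defining relations of a curved Lie algebra: $\D^2 = [R,\argument]$ (responsible for the new $2rs R$-term to be compatible with the $\D$-terms in $[x,y] + r\D y - (-1)^{|x|} s\D x$) and $\D R = 0$ (needed when checking the Jacobi identity on triples containing two copies of $\D$). I expect this to be the main computational obstacle; it is the step where the curvature really intervenes non-trivially, and care with the Koszul signs is essential. Once this is done, a direct expansion of $\frac{1}{2}[\pi+\D,\pi+\D]_\D$ using \eqref{eq:gplusd} gives $\D\pi + \frac{1}{2}[\pi,\pi] + R$, proving that $\pi \in \Mc^1(\liealg{g})$ if and only if $\pi + \D$ is a Maurer–Cartan element of $\liealg{g}_\D$ (where we note that $\pi+\D \in \mathcal{F}^1\liealg{g}_\D^1 \oplus \mathbb{K}\D$ sits in a complete filtration inherited from $\liealg{g}$).

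Next I would use the classical fact that on any flat DGLA $(\liealg{h},[\argument{,}\argument],0)$ with complete filtration the group $\{\exp(\ad_a) \mid a \in \mathcal{F}^1\liealg{h}^0\}$ acts on $\Mc(\liealg{h})$ by the adjoint action, since $\ad_a$ preserves Maurer–Cartan elements of a zero-differential DGLA. Applying this to $\liealg{h} = \liealg{g}_\D$ and to $a = g \in \mathcal{F}^1\liealg{g}^0 \subset \mathcal{F}^1\liealg{g}_\D^0$, I would compute $[g,\D]_\D = -\D g \in \liealg{g}$ directly from \eqref{eq:gplusd}, so that $[g,\argument]_\D$ maps $\liealg{g}_\D$ into $\liealg{g}$ and agrees on $\liealg{g}$ with the ordinary bracket $[g,\argument]$. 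Hence
\begin{equation*}
  e^{[g,\argument]_\D}(\pi + \D)
  = \D + \pi + \sum_{n\geq 0}\frac{[g,\argument]^n}{(n+1)!}\bigl([g,\pi] - \D g\bigr),
\end{equation*}
which is precisely $\D + (\exp([g,\argument])\acts \pi)$ in the notation of \eqref{eq:gaugeactiong0}. The $\D$-component being preserved confirms that the result again lies in the $\pi + \D$-slice, so pulling back yields an honest action on $\Mc^1(\liealg{g})$.

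Finally I would check convergence and the group-action axioms. Since $g \in \mathcal{F}^1$ the iterated bracket $[g,\argument]^n$ sends $\mathcal{F}^k\liealg{g}$ to $\mathcal{F}^{k+n}\liealg{g}$, and by completeness of the filtration all the series in \eqref{eq:gaugeactiong0} converge, and $\mathcal{F}^1\liealg{g}^0$ is closed under the BCH formula so that $\group{G}^0(\liealg{g})$ is indeed a group. The group-action identity $\exp([g_1,\argument])\acts (\exp([g_2,\argument])\acts \pi) = \exp([\mathrm{BCH}(g_1,g_2),\argument])\acts \pi$ is then inherited from the corresponding identity of the adjoint action in $\liealg{g}_\D$ via the embedding $\pi \mapsto \pi + \D$, concluding the proof.
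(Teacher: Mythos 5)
Your proposal is correct and follows essentially the same route the paper takes: the text immediately preceding the proposition introduces exactly this embedding $\pi \mapsto \pi + \D$ into the flat graded Lie algebra $(\liealg{g}_\D, [\argument{,}\argument]_\D, 0)$ and obtains the gauge action by pulling back the adjoint action, with the formula \eqref{eq:gaugeactiong0} arising from $[g,\D]_\D = -\D g$ just as you compute. You merely fill in the details (Jacobi identity via $\D^2=[R,\argument]$ and $\D R=0$, filtration/convergence, BCH) that the paper leaves implicit.
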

$\Def(\liealg{g})$ is the orbit space of the 
transformation groupoid $\group{G}^0(\liealg{g})\ltimes\Mc^1(\liealg{g})$ of the gauge action and 
$\group{G}^0(\liealg{g})\ltimes\Mc^1(\liealg{g})$ is also called \emph{Goldman-Millson groupoid} or 
\emph{Deligne groupoid} \cite{goldman.millson:1988a}. 
It plays an important role in deformation theory \cite{manetti:2005a}.

An additional motivation for this gauge action comes from 
the following general consideration: 
\begin{lemma}
  \label{lemma:TwistedDGLAsIsomorphic}
  Let $(\liealg{g},R,\D,[\argument{,}\argument])$ be a curved Lie algebra with complete 
	descending filtration. For all $g\in \mathcal{F}^1\liealg{g}^0$ 
	and all derivations $D$ of degree $+1$ one has 
	\begin{equation}
		\label{eq:ConjunctionofDer}
		\exp([g,\argument]) \circ D \circ \exp([-g,\argument])
		=
		D - \left[ \frac{\exp([g,\argument])-\id}{[g,\argument]} (Dg),
		\argument\right].
	\end{equation}
\end{lemma}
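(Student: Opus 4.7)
The plan is to prove the identity by expanding $e^{[g,\argument]} \circ D \circ e^{-[g,\argument]}$ as a sum of nested commutators, and then collapsing each term into an inner derivation by using that $D$ is a graded derivation together with the graded Jacobi identity. Set $X \colon \liealg{g} \to \liealg{g}$, $X = [g,\argument]$, which raises filtration by at least one since $g \in \mathcal{F}^1\liealg{g}^0$; hence all the exponential series considered below converge in the complete topological space of filtration-compatible endomorphisms of $\liealg{g}$.

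First I would invoke the standard BCH-type identity
\begin{equation*}
  e^{X} \circ D \circ e^{-X}
  \;=\;
  \sum_{n=0}^{\infty} \frac{1}{n!}\, [X,D]_n ,
\end{equation*}
where $[X,D]_0 = D$ and $[X,D]_{n+1} = [X,[X,D]_n]$ denotes iterated graded commutators of operators. (Since $\abs{g}=0$, these graded commutators reduce to ordinary commutators of operators.) The identity follows by differentiating the curve $t \mapsto e^{tX}\circ D \circ e^{-tX}$ with respect to $t$ and integrating, and the convergence is again guaranteed by the completeness assumption.

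Next I would compute the first commutator. Because $D$ is a graded derivation of degree $+1$ and $\abs{g}=0$, the graded Leibniz rule reads $D[g,y] = [Dg,y] + [g,Dy]$, so that
\begin{equation*}
  [X,D](y)
  \;=\;
  [g,Dy] - D[g,y]
  \;=\;
  -[Dg,y],
\end{equation*}
i.e.\ $[X,D] = -[Dg,\argument]$. The key inductive claim is then
\begin{equation*}
  [X,D]_n \;=\; -\bigl[X^{n-1}(Dg),\argument\bigr] \qquad (n\geq 1),
\end{equation*}
whose inductive step uses the identity $[X,[a,\argument]] = [Xa,\argument]$ for any $a\in \liealg{g}$, which is precisely the graded Jacobi identity in the form $[[g,a],y] = [g,[a,y]] - (-1)^{\abs{a}\abs{g}}[a,[g,y]]$ (again trivialised by $\abs{g}=0$).

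Finally I would substitute the inductive formula back into the BCH expansion and pull the sum into the argument of the bracket by linearity of $[\argument,\argument]$:
\begin{equation*}
  e^{X}\circ D\circ e^{-X}
  \;=\;
  D \;-\; \sum_{n=1}^{\infty}\frac{1}{n!}\bigl[X^{n-1}(Dg),\argument\bigr]
  \;=\;
  D \;-\; \left[\sum_{n=1}^{\infty}\frac{X^{n-1}}{n!}(Dg),\,\argument\right]
  \;=\;
  D \;-\; \left[\frac{e^{X}-\id}{X}(Dg),\,\argument\right],
\end{equation*}
which is the asserted identity. The only real delicacy is bookkeeping of signs (all of which trivialise because $g$ has degree zero), and verifying termwise that the exchange of the infinite sum with the outer bracket is legal in the complete filtration topology; neither is a genuine obstacle, so the main content of the proof is really the recognition of the geometric-series-type combinatorics in step three.
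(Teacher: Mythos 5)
Your proposal is correct and follows essentially the same route as the paper, which proves the lemma by citing the two facts $\exp([g,\argument])\circ D\circ\exp([-g,\argument])=e^{\ad([g,\argument])}D$ and $\ad([g,\argument])D=-[Dg,\argument]$ (referring to Neumaier's thesis for details); your argument simply fills in the induction and the resummation explicitly.
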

\begin{proof}
The proof is the same as \cite[Lemma~1.3.20]{neumaier:2001a}. It follows directly 
from $\Ad(\exp([g,\argument])) D = e^{\ad([g,\argument])} D$ and 
$\ad([g,\argument])D = - [Dg,\argument]$.
\end{proof}

This immediately implies that twisting with gauge equivalent Maurer-Cartan 
elements leads to isomorphic DGLAs.

\begin{corollary}
  \label{cor:GaugeEquivMCTwistsQuis}
  Let $(\liealg{g},R,\D,[\argument{,}\argument])$ be a curved 
	Lie algebra with complete 
	descending filtration and let $\pi, \widetilde{\pi}$ be two 
	gauge equivalent Maurer-Carten elements 
	via $  \widetilde{\pi}	=	\exp([g,\,\cdot\,]) \acts \pi $. Then one has
	\begin{equation}
		\label{eq:MCEquivalenceIso}
		\D + [\widetilde{\pi},\argument]
		=
		\exp([g,\,\cdot\,]) \circ (\D + [\pi,\argument]) \circ \exp([-g,\,\cdot\,]),
	\end{equation}
	i.e. $\exp([g,\argument]) \colon  (\liealg{g},\D+[\pi,\argument],
	[\argument{,}\argument]) \rightarrow (\liealg{g},\D +[\widetilde{\pi},\argument],
	[\argument{,}\argument])$ is an isomorphism of DGLAs.
	Moreover, one also has in the coalgebra setting 
	\begin{equation}
		\label{eq:MCEquivalenceCoalgIso}
		Q^{\widetilde{\pi}}
		=
		\exp([g,\,\cdot\,]) \circ Q^\pi \circ \exp([-g,\,\cdot\,]),
	\end{equation}
	where $Q^\pi,Q^{\widetilde{\pi}}$ are the codifferentials with 
	$Q^\pi_1 = - \D - [\pi,\argument]$ and $Q^{\widetilde{\pi}}_1 = -\D 
	- [\widetilde{\pi},\argument]$ and second structure map given by 
	the bracket.
\end{corollary}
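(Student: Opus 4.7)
The plan is to derive both identities as direct consequences of the gauge action formula~\eqref{eq:gaugeactiong0} and Lemma~\ref{lemma:TwistedDGLAsIsomorphic}. For~\eqref{eq:MCEquivalenceIso}, I would apply Lemma~\ref{lemma:TwistedDGLAsIsomorphic} to the degree $+1$ derivation $D = \D + [\pi,\argument]$. Since $g$ has degree $0$, graded antisymmetry gives $[\pi,g]=-[g,\pi]$, hence $Dg = \D g - [g,\pi]$, and the lemma yields
\begin{equation*}
  \exp([g,\argument]) \circ (\D + [\pi,\argument]) \circ \exp([-g,\argument]) = \D + [\pi,\argument] - \left[ \frac{\exp([g,\argument]) - \id}{[g,\argument]}(\D g - [g,\pi]),\, \argument\right].
\end{equation*}
Using the power series identity $\sum_{n\geq 0} [g,\argument]^n/(n+1)! = (\exp([g,\argument])-\id)/[g,\argument]$ and comparing with~\eqref{eq:gaugeactiong0}, the bracketed correction equals exactly $\widetilde{\pi}-\pi$, so the right-hand side collapses to $\D + [\widetilde{\pi},\argument]$.

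For the coalgebra statement~\eqref{eq:MCEquivalenceCoalgIso}, I would interpret $\exp([g,\argument])$ on $\Sym(\liealg{g}[1])$ as the coalgebra automorphism $\exp(D_g)$, where $D_g$ denotes the unique degree-zero coderivation on $\Sym(\liealg{g}[1])$ whose Taylor coefficient on generators is $[g,\argument]$, furnished by Theorem~\ref{thm:CofreeCocomConilpotentCoalg}. The exponential makes sense by local nilpotency of $D_g$ on each $\Sym^n(\liealg{g}[1])$, and by the uniqueness statement in that theorem, $\exp(D_g)$ coincides with the coalgebra morphism induced by the linear map $\exp([g,\argument])\colon\liealg{g}\to\liealg{g}$, since both agree on cogenerators. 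The first part of the corollary shows that this linear map is in fact an isomorphism of DGLAs from $(\liealg{g},\D+[\pi,\argument],[\argument{,}\argument])$ onto $(\liealg{g},\D+[\widetilde{\pi},\argument],[\argument{,}\argument])$, and functoriality of the coalgebra construction of Example~\ref{ex:DGLAasLinfty} then transports this into an intertwining relation $\exp(D_g)\circ Q^\pi = Q^{\widetilde{\pi}}\circ \exp(D_g)$, which rearranges to~\eqref{eq:MCEquivalenceCoalgIso}.

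The main obstacle I anticipate is the sign and degree-shift bookkeeping between $\liealg{g}$ and $\liealg{g}[1]$ in the second part, since the Taylor coefficients of $Q^\pi$ carry the signs of Example~\ref{ex:DGLAasLinfty}. A more transparent route that sidesteps the invocation of functoriality is to observe that both sides of~\eqref{eq:MCEquivalenceCoalgIso} are coderivations of $\Sym(\liealg{g}[1])$ (the right-hand side because conjugation by a coalgebra automorphism preserves the coderivation property), so by Theorem~\ref{thm:CofreeCocomConilpotentCoalg} equality reduces to the equality of their projections onto $\liealg{g}[1]$. Expanding $\pr_{\liealg{g}[1]}\circ \exp(D_g)\circ Q^\pi\circ\exp(-D_g)$ using the coderivation rule and the fact that $D_g$ is the extension of a unary operation, the verification collapses precisely to the Lie-algebra identity already proved in the first part, thereby closing the argument without any separate sign computation.
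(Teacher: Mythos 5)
Your proposal is correct and follows essentially the same route as the paper: the first identity is obtained by applying Lemma~\ref{lemma:TwistedDGLAsIsomorphic} to $\D+[\pi,\argument]$ and matching the correction term with the gauge-action formula \eqref{eq:gaugeactiong0}, and the second by observing that $\exp([g,\argument])$ is a degree-zero Lie algebra automorphism intertwining the twisted differentials, so its factor-wise coalgebra extension conjugates $Q^\pi$ into $Q^{\widetilde{\pi}}$ (checked on Taylor coefficients). Only a minor remark: the convergence of $\exp(D_g)$ on each $\Sym^n(\liealg{g}[1])$ is guaranteed by the complete descending filtration (since $g\in\mathcal{F}^1\liealg{g}^0$), not by local nilpotency, exactly as for $\exp([g,\argument])$ on $\liealg{g}$ itself.
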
 
\begin{proof}
The first equation is clear by \eqref{eq:ConjunctionofDer}. The second one is 
clear since $\exp([g,\,\cdot\,])$ is a Lie algebra automorphism of degree zero 
intertwining the differentials.
\end{proof}

Finally, note that we recover indeed the equivalence notions for the polyvector 
fields and polydifferential 
operators.
\begin{example}[$\Tpoly(M)$ and $\Dpoly(M)$]
It is easy to see that the action of $\group{G}^0$ on $\Mc^\hbar$ as defined in 
\eqref{eq:gaugeactiong0} coincides with the actions in the above 
examples $\Tpoly(M)[[\hbar]]$ and $\Dpoly(M)[[\hbar]]$. 
For $\Tpoly(M)$ in Example~\ref{ex:tpoly} the differential of the DGLA is 
zero, i.e. the gauge action $\exp(\hbar[X,\,\cdot\,]) \acts \cdot$ 
coincides with the usual action by formal diffeomorphisms. In the case of $\Dpoly(M)$ in 
Example~\ref{ex:dpoly} the differential is given by $\del=[\mu_0,\,\cdot\,]$, 
where $\mu_0$ denotes the pointwise product on functions. Therefore, two formal 
star products $\star=\mu_0 + \hbar m_\star$ and $\star'=\mu_0 + \hbar m_{\star'}$ 
are equivalent via
$ \exp(\hbar [T,\,\cdot\,]) \star=	\star'$ with $\hbar T \in 
\hbar \Dpoly^{0}(M)[[\hbar]]$ if and only if
\begin{align*}
  \star' 
	=
	\mu_0 + \hbar m_{\star'}
  = 
	\exp(\hbar [T,\,\cdot\,]) (\mu_0 + \hbar m_\star)
	=
	\mu_0 + \exp(\hbar[T,\,\cdot\,]) \acts \hbar m_\star.
\end{align*}
\end{example}

%
%
\subsection{Maurer-Cartan Elements in $L_\infty$-algebras}
\label{sec:MCinLinftyAlgs}

The notion of Maurer-Cartan elements can be transferred to general 
$L_\infty$-algebras. There are again different possibilities for conditions 
on the $L_\infty$-algebra. 
One possibility is to require the $L_\infty$-algebra $(L,Q)$ to be 
\emph{nilpotent} \cite{getzler:2009a,manetti:note}: for example, one requires 
$L^{[n]}=0$ for $n \gg 0$, where
\begin{equation}
  L^{[n]}
	=
	\Span\{ Q_k(x_1\vee \cdots \vee x_k) \mid 
	k \geq 2, \; x_i \in L^{[n_i]}, \; 0 < n_i <n ,\; \sum_i n_i > n\}
\end{equation} 
and $L^{[1]}=L$. In particular, this implies $Q_n =0$ for $n \gg 0$. 
However, we consider again $L_\infty$-algebras with complete descending and 
exhaustive filtrations, where we implicitly include exhaustive when we say 
'complete filtration'. Moreover, we 
require from now on that the codifferentials and the $L_\infty$-morphisms 
are compatible with the filtrations. 

\begin{remark}
  There are again different conventions about the filtrations. Sometimes one 
	considers \emph{complete} $L_\infty$-algebras $L$, i.e. 
	$L_\infty$-algebras with complete descending filtrations where 
	$L = \mathcal{F}^1 L$, see e.g. \cite{dotsenko.poncin:2016a}.
\end{remark}

\begin{definition}[Maurer-Cartan elements II]
  Let $(L,Q)$ be a (curved) $L_\infty$-algebra 
	with complete descending filtration. Then $\pi\in \mathcal{F}^1 L[1]^0
	= \mathcal{F}^1 L^1$ is called 
	\emph{(curved) Maurer-Cartan element} if it satisfies the Maurer-Cartan equation
	\begin{equation}
	  \label{eq:mclinfty}
		Q^1(\exp(\pi))
		=
	  \sum_{n\geq 0} \frac{1}{n!} Q_n^1(\pi\vee\cdots\vee \pi)
		=
		0.
	\end{equation}
	The set of (curved) Maurer-Cartan elements is denoted by $\Mc^1(L)$.
\end{definition}
 
Note that the sum in \eqref{eq:mclinfty} is well-defined for $\pi\in 
\mathcal{F}^1L^1$ because of the completeness. From now on we
assume to be in this setting and we collect some useful properties:

\begin{lemma}
  \label{lemma:twistinglinftymorphisms}
  Let $F\colon (L,Q)\rightarrow (L',Q')$ be an 
	$L_\infty$-morphism of (curved) $L_\infty$-algebras and 
	$\pi \in \mathcal{F}^1L^1$. 
	\begin{lemmalist}
		\item $\pi$ is a (curved) Maurer-Cartan element if and only if 
		      $Q(\exp(\pi))=0$. 
		
		\item $F(\exp(\pi)) = \exp(S)$ with 
		      $S = F_\MC(\pi)= F^1(\cc{\exp}(\pi))$, where 
					$\cc{\exp}(\pi)=\sum_{k=1}^\infty \frac{1}{k!} \pi^{\vee k}$. 
					
		\item If $\pi$ is a (curved) Maurer-Cartan element, then so is 
		      $F_\MC(\pi)$.
  \end{lemmalist}
\end{lemma}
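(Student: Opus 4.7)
My plan is to reduce all three items to the single observation that $\exp(\pi) = 1 + \cc{\exp}(\pi)$ is a group-like element of $\Sym(L[1])$, i.e. $\Delta_\sh(\exp\pi) = \exp\pi \otimes \exp\pi$. This follows because $\pi \in L[1]^0$ is primitive with respect to $\Delta_\sh$ and $\Delta_\sh$ is an algebra morphism, and completeness of the filtration guarantees that the exponential series converges in the appropriate completion of $\Sym(L[1])$. With this in hand, every item becomes a short coalgebraic computation.

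For part (i), I would apply the description of coderivations from Corollary~\ref{cor:coderivationcogeneratorscocom}, writing $Q = Q^1 \star \id$ with $Q^1 = \pr_{L[1]} \circ Q$ for the convolution coming from the shuffle coproduct and the product $\mu_S = \vee$. Evaluating on the group-like $\exp(\pi)$ yields
\begin{equation*}
  Q(\exp \pi)
  \;=\;
  \mu_S \circ (Q^1 \otimes \id) \circ \Delta_\sh(\exp \pi)
  \;=\;
  Q^1(\exp \pi) \vee \exp \pi.
\end{equation*}
Projecting onto $L[1]$ gives back exactly $Q^1(\exp \pi) = \sum_{n\geq 0} \tfrac{1}{n!} Q_n^1(\pi^{\vee n})$, so $Q(\exp\pi) = 0$ forces $Q^1(\exp\pi) = 0$, and conversely $Q^1(\exp\pi) = 0$ immediately gives $Q(\exp\pi) = 0 \vee \exp\pi = 0$. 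This is precisely the (curved) Maurer-Cartan equation~\eqref{eq:mclinfty}.

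For part (ii), I would invoke Theorem~\ref{thm:CofreeCocomConilpotentCoalg}(i), which expresses the coalgebra morphism $F$ as the convolution exponential $F = \exp_\star(F^1)$ with $F^1 = \pr_{L'[1]} \circ F$ and $F^1(1) = 0$. Since $\exp\pi$ is group-like, the iterated coproduct factorizes as $\Delta_\sh^{n-1}(\exp\pi) = (\exp\pi)^{\otimes n}$, so
\begin{equation*}
  F(\exp \pi)
  \;=\;
  \sum_{n \geq 0} \frac{1}{n!}(F^1)^{\star n}(\exp \pi)
  \;=\;
  \sum_{n \geq 0} \frac{1}{n!}\bigl(F^1(\exp\pi)\bigr)^{\vee n}
  \;=\;
  \exp\bigl(F^1(\exp\pi)\bigr).
\end{equation*}
Because $F^1(1) = 0$, we have $F^1(\exp\pi) = F^1(\cc{\exp}\pi) = F_\MC(\pi)$, and filtration compatibility of $F$ places $F_\MC(\pi)$ in $\mathcal{F}^1 L'^1$ as required.

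Part (iii) is then immediate: by (i) applied to $(L',Q')$, it suffices to check $(Q')^1(\exp F_\MC(\pi)) = 0$, or equivalently $Q'(\exp F_\MC(\pi)) = 0$. Combining (ii) with the defining relation $F \circ Q = Q' \circ F$ of an $L_\infty$-morphism gives
\begin{equation*}
  Q'\bigl(\exp F_\MC(\pi)\bigr)
  \;=\;
  Q'\bigl(F(\exp \pi)\bigr)
  \;=\;
  F\bigl(Q(\exp \pi)\bigr)
  \;=\;
  F(0)
  \;=\;
  0,
\end{equation*}
where in the last equality we used (i) together with the fact that $\pi$ is Maurer-Cartan. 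The only real obstacle here is bookkeeping: one must verify that all these formal exponential series converge in the completed filtered setting and that coalgebra morphisms and coderivations commute with the implicit limits. Since $\pi \in \mathcal{F}^1$ implies $\pi^{\vee n} \in \mathcal{F}^n$, and since both $Q$ and $F$ preserve the filtration by assumption, this is automatic.
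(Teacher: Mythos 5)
Your proposal is correct and follows essentially the same route as the paper: group-likeness of $\exp(\pi)$ in the completed symmetric coalgebra, the resulting identity $Q(\exp\pi)=Q^1(\exp\pi)\vee\exp\pi$, the exponential form $F=\exp_\star(F^1)$ for part (ii), and the chain $Q'(F(\exp\pi))=F(Q(\exp\pi))=0$ for part (iii). The only cosmetic difference is that you derive the key identity from the coderivation description $Q=Q^1\star\id$ rather than by the paper's explicit binomial summation over Taylor coefficients.
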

\begin{proof}
The proof for the case of flat DGLAs can be found in \cite[Proposition~1]{dolgushev:2005b}. 
Note that in the flat case it suffices to consider 
$\cc{\exp}(\pi)=\sum_{k=1}^\infty \frac{1}{k!} \pi^{\vee k}$ instead of $\exp(\pi)$.
In our general setting we have
\begin{equation*}
  \Delta_\sh(\exp(\pi))
	=
	\exp(\pi) \otimes \exp(\pi),
\end{equation*}
where we have to consider the completed symmetric tensor algebra with respect 
to the symmetric degrees. For $\pi \in \mathcal{F}^1 L[1]^0$ we compute with \eqref{eq:QinSymmetricviaTaylor}
\begin{align*}
  Q(\exp(\pi))
	& = 
	\sum_{n=0}^\infty \frac{1}{n!} \sum_{k=0}^n 
	\binom{n}{k} Q_k^1(\pi\vee \cdots \vee \pi)\vee \pi \vee \cdots \vee \pi  \\
	& = 
	Q^1(\exp(\pi))\vee \exp (\pi)
\end{align*}
and thus the first point is clear. 
The second point follows from the explicit form of $F$ from 
Proposition~\ref{prop:linftymorphismsequence}.
Concerning the last part we have
\begin{equation*}
  Q'(\exp(S))
	=
	Q'F(\exp(\pi))
	=
	F Q(\exp(\pi))
	=
	0
\end{equation*}
whence the statement follows with the first and second point.
\end{proof}

We have shown that $L_\infty$-morphisms map Maurer-Cartan elements to Maurer-Cartan 
elements, but we would like to have even more: they should map equivalence 
classes of Maurer-Cartan elements to equivalence classes and thus yield maps 
on the deformations $\Def(\liealg{g})$ of DGLAs resp. of $L_\infty$-algebras. 
Therefore, we have to generalize at first the gauge action to an equivalence 
relation on the set of Maurer-Cartan elements of (curved) $L_\infty$-algebras. 
This is achieved by introducing the homotopy equivalence relation, see e.g.
\cite[Lemma~5.1]{manetti:2005a} for the case of DGLAs and 
\cite[Definition~6.5]{maunder:2017a} for the case of curved Lie algebras.
For the $L_\infty$-setting we follow \cite[Section~4]{canonaco:1999a} but 
adapt the definitions to the case of $L_\infty$-algebras with complete 
descending filtrations as in \cite{dotsenko.poncin:2016a}. 

Let therefore $(L,Q)$ be an $L_\infty$-algebra with complete descending filtration and 
consider $L[t]=L \otimes \mathbb{K}[t]$ which has again a descending filtration
\begin{equation*}
  \mathcal{F}^k L[t]
	=
	\mathcal{F}^kL \otimes \mathbb{K}[t].
\end{equation*} 
We denote its completion by $\widehat{L[t]}$ and note that since $Q$ is compatible 
with the filtration it extends to $\widehat{L[t]}$. Similarly, 
$L_\infty$-morphisms extend to these completed spaces. 

\begin{remark} 
  \label{rm:completion}
  Note that one can define the completion as space of equivalence classes of 
	Cauchy sequences with respect to the filtration topology.
  Alternatively, the completion can be identified with
  \begin{equation*}
    \varprojlim L[t] / \mathcal{F}^n L[t] 
	  \subset \prod_n L[t]/\mathcal{F}^nL[t]
		\cong
		\prod_n L/\mathcal{F}^nL \otimes\mathbb{K}[t]
  \end{equation*}
  consisting of all coherent tuples $X=(x_n)_n \in 
	\prod_n L[t]/\mathcal{F}^nL[t] $, where
  \begin{equation*}
    L[t]/\mathcal{F}^{n+1}L[t] \ni x_{n+1}
	  \longmapsto
	  x_n \in L[t]/\mathcal{F}^n[t]
  \end{equation*}
	under the obvious surjections. 
	Moreover, $\mathcal{F}^n\widehat{L[t]}$ corresponds to the kernel 
	of $\varprojlim L[t]/\mathcal{F}^nL[t] \rightarrow L[t]/\mathcal{F}^nL[t]$ 
	and thus
	\begin{equation*}
	  \widehat{L[t]} / \mathcal{F}^n\widehat{L[t]} 
		\cong 
    L[t] / \mathcal{F}^n L[t].
	\end{equation*}
	Since $L$ is complete, we can also interpret $\widehat{L[t]}$ as the 
	subspace of $L[[t]]$ such that 
	$X \;\,\mathrm{mod}\;\, \mathcal{F}^nL[[t]]$ is polynomial in $t$. In particular, 
	$\mathcal{F}^n\widehat{L[t]}$ is the subspace of elements in 
	$\mathcal{F}^nL[[t]]$ that are polynomial in $t$ modulo 
	$\mathcal{F}^mL[[t]]$ for all $m>n$.
\end{remark}

By the above construction of $\widehat{L[t]}$ it is clear that 
differentiation $\frac{\D}{\D t}$ and integration with respect to 
$t$ extend to it since they do not change the filtration. Moreover, 
\begin{equation*}
  \delta_s 
	\colon
	\widehat{L[t]} \ni X(t)
	\longmapsto
	X(s) \in L
\end{equation*}
is well-defined for all $s\in \mathbb{K}$ since $L$ is complete.

\begin{example}
  In the case that the filtration of $L$ comes from a grading $L^\bullet$, the 
	completion is given by $\widehat{L[t]}\cong \prod_i L^i[t]$, i.e. by polynomials 
	in each degree. A special case is here the case of formal power series 
	$L= V[[\hbar]]$ with $\widehat{L[t]} \cong (V[t])[[\hbar]]$ as in 
	\cite[Appendix~A]{bursztyn.dolgushev.waldmann:2012a}.
\end{example}

Now we can introduce a general equivalence relation between Maurer-Cartan 
elements of $L_\infty$-algebras. We write $\pi_0 \sim \pi_1$ if 
there exist $\pi(t) \in \mathcal{F}^1\widehat{L^1[t]}$ and 
$\lambda(t) \in \mathcal{F}^1\widehat{L^0[t]}$ such that
	\begin{align}
	  \label{eq:EquivMCElements}
		\begin{split}
		  \frac{\D}{\D t} \pi(t)
			& = 
			Q^1  (\lambda(t) \vee \exp(\pi(t)))
			=
			\sum_{n=0}^\infty \frac{1}{n!} Q^1_{n+1} (\lambda(t) \vee \pi(t) \vee 
			\cdots \vee \pi(t)),   \\
			\pi(0) 
			& = 
			\pi_0 
			\quad \quad \text{ and }\quad \quad 
			\pi(1)
			=
			\pi_1.
		\end{split}
\end{align}
We directly see that $\sim$ is reflexive and symmetric and one can check that 
it is also transitive. We write $[\pi_0]$ for the homotopy class of 
$\pi_0$ and define:

\begin{definition}[Homotopy equivalence]
\label{def:HomEquivalenceofMC}
  Let $(L,Q)$ be a (curved) $L_\infty$-algebra	with complete descending filtration. 
	The \emph{homotopy equivalence relation} on the set $\Mc^1(L)$ is given by 
	the relation $\sim$ from \eqref{eq:EquivMCElements}. The set of equivalence classes of Maurer-Cartan elements is denoted 
	by $\Def(L) = \Mc^1(L) / \sim$.
\end{definition} 

\begin{remark}
This definition can be reformulated: two Maurer-Cartan elements $\pi_0$ and $\pi_1$ in $L$ 
are homotopy equivalent if and only if there exists a Maurer-Cartan element 
$\pi(t)-\lambda(t)\D t$ in $\widehat{L[t,\D t]} $ with $\pi(0)=\pi_0$ 
and $\pi(1)=\pi_1$, see e.g. 
\cite{dotsenko.poncin:2016a} for $L_\infty$-algebras and \cite{manetti:2005a} for DGLAs.
\end{remark}

Note that in the case of 
nilpotent $L_\infty$-algebras it suffices to consider polynomials in $t$ 
as there is no need to complete $L[t]$, compare \cite{getzler:2009a}.
We check now that this is well-defined and even yields a curve $\pi(t)$ 
of Maurer-Cartan elements, see \cite[Proposition~4.8]{canonaco:1999a}.

\begin{proposition}
  \label{prop:PioftUnique}
  For every $\pi_0 \in \mathcal{F}^1L^1$ and $\lambda(t) \in 
	\mathcal{F}^1\widehat{L^0[t]}$ 
	there exists a unique $\pi(t) \in \mathcal{F}^1\widehat{L^1[t]}$ 
	such that 
	$\frac{\D}{\D t} \pi(t) = Q^1  (\lambda(t) \vee \exp(\pi(t)))$ and 
	$\pi(0) = \pi_0$. If $\pi_0 \in \Mc^1(L)$, then $\pi(t) \in \Mc^1(L)$ for 
	all $t\in \mathbb{K}$.
\end{proposition}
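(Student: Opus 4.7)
My plan is to settle existence and uniqueness by a Picard-type iteration adapted to the complete filtration, and then to upgrade the Maurer--Cartan property from $\pi_0$ to $\pi(t)$ by deriving a linear ODE for the defect $Y(t):=Q^1(\exp(\pi(t)))$ with vanishing initial condition, to which the same uniqueness principle applies.

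For existence I will set $\pi^{(0)}(t):=\pi_0$ and define recursively
\[
  \pi^{(m+1)}(t) := \pi_0 + \int_0^t Q^1\bigl(\lambda(s)\vee \exp(\pi^{(m)}(s))\bigr)\,\D s.
\]
The infinite sum hidden in $Q^1(\lambda\vee\exp(\pi^{(m)}))$ makes sense in $\widehat{L[t]}$ because $\lambda(s)\vee (\pi^{(m)}(s))^{\vee k}\in \mathcal{F}^{k+1}$ and $Q$ is filtration-preserving, so each mod-$\mathcal{F}^n$ truncation involves only finitely many nonzero summands. Using the identity $\exp(\pi^{(m)})-\exp(\pi^{(m-1)})=\exp(\pi^{(m-1)})\vee(\exp(\pi^{(m)}-\pi^{(m-1)})-1)$ in the completed symmetric algebra, one shows inductively that $\pi^{(m+1)}-\pi^{(m)}\in\mathcal{F}^{m+1}\widehat{L^1[t]}$, so the iterates form a Cauchy sequence in the filtration topology and converge to a solution $\pi(t)\in\mathcal{F}^1\widehat{L^1[t]}$. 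Uniqueness follows by the same filtration induction applied to the difference of two solutions: modulo $\mathcal{F}^2$ the equation reduces to $\dot\pi\equiv Q_1^1(\lambda)$ with the prescribed initial value, and in the inductive step the right-hand side modulo $\mathcal{F}^{n+1}$ depends only on $\pi$ modulo $\mathcal{F}^n$ because $\lambda\in\mathcal{F}^1$.

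For the Maurer--Cartan claim assume $\pi_0\in\Mc^1(L)$ and set $Y(t):=Q^1(\exp(\pi(t)))$, so that $Y(0)=0$ by hypothesis. Differentiating and using $\frac{\D}{\D t}\exp(\pi)=\dot\pi(t)\vee\exp(\pi(t))$ together with the ODE yields
\[
  \dot Y(t) = Q^1\bigl(\dot\pi(t)\vee\exp(\pi(t))\bigr).
\]
The decisive input is the expansion of $Q$ on $\lambda\vee\exp(\pi)$: the cocommutativity of $\Delta_\sh$ together with $\Delta_\sh(\exp(\pi))=\exp(\pi)\otimes\exp(\pi)$ and the coderivation property of $Q$ yield, by the same bookkeeping as in the proof of Lemma~\ref{lemma:twistinglinftymorphisms},
\[
  Q\bigl(\lambda(t)\vee\exp(\pi(t))\bigr) = \dot\pi(t)\vee\exp(\pi(t)) \;\pm\; \lambda(t)\vee Y(t)\vee\exp(\pi(t)),
\]
where the sign is fixed by the Koszul rule. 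Applying $Q^1$ to this equation and using $Q\circ Q=0$ gives
\[
  \dot Y(t) = \pm\, Q^1\bigl(\lambda(t)\vee Y(t)\vee\exp(\pi(t))\bigr),\qquad Y(0)=0,
\]
an ODE of the same type as the one for $\pi$ but now \emph{linear} in the unknown $Y$. Since $Y\equiv 0$ is evidently a solution and the Picard argument above applies verbatim to establish uniqueness, we conclude $Y(t)=0$ for all $t\in\mathbb{K}$, i.e.\ $\pi(t)\in\Mc^1(L)$.

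The main obstacle I foresee is less any single identity than the mod-$\mathcal{F}^n$ book-keeping: one has to verify that each Picard iterate really lies in the completion $\widehat{L[t]}$, not merely in $L[[t]]$, and that the expansion of $Q(\lambda\vee\exp(\pi))$ isolates the defect $Y$ cleanly enough for the resulting auxiliary equation to be truly linear in $Y$. Both points rely crucially on the standing assumption that $\lambda,\pi_0\in\mathcal{F}^1$ and that $Q$ is compatible with the filtration.
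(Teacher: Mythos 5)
Your proposal is correct, and it relies on exactly the same two pillars as the paper's proof: an inductive construction/uniqueness argument exploiting that $\lambda\in\mathcal{F}^1$ makes the right-hand side raise the filtration resp.\ depend only on lower data, and the coderivation expansion $Q(\lambda\vee\exp(\pi))=Q^1(\lambda\vee\exp(\pi))\vee\exp(\pi)\pm\lambda\vee Q^1(\exp(\pi))\vee\exp(\pi)$ combined with $Q^1\circ Q=0$. The packaging differs in two places. For existence, the paper solves the ODE directly as a formal power series in $t$ (the coefficient $\pi_{k+1}$ is determined recursively from $\lambda_j$ and $\pi_i$, $i\le k$) and only afterwards checks, by the mod-$\mathcal{F}^n$ induction of Remark~\ref{rm:completion}, that the solution lies in $\mathcal{F}^1\widehat{L^1[t]}$; your Picard iteration builds the membership in the completion into the construction and gets convergence from $\pi^{(m+1)}-\pi^{(m)}\in\mathcal{F}^{m+1}$, at the price of the extra bookkeeping you anticipate (which does go through, since the iterates stabilize mod $\mathcal{F}^n$). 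For the Maurer--Cartan part, the paper derives the same identity $\dot g(t)=Q^1(\lambda(t)\vee\exp(\pi(t))\vee g(t))$ for the defect $g(t)=Q^1(\exp(\pi(t)))$ and then shows $g^{(n)}(0)=0$ for all $n$ by induction; you instead observe that this equation is linear in the defect with $Y(0)=0$ and $Y\equiv 0$ as a solution, so uniqueness (which here is immediate by coefficient comparison, since the right-hand side has no term of order zero in $Y$) forces $Y\equiv 0$. Your finish is arguably the cleaner way to close the argument, and since the equation for $Y$ is linear you do not even need the filtration for that last uniqueness step; otherwise the two proofs buy essentially the same thing.
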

\begin{proof}
At first we show that there exists a unique solution $\pi(t) = \sum_{k=0}^\infty 
\pi_k t^k$ in the formal power series $\mathcal{F}^1 L^1 \otimes \mathbb{K}[[t]]$. 
On one hand one has
\begin{equation*}
  \frac{\D}{\D t} \pi(t)
	=
	\sum_{k =0}^\infty (k+1) \pi_{k+1} t^k,
\end{equation*}
on the other hand there exist $\phi_k \in \mathcal{F}^1L^1$ such that
\begin{equation*}
  Q^1  (\lambda(t) \vee \exp(\pi(t)))
	=
	\sum_{k=0}^\infty \phi_k t^k.
\end{equation*}
Here the $\phi_k$ depend only on the $\lambda_j$ of $\lambda(t)= \sum_{j=0}^\infty 
\lambda_j t^j$ and the $\pi_i$ for $i\leq k$, hence they can be defined 
inductively. It remains to check that one has even 
$\pi(t)\in \mathcal{F}^1\widehat{L^1[t]}$, i.e. by 
Remark~\ref{rm:completion} that
$\pi(t)\;\,\mathrm{mod}\;\, \mathcal{F}^nL^1[[t]] \in L^1[t]$ for all $n$. 
Indeed, we have inductively
\begin{equation*}
  \frac{\D}{\D t}\pi(t) \;\,\mathrm{mod}\;\, \mathcal{F}^2L^1[[t]]
	=
	Q^1(\lambda(t)) \;\,\mathrm{mod}\;\,\mathcal{F}^2L^1[[t]]
	\in 
	L^1[t].
\end{equation*}
For the higher orders we get
\begin{equation*}
  \frac{\D}{\D t}\pi(t) 
	\equiv
	\sum_{k=0}^{n-2}\frac{1}{k!} 
	Q^1_{k+1}(\lambda(t)\vee (\pi(t)\;\,\mathrm{mod}\;\,\mathcal{F}^{n-1})\vee \cdots 
	\vee (\pi(t)\;\,\mathrm{mod}\;\,\mathcal{F}^{n-1}))
	\;\,\mathrm{mod}\;\, \mathcal{F}^nL^1[[t]]
\end{equation*}
and thus $\pi(t) \,\mathrm{mod}\, \mathcal{F}^nL^1[[t]] \in L^1[t]$.

Let now $\pi_0$ be a curved Maurer-Cartan element, the flat case follows directly 
from the curved one. We have to show $g(t) = Q^1(\exp(\pi(t))) = 0$ for 
all $t$, so it suffices to show $g^{(n)}(0)= \frac{\D^n}{\D t^n} g (0)=0$ for all $n\geq 0$. 
The case $n=0$ is clear, for $n=1$ we get 
\begin{align*}
  g^{(1)}(t)
	& =
	Q^1 \left(\exp(\pi(t)) \vee Q^1(\lambda(t)\vee \exp(\pi(t)))\right) \\
	& =
	Q^1\left(Q\left(\lambda(t)\vee \exp(\pi(t))\right) + \lambda(t)\vee \exp(\pi(t)) \vee 
	Q^1(\exp(\pi(t)))\right) \\
	& = 
	Q^1\left(\lambda(t)\vee \exp(\pi(t)) \vee g^{(0)}(t)\right).
\end{align*} 
The statement follows by induction.
\end{proof}

In the case of a curved Lie algebra $(\liealg{g},R,\D,[\argument{,}\argument])$ this 
recovers the gauge action from Proposition~\ref{prop:GaugeactionDGLA}. 
Going from gauge equivalence to homotopy equivalence is easy:
Explicitly, let $\pi_1 = \exp([g,\argument])\acts \pi_0$, then setting 
	$\lambda(t) = g$ and $\pi(t) = \exp([tg,\argument])\acts \pi$ satisfies
	\begin{align*}
	  \frac{\D}{\D t} \pi(t)
	  & =
	  \exp([tg,\argument])[g,\pi_0] - \exp([tg,\argument])(\D g) \\
		& =
		-\D g + \exp([tg,\argument])[g,\pi_0] -
		\sum_{n=0}^\infty \frac{([tg,\,\cdot\,])^{n+1}}{(n+1)!} (\D g) \\
		& =
		Q_1^1(\lambda(t)) + [\lambda(t), \exp([tg,\argument])\acts \pi_0].
	\end{align*} 
For the flat setting, the other direction from the homotopy equivalence to 
the gauge equivalence is contained in the following theorem, see e.g. 
\cite[Theorem~5.5]{manetti:2005a}.

\begin{theorem}
  \label{thm:DGALHomvsGaugeEquiv}
  Two Maurer-Cartan elements in $(\liealg{g},\D,[\argument{,}\argument])$ 
	are homotopy equivalent if and only if they are gauge equivalent.
\end{theorem}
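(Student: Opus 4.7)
The plan is to prove that homotopy equivalence implies gauge equivalence; the converse direction was already verified explicitly in the discussion preceding the theorem, just after Proposition~\ref{prop:PioftUnique}.

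The cleanest route is to pass to the auxiliary DGLA $(\liealg{g}_\D,[\argument{,}\argument]_\D,0)$ introduced in \eqref{eq:gplusd} with trivial differential. Recall that $\pi$ is a Maurer-Cartan element of $\liealg{g}$ if and only if $\tilde\pi:=\pi+\D$ satisfies $[\tilde\pi,\tilde\pi]_\D=0$, and that the gauge action of $\exp([g,\argument])$ on $\liealg{g}$ transports to the inner adjoint action $\exp([g,\argument]_\D)$ on $\liealg{g}_\D$, using $[g,\D]_\D=-\D g$. For a homotopy $(\pi(t),\lambda(t))$ joining $\pi_0$ to $\pi_1$, a direct calculation shows that the path $\tilde\pi(t):=\pi(t)+\D$ satisfies the purely linear ODE
\begin{equation*}
  \frac{\D}{\D t}\tilde\pi(t)
  =
  [\lambda(t),\tilde\pi(t)]_\D,
  \qquad \tilde\pi(0)=\pi_0+\D,
\end{equation*}
since $[\lambda(t),\pi(t)+\D]_\D=[\lambda(t),\pi(t)]-\D\lambda(t)$ matches the right-hand side of the homotopy equation unpacked for DGLAs from \eqref{eq:EquivMCElements}.

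I would then construct $g(t)\in\mathcal{F}^1\widehat{\liealg{g}^0[t]}$ with $g(0)=0$ by solving the ODE
\begin{equation*}
  \dot g(t)
  =
  \sum_{n\geq 0}\frac{B_n}{n!}[g(t),\argument]^n\bigl(\lambda(t)\bigr),
\end{equation*}
where the $B_n$ are the Bernoulli numbers arising from $x/(e^x-1)=\sum_n B_n x^n/n!$. Since $[g(t),\argument]$ strictly increases filtration degree, a filtration-induction argument analogous to the one in Proposition~\ref{prop:PioftUnique} produces a unique solution in $\mathcal{F}^1\widehat{\liealg{g}^0[t]}$. The standard adjoint form of the derivative-of-exponential identity then yields
\begin{equation*}
  \frac{\D}{\D t}\exp\bigl([g(t),\argument]_\D\bigr)
  =
  [\lambda(t),\argument]_\D\circ\exp\bigl([g(t),\argument]_\D\bigr),
\end{equation*}
so the path $\exp([g(t),\argument]_\D)(\tilde\pi_0)$ satisfies the same linear ODE as $\tilde\pi(t)$ with the same initial value at $t=0$. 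By uniqueness the two paths coincide, and evaluating at $t=1$ and projecting back to $\liealg{g}$ gives $\pi_1=\exp([g(1),\argument])\acts\pi_0$, i.e.\ the desired gauge equivalence.

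The main technical obstacle is the careful verification of the derivative-of-exponential identity in the graded filtered context and the justification of the convergence and polynomiality in $t$ (modulo each filtration level) of all power series that appear. Both issues reduce to the observation that $[g(t),\argument]$ raises filtration degree by at least one, which makes the relevant Bernoulli and exponential series truncate modulo any $\mathcal{F}^n\liealg{g}$; the problem is thereby reduced to the nilpotent case, where the classical Baker--Campbell--Hausdorff-type identities apply verbatim.
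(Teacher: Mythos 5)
Your proposal is correct and is essentially the paper's own route: the Bernoulli-number ODE you solve for $g(t)$ is exactly the inverted form of \eqref{eq:ODEforA} defining $A(t)$ in Proposition~\ref{prop:HomEquvsGaugeEqu} (the paper states the theorem with a reference to Manetti and then gives precisely this construction as its explicit version), and the converse direction is, as you say, the computation preceding the theorem. The only difference is presentational: you verify that the gauge path reproduces $\pi(t)$ by passing to $(\liealg{g}_{\D},[\argument{,}\argument]_{\D},0)$, where the homotopy equation becomes the linear ODE $\frac{\D}{\D t}\tilde\pi(t)=[\lambda(t),\tilde\pi(t)]_{\D}$ and the derivative-of-exponential identity plus uniqueness of formal solutions finishes the argument, whereas the paper checks the explicit formula $\pi(t)=e^{[A(t),\argument]}\pi_0-\frac{\exp([A(t),\argument])-\id}{[A(t),\argument]}\D A(t)$ directly against the homotopy equation; this is a mildly cleaner bookkeeping on your side, but the underlying construction and the filtration/polynomiality checks are the same.
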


This theorem can be rephrased in a more explicit manner in the following proposition, 
see \cite[Proposition~2.13]{kraft.schnitzer:2021a:pre}.

\begin{proposition}
\label{prop:HomEquvsGaugeEqu}
  Let $(\liealg{g},R,\D,[\argument{,}\argument])$ be a curved Lie algebra 
	equipped with a complete descending filtration. 
	Consider $\pi_0 \sim \pi_1$ with homotopy equivalence given by  
  $\pi(t) \in \mathcal{F}^1\widehat{\liealg{g}^1[t]}$ and 
	$\lambda(t) \in \mathcal{F}^1\widehat{ \liealg{g}^0[t]}$. 
	The formal solution of
	\begin{equation}
		\label{eq:ODEforA}
		\lambda(t)
		=
		\frac{\exp([A(t),\argument])-\id}{[A(t),\argument]} \left(\frac{\D}{\D t}A(t)\right),
		\quad\quad
		A(0)
		=
		0
	\end{equation}
	is an element $A(t) \in \mathcal{F}^1\widehat{ \liealg{g}^0[t]}$
	and satisfies
	\begin{equation}
	  \pi(t)
		=
		e^{[A(t),\argument]}\pi_0 
	  - \frac{\exp([A(t),\argument]-\id}{[A(t),\argument]} \D A(t).
	\end{equation}
	In particular, one has for $g =A(1) \in  \mathcal{F}^1 \liealg{g}^0$ 
	\begin{equation}
	  \pi_1
		=
		\exp([g,\argument])\acts \pi_0.
	\end{equation}
\end{proposition}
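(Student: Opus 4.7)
The plan is to reduce everything to a computation in the auxiliary graded Lie algebra $(\liealg{g}_{\D},[\argument{,}\argument]_{\D},0)$ of \eqref{eq:gplusd}, where the decisive identity is $[h,\D]_{\D} = -\D h$ for $h \in \liealg{g}^0$. The crucial observation is that the candidate
\begin{equation*}
  \tilde{\pi}(t) \;:=\; \exp([A(t),\argument])\pi_0 \;-\; \frac{\exp([A(t),\argument])-\id}{[A(t),\argument]}\D A(t)
\end{equation*}
is nothing but the adjoint action of $\exp([A(t),\argument]_{\D})$ on $\pi_0+\D$ in $\liealg{g}_{\D}$, shifted back by $\D$. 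Indeed, since $[A(t),\argument]_{\D}^{n}(\D) = -[A(t),\argument]^{n-1}(\D A(t))$ for $n\geq 1$, a direct resummation yields $\exp([A(t),\argument]_{\D})(\pi_0+\D) = \tilde{\pi}(t)+\D$. This reduces the proposition to a Lie-theoretic identity in a Lie algebra with zero differential, where curved Maurer-Cartan elements of $\liealg{g}$ correspond exactly to flat Maurer-Cartan elements of $\liealg{g}_{\D}$ via $\pi \mapsto \pi + \D$.

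First I would establish existence of $A(t)$. Inverting the invertible formal power series $x \mapsto (e^x-1)/x$ rewrites the defining relation as a recursive ODE $\tfrac{\D}{\D t}A(t) = \lambda(t) + \text{(higher-order terms in $[A(t),\argument]$ applied to $\lambda(t)$)}$ with $A(0)=0$. This is exactly of the type treated in Proposition~\ref{prop:PioftUnique}: since $\lambda(t)\in\mathcal{F}^1\widehat{\liealg{g}^0[t]}$ and $[A(t),\argument]$ raises the filtration degree by at least one, the same mod-$\mathcal{F}^n$ induction produces a unique formal power series solution which in fact lies in $\mathcal{F}^1\widehat{\liealg{g}^0[t]}$.

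Second I would verify that $\tilde{\pi}(t)$ solves the homotopy equation \eqref{eq:EquivMCElements}. The classical derivative-of-the-exponential formula, applied in the completed operator algebra on $\liealg{g}_{\D}$, gives
\begin{equation*}
  \tfrac{\D}{\D t}\exp([A(t),\argument]_{\D}) \;=\; [\lambda(t),\argument]_{\D} \;\circ\; \exp([A(t),\argument]_{\D}),
\end{equation*}
because the prefactor $\frac{\exp(\mathrm{ad}\,X)-\id}{\mathrm{ad}\,X}(X')$ with $X = [A(t),\argument]$ collapses to $[\lambda(t),\argument]$ via $[\mathrm{ad}_A,\mathrm{ad}_B]=\mathrm{ad}_{[A,B]}$ and the very definition of $\lambda(t)$. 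Evaluating on $\pi_0+\D$ and using $[\lambda(t),\D]_{\D} = -\D\lambda(t)$ yields $\tfrac{\D}{\D t}\tilde{\pi}(t) = -\D\lambda(t) + [\lambda(t),\tilde{\pi}(t)]$, which is the curved-Lie-algebra form of the homotopy ODE. Combined with $\tilde{\pi}(0)=\pi_0$, the uniqueness clause of Proposition~\ref{prop:PioftUnique} forces $\tilde{\pi}(t)=\pi(t)$. Setting $t=1$, $g=A(1)\in\mathcal{F}^1\liealg{g}^0$ and comparing the resulting expression for $\tilde{\pi}(1)$ with \eqref{eq:gaugeactiong0} identifies it with $\exp([g,\argument])\acts\pi_0$, giving $\pi_1 = \exp([g,\argument])\acts\pi_0$.

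The main technical obstacle will be keeping all of these operator power series rigorous inside the completion $\widehat{\liealg{g}[t]}$: one must check at each step that the partial sums of $\exp([A(t),\argument])$ and $\tfrac{\exp([A(t),\argument])-\id}{[A(t),\argument]}$ stabilise modulo every $\mathcal{F}^n$, that term-by-term differentiation is legitimate, and that the derivative-of-the-exponential formula survives the passage to the completion. This is a direct extension of the bookkeeping carried out in Proposition~\ref{prop:PioftUnique} and Remark~\ref{rm:completion}, and should go through without surprises.
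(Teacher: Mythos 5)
Your proposal is correct, and its overall skeleton coincides with the paper's: construct $A(t)$ by inverting the formal series $x\mapsto (e^x-1)/x$ and running the same mod-$\mathcal{F}^n$ induction to see $A(t)\in\mathcal{F}^1\widehat{\liealg{g}^0[t]}$, verify that the explicit candidate solves the homotopy ODE $\frac{\D}{\D t}\pi(t)=-\D\lambda(t)+[\lambda(t),\pi(t)]$, invoke the uniqueness of Proposition~\ref{prop:PioftUnique} to conclude it equals $\pi(t)$, and read off the gauge-action formula \eqref{eq:gaugeactiong0} at $t=1$. Where you genuinely diverge is in the verification step: the paper stays inside $\liealg{g}$, expands $\D\lambda(t)$ into a double sum and cancels it against the terms produced by the derivative-of-the-exponential formula, whereas you observe that the candidate is precisely $\exp([A(t),\argument]_{\D})(\pi_0+\D)-\D$ in the auxiliary Lie algebra $\liealg{g}_{\D}$ of \eqref{eq:gplusd}, so that the single identity $\frac{\D}{\D t}\exp([A(t),\argument]_{\D})=[\lambda(t),\argument]_{\D}\circ\exp([A(t),\argument]_{\D})$ together with $[\lambda(t),\D]_{\D}=-\D\lambda(t)$ yields the ODE in one line; your resummation $[A,\argument]_{\D}^{n}(\D)=-[A,\argument]^{n-1}(\D A)$ is correct, and the curvature term $2rsR$ in $[\argument{,}\argument]_{\D}$ never enters because $\D$ is never bracketed with itself. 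This buys a cleaner, sign-free computation and makes explicit the conceptual point that the gauge flow is just the adjoint flow on $\pi+\D$ in $\liealg{g}_{\D}$ — the very picture the paper uses only to \emph{define} the gauge action before Proposition~\ref{prop:GaugeactionDGLA} — at the cost of having to justify the derivative-of-the-exponential formula and the convergence of the operator series in $\liealg{g}_{\D}$ rather than in $\liealg{g}$; the filtration bookkeeping you flag at the end is handled exactly as in the paper and poses no obstacle.
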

\begin{proof}
As formal power series in $t$ Equation~\eqref{eq:ODEforA} has a unique solution
$A(t) \in \mathcal{F}^1 \liealg{g}^0 \otimes \mathbb{K}[[t]]$.
But one has even $A(t)\in \mathcal{F}^1\widehat{\liealg{g}^0[t]}$ 
since
\begin{align*}
  \frac{\D A(t)}{\D t}  &
	\equiv
	\lambda(t) - \sum_{k=1}^{n-2} \frac{1}{(k+1)!} [A(t),\argument]^k 
	\frac{\D A(t)}{\D t} 
	\;\,\mathrm{mod}\;\, \mathcal{F}^n\liealg{g}[[t]] \\
	& \equiv
	\lambda(t) - \sum_{k=1}^{n-2} \frac{1}{(k+1)!} 
	[A(t)\;\,\mathrm{mod}\;\, \mathcal{F}^{n-1}\liealg{g}[[t]],\argument]^k 
	\left(\frac{\D A(t)}{\D t} \;\,\mathrm{mod}\;\, \mathcal{F}^{n-1}\liealg{g}[[t]]\right)
	\;\,\mathrm{mod}\;\, \mathcal{F}^n\liealg{g}[[t]] 
\end{align*}
is polynomial in $t$ by induction. Writing $\dot{A}(t)=\frac{\D}{\D t}A(t)$ 
one has
\begin{equation}
  \tag{$*$}
  \label{eq:DiffofExp}
  \frac{\D}{\D t} e^{[A(t),\argument]}
	=
	\left[\frac{\exp([A(t),\argument]-\id}{[A(t),\argument]}\dot{A}(t) , 
	\argument \right] 
	\circ
	\exp([A(t),\argument]).
\end{equation}
Our aim is now to show that 
$\pi'(t) = e^{[A(t),\argument]}\pi_0 
	- \frac{\exp([A(t),\argument]-\id}{[A(t),\argument]} \D A(t)$ 
satisfies
\begin{equation*}
  \frac{\D \pi'(t)}{\D t}
  =
  -\D \lambda(t) + \left[\lambda(t), e^{[A(t),\argument]}\pi_0 
	- \frac{\exp([A(t),\argument])-\id}{[A(t),\argument]} \D A(t)\right].
\end{equation*} 
Then we know $\pi'(t) = \pi(t)\in \mathcal{F}^1\widehat{\liealg{g}^1[t]}$ 
since the solution $\pi(t)$ is unique by 
Proposition~\ref{prop:PioftUnique}, which immediately gives 
$\pi'(1)=\pi_1$. At first compute
\begin{align*}
  \D \lambda(t)
	& =
	\frac{\exp([A(t),\argument])-\id}{[A(t),\argument]}\D \dot{A}(t) 
	+
	\sum_{k=0}^\infty \sum_{j=0}^{k-1} \frac{1}{(k+1)!} \binom{k}{j+1} 
	[ \ad_A^j\D A(t), \ad_A^{k-1-j}  \dot{A}(t)]
\end{align*} 
and with \eqref{eq:DiffofExp} we get
\begin{align*}
  \frac{\D \pi'(t)}{\D t}
	& =
	\left[\frac{\exp([A(t),\argument])-\id}{[A(t),\argument]} \dot{A}(t), 
	\exp([A(t),\argument]) \pi_0\right] - 
	\frac{\exp([A(t),\argument])-\id}{[A(t),\argument]}\D \dot{A}(t)  \\
	& \;
	- \sum_{k=0}^\infty \sum_{j=0}^{k-1} \frac{1}{(k+1)!}\binom{k}{j+1} 
	[\ad_A^j \dot{A}(t), \ad_A^{k-1-j}\D A]  \\
	& =
	-\D \lambda(t) + \left[\lambda(t), e^{[A(t),\argument]}\pi_0 
	- \frac{\exp([A(t),\argument])-\id}{[A(t),\argument]} \D A(t)\right]
\end{align*}
and therefore $\pi'(t)=\pi(t)$ and the proposition is proven.
\end{proof}

\begin{remark}
\label{rem:QuillenandGaugeEquivalence}
  More generally, there are also different notions of homotopy resp. gauge 
	equivalences for Maurer-Cartan elements in $L_\infty$-algebras: 
	for example the above definition, sometimes also called \emph{Quillen homotopy}, 
	the \emph{gauge homotopy} where one requires $\lambda(t) = \lambda$ to be 
	constant, compare \cite{dolgushev:2007a}, and the \emph{cylinder homotopy}. 
	In \cite{dotsenko.poncin:2016a} 
	it is shown that these notions are also equivalent for flat 
	$L_\infty$-algebras with complete 
	descending filtration and compatible higher brackets, extending the 
	result for DGLAs from \cite{manetti:2005a}.
\end{remark}

Now we can finally show that $L_\infty$-morphisms map equivalence classes of 
Maurer-Cartan elements to equivalence classes.

\begin{proposition}
\label{prop:FmapsEquivMCtoEquiv}
  Let $F \colon (L,Q) \rightarrow (L',Q')$ be an $L_\infty$-morphism between 
	(curved) $L_\infty$-algebras, 
	and $\pi_0,\pi_1 \in \Mc^1(L)$ with $[\pi_0]=[\pi_1]$. Then $F$ is compatible 
	with the homotopy equivalence relation, i.e. one has 
	$[F^1(\exp\pi_0)] = [F^1(\exp(\pi_1)]$. In particular, one has an induced 
	map $F_\MC \colon \Def(L) \rightarrow \Def(L')$.
\end{proposition}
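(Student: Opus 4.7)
The plan is to transport the given homotopy along $F$. Given a homotopy equivalence $\pi_0 \sim \pi_1$ witnessed by $\pi(t) \in \mathcal{F}^1\widehat{L^1[t]}$ and $\lambda(t) \in \mathcal{F}^1\widehat{L^0[t]}$ as in \eqref{eq:EquivMCElements}, I would define
\begin{equation*}
  \tilde\pi(t) := F^1(\exp(\pi(t))),
  \qquad
  \tilde\lambda(t) := F^1(\lambda(t) \vee \exp(\pi(t))),
\end{equation*}
noting that $F^1$ vanishes on $1 \in \Sym^0(L[1])$, so this agrees with $F_{\Mc}(\pi(t)) = F^1(\cc{\exp}(\pi(t)))$. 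By Lemma~\ref{lemma:twistinglinftymorphisms} we have $F(\exp(\pi(t))) = \exp(\tilde\pi(t))$ and each $\tilde\pi(t)$ is a curved Maurer-Cartan element in $L'$. Degree and filtration compatibility of $F^1$ place $\tilde\pi(t) \in \mathcal{F}^1\widehat{(L')^1[t]}$ and $\tilde\lambda(t) \in \mathcal{F}^1\widehat{(L')^0[t]}$, while the boundary values $\tilde\pi(0) = F_{\Mc}(\pi_0)$ and $\tilde\pi(1) = F_{\Mc}(\pi_1)$ hold tautologically.

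The first key step is to rewrite the homotopy differential equation in a form compatible with coalgebra morphisms. From $\Delta_\sh(\exp\pi) = \exp\pi \otimes \exp\pi$ and $\Delta_\sh(\lambda) = \lambda \otimes 1 + 1 \otimes \lambda$ one obtains $\Delta_\sh(\lambda \vee \exp\pi) = (\lambda \vee \exp\pi) \otimes \exp\pi + \exp\pi \otimes (\lambda \vee \exp\pi)$. The identity $Q = Q^1 \star \id$ from Corollary~\ref{cor:coderivationcogeneratorscocom} therefore yields
\begin{equation*}
  Q(\lambda(t) \vee \exp(\pi(t)))
  =
  Q^1(\lambda(t) \vee \exp(\pi(t))) \vee \exp(\pi(t))
  + Q^1(\exp(\pi(t))) \vee \lambda(t) \vee \exp(\pi(t)).
\end{equation*}
The second term vanishes by the curved Maurer-Cartan equation for $\pi(t)$ (Proposition~\ref{prop:PioftUnique}), while the first term equals $\dot{\pi}(t) \vee \exp(\pi(t)) = \frac{d}{dt}\exp(\pi(t))$. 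Hence the homotopy condition \eqref{eq:EquivMCElements} is equivalent to $\frac{d}{dt}\exp(\pi(t)) = Q(\lambda(t) \vee \exp(\pi(t)))$.

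Applying $F$, which is $\mathbb{K}[t]$-linear and intertwines $Q$ and $Q'$, this becomes $\frac{d}{dt}\exp(\tilde\pi(t)) = Q'(F(\lambda(t) \vee \exp(\pi(t))))$. The remaining ingredient is the identity $F(\lambda(t) \vee \exp(\pi(t))) = \tilde\lambda(t) \vee \exp(\tilde\pi(t))$, which I would derive from the observation $\lambda \vee \exp\pi = \frac{d}{ds}\big|_{s=0} \exp(\pi + s\lambda)$: since $F(\exp(\pi + s\lambda)) = \exp(F^1(\exp(\pi + s\lambda)))$ by Lemma~\ref{lemma:twistinglinftymorphisms}, differentiating in $s$ at $s = 0$ gives $F^1(\lambda \vee \exp\pi) \vee \exp(\tilde\pi) = \tilde\lambda \vee \exp(\tilde\pi)$. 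A more direct alternative expands $F = \exp_\star(F^1)$ from Theorem~\ref{thm:CofreeCocomConilpotentCoalg} and verifies that each convolution power $(F^1)^{\star p}$ applied to $\lambda \vee \exp\pi$ distributes $\lambda$ symmetrically over $p$ slots.

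Applying the same reformulation on the $L'$ side, the Maurer-Cartan property of $\tilde\pi(t)$ gives $Q'(\tilde\lambda \vee \exp\tilde\pi) = (Q')^1(\tilde\lambda \vee \exp\tilde\pi) \vee \exp\tilde\pi$, so the chain assembles to
\begin{equation*}
  \frac{d\tilde\pi(t)}{dt} \vee \exp(\tilde\pi(t))
  =
  (Q')^1(\tilde\lambda(t) \vee \exp(\tilde\pi(t))) \vee \exp(\tilde\pi(t)).
\end{equation*}
Projecting onto $\Sym^1(L'[1]) = L'[1]$ then extracts $\dot{\tilde\pi}(t) = (Q')^1(\tilde\lambda(t) \vee \exp(\tilde\pi(t)))$, which is exactly \eqref{eq:EquivMCElements} for the pair $(\tilde\pi,\tilde\lambda)$. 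Combined with the boundary conditions, this exhibits a homotopy equivalence $F_{\Mc}(\pi_0) \sim F_{\Mc}(\pi_1)$, so $F_{\Mc} \colon \Def(L) \to \Def(L')$ is well-defined. I expect the main obstacle to be the coalgebraic identity $F(\lambda \vee \exp\pi) = \tilde\lambda \vee \exp(\tilde\pi)$; once that is in hand, everything else is a short chain of rewrites.
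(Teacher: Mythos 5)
Your proposal is correct and is essentially the paper's own argument: the paper also transports the homotopy via $\widetilde{\pi}(t)=F^1(\exp\pi(t))$, $\widetilde{\lambda}(t)=F^1(\lambda(t)\vee\exp\pi(t))$, using that $\pi(t)$ stays Maurer-Cartan, that $F^1\circ Q=Q'^1\circ F$, and the same coalgebraic identity $F(\lambda\vee\exp\pi)=\widetilde{\lambda}\vee\exp\widetilde{\pi}$ that you single out (your convolution-expansion derivation of it is the safe one, since the $s$-differentiation trick applies Lemma~\ref{lemma:twistinglinftymorphisms} to the non-homogeneous element $\pi+s\lambda$ and is only valid modulo $s^2$). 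The only cosmetic difference is that you rewrite both homotopy ODEs at the level of the full coalgebra and project to $L'[1]$ at the end, while the paper differentiates $F^1(\exp\pi(t))$ directly.
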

\begin{proof} 
Let $\pi(t)$ and $\lambda(t)$ encode the equivalence between $\pi_0$ and 
$\pi_1$. We set $\widetilde{\pi}(t) = F^1(\exp(\pi(t)))$ and 
$\widetilde{\lambda}(t) = F^1(\lambda(t)\vee \exp(\pi(t)))$. We compute 
\begin{align*}
  \frac{\D}{\D t} \widetilde{\pi}(t)
	& = 
	F^1\left( \exp(\pi(t))\vee Q^1\left(\lambda(t) \vee \exp(\pi(t))\right)\right) \\
	& =
	F^1\left(Q(\lambda(t)\vee \exp(\pi(t))) + \lambda(t)\vee \exp(\pi(t)) \vee 
	Q^1(\exp(\pi(t)))\right)   \\
	& =
	Q'^1 \circ F \left(\lambda(t)\vee \exp(\pi(t))\right) \\
	& = 
	Q'^1\left(F^1\left(\lambda(t)\vee \exp(\pi(t))\right) \vee \exp (F^1(\exp(\pi(t))))\right) \\
	& = 
	Q'^1\left(\widetilde{\lambda}(t) \vee \exp(\widetilde{\pi}(t))\right),
	\end{align*}
and thus the desired $[F^1(\exp\pi_0)] = [F^1(\exp\pi_1)]$.
\end{proof}

If one does not want to restrict to $L_\infty$-algebras with complete filtrations, one can tensorize general $L_\infty$-algebras by nilpotent algebras, compare 
Remark~\ref{rem:nilpotentdgla} for the case of DGLAs. In this setting the 
deformations are not a set but a functor: 
For a (curved) $L_\infty$-algebra $L$ the deformation 
functor $\Def_L \colon \mathrm{Art}_\mathbb{K} \rightarrow \mathrm{Set}$ 
maps a local Artinian ring $A$ to the set 
\begin{equation*}
  \Def_L(A) 
	=
	\Def(L \otimes m_A).
\end{equation*}
Here $m_A$ is the maximal ideal of $A$ and thus $L\otimes m_A$ is a 
nilpotent $L_\infty$-algebra and the above is well-defined. 
In this case it can be shown that one 
even has the following statement, see \cite[Theorem~4.6]{kontsevich:2003a} and 
\cite[Theorem~4.12]{canonaco:1999a} and also \cite[Proposition~4]{dolgushev:2005a} for 
the filtered setting.

\begin{theorem}
  \label{thm:lquisbijondef}
  Let $F \colon (L,Q)\rightarrow (L',Q')$ be an $L_\infty$-quasi-isomorphism 
	of flat $L_\infty$-algebras. Then the map
	\begin{equation}
	  \label{eq:mclinftymorphism}
		F_\MC \colon
	  \pi 
		\longmapsto
		F_\MC(\pi) = \sum_{n>0}\frac{1}{n!} F_n(\pi\vee \cdots \vee\pi)
	\end{equation}
	induces an isomorphism between the deformation functors 
	$\Def_L$ and $\Def_{L'}$.
\end{theorem}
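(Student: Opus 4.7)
The plan is to reduce to the case of minimal models, where quasi-isomorphisms become $L_\infty$-isomorphisms and deformations are transparent. First, fix a local Artinian algebra $A$ with nilpotent maximal ideal $m_A$ and extend all constructions $m_A$-linearly; since naturality in $A$ is immediate from $F_\MC$ being given by a universal formula, it suffices to prove that for each such $A$ the map $F_\MC\colon \Def(L \otimes m_A) \to \Def(L' \otimes m_A)$ is a bijection. Note that $L \otimes m_A$ and $L' \otimes m_A$ are nilpotent, hence automatically complete for the $m_A$-adic filtration.

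Second, I would invoke Proposition~\ref{prop:StandardFormLinfty} to obtain $L_\infty$-isomorphisms $L \cong L_\min \oplus L_{\mathrm{contr}}$ and $L' \cong L'_\min \oplus L'_{\mathrm{contr}}$, where the minimal summands have vanishing $Q_1^1$ and the contractible summands carry only an acyclic linear differential with all higher brackets zero. Denote the inclusions and projections of these direct sum decompositions by $i, p$ and $i', p'$; these are all $L_\infty$-quasi-isomorphisms, and by Proposition~\ref{prop:FmapsEquivMCtoEquiv} they induce maps between deformation sets after tensoring with $m_A$.

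Third, I would show that $i_\MC\colon \Def(L_\min \otimes m_A) \to \Def(L \otimes m_A)$ is a bijection. Using the direct product description of Lemma~\ref{lem:DirectSumLinftyAlg}, Maurer-Cartan elements of $L \otimes m_A$ split as pairs and the homotopies of Definition~\ref{def:HomEquivalenceofMC} split coordinatewise, so it suffices to show $\Def(L_{\mathrm{contr}}\otimes m_A) = \{[0]\}$. Any Maurer-Cartan element $\pi$ there is $Q_1^1$-closed because higher brackets on $L_{\mathrm{contr}}$ vanish; acyclicity, preserved under tensoring with $m_A$ over the field $\mathbb{K}$, yields $\pi = -Q_1^1 \lambda$ for some $\lambda \in \mathcal{F}^1(L_{\mathrm{contr}}\otimes m_A)^0$; the path $\pi(t) = t\pi$ with constant $\lambda(t)\equiv \lambda$ then satisfies the homotopy equation~\eqref{eq:EquivMCElements}, witnessing $\pi \sim 0$. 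The analogous argument gives $\Def(L'_{\mathrm{contr}}\otimes m_A) = \{[0]\}$.

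Finally, the composition $F_\min = p' \circ F \circ i \colon L_\min \to L'_\min$ is an $L_\infty$-quasi-isomorphism between cochain complexes with zero differential, so $(F_\min)_1^1$ is an isomorphism of graded vector spaces; by Proposition~\ref{prop:Linftyiso}, $F_\min$ is an $L_\infty$-isomorphism and hence $(F_\min)_\MC$ is a bijection. The main obstacle is to verify that under the identifications $i_\MC, i'_\MC$ the map $F_\MC$ corresponds to $(F_\min)_\MC$, i.e. that the diagram
\[
\begin{tikzcd}
\Def(L_\min \otimes m_A) \arrow[r,"(F_\min)_\MC"] \arrow[d,"i_\MC"'] & \Def(L'_\min \otimes m_A) \arrow[d,"i'_\MC"] \\
\Def(L \otimes m_A) \arrow[r,"F_\MC"'] & \Def(L' \otimes m_A)
\end{tikzcd}
\]
commutes. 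Since $i'$ is strict, the composition identity $(i' \circ F_\min)_\MC = i' \circ p' \circ F_\MC \circ i$ shows that, for $[\pi] \in \Def(L_\min\otimes m_A)$, the elements $F_\MC(i(\pi))$ and $i'((F_\min)_\MC(\pi))$ agree in the $L'_\min$-component and differ only by an element of $L'_{\mathrm{contr}}\otimes m_A$ which is itself Maurer-Cartan and thus homotopically trivial by the third step, so both paths yield the same class in $\Def(L'\otimes m_A)$.
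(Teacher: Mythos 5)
Your proposal is correct and follows essentially the same route as the paper's (sketched) proof: reduce via Proposition~\ref{prop:StandardFormLinfty} to $L\cong L_{\min}\oplus L_{\mathrm{contr}}$, show the deformation functor is trivial on the linear contractible summand and splits over direct sums, and use that $F_{\min}=p'\circ F\circ i$ is an $L_\infty$-isomorphism by Proposition~\ref{prop:Linftyiso}. You additionally spell out the compatibility square identifying the resulting isomorphism with $F_\MC$ itself, which the paper leaves implicit; apart from a harmless sign ($\pi=Q_1^1\lambda$ rather than $-Q_1^1\lambda$ with your conventions) and the tacit replacement of $L,L'$ by their standard forms so that $i,p,i',p'$ are strict, everything checks out.
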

\begin{proof}[Sketch]
In Proposition~\ref{prop:StandardFormLinfty} we have shown that 
every $L_\infty$-algebra $L$ is isomorphic to the direct product of a minimal one 
$L_{min}$, i.e. one with 
$Q_1=0$, and a linear contractible one $L_{lc}$, see also 
\cite[Proposition~2.8]{canonaco:1999a}. 
On linear contractible $L_\infty$-algebras
the deformation functor is trivial, i.e. for all $A$ the set $\Def_{L_{lc}}(A)$ 
contains just one element: A Maurer-Cartan element $\pi$ is just a closed 
element. But since the cohomology is trivial, it is exact and there exists 
$\lambda \in L^0_{lc} \otimes m_A$ with $Q_1^1(\lambda) = \pi$, and $\pi(t) 
= t \pi$ shows that $\pi$ is homotopy equivalent to zero.
In addition, one easily sees that the deformation functor is compatible 
with direct products, i.e. 
\begin{equation*}
  \Def_{L\oplus L'} 
	\cong
	\Def_L \times \Def_{L'}.
\end{equation*}
Summarizing, this yields
\begin{equation*}
  \Def_L
	\cong
	\Def_{L_{min}}
	\cong
	\Def_{L'_{min}}
	\cong
	\Def_{L'}
\end{equation*}
since $L_{min}$ and $L_{min}'$ are $L_\infty$-isomorphic, see also 
\cite[Section~4]{canonaco:1999a}. 
\end{proof} 

\begin{remark}
  The above result can be further generalized: In fact, a morphism 
	$L \colon \liealg{g} \rightarrow \liealg{g}'$ of DGLAs induces an isomorphism 
	on the deformation functors if the induced map in cohomology is bijective in 
	degree one, injective in degree two and surjective in degree zero, compare 
	\cite[Theorem~3.1]{manetti:2005a}.
\end{remark}

We are mainly interested in the setting of formal power series, where we directly get 
the following statement.

\begin{corollary}
\label{cor:BijFormalMC}
  Let $F$ be an $L_\infty$-quasi-isomorphism between two flat DGLAs $\liealg{g}$ and 
	$\liealg{g}'$. Then it induces a bijection $F_\MC$ between 
	$\Def(\liealg{g}[[\hbar]])$ and $\Def(\liealg{g}'[[\hbar]])$. 
\end{corollary}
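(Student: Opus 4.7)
The plan is to deduce the corollary from Theorem~\ref{thm:lquisbijondef} by reducing the filtered statement to its Artinian form. The DGLA $\liealg{g}[[\hbar]]$ carries the complete descending filtration $\mathcal{F}^k = \hbar^k\liealg{g}[[\hbar]]$, which is strictly compatible with the differential and the bracket. The $\hbar$-linear extension $F_\hbar \colon \liealg{g}[[\hbar]] \to \liealg{g}'[[\hbar]]$ of $F$ is again an $L_\infty$-morphism, it preserves the filtration, and its first Taylor coefficient acts as $F_1$ on each $\hbar$-coefficient, hence remains a quasi-isomorphism.

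Next I would reinterpret both sides of the desired bijection as inverse limits over local Artinian quotients. Setting $A_n = \mathbb{K}[\hbar]/(\hbar^{n+1})$ with maximal ideal $m_n = \hbar A_n$, one has
\[
\hbar\liealg{g}[[\hbar]] \;\cong\; \varprojlim_n \bigl(\liealg{g}\otimes m_n\bigr),
\]
and I would use the completeness of the filtration (together with Proposition~\ref{prop:HomEquvsGaugeEqu}) to promote this to an identification
\[
\Def(\liealg{g}[[\hbar]]) \;\cong\; \varprojlim_n \Def_\liealg{g}(A_n),
\]
and analogously for $\liealg{g}'$. Theorem~\ref{thm:lquisbijondef} applied to $F$ then yields bijections $(F_\MC)_n \colon \Def_\liealg{g}(A_n) \to \Def_{\liealg{g}'}(A_n)$ at every finite order. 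These bijections commute with the transition maps induced by the projections $A_{n+1}\twoheadrightarrow A_n$, because $F_\MC(\pi)=\sum_{k>0}\tfrac{1}{k!}F_k(\pi^{\vee k})$ is given by a universal polynomial formula in the structure maps of $F$. Passing to the inverse limit produces the bijection between $\Def(\liealg{g}[[\hbar]])$ and $\Def(\liealg{g}'[[\hbar]])$ sought in the statement.

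The main obstacle is the identification $\Def(\liealg{g}[[\hbar]]) \cong \varprojlim_n \Def_\liealg{g}(A_n)$. Surjectivity onto the inverse limit — lifting a coherent family of Maurer-Cartan classes modulo $\hbar^{n+1}$ to an honest element of $\hbar\liealg{g}^1[[\hbar]]$ — follows from completeness by choosing compatible representatives order by order. Injectivity is subtler: if two formal Maurer-Cartan elements are gauge equivalent modulo each $\hbar^{n+1}$, one must produce a single global gauge parameter $g\in\hbar\liealg{g}^0[[\hbar]]$ implementing the equivalence. This requires inductively replacing each truncated parameter $g_n$ by an equivalent one so that the resulting sequence becomes coherent in $\varprojlim_n(\liealg{g}^0\otimes m_n)$, which is exactly where the freedom to compose with stabilizing gauge transformations is used. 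Alternatively, one can bypass the Artinian detour altogether and argue directly in the filtered setting, running the Kontsevich--Canonaco proof sketched after Theorem~\ref{thm:lquisbijondef} with the filtration $\hbar^k\liealg{g}[[\hbar]]$ in place of the nilpotent $m_A$, as in \cite[Proposition~4]{dolgushev:2005a}: both injectivity and surjectivity of $F_\MC$ on homotopy classes are then obtained by a single induction in powers of $\hbar$ that exploits $F_1$ being a quasi-isomorphism at each step.
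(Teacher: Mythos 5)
Your proposal follows essentially the same route as the paper, whose proof is a one-liner deducing the corollary from Theorem~\ref{thm:lquisbijondef} by observing that $\mathbb{K}[[\hbar]]=\varprojlim\mathbb{K}[\hbar]/\hbar^k\mathbb{K}[\hbar]$ is pro-Artinian with pronilpotent maximal ideal $\hbar\mathbb{K}[[\hbar]]$. Your extra care about identifying $\Def(\liealg{g}[[\hbar]])$ with the inverse limit of the $\Def_\liealg{g}(A_n)$ (and the alternative of running the argument directly in the $\hbar$-adically filtered setting, as in Dolgushev) makes explicit precisely what the paper leaves implicit, so the proposal is correct and in line with the paper's argument.
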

\begin{proof}
The above statement follows from Theorem~\ref{thm:lquisbijondef} 
since $\mathbb{K}[[\hbar]] = 
\lim \mathbb{K}[\hbar] / \hbar^k \mathbb{K}[\hbar]$ is pro-Artinian with the 
pronilpotent $\hbar\mathbb{K}[[\hbar]]$ as maximal ideal. 
\end{proof}

In the curved setting the situation is more complicated  
since the proof of Theorem~\ref{thm:lquisbijondef} does not generalize: 
if the curvature is not central one does 
not even have a differential and there is no obvious notion of 
$L_\infty$-quasi-isomorphisms between curved $L_\infty$-algebras. Therefore, we postpone these considerations 
until we understand the twisting procedure, which is a way to obtain a flat 
$L_\infty$-algebra out of a curved one, see Lemma~\ref{lemma:CorrespCurvedandFlatMC} 
below.

\subsection{Twisting of (Curved) $L_\infty$-Algebras}
\label{subsec:Twisting}

Recall that for a Maurer-Cartan 
element $\pi$ of a DGLA $(\liealg{g},\D,[\argument{,}\argument])$ the map 
$\D + [\pi,\,\cdot\,]$ is a differential on $\liealg{g}$, 
the \emph{twisted} differential by $\pi$. This can be generalized 
to $L_\infty$-algebras, see e.g. \cite{dolgushev:2005a,
dolgushev:2005b,dotsenko.shadrin.vallette:2018,esposito.dekleijn:2021a}.

\begin{lemma} 
\label{lemma:twistCodiff}
Let $(L,Q)$ be a (curved) $L_\infty$-algebra and $\pi \in \mathcal{F}^1 L[1]^0$. 
Then the map $Q^\pi$ given by
\begin{equation}
  Q^\pi(X)
	=
	\exp((-\pi)\vee) Q (\exp(\pi \vee)X),
	\quad \quad
	X \in \Sym(L[1]),
\end{equation}
defines a codifferential on $\Sym (L[1])$. If $\pi$ is in addition a (curved) 
Maurer-Cartan element, then $(L,Q^\pi)$ is a flat $L_\infty$-algebra.
\end{lemma}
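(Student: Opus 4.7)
The plan is to reduce the statement to two elementary observations about the shuffle coalgebra: that $\exp(\pi)$ is a group-like element, and that left multiplication by a group-like element is a coalgebra automorphism. Everything else is then formal. Throughout, I assume the implicit complete descending filtration on $L$ so that $\exp(\pi) = \sum_{n\geq 0}\tfrac{1}{n!}\pi^{\vee n}$ converges in (the completion of) $\Sym(L[1])$; by our conventions the codifferential $Q$ extends automatically.

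First, I would check that $\exp(\pi)$ is a group-like element for $\Delta_\sh$. Since $\pi\in L[1]^0$ is primitive, i.e.\ $\Delta_\sh(\pi) = \pi\otimes 1 + 1\otimes\pi$, and since $\pi$ has even degree $0$ in $L[1]$, the usual binomial identity applies to $\Delta_\sh(\pi)$ in the tensor square of $\Sym(L[1])$. Using that $\Delta_\sh$ is an algebra morphism one gets
\begin{equation*}
  \Delta_\sh(\exp(\pi))
  = \exp(\Delta_\sh(\pi))
  = \exp(\pi\otimes 1)\cdot\exp(1\otimes \pi)
  = \exp(\pi)\otimes \exp(\pi).
\end{equation*}
Second, the left multiplication $L_{\exp(\pi)}\colon X\mapsto \exp(\pi)\vee X$ is a coalgebra morphism: again using that $\Delta_\sh$ is an algebra morphism together with the group-like identity above,
\begin{equation*}
  \Delta_\sh(\exp(\pi)\vee X)
  = (\exp(\pi)\otimes\exp(\pi))\cdot\Delta_\sh(X)
  = (L_{\exp(\pi)}\otimes L_{\exp(\pi)})\circ\Delta_\sh(X).
\end{equation*}
As $\Sym(L[1])$ is graded commutative and $\exp(\pi)\vee\exp(-\pi)=1$, $L_{\exp(\pi)}$ is a degree $0$ coalgebra automorphism with inverse $L_{\exp(-\pi)}$.

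Third, for any coalgebra automorphism $\phi$ and any coderivation $D$ along the identity, a one-line computation shows that $\phi^{-1}\circ D\circ \phi$ is again a coderivation along the identity. Applying this with $\phi = L_{\exp(\pi)}$ yields that $Q^\pi = L_{\exp(-\pi)}\circ Q\circ L_{\exp(\pi)}$ is a coderivation of degree $1$. Moreover
\begin{equation*}
  Q^\pi\circ Q^\pi
  = L_{\exp(-\pi)}\circ Q^2\circ L_{\exp(\pi)}
  = 0,
\end{equation*}
so $Q^\pi$ is a codifferential on $\Sym(L[1])$.

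Finally, to obtain the Maurer-Cartan characterization of flatness, I would compute $Q^\pi(1)$ directly. Exactly as in the proof of Lemma~\ref{lemma:twistinglinftymorphisms}, expanding $Q$ via its Taylor coefficients and using $\Delta_\sh(\exp(\pi))=\exp(\pi)\otimes\exp(\pi)$ gives $Q(\exp(\pi)) = Q^1(\exp(\pi))\vee\exp(\pi)$, hence
\begin{equation*}
  Q^\pi(1)
  = \exp(-\pi)\vee Q(\exp(\pi))
  = \exp(-\pi)\vee Q^1(\exp(\pi))\vee\exp(\pi)
  = Q^1(\exp(\pi)),
\end{equation*}
which vanishes precisely when $\pi$ is a (curved) Maurer-Cartan element. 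The only conceptual step is the group-like identity together with the fact that left multiplication by a group-like element is a coalgebra morphism; once these are in place, the rest is a purely formal conjugation argument and there is no real obstacle.
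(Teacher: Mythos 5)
Your proposal is correct and follows essentially the same route as the paper: the paper's displayed computation is precisely your conjugation argument, using that $\exp(\pm\pi)$ is group-like for $\Delta_\sh$ and that $Q$ is a coderivation, with $(Q^\pi)^2=0$ immediate and flatness read off from $Q^\pi(1)=Q^1(\exp(\pi))$. The one point the paper addresses explicitly that your completion-based framing leaves implicit is that $Q^\pi$ really maps $\Sym(L[1])$ into $\Sym(L[1])$ and not merely into its completion with respect to the symmetric degree: this is settled by noting that the Taylor coefficients $(Q^\pi)^1_n(x_1\vee\cdots\vee x_n)=\sum_{k\geq 0}\frac{1}{k!}\,Q^1_{n+k}(\pi^{\vee k}\vee x_1\vee\cdots\vee x_n)$ converge by completeness of the filtration on $L$, and that the conjugation formula agrees with the coderivation these coefficients cogenerate (equivalently, the $\exp(\pi)$ factors cancel against $\exp(-\pi)$, so only finitely many symmetric components survive), which is a short but necessary supplement to your argument.
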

\begin{proof}
At first we have to show that $Q^\pi$ is a well-defined map into $\Sym(L[1])$ 
and not into its completion with respect to the symmetric degree. This is clear 
since its structure maps $(Q^\pi)^1_n$ are well-defined maps into $L[1]$ by the completeness of the filtration and since $Q^\pi$ defines a coderivation on 
$\Sym (L[1])$ by
\begin{align*}
  \Delta_\sh Q^\pi (X)
	& = 
	\Delta_\sh \exp(-\pi \vee)Q (\exp(\pi\vee)X) \\
	& =	
	\exp(-\pi \vee) \otimes \exp(-\pi \vee)
	(Q \otimes \id + \id \otimes Q) 
	(\exp(\pi\vee) \otimes \exp(\pi\vee))(\Delta_\sh X) \\
	& = 
	(Q^\pi \otimes \id + \id \otimes Q^\pi)(\Delta_\sh X).
\end{align*}
The property $(Q^\pi)^2=0$ is clear. 
If $\pi$ is in addition a (curved) Maurer-Cartan element, then one obtains 
$Q^\pi(1) =\exp(-\pi \vee) Q(\exp(\pi)) = 0$ and thus after twisting a flat 
$L_\infty$-algebra.
\end{proof}
The $L_\infty$-algebra $(L,Q^\pi)$ is again called \emph{twisted} 
and it turns out that one can also twist the $L_\infty$-morphism, 
see \cite[Proposition~1]{dolgushev:2005b} for the flat setting and 
\cite[Lemma~2.7]{esposito.dekleijn:2021a} for the curved setting.

\begin{proposition}
  \label{prop:twistinglinftymorphisms}
  Let $F\colon (L,Q)\rightarrow (L',Q')$ be an 
	$L_\infty$-morphism between (curved) $L_\infty$-algebras, 
	$\pi \in \mathcal{F}^1L^1$ and $S = F^1(\cc{\exp}\pi) 
	\in \mathcal{F}^1 (L')^1$. 
	\begin{propositionlist}					
		\item The map 
		      \begin{equation*}
					  F^\pi
						=
						\exp(-S\vee) F \exp(\pi\vee) \colon 
						\Sym(L[1])
						\longrightarrow
						\Sym(L'[1])
					\end{equation*}
					defines an $L_\infty$-morphism between the (curved) 
					$L_\infty$-algebras $(L,Q^\pi)$ and $(L',(Q')^S)$.
					
		\item The structure maps of $F^\pi$ are given by 
		      \begin{equation}
					  \label{eq:twisteslinftymorphism}
					  F_n^\pi(x_1,\dots, x_n)
						=
						\sum_{k=0}^\infty \frac{1}{k!} 
						F_{n+k}(\pi, \dots, \pi,x_1 ,	\dots, x_n)
					\end{equation}
					and $F^\pi$ is called \emph{twisted by $\pi$}.
		\item If $\pi$ is a (curved) Maurer-Cartan element, then $F^\pi$ is an 
		      $L_\infty$-morphism between flat $L_\infty$-algebras.
		\item Let $F$ be an $L_\infty$-quasi-isomorphism between flat $L_\infty$-algebras 
		      such that $F_1^1$ is not only a quasi-isomorphism of filtered complexes 
					$L\rightarrow L'$ but even induces a quasi-isomorphism
          \begin{equation*}
            F_1^1 \colon
	          \mathcal{F}^k L
	          \longrightarrow
	          \mathcal{F}^kL'
          \end{equation*}
          for each $k$. If $\pi$ is a flat Maurer-Cartan element, then $F^\pi$ is 
					also an $L_\infty$-quasi-isomorphism.	 
	\end{propositionlist}
\end{proposition}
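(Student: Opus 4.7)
The plan rests on one core observation: since $\pi \in L[1]^0$ and $\Sym(L[1])$ is a bialgebra under the shuffle coproduct and the symmetric product, the element $\exp(\pi)$ is grouplike, so the maps $\exp(\pm\pi\vee)\colon X \mapsto \exp(\pm\pi) \vee X$ are mutually inverse coalgebra automorphisms of $\Sym(L[1])$, and analogously for $\exp(\pm S\vee)$ on $\Sym(L'[1])$. For part~(i), this immediately shows that $F^\pi = \exp(-S\vee)\, F\, \exp(\pi\vee)$ is a coalgebra morphism, while the intertwining $F^\pi Q^\pi = (Q')^S F^\pi$ follows by telescoping:
\begin{align*}
F^\pi Q^\pi
&= \exp(-S\vee)\, F\, \exp(\pi\vee) \exp(-\pi\vee)\, Q\, \exp(\pi\vee)
 = \exp(-S\vee)\, F Q\, \exp(\pi\vee) \\
&= \exp(-S\vee)\, Q' F\, \exp(\pi\vee)
 = (Q')^S\, \exp(-S\vee)\, F\, \exp(\pi\vee)
 = (Q')^S F^\pi,
\end{align*}
using $(Q')^S \exp(-S\vee) = \exp(-S\vee) Q'$ in the last step.

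For part~(ii), I would project $F^\pi(x_1 \vee \cdots \vee x_n) = \exp(-S\vee) F(\exp(\pi) \vee x_1 \vee \cdots \vee x_n)$ onto $L'[1]$. Because $F$ is counital and $\exp(\pi) \vee x_1 \vee \cdots \vee x_n$ has vanishing counit for $n \geq 1$, the element $F(\exp(\pi) \vee x_1 \vee \cdots \vee x_n)$ has no constant part, so the only term of $\exp(-S\vee)$ contributing to symmetric degree one is the identity, yielding
\begin{equation*}
F_n^\pi(x_1,\dots,x_n)
= F^1\bigl(\exp(\pi) \vee x_1 \vee \cdots \vee x_n\bigr)
= \sum_{k \geq 0} \tfrac{1}{k!}\, F^1_{n+k}(\pi,\dots,\pi,x_1,\dots,x_n).
\end{equation*}
Part~(iii) is then immediate from the preceding Lemma~\ref{lemma:twistCodiff} (flatness of $Q^\pi$ when $\pi$ is Maurer-Cartan) combined with Lemma~\ref{lemma:twistinglinftymorphisms}, which guarantees that $S = F^1(\cc{\exp}\pi)$ is itself a Maurer-Cartan element of $L'$.

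The heart of the statement is part~(iv), which I attack via an associated-graded argument with respect to the complete descending filtration. From
\begin{equation*}
(F^\pi)^1_1(x) = F^1_1(x) + \sum_{k \geq 1} \tfrac{1}{k!}\, F^1_{1+k}(\pi,\dots,\pi,x),
\end{equation*}
the hypothesis $\pi \in \mathcal{F}^1 L^1$ forces each correction term to map $\mathcal{F}^k L$ into $\mathcal{F}^{k+1} L'$, so $(F^\pi)^1_1$ agrees with $F^1_1$ on every graded piece $\mathrm{gr}^k = \mathcal{F}^k/\mathcal{F}^{k+1}$. The same filtration-degree count shows that the twisted differentials $(Q^\pi)^1_1$ and $((Q')^S)^1_1$ reduce to $Q^1_1$ and $(Q')^1_1$ on the associated graded. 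The strengthened hypothesis that $F^1_1\colon \mathcal{F}^k L \to \mathcal{F}^k L'$ is a quasi-isomorphism for every $k$, combined with the five-lemma applied to the short exact sequences $0 \to \mathcal{F}^{k+1} \to \mathcal{F}^k \to \mathrm{gr}^k \to 0$, yields quasi-isomorphisms on each $\mathrm{gr}^k$. Completeness and exhaustivity of the filtration finally upgrade these to the desired quasi-isomorphism of $(L, (Q^\pi)^1_1)$ and $(L', ((Q')^S)^1_1)$ via the standard comparison theorem for complete filtered complexes.

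The hard part will be this final lifting step: without the strengthened hypothesis that $F^1_1$ is a quasi-isomorphism on every filtered piece, twisting by a Maurer-Cartan element can alter cohomology and destroy the quasi-isomorphism property, so the reduction to the associated graded is essential. A careful write-up must therefore either invoke the comparison theorem for complete filtered complexes, explicitly verifying convergence of the associated spectral sequence from completeness, or equivalently construct a homotopy inverse inductively along the filtration.
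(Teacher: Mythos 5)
Your proposal is correct and follows essentially the same route as the paper: the grouplike property of $\exp(\pi)$ (resp.\ $\exp(S)$) gives the coalgebra morphism property, the telescoping conjugation computation is exactly what the paper means by ``follows directly with the definitions,'' part (iii) is read off from Lemma~\ref{lemma:twistCodiff} together with Lemma~\ref{lemma:twistinglinftymorphisms}, and your associated-graded argument for (iv) is precisely the paper's spectral-sequence argument (quasi-isomorphism on the $E_0$-level, then convergence by completeness and exhaustivity, as in Dolgushev). The only cosmetic difference is that you make the projection argument for the structure maps in (ii) and the $E_0$-comparison in (iv) explicit where the paper just cites them, which is fine.
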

\begin{proof}
$F^\pi$ is well-defined since its structure maps are well-defined maps into 
$L'[1]$ by the completeness of the filtration and since 
$F^\pi$ is a coalgebra morphism 
\begin{align*}
	\Delta_\sh \exp(-S\vee)F\exp(\pi \vee)X 
	& = 
	\exp(-S\vee)\otimes \exp(-S\vee)(F\otimes F)\Delta_\sh\exp(\pi \vee)X \\
	& = 
	F^\pi \otimes F^\pi \Delta_\sh (X).
\end{align*}
The compatibility of $F^\pi$ with the coderivations 
$Q^\pi$ and $(Q')^S$ follows directly with the definitions.
The third point follows directly from Lemma~\ref{lemma:twistCodiff}.

The last claim follows by a standard argument of spectral sequences. 
Since $F_1$ is a quasi-isomorphism w.r.t. 
$Q^1_1$ and $(Q')^1_1$ that is compatible with the filtrations, the map 
$F_1^\pi$ induces a quasi-isomorphism on the zeroth level of 
the corresponding spectral sequence, and therefore also on the 
terminal $E_\infty$-level, compare \cite[Proposition~1]{dolgushev:2005b}.
\end{proof}

It follows directly that the twisting 
procedure is functorial in the sense that 
\begin{equation}
  (G \circ F)^\pi 
	=
	G^S \circ F^\pi
\end{equation}
for an $L_\infty$-morphism $G\colon L' \rightarrow L''$, 
see \cite[Proposition~4]{dolgushev:2006a}, as well as
\begin{equation*}
  (Q^\pi)^B
	=
	Q^{\pi + B},
	\quad \quad 
	(F^\pi)^B
	=
	F^{\pi + B}.
\end{equation*}

Now we can come back to the correspondences of curved Maurer-Cartan elements 
under $L_\infty$-morphisms. Let $(L,Q)$ be a curved $L_\infty$-algebra with 
curved Maurer-Cartan element $m \in \mathcal{F}^1L^1$. Then we know 
from Lemma~\ref{lemma:twistCodiff} that the twisted codifferential 
$Q^{m}$ is flat and we get the following, compare \cite[Proposition~4.6]{maunder:2017a} 
for the case of curved Lie algebras.

\begin{lemma}
\label{lemma:CorrespCurvedandFlatMC}
  Let $(L,Q)$ be a curved $L_\infty$-algebra with 
  curved Maurer-Cartan element $m \in \mathcal{F}^1L^1$. 
	Then the curved Maurer-Cartan elements in $(L,Q)$ are in one-to-one 
	correspondence with flat Maurer-Cartan elements in $(L,Q^m)$ via 
	$\pi \mapsto \pi-m$. The 
	correspondence is compatible with equivalences.
\end{lemma}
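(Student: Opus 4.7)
The plan is to reduce everything to the coalgebra identity
\begin{equation*}
  Q \circ \exp(m\vee) = \exp(m\vee) \circ Q^m,
\end{equation*}
which is simply a restatement of the definition of $Q^m$ from Lemma~\ref{lemma:twistCodiff}. Since $m$ and $\pi-m$ both lie in degree zero of $L[1]$ they commute with respect to $\vee$, so $\exp(m)\vee\exp(\pi-m)=\exp(\pi)$, and applying this identity to $X=\exp(\pi-m)$ gives
\begin{equation*}
  Q(\exp\pi) = \exp(m)\vee Q^m(\exp(\pi-m)),
\end{equation*}
while applying it to $X = \lambda\vee\exp(\pi-m)$ gives analogously $Q(\lambda\vee\exp\pi) = \exp(m)\vee Q^m(\lambda\vee\exp(\pi-m))$.

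First I would address the bijection. By Lemma~\ref{lemma:twistinglinftymorphisms}~(i), $\pi\in\mathcal{F}^1 L^1$ is a curved Maurer-Cartan element in $(L,Q)$ if and only if $Q(\exp\pi)=0$, and likewise $\pi-m\in\mathcal{F}^1 L^1$ is a flat Maurer-Cartan element in $(L,Q^m)$ iff $Q^m(\exp(\pi-m))=0$. Because $\exp(m\vee)$ is an automorphism of $\Sym(L[1])$ with inverse $\exp(-m\vee)$, the first displayed identity above shows these two conditions are equivalent. The map $\pi\mapsto\pi-m$ is clearly a bijection of $\mathcal{F}^1 L^1$ with inverse $\widetilde{\pi}\mapsto\widetilde{\pi}+m$.

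For the compatibility with the homotopy equivalence of Definition~\ref{def:HomEquivalenceofMC}, given a homotopy $(\pi(t),\lambda(t))$ between $\pi_0,\pi_1\in\Mc^1(L)$ in $(L,Q)$, I would set $\widetilde{\pi}(t)=\pi(t)-m$ and $\widetilde{\lambda}(t)=\lambda(t)$. The boundary conditions and the fact that $\frac{\D}{\D t}\widetilde{\pi}(t)=\frac{\D}{\D t}\pi(t)$ reduce the problem to the identity
\begin{equation*}
  Q^1(\lambda(t)\vee\exp\pi(t)) = (Q^m)^1(\lambda(t)\vee\exp(\pi(t)-m)).
\end{equation*}
This follows from the second displayed identity by projecting to $L[1]$: since $(L,Q^m)$ is a flat $L_\infty$-algebra by Lemma~\ref{lemma:twistCodiff}, one has $Q^m(1)=0$, so $Q^m(\lambda\vee\exp(\pi-m))\in\cc{\Sym}(L[1])$ has trivial $\Sym^0$-component, and the $L[1]$-projection of $\exp(m)\vee Q^m(\lambda\vee\exp(\pi-m))$ picks up only its $\Sym^1$-part, namely $(Q^m)^1(\lambda\vee\exp(\pi-m))$. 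The reverse implication from $(L,Q^m)$ back to $(L,Q)$ is entirely symmetric: add $m$ back and use the inverse identity $(Q^m)^{-m}=Q$.

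The whole argument is essentially bookkeeping on the cocommutative coalgebra $\Sym(L[1])$; there is no serious obstacle. The only mildly delicate point is the projection step in the last paragraph, where the flatness of $Q^m$ (equivalently, the fact that $m$ is a curved Maurer-Cartan element) is what lets one discard the potentially nontrivial $\Sym^0$-contribution arising from left-multiplication by $\exp(m)$.
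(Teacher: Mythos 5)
Your argument is correct and is essentially the paper's own proof: both rest on the defining identity $Q(\exp(m)\vee X)=\exp(m)\vee Q^m(X)$ applied to $X=\exp(\pi-m)$ and to $X=\lambda(t)\vee\exp(\pi(t)-m)$, with the same path $(\pi(t)-m,\lambda(t))$ implementing the equivalence. One small correction: the vanishing of the $\Sym^0$-component in your projection step is automatic for any coderivation, since $\epsilon\circ Q^m=0$ by Proposition~\ref{prop:epsilonDzero}, and does not actually rely on the flatness of $Q^m$ (i.e. on $Q^m(1)=0$, which concerns the value at $1$ rather than the counit of the output).
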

\begin{proof}
We have for $\pi \in \mathcal{F}^1L[1]^0$
\begin{equation*}
  Q^1(\exp \pi)
	=
	Q^1(\exp(\pi) \vee \exp (m )\vee \exp(-m))
	=
	(Q^m)^1(\exp(\pi-m)),
\end{equation*}
so the first part follows.
Suppose that $\pi_0$ and $\pi_1$ are equivalent, i.e. there 
exists $\pi(t)$ with $\pi(0)=\pi_0$, $\pi(1)=\pi_1$ and 
\begin{equation*}
  \frac{\D}{\D t} \pi(t)
	= 
	Q^{1}  (\lambda(t) \vee \exp(\pi(t))).
\end{equation*}
Then $\pi'(t) = \pi(t) - m$ induces the 
equivalence between $\pi_0 - m$ and $\pi_1 - m$ in the 
flat setting since 
\begin{align*}
  \frac{\D}{\D t} \pi'(t)
	=
	\frac{\D}{\D t} \pi(t)
	= 
	Q^1  (\exp(m)\vee\lambda(t) \vee \exp(\pi(t)-m)) 
	=
	Q^{m,1}  (\lambda(t) \vee \exp(\pi'(t))),
\end{align*}
and the statement is shown.
\end{proof}

This directly implies for the equivalence classes of curved 
Maurer-Cartan elements:

\begin{corollary}
\label{cor:CurvdMCEquivBijection}
Let $F \colon (L,Q) \rightarrow (L',Q')$ be an 
$L_\infty$-morphism between two curved $L_\infty$-algebras 
with complete filtrations. 
If $L$ has a curved Maurer-Cartan 
element $m \in \mathcal{F}^1L^1$ such that $F^{m}$ induces 
bijection on the equivalence classes of flat Maurer-Cartan elements, then 
$F$ induces a bijection $F_\MC$ on the equivalence classes of curved 
Maurer-Cartan elements. 
\end{corollary}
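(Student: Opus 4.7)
The plan is to reduce the corollary to the flat case by simultaneously twisting source and target, exploiting Lemma~\ref{lemma:CorrespCurvedandFlatMC} on both sides. First I would set $m' = F_\MC(m) = F^1(\cc{\exp}(m)) \in \mathcal{F}^1 L'^1$. By Lemma~\ref{lemma:twistinglinftymorphisms}, $m'$ is a curved Maurer-Cartan element of $(L',Q')$, and by Proposition~\ref{prop:twistinglinftymorphisms}, twisting yields an $L_\infty$-morphism
\begin{equation*}
  F^m \colon (L,Q^m) \longrightarrow (L',(Q')^{m'})
\end{equation*}
between flat $L_\infty$-algebras. Lemma~\ref{lemma:CorrespCurvedandFlatMC} then gives bijections on equivalence classes
\begin{equation*}
  \Mc^1(L)/\sim \;\xrightarrow{\;\pi\mapsto \pi-m\;}\; \Def(L,Q^m)
  \qquad \text{and} \qquad
  \Mc^1(L')/\sim \;\xrightarrow{\;\pi'\mapsto \pi'-m'\;}\; \Def(L',(Q')^{m'}).
\end{equation*}

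Next I would assemble these into the square
\begin{equation*}
  \begin{tikzcd}
    \Mc^1(L)/\sim \arrow[r,"F_\MC"] \arrow[d,"{-m}"'] & \Mc^1(L')/\sim \arrow[d,"{-m'}"] \\
    \Def(L,Q^m) \arrow[r,"(F^m)_\MC"'] & \Def(L',(Q')^{m'})
  \end{tikzcd}
\end{equation*}
and verify it commutes on representatives. The key calculation is short: for any curved Maurer-Cartan $\pi$, the identity $F(\exp(\pi)) = \exp(F_\MC(\pi))$ from Lemma~\ref{lemma:twistinglinftymorphisms} applied to both $\pi$ and $m$ gives
\begin{equation*}
  F^m\bigl(\cc{\exp}(\pi-m)\bigr) = \exp(-m'\vee)\bigl(F(\exp \pi) - F(\exp m)\bigr) = \cc{\exp}\bigl(F_\MC(\pi)-m'\bigr),
\end{equation*}
and projecting onto $L'[1]$ yields $(F^m)^1(\cc{\exp}(\pi-m)) = F_\MC(\pi)-m'$, i.e. $(F^m)_\MC(\pi - m) = F_\MC(\pi) - m'$.

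Finally, since the two vertical maps are bijections by Lemma~\ref{lemma:CorrespCurvedandFlatMC} and $(F^m)_\MC$ is a bijection by hypothesis, a diagram chase shows that $F_\MC$ is a bijection as well. The only non-trivial step is the commutativity computation, but it is a one-line manipulation using the exponential characterization of $F$ on grouplike-type elements; the main conceptual obstacle, namely making sense of flat Maurer-Cartan equivalences for curved data, is already handled by Lemma~\ref{lemma:CorrespCurvedandFlatMC}.
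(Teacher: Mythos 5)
Your proposal is correct and follows essentially the same route as the paper: twist by $m$ on the source and by $m'=F_\MC(m)$ on the target, use Lemma~\ref{lemma:CorrespCurvedandFlatMC} to identify curved and flat equivalence classes on both sides, and invoke the hypothesis on $(F^m)_\MC$. Your explicit check that the square commutes, i.e. $(F^m)_\MC(\pi-m)=F_\MC(\pi)-m'$ via $F^m(\exp(\pi-m))=\exp(-m'\vee)F(\exp\pi)=\exp(F_\MC(\pi)-m')$, is exactly the point the paper's brief proof leaves implicit, and it is needed to conclude that the induced bijection really is $F_\MC$.
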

\begin{proof}
We know that $F$ maps $m$ to a curved Maurer-Cartan element $m' =
F_\MC(m)= F^1(\cc{\exp}m) \in \mathcal{F}^1 L'^1$ and by Lemma~\ref{lemma:CorrespCurvedandFlatMC} we know 
that 
\begin{equation*}
  \Def(L,Q)
	\cong 
	\Def(L,Q^m),
	\quad \quad
	\Def(L',Q')
	\cong 
	\Def(L',(Q')^{m'}).
\end{equation*}
But by assumption we have 
\begin{equation*}
  \Def(L,Q^m)
	\cong
	\Def(L',(Q')^{m'})
\end{equation*}
and the statement is clear.
\end{proof}

%
%
\section{Homotopic $L_\infty$-morphisms}
\label{sec:HomotopyTheoryLinftyAlgandMorph}

One of our aims is to investigate the relation between twisted $L_\infty$-morphisms. 
More explicitly, let 
$F \colon (\liealg{g},Q) \rightarrow (\liealg{g}',Q')$ be an $L_\infty$-morphism 
between DGLAs and let $\pi \in \mathcal{F}^1\liealg{g}^1$ be a 
Maurer-Cartan element equivalent to zero. Then we want to take a look at  
the relation between $F^{\pi}$ and $F$. 

To this end we need to recall the definition 
of homotopic $L_\infty$-morphisms, which also has consequences 
for the homotopy classification of $L_\infty$-algebras. 
At first, let us recall from 
Corollary~\ref{cor:coderivationcogeneratorscocom} that an $L_\infty$-structure 
$Q$ on the vector space $L$ is equivalent to a Maurer-Cartan element in the DGLA
$(\Hom_\mathbb{K}^\bullet(\Sym(L[1]),L[1]),0,[\argument{,}\argument]_{NR})$. 
Analogously, we show now that we 
can also interpret $L_\infty$-morphisms as Maurer-Cartan elements in 
a convolution $L_\infty$-algebra.

%
%
\subsection{$L_\infty$-morphisms as Maurer-Cartan Elements}
\label{sec:LinftyMorphasMC}

Let $(L,Q),(L',Q')$ be two flat 
$L_\infty$-algebras and consider the space $\Hom(\cc{\Sym}(L[1]),L')$ 
of graded linear maps. If $L$ and $L'$ are equipped with complete descending 
filtrations, then we require the maps to be compatible with the filtrations. 
We can interpret elements $F^1,G^1\in \Hom(\cc{\Sym}(L[1]),L')[1]$ as maps in 
$\Hom(\cc{\Sym}(L[1]),\cc{\Sym}(L'[1]))$, where we have a convolution product $\star$, 
compare Definition~\ref{def:ConvProduct}. For example, one has
\begin{equation*}
  F^1 \star G^1
	=
	\vee \circ (F^1 \otimes G^1) \circ \cc{\Delta_\sh} 
	\colon 
	\cc{\Sym}(L[1])
	\longrightarrow
	\Sym^2(L[1]),
\end{equation*}
and since $\cc{\Delta_\sh}$ and $\vee$ are (co-)commutative, one directly sees that 
$\star$ is graded commutative. We can use this and the 
$L_\infty$-structures on $L$ and $L'$ to define an $L_\infty$-structure on this 
vector space of maps, see \cite[Proposition~1 and Proposition~2]{dolgushev:2007a} 
and also \cite{bursztyn.dolgushev.waldmann:2012a} for the case of DGLAs.

\begin{proposition}
\label{prop:ConvLinftyStructure}
	The coalgebra $\cc{\Sym}(\Hom(\cc{\Sym}(L[1]),L')[1])$ can be equipped with 
	a codifferential $\widehat{Q}$ with structure maps
  \begin{equation}
	  \label{eq:DiffonConvLinfty}
	  \widehat{Q}^1_1 F
		=
		Q'^1_1 \circ F - (-1)^{\abs{F}} F \circ Q
	\end{equation}
	and 
	\begin{equation}
		\label{eq:BracketonConvLinfty}
		\widehat{Q}^1_n(F_1\vee \cdots \vee F_n)
		=
		(Q')^1_n \circ 
		(F_1\star F_2\star \cdots \star F_n).
	\end{equation}
	It is called \emph{convolution $L_\infty$-algebra} and its 
	Maurer-Cartan elements can be identified with $L_\infty$-morphisms. 
	Here $\abs{F}$ denotes the degree in 
	$\Hom(\cc{\Sym}(L[1]),L')[1]$. 
\end{proposition}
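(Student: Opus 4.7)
The strategy has three parts: (i) check that $\{\widehat Q^1_n\}_{n\geq 1}$ defines graded symmetric multilinear maps of total degree $+1$, so that by Corollary~\ref{cor:coderivationcogeneratorscocom} they lift to a unique coderivation $\widehat Q$ on $\cc\Sym(\Hom(\cc\Sym(L[1]),L')[1])$; (ii) verify $\widehat Q^2=0$, so that $\widehat Q$ is a codifferential; (iii) show that Maurer--Cartan elements of $(\Hom(\cc\Sym(L[1]),L')[1],\widehat Q)$ correspond bijectively to $L_\infty$-morphisms $(L,Q)\to(L',Q')$.

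\textbf{Step (i) and step (iii).} Graded symmetry of $\widehat Q^1_n$ for $n\geq 2$ follows from the graded commutativity of the convolution product $\star$ (itself a consequence of the cocommutativity of $\cc\Delta_\sh$ and the commutativity of $\vee$) together with the graded symmetry of $(Q')^1_n$; the degree condition is immediate from the degrees of $(Q')^1_1$, $Q$, and $(Q')^1_n$. For $\widehat Q^1_1$ the formula~\eqref{eq:DiffonConvLinfty} is the usual internal Hom differential associated with $Q$ and $(Q')^1_1$. For (iii), note that a degree $0$ element $F\in\Hom(\cc\Sym(L[1]),L')[1]$ extends by Theorem~\ref{thm:CofreeCocomConilpotentCoalg} to a coalgebra morphism $\Phi=\exp_\star(F)\colon\cc\Sym(L[1])\to\cc\Sym(L'[1])$ with $\pr_{L'[1]}\circ\Phi=F$. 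Unfolding the MC equation $\sum_{n\geq 1}\tfrac{1}{n!}\widehat Q^1_n(F^{\vee n})=0$ yields
\begin{equation*}
  \sum_{n\geq 1}\tfrac{1}{n!}(Q')^1_n\circ F^{\star n}=F\circ Q,
\end{equation*}
whose left-hand side equals $\pr_{L'[1]}\circ Q'\circ\Phi$ and whose right-hand side equals $\pr_{L'[1]}\circ\Phi\circ Q$. A short direct check using the coalgebra morphism property of $\Phi$ shows $Q'\circ\Phi-\Phi\circ Q$ is a coderivation along $\Phi$; by the second part of Theorem~\ref{thm:CofreeCocomConilpotentCoalg} such coderivations are determined by their projection to $L'[1]$. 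Hence the MC equation is equivalent to $Q'\circ\Phi=\Phi\circ Q$, i.e.\ to $\Phi$ being an $L_\infty$-morphism.

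\textbf{Step (ii) and the main obstacle.} The serious step is $\widehat Q^2=0$, equivalently the $L_\infty$-identity~\eqref{eq:QsquaredZero} for $\{\widehat Q^1_n\}$ on arbitrary tuples $F_1\vee\cdots\vee F_n$. The direct approach is to expand the double composition $\widehat Q^1_{n-k+1}\bigl(\widehat Q^1_k(F_{\sigma(1)}\vee\cdots\vee F_{\sigma(k)})\vee F_{\sigma(k+1)}\vee\cdots\vee F_{\sigma(n)}\bigr)$: the ``inner $Q'$--outer $Q'$'' terms sum over shuffles to $(Q')^1_{n-k+1}\bigl((Q')^1_k\circ(\cdots)\star\cdots\bigr)$ and vanish by applying the $L_\infty$-identity for $(L',Q')$ to the convolution factor, while the cross terms involving $F\circ Q$ (from $\widehat Q^1_1$) cancel using the coderivation property of $Q$ with respect to $\cc\Delta_\sh$ (so $(f\star g)\circ Q$ splits) together with $Q^2=0$. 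The main obstacle is the sign- and shuffle-bookkeeping in this expansion. A cleaner conceptual route, which I would use to organize the calculation, is the observation that on the auxiliary space $\Hom(\cc\Sym(L[1]),\cc\Sym(L'[1]))$ the operator $\bar Q\colon\Phi\mapsto Q'\circ\Phi-(-1)^{|\Phi|}\Phi\circ Q$ squares to zero automatically from $Q^2=(Q')^2=0$, and the $\widehat Q^1_n$ are the Taylor coefficients of $\bar Q$ along the nonlinear locus $\Phi=\exp_\star(F)$; polarizing $\bar Q^2(\exp_\star(t_1F_1+\cdots+t_nF_n))=0$ in the parameters $t_i$ then produces~\eqref{eq:QsquaredZero} for $\widehat Q$ without further combinatorics.
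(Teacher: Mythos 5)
Your main line of argument is correct and coincides with the paper's own proof. Part (iii) is essentially verbatim the paper's argument: extend $F$ to the coalgebra morphism $\Phi=\exp_\star(F)$ via Theorem~\ref{thm:CofreeCocomConilpotentCoalg}, observe that $Q'\circ\Phi-\Phi\circ Q$ is a coderivation along $\Phi$, and conclude that it vanishes precisely when its projection to $L'[1]$ --- which, using Lemma~\ref{lemma:projectiononSnCocom}, is exactly the Maurer--Cartan expression for $F$ --- vanishes. For $\widehat{Q}^2=0$ the paper merely asserts that it ``follows directly'' from the $L_\infty$-relations for $Q$ and $Q'$ together with the cocommutativity and coassociativity of $\cc{\Delta_\sh}$; your route (a) spells out precisely this computation, and your identification of the cancellations (the pure-$Q'$ terms assemble to \eqref{eq:QsquaredZero} for $Q'$ applied to the convolution factors, while the $F\circ Q$ cross terms cancel via the coderivation property of $Q$ with respect to $\cc{\Delta_\sh}$ and $Q^2=0$) is the correct bookkeeping.

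The one point to flag is your auxiliary shortcut (b). The mechanism there --- that $\bar Q(\exp_\star F)$ is the coderivation along $\exp_\star F$ whose projection is the curvature of $F$, so that $\bar Q^{\,2}=0$ yields the Bianchi-type identity obtained from \eqref{eq:QsquaredZero} by evaluating on powers of a single element $F$ --- only operates when $F$ has degree $0$ in $\Hom(\cc{\Sym}(L[1]),L')[1]$, since only then is $\exp_\star F$ a degree-zero coalgebra morphism and the identification with $\widehat{Q}$ via Theorem~\ref{thm:CofreeCocomConilpotentCoalg} applies. Polarizing in even scalar parameters $t_i$ therefore keeps you inside that degree and produces \eqref{eq:QsquaredZero} for $\widehat{Q}$ only on degree-zero inputs; since degree-zero elements do not span the other degrees, this is strictly weaker than the identity required for arbitrary homogeneous $F_1,\dots,F_n$. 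The gap can be closed by extending scalars by odd (Grassmann) parameters, so that arbitrary homogeneous elements become degree zero after multiplication by a parameter and the multilinear identities descend by base change; as stated, however, (b) is not a complete substitute for the direct computation in (a) --- which, fortunately, you have already sketched correctly, so the overall proof stands.
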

\begin{proof}
The fact that this yields a well-defined $L_\infty$-structure follows 
directly from the fact that $L$ and $L'$ are $L_\infty$-algebras, and 
in particular from the cocommutativity and coassociativity of $\cc{\Delta_\sh}$. 

Now we want to show that the Maurer-Cartan elements are indeed in one-to-one 
correspondence with the $L_\infty$-morphisms.
At first we recall that a coalgebra morphism $F$ from 
$\cc{\Sym}(L[1])$ into $\cc{\Sym}(L'[1])$ is 
uniquely determined by its projection $F^1$ to $L'$ via $F=\exp_\star(F^1)$, compare 
Theorem~\ref{thm:CofreeCocomConilpotentCoalg}. Consequently, we can 
identify it with a degree one element in  
$\Hom(\cc{\Sym}(L[1]),L')$. It remains to show that the 
Maurer-Cartan equation is equivalent to the fact that $F$ commutes with the 
codifferentials. But again by Theorem~\ref{thm:CofreeCocomConilpotentCoalg} one 
sees that $Q' F = F Q$ is equivalent to 
$\pr_{L'[1]} (Q'F - FQ) = 0$ which is just the Maurer-Cartan equation for $F^1$.
\end{proof}

\begin{example}[Convolution DGLA]
  Let $\liealg{g},\liealg{g}'$ be two DGLAs. Then 
	$\Hom(\cc{\Sym}(\liealg{g}[1]),	\liealg{g}')$ is in fact a DGLA, the 
	so-called \emph{convolution DGLA} with differential
	\begin{equation}
	  \label{eq:DiffonConvDGLA}
	  \del F
		=
		\D' \circ F + (-1)^{\abs{F}} F \circ Q
	\end{equation}
	and bracket
	\begin{equation}
		\label{eq:BracketonConvDGLA}
		[F,G]
		=		
		- (-1)^{\abs{F}} (Q')^1_2 \circ (F\star G) .
	\end{equation}
	Here $\abs{F}$ denotes again the degree in 
	$\Hom(\cc{\Sym}(\liealg{g}[1]),\liealg{g}')[1]$ and the induced codifferential 
	is also denoted by $\widehat{Q}$. 
\end{example}

In order to obtain a notion of equivalent Maurer-Cartan elements we need a 
complete filtration. Note that the convolution $L_\infty$-algebra 
$\mathcal{H} = \Hom(\cc{\Sym}(L[1]),L')$ is indeed 
equipped with the following complete descending filtration:
\begin{align}
  \label{eq:FiltrationConvLieAlg}
  \begin{split}
  \mathcal{H}
	& =
	 \mathcal{F}^1\mathcal{H}
	\supset
	\mathcal{F}^2\mathcal{H}
	\supset \cdots \supset
	\mathcal{F}^k\mathcal{H}
	\supset \cdots \\
	\mathcal{F}^k\mathcal{H}
	& =
	\left\{ f \in \Hom(\cc{\Sym}(L[1]),L') \mid 
	f \at{\Sym^{<k}(L[1])}=0\right\}.
	\end{split}
\end{align}
Thus all twisting procedures are well-defined and 
one can define a notion of homotopic $L_\infty$-morphisms.

\begin{definition}
  \label{def:homotopicMorph}
  Two $L_\infty$-morphisms $F,F'$ between flat $L_\infty$-algebras 
	$(L,Q)$ and $(L',Q')$ 
	are called \emph{homotopic} if they are homotopy equivalent Maurer-Cartan 
	elements in the convolution $L_\infty$-algebra $(\Hom(\cc{\Sym}(L[1]),	L'),\widehat{Q})$. 
\end{definition}

However, we are mainly interested in $L_\infty$-morphisms between 
$L_\infty$-algebras resp. DGLAs with complete filtrations, whence we 
introduce a new filtration on the convolution $L_\infty$-algebra 
$\mathcal{H} = \Hom(\cc{\Sym}(L[1]),L')$ that takes into 
account the filtrations on $\cc{\Sym}(L[1])$ and $L'$:
\begin{align}
  \label{eq:FiltrationConvLieAlg2}
  \begin{split}
  \mathcal{H}
	& =
	 \mathfrak{F}^1\mathcal{H}
	\supset
	\mathfrak{F}^2\mathcal{H}
	\supset \cdots \supset
	\mathfrak{F}^k\mathcal{H}
	\supset \cdots \\
	\mathfrak{F}^k\mathcal{H}
	& =
	\sum_{n+m=k}
	\left\{ f \in \Hom(\cc{\Sym}(L[1]),L') \mid 
	f \at{\Sym^{<n}(L[1])}=0\quad \text{ and }\quad 
	f\colon\mathcal{F}^\bullet \rightarrow 
	\mathcal{F}^{\bullet +m}\right\}.
	\end{split}
\end{align}
Here the filtration on $\cc{\Sym}(L[1])$ is the product filtration 
induced by
\begin{equation*}
  \mathcal{F}^k(L[1] \otimes L[1])
	=
	\sum_{n+m=k} \image\left(  \mathcal{F}^nL[1] \otimes 
	\mathcal{F}^mL[1] \rightarrow L[1]\otimes L[1]\right),
\end{equation*}
see e.g. \cite[Section~1]{dotsenko.shadrin.vallette:2018}.

\begin{proposition}
\label{prop:CompleteFiltrationConvLinfty}
  The above filtration \eqref{eq:FiltrationConvLieAlg2} is a complete descending 
	filtration on the convolution $L_\infty$-algebra 
	$\Hom(\cc{\Sym}(L[1]),L')$.
\end{proposition}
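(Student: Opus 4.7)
The strategy is to reformulate membership in $\mathfrak{F}^k\mathcal{H}$ as a pointwise condition on the components $f_p := f|_{\Sym^p(L[1])}$, and then to verify descendance, compatibility with $\widehat{Q}$, and completeness from this reformulation.

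First I would prove the characterization: an element $f\in\mathcal{H}$ lies in $\mathfrak{F}^k\mathcal{H}$ if and only if
\[
f_p\bigl(\mathcal{F}^q \Sym^p(L[1])\bigr)\subseteq \mathcal{F}^{q+\max(0,k-p)} L'
\]
for all $p\geq 1$ and all $q\in\mathbb{Z}$. The forward direction is immediate: each summand in the defining sum vanishes on $\Sym^{<n_i}$ and shifts the $L$-filtration by $m_i=k-n_i$, so on $\Sym^p$ with $p\geq n_i$ it shifts by at least $k-p$, while for $p<n_i$ it vanishes. For the converse, one writes $f = \sum_{p<k} f_p + \sum_{p\geq k} f_p$: the first term is a finite sum of elements living in the pieces with $(n,m)=(p,k-p)$, and the tail vanishes on $\Sym^{<k}$ and preserves the $L'$-filtration, hence lies in the $(n,m)=(k,0)$ piece. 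The descending property $\mathfrak{F}^{k+1}\subseteq \mathfrak{F}^k$ then follows from $\max(0,(k{+}1){-}p)\geq \max(0,k-p)$, and $\mathfrak{F}^1\mathcal{H}=\mathcal{H}$ since $F\in\mathcal{H}$ already preserves filtrations.

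Next I would verify that $\widehat{Q}$ respects $\mathfrak{F}^\bullet$. For the unary part $\widehat{Q}^1_1 F = Q'^1_1\circ F - (-1)^{|F|} F\circ Q$ with $F\in \mathfrak{F}^k$: the term $Q'^1_1\circ F$ is handled by the filtration compatibility of $Q'$, and for $F\circ Q$ one uses that $Q$ sends $\Sym^p$ into $\bigoplus_{i\leq p}\Sym^i$ preserving filtration, together with $\max(0,k-i)\geq \max(0,k-p)$ for $i\leq p$. For the $n$-ary part, given $F_j\in\mathfrak{F}^{k_j}$ and $x\in\mathcal{F}^q\Sym^p(L[1])$, I would expand $(F_1\star\cdots\star F_n)(x)$ using $\cc{\Delta_\sh}^{\,n-1}$: each summand is of the form $F_1(x_{I_1})\vee\cdots\vee F_n(x_{I_n})$ for a partition $\{1,\dots,p\} = I_1\sqcup\cdots\sqcup I_n$ with $|I_j|\geq 1$, and the $j$-th factor lies in $\mathcal{F}^{q_{I_j}+\max(0,k_j-|I_j|)}L'$. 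Since
\[
\sum_j \max(0,k_j - |I_j|)\;\geq\; \max\!\Bigl(0,\,\textstyle\sum_j k_j - p\Bigr)
\]
and $(Q')^1_n$ respects the product filtration, the characterization of Step~1 yields $\widehat{Q}^1_n(F_1\vee\cdots\vee F_n)\in \mathfrak{F}^{k_1+\cdots+k_n}\mathcal{H}$.

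For completeness, given a coherent tuple $(\bar f^{(n)})\in\varprojlim\mathcal{H}/\mathfrak{F}^n\mathcal{H}$ with representatives $f^{(n)}$ satisfying $f^{(n+1)}-f^{(n)}\in \mathfrak{F}^n\mathcal{H}$, a telescoping estimate via Step~1 shows that for each $x\in \mathcal{F}^q\Sym^p(L[1])$ and all $N>n\geq p$ one has $f^{(N)}_p(x)-f^{(n)}_p(x)\in \mathcal{F}^{q+n-p}L'$. Completeness of $L'$ produces a limit $f_p(x)\in \mathcal{F}^q L'$; these assemble into a filtration-respecting map $f = (f_p)_p\in\mathcal{H}$, and passing $N\to\infty$ inside the closed subspace $\mathcal{F}^{q+\max(0,n-p)}L'$ yields $f-f^{(n)}\in\mathfrak{F}^n\mathcal{H}$, so $\bar f = \bar f^{(n)}$ for every $n$. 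The main obstacle is the bookkeeping around the threshold $\max(0,k-p)$ and its interaction with the convolution: one has to track simultaneously how the symmetric-degree vanishing and the $L$-filtration shift combine across a partition of $\{1,\dots,p\}$, which is precisely where the partition-sum inequality $\sum_j\max(0,k_j-|I_j|)\geq \max(0,\sum_j k_j - p)$ enters the argument.
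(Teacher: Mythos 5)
Your proof is correct and takes essentially the same route as the paper, whose own proof merely asserts in a few words that the filtration is descending, exhausts at $k=1$, is compatible with the convolution $L_\infty$-structure, and is complete because $L'$ is; your pointwise characterization of $\mathfrak{F}^k\mathcal{H}$ via the shift $\max(0,k-p)$ on each $\Sym^p(L[1])$ simply supplies the details the paper omits. The only cosmetic addition worth making is separatedness, $\bigcap_k\mathfrak{F}^k\mathcal{H}=0$, which follows in one line from the same characterization together with $\bigcap_m\mathcal{F}^mL'=0$.
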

\begin{proof}
The filtration is obviously descending and 
$\mathcal{H}=\mathfrak{F}^1\mathcal{H}$ since we consider in the convolution 
$L_\infty$-algebra only maps that are compatible with the filtration. 
It is compatible with the convolution $L_\infty$-algebra 
structure and complete since $L'$ is complete.
\end{proof}

Recall that we introduced in Remark~\ref{rem:curvedMorphisms} the definition of 
curved morphisms between curved Lie algebras from \cite[Definition~4.3]{maunder:2017a}. 
There exists 
a similar generalizations of curved morphisms between curved $A_\infty$-algebras 
\cite{positselski:2018a}. Considering now the convolution $L_\infty$-algebra between two 
curved $L_\infty$-algebras we get directly the analogue generalization of 
curved morphisms for the $L_\infty$-setting:

\begin{remark}[Curved convolution $L_\infty$-algebra]
  \label{rem:CurvedMorphLinfty}
  Let us consider now two curved $L_\infty$-algebras $(L,Q)$ and $(L',Q')$. 
	Here we use the counital coaugmented coalgebra 
	$\Sym(\Hom(\Sym(L[1]),L')[1])$ with coproduct $\Delta_\sh$ and codifferential 
	$\widehat{Q}$ with Taylor components \eqref{eq:DiffonConvLinfty} and 
	\eqref{eq:BracketonConvLinfty} as in the flat case plus curvature component
	\begin{equation*}
	  \widehat{Q}_0^1 
		=
		(1 
		\longmapsto 
		Q'_0(1) )
		\in 
		(\Hom(\mathbb{K},L')[1])^1.
	\end{equation*}
	This gives indeed a \emph{curved convolution $L_\infty$-algebra} and we want 
	to interpret its Maurer-Cartan elements. We restrict ourselves to the filtered setting 
	where we set $\mathcal{F}^0\mathbb{K}=\mathbb{K}$, and where we 
	assume $Q'_0\in \mathcal{F}^1L'$. Then Mauer-Cartan elements 
	$F \in \mathfrak{F}^1(\Hom(\Sym(L[1]),L')[1])^0$ with the 
	filtration from \eqref{eq:FiltrationConvLieAlg2} are given by 
	Taylor components $F_n^1 \colon \Sym^n (L[1]) 
	\rightarrow L'[1]$ for $n\geq 0$. The only difference to the flat setting is 
	the zero component $F_0^1(1) = \alpha \in \mathcal{F}^1L'^1$. 
	Then the Maurer-Cartan equation implies
	\begin{align}
	  \label{eq:curvedmorphLinfty}
		\tag{$*$}
	  0
		=
		\widehat{Q}_0^1 + Q'^1_1 \circ F^1 - F^1\circ Q + 
		\sum_{n=2}^\infty \frac{1}{n!} Q'^1_n \circ (F^1)^{\star n}.
	\end{align}
	If $F^1_0(1)=0$ the evaluation at $1$ yields $Q'_0(1) = F^1_1(Q_0(1))$ and thus 
	$F$ is a $L_\infty$-morphism of curved $L_\infty$-algebras 
	in the usual sense, i.e.\ a coalgebra morphism commuting with the codifferentials. But for $F_0^1(1)=\alpha\neq 0$ we no longer get an induced coalgebra morphism since we no longer have $F(1)=1$. 
	In analogy to \cite[Definition~4.3]{maunder:2017a} we call such a general $F$ 
	\emph{curved morphism of $L_\infty$-algebras} and for $\alpha=0$ 
	the morphism $F$ is called \emph{strict}. Note that in 
	\cite{getzler:2018a} those curved $L_\infty$-morphisms are just called 
	$L_\infty$-morphism. 
	We will study the curved setting in more details in 
	Section~\ref{sec:HomTheoryofCurvedLinfty}, where we show that 
	curved $L_\infty$-morphisms still satisfy some nice properties; in 
	particular they are compatible with Maurer-Cartan elements, 
	see e.g. Proposition~\ref{prop:ProvertiesofCurvedLinftyMorph}.
\end{remark}


For now we restrict us to the simpler 
flat case. Therefore, from now on, unless stated otherwise, all DGLAs and 
$L_\infty$-algebras are assumed to be flat. We collect a few immediate consequences about 
homotopic $L_\infty$-morphisms, see e.g. \cite[Proposition~1.4.6]{kraft:2021a}:

\begin{proposition}
\label{prop:PropertiesofHomotopicMorphisms}
  Let $F,F'$ be two homotopic $L_\infty$-morphisms 
	between the flat $L_\infty$-algebras $(L,Q)$ and $(L',Q')$.
	\begin{propositionlist}
		\item $F_1^1$ and $(F')_1^1$ are chain homotopic.
		\item If $F$ is an $L_\infty$-quasi-isomorphism, then so is $F'$.
		\item $F$ and $F'$ induce the same maps from $\Def(L)$ to 
		      $\Def(L')$, i.e. $F_\MC = F'_\MC $.
		\item In the case of DGLAs $\liealg{g},\liealg{g}'$, compositions 
		      of homotopic $L_\infty$-morphisms with a 
		      DGLA morphism of degree zero are again homotopic.
	\end{propositionlist}
\end{proposition}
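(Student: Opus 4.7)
My plan is to translate every claim into the language of the convolution $L_\infty$-algebra $\mathcal{H} = \Hom(\cc{\Sym}(L[1]),L')$ and its differential $\widehat{Q}$ from Proposition~\ref{prop:ConvLinftyStructure}. By assumption there exist $F(t) \in \mathfrak{F}^1\widehat{\mathcal{H}^1[t]}$ and $\lambda(t) \in \mathfrak{F}^1\widehat{\mathcal{H}^0[t]}$ with $F(0)=F$, $F(1)=F'$, and
\begin{equation*}
  \tfrac{\D}{\D t} F(t)
  =
  \widehat{Q}^1\!\left(\lambda(t) \vee \exp(F(t))\right)
  =
  \sum_{n\geq 0}\tfrac{1}{n!}\,\widehat{Q}^1_{n+1}(\lambda(t)\vee F(t)\vee\cdots\vee F(t)).
\end{equation*}
All four claims will follow from evaluating this single equation on suitable elements of $\cc{\Sym}(L[1])$.

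For (i), I would evaluate at an element $x \in L[1] = \Sym^1(L[1])$. Since $\cc{\Delta_\sh}(x)=0$, every convolution $f_1\star\cdots\star f_k$ with $k\geq 2$ vanishes on $\Sym^1(L[1])$, so only the linear term $\widehat{Q}^1_1(\lambda(t))$ contributes. Using \eqref{eq:DiffonConvLinfty} and $Q^1_1=-\D_L$, $(Q')^1_1=-\D_{L'}$ this gives
\begin{equation*}
  \tfrac{\D}{\D t}\, F_1^1(t)(x)
  =
  -\D_{L'}\bigl(\lambda_1(t)(x)\bigr) + (-1)^{|\lambda_1|}\lambda_1(t)(\D_L x),
\end{equation*}
where $\lambda_1(t)=\lambda(t)|_{\Sym^1(L[1])}$. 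Integrating from $0$ to $1$ exhibits $H:=\int_0^1 \lambda_1(t)\,\D t$ as an explicit chain homotopy between $F_1^1$ and $(F')_1^1$. Claim (ii) then follows at once, since chain homotopic maps induce the same map on cohomology and hence $F_1^1$ is a quasi-isomorphism if and only if $(F')_1^1$ is.

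For (iii), I would evaluate the homotopy equation on $\cc{\exp}(\pi) \in \cc{\Sym}(L[1])$ for a Maurer-Cartan element $\pi\in \Mc^1(L)$. The key identity is $\cc{\Delta_\sh}(\cc{\exp}\pi)=\cc{\exp}\pi\otimes\cc{\exp}\pi$ (obtained by subtracting the primitive terms from $\Delta_\sh(\exp\pi)=\exp\pi\otimes\exp\pi$), which yields
\begin{equation*}
  (f_1\star\cdots\star f_k)(\cc{\exp}\pi)
  =
  f_1(\cc{\exp}\pi)\vee\cdots\vee f_k(\cc{\exp}\pi).
\end{equation*}
Setting $\pi'(t)=F(t)(\cc{\exp}\pi)=F_\MC(t)(\pi)$ and $\lambda'(t)=\lambda(t)(\cc{\exp}\pi)$, the homotopy equation in $\mathcal{H}$ evaluated on $\cc{\exp}\pi$ becomes exactly $\tfrac{\D}{\D t}\pi'(t)=(Q')^1(\lambda'(t)\vee\exp\pi'(t))$, which is the homotopy equation \eqref{eq:EquivMCElements} in $L'$ between $\pi'(0)=F_\MC(\pi)$ and $\pi'(1)=F'_\MC(\pi)$. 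Hence $F_\MC=F'_\MC$ on $\Def(L)$.

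For (iv), given a DGLA morphism $G\colon\liealg{g}'\to \liealg{g}''$, I would simply post-compose the homotopy data: set $\widetilde{F}(t)=G\circ F(t)$ and $\widetilde{\lambda}(t)=G\circ \lambda(t)$, viewed as elements of the convolution DGLA $\Hom(\cc{\Sym}(\liealg{g}[1]),\liealg{g}'')$. Since $G$ commutes with the differentials and brackets, $G$ intertwines $\widehat{Q}^1_1$ and $\widehat{Q}^1_2$ on the two convolution DGLAs, so applying $G$ termwise to the homotopy equation for $F(t)$ produces the homotopy equation for $\widetilde{F}(t)$, connecting $G\circ F$ to $G\circ F'$. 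Pre-composition with a DGLA morphism is handled analogously, using that the relevant data pulls back along a coalgebra morphism $\cc{\Sym}(\liealg{g}[1])\to \cc{\Sym}(\liealg{g}'[1])$. The main obstacle throughout is careful sign bookkeeping: disentangling the degree shifts between $\mathcal{H}$, $\mathcal{H}[1]$, and the internal degrees of $L,L'$, together with the Koszul signs appearing in $\widehat{Q}^1_1$; the structural arguments themselves are essentially immediate once the formalism is set up.
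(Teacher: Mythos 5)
Your proposal is correct and follows essentially the same route as the paper: restricting the homotopy equation to $\Sym^1(L[1])$ to read off the chain homotopy $\int_0^1\lambda_1^1(t)\,\D t$ for (i) and (ii), evaluating it on the group-like element $\cc{\exp}(\pi)$ to transport the homotopy of Maurer-Cartan elements for (iii), and using that a DGLA morphism intertwines differentials and brackets (resp. induces a coalgebra morphism for pre-composition) for (iv). Only the Koszul signs in your formula for $\widehat{Q}^1_1(\lambda(t))$ need the care you already flag; the structure of the argument is exactly that of the paper.
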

\begin{proof}
Concerning the first two points, let $F^1(t)$ and $\lambda^1(t)$ be the 
paths encoding the homotopy equivalence, i.e. 
\begin{equation}
\label{eq:HomEquFFprime}
\tag{$*$}
  \frac{\D}{\D t} F^1(t)
	=
	\widehat{Q}^1(\lambda^1(t)\vee \exp(F^1(t)))
\end{equation}
with $F^1(0)=F^1$ and $F^1(1)=(F')^1$.
In particular, this implies $\frac{\D}{\D t} F_1^1(t)
= (Q')^1_1 \circ \lambda_1^1(t) + \lambda_1^1(t) \circ Q^1_1$ which gives 
the statement with $F_1^1(0)=F_1^1$. 

For DGLAs the third point is proven in \cite[Lemma~B.5]{bursztyn.dolgushev.waldmann:2012a}. 
In our general setting we consider a
Maurer-Cartan element $\pi \in \mathcal{F}^1L^1$ and recall that
$\cc{\exp}(\pi)=\sum_{k=1}^\infty \frac{1}{k!} \pi^{\vee k}$ 
satisfies $\cc{\Delta_\sh}\cc{\exp}(\pi) 
= \cc{\exp}(\pi) \otimes \cc{\exp}(\pi)$ and 
$ Q \cc{\exp}(\pi) = 0$ by Lemma~\ref{lemma:twistinglinftymorphisms}.
Applying now \eqref{eq:HomEquFFprime} on $\cc{\exp}\pi$ gives
\begin{align*}
  \frac{\D}{\D t} F^1(t)(\cc{\exp}\pi)
	& =
	\widehat{Q}^1(\lambda^1(t)\vee \exp(F^1(t))(\cc{\exp}\pi) \\
	& =
  (Q')^1(\lambda^1(t)(\cc{\exp}\pi) \vee \exp(F^1(t)(\cc{\exp}\pi) ),
\end{align*}
i.e. $\pi(t) = F^1(t)(\cc{\exp}\pi) $ and $\lambda(t) = \lambda^1(t)(\cc{\exp}\pi) $ 
encode the homotopy equivalence between $F_\MC(\pi)= F^1(\cc{\exp}\pi) $ 
and $F'_\MC(\pi)=(F')^1(\cc{\exp}\pi) $.

%
The last point follows directly
since DGLA morphisms commute with brackets and differentials. 
\end{proof}

We want to generalize the last point to compositions with 
$L_\infty$-morphisms. Since we could not find a reference we prove the 
statements in detail. We start with the post-composition as in 
\cite[Proposition~3.5]{kraft.schnitzer:2021a:pre}.

\begin{proposition}
  \label{prop:CompofHomotopicHomotopic}
  Let $F_0,F_1$ be two homotopic $L_\infty$-morphisms 
	from $(L,Q)$ to $(L',Q')$. Let 
	$H$ be an $L_\infty$-morphism from $(L',Q')$ to 
	$(L'',Q'')$, then $HF_0 \sim HF_1$. 
\end{proposition}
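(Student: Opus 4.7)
The plan is to realize post-composition with $H$ as an $L_\infty$-morphism between the convolution $L_\infty$-algebras
\[
\mathcal{H}' := \Hom(\cc{\Sym}(L[1]), L') \quad\text{and}\quad \mathcal{H}'' := \Hom(\cc{\Sym}(L[1]), L''),
\]
and then to apply Proposition~\ref{prop:FmapsEquivMCtoEquiv}. Both are flat $L_\infty$-algebras by Proposition~\ref{prop:ConvLinftyStructure} and carry the complete descending filtration \eqref{eq:FiltrationConvLieAlg2}, so Definition~\ref{def:HomEquivalenceofMC} gives us a well-defined notion of homotopy equivalence on $\Mc^1(\mathcal{H}')$ and $\Mc^1(\mathcal{H}'')$.

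First I would define a coalgebra morphism $\Phi\colon \cc{\Sym}(\mathcal{H}'[1]) \to \cc{\Sym}(\mathcal{H}''[1])$ by prescribing its Taylor coefficients
\[
\Phi^1_k(\phi_1 \vee \cdots \vee \phi_k) \;:=\; H^1_k \circ (\phi_1 \star \cdots \star \phi_k),
\]
where $\star$ denotes the convolution product on $\Hom(\cc{\Sym}(L[1]), \cc{\Sym}(L'[1]))$ and $H^1_k$ are the Taylor coefficients of $H$. Since the reduced shuffle coproduct is cocommutative and $\vee$ is graded commutative, $\star$ is itself graded commutative, so the right-hand side is graded symmetric in its arguments and $\Phi^1_k$ descends to $\Sym^k(\mathcal{H}'[1])$. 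Compatibility with the filtration \eqref{eq:FiltrationConvLieAlg2} is clear from the fact that $\phi_1 \star \cdots \star \phi_k$ vanishes on $\Sym^{<(n_1+\cdots+n_k)}(L[1])$ whenever $\phi_i \in \mathfrak{F}^{n_i}\mathcal{H}'$.

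The heart of the argument is to verify that $\Phi$ is an $L_\infty$-morphism, i.e.\ that it intertwines the respective convolution codifferentials $\widehat{Q}'$ on $\cc{\Sym}(\mathcal{H}'[1])$ and $\widehat{Q}''$ on $\cc{\Sym}(\mathcal{H}''[1])$. By Theorem~\ref{thm:CofreeCocomConilpotentCoalg} it suffices to check this after projection onto $\mathcal{H}''[1]$, namely
\[
\Phi^1 \circ \widehat{Q}' \;=\; (\widehat{Q}'')^1 \circ \Phi.
\]
Expanding both sides on a homogeneous element $\phi_1 \vee \cdots \vee \phi_k$ using the explicit structure maps \eqref{eq:DiffonConvLinfty}--\eqref{eq:BracketonConvLinfty}, and using coassociativity and cocommutativity of the shuffle coproduct to recombine iterated convolutions, the required identity reduces to the $L_\infty$-morphism relation $Q'' \circ H = H \circ Q'$ for $H$ itself, projected to $L''[1]$. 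This combinatorial bookkeeping of Koszul signs and shuffle sums is the main obstacle.

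Once $\Phi$ is known to be an $L_\infty$-morphism, Lemma~\ref{lemma:twistinglinftymorphisms} computes the induced map on Maurer--Cartan elements as
\[
\Phi_{\MC}(F^1) \;=\; \sum_{k\geq 1}\frac{1}{k!}\,\Phi^1_k\bigl((F^1)^{\vee k}\bigr) \;=\; \sum_{k\geq 1}\frac{1}{k!}\, H^1_k \circ (F^1)^{\star k} \;=\; H^1 \circ \exp_\star(F^1) \;=\; (HF)^1,
\]
so $\Phi_{\MC}$ is precisely the post-composition $F \mapsto HF$. Since $F_0 \sim F_1$ means that $F_0^1$ and $F_1^1$ are equivalent in $\Mc^1(\mathcal{H}')$, Proposition~\ref{prop:FmapsEquivMCtoEquiv} applied to $\Phi$ forces $[(HF_0)^1] = [(HF_1)^1]$ in $\Def(\mathcal{H}'')$, which is exactly $HF_0 \sim HF_1$.
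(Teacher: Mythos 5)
Your route is correct but genuinely different from the one taken in the paper. The paper proves the statement by directly transporting the homotopy: given the path $F^1(t)$ and gauge parameter $\lambda^1(t)$ joining $F_0$ and $F_1$, it shows by an explicit computation that $H^1\circ\exp_\star(F^1(t))$ together with $H^1\circ(\lambda^1(t)\star\exp_\star(F^1(t)))$ again solves the homotopy equation, using that $\lambda^1(t)\star\exp_\star(F^1(t))$ is a coderivation along $F(t)$ (Theorem~\ref{thm:CofreeCocomConilpotentCoalg}) and that $H$ intertwines $Q'$ and $Q''$. You instead package post-composition with $H$ as a map $\Phi$ with Taylor coefficients $\Phi^1_k(\phi_1\vee\cdots\vee\phi_k)=H^1_k\circ(\phi_1\star\cdots\star\phi_k)$ between the convolution $L_\infty$-algebras, check that $\Phi_\MC$ is exactly $F\mapsto HF$ (this identification is correct), and invoke Proposition~\ref{prop:FmapsEquivMCtoEquiv}. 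If fully carried out, your argument proves a stronger, reusable statement -- functoriality of the convolution $L_\infty$-algebra in its target with respect to $L_\infty$-morphisms -- from which both this proposition and its analogues follow formally; the paper's computation, by contrast, stays elementary and produces the explicit homotopy data, which your route only recovers implicitly through the proof of Proposition~\ref{prop:FmapsEquivMCtoEquiv} (which transports paths in precisely the same way).

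The one point you should not gloss over is that the central claim -- that $\Phi$ really is an $L_\infty$-morphism with respect to the codifferentials built from \eqref{eq:DiffonConvLinfty} and \eqref{eq:BracketonConvLinfty} -- is a genuine lemma, not mere bookkeeping, and you only assert it. The verification does go through: the terms $\phi_i\circ Q$ recombine because $Q$ is a coderivation for $\cc{\Delta_\sh}$, the terms involving $(\widehat{Q}'')^1_\ell\circ\Phi^\ell_k$ match $Q''^1_\ell\circ H^\ell_k\circ(\phi_1\star\cdots\star\phi_k)$ because the coalgebra-morphism components $H^\ell_k$ interact with convolution products exactly as in the identity the paper itself uses for $H^\ell\circ(\lambda^1\star\exp_\star F^1)$, and what remains is the $L_\infty$-relation for $H$ precomposed with $\phi_1\star\cdots\star\phi_k$. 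But this is essentially the same computation, with the same Koszul-sign care, as the paper's direct proof, so your approach defers rather than avoids the real work. Also, for \eqref{eq:FiltrationConvLieAlg2} your filtration argument only covers the symmetric-degree part; you should add that the target-filtration part holds because the Taylor coefficients $H^1_k$ are assumed compatible with the filtrations, so that $\Phi_\MC$ and Proposition~\ref{prop:FmapsEquivMCtoEquiv} indeed apply.
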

\begin{proof}
For $F^1\in \Hom(\cc{\Sym}(L[1]),L')$ we write $ H^1F	= H^1 \circ \exp_\star(F^1)$, 
where $\star$ denotes again the convolution product with respect to $\vee$ and 
$\Delta_\sh$ resp. $\cc{\Delta_\sh}$. 
Let us denote by $F^1(t) \in \widehat{(\Hom(\cc{\Sym}
(L[1]),L')[1])^0[t]}$ and $\lambda^1(t) \in 
\widehat{(\Hom(\cc{\Sym}(L[1]),L')[1])^{-1}[t]}$ 
the paths encoding the homotopy equivalence between $F_0$ and $F_1$. Then 
$H^1F(t) \in 
\widehat{(\Hom(\cc{\Sym}(L[1]),L'')[1])^{-1}[t]}$ 
satisfies
\begin{align*}
  \frac{\D}{\D t} H^1 F(t)
	& =
	H^1 \circ 
	\left( \widehat{Q}^1(\lambda^1(t)\vee \exp(F^1(t))) \star \exp_\star(F^1)\right) .
\end{align*}
Since $F^1(t)$ is a path of Maurer-Cartan elements, we get 
\begin{align*}
  \frac{\D}{\D t} H^1F(t)
	& =
	H^1 \circ Q' \circ
	(\lambda^1(t) \star \exp_\star(F^1(t)))
	+
	H^1\circ(\lambda^1(t) \star \exp_\star(F^1(t)))  \circ Q  \\
	& =
	(Q'')^1 \circ H \circ
	(\lambda^1(t) \star \exp_\star(F^1(t)))  
	+
	H^1\circ(\lambda^1(t) \star \exp_\star(F^1(t)))   \circ Q  \\
	& =
	(\widehat{Q}')^1_1 \left(  H^1 \circ
	(\lambda^1(t) \star \exp_\star(F^1(t)))\right) 
	+ \sum_{\ell=2}^\infty (Q'')^1_\ell \circ H^\ell \circ
	(\lambda^1(t) \star \exp_\star(F^1(t))).
\end{align*}
Finally, we know from Theorem~\ref{thm:CofreeCocomConilpotentCoalg} that 
$\lambda^1(t) \star \exp_\star(F^1(t))$ is a coderivation along $F(t)$ and we get 
for the second term
\begin{align*}
  H^\ell \circ (\lambda^1(t) \star \exp_\star(F^1(t)))
	& =
	\left((H^1\circ(\lambda^1(t)\star \exp_\star F^1)) \star 
	\frac{1}{(\ell-1)!}(H^1F)^{\star (\ell-1)}\right).
\end{align*}
Summarizing, we have
\begin{align*}
  \frac{\D}{\D t} H^1F(t)
	=
	(\widehat{Q}')^1  
	\left((H^1\circ(\lambda^1(t)\star \exp_\star F^1)) \vee
	\exp(H^1F)\right)
\end{align*}
and the statement is shown.
\end{proof}

Analogously, we have for the pre-composition \cite[Proposition~3.6]{kraft.schnitzer:2021a:pre}:

\begin{proposition}
  \label{prop:preCompofHomotopicHomotopic}
  Let $F_0,F_1$ be two homotopic $L_\infty$-morphisms 
	from $(L,Q)$ to $(L',Q')$. Let 
	$H$ be an $L_\infty$-morphism from $(L'',Q'')$ to 
	$(L,Q)$, then $F_0 H\sim F_1H$. 
\end{proposition}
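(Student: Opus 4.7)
The plan is to follow the same strategy as in the proof of Proposition~\ref{prop:CompofHomotopicHomotopic}, but now exploiting that $H\colon \cc{\Sym}(L''[1])\to \cc{\Sym}(L[1])$ is a coalgebra morphism commuting with the codifferentials, i.e.\ $H\circ Q''=Q\circ H$. Let $F^1(t)\in \widehat{(\Hom(\cc{\Sym}(L[1]),L')[1])^0[t]}$ and $\lambda^1(t)\in \widehat{(\Hom(\cc{\Sym}(L[1]),L')[1])^{-1}[t]}$ encode the homotopy equivalence between $F_0$ and $F_1$ in the convolution $L_\infty$-algebra $(\Hom(\cc{\Sym}(L[1]),L'),\widehat Q)$. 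I would propose the paths
\[
\tilde F^1(t):=F^1(t)\circ H,\qquad \tilde\lambda^1(t):=\lambda^1(t)\circ H
\]
as the homotopy equivalence between $F_0H$ and $F_1H$ in the convolution $L_\infty$-algebra $(\Hom(\cc{\Sym}(L''[1]),L'),\widehat Q')$. Here $\tilde F^1(t)$ is, by Theorem~\ref{thm:CofreeCocomConilpotentCoalg}, precisely the structure map of the composed coalgebra morphism $F(t)\circ H$, so the endpoints are as desired.

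The verification then reduces to two compatibilities of pre-composition with $H$. The first is compatibility with the convolution product: from $(H\otimes H)\circ \cc{\Delta_\sh}=\cc{\Delta_\sh}\circ H$ one immediately gets
\[
(G_1\star\cdots\star G_n)\circ H \;=\; (G_1\circ H)\star\cdots\star(G_n\circ H)
\]
for any $G_i\in \Hom(\cc{\Sym}(L[1]),L')$, which converts the higher-bracket contributions $(Q')^1_{n+1}\circ(\lambda^1(t)\star F^1(t)^{\star n})$ of $\widehat Q^1$ into the higher-bracket contributions of $\widehat Q'^1$ evaluated on $\tilde\lambda^1(t)\vee\exp(\tilde F^1(t))$. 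The second is compatibility with the linear part: since $\widehat Q^1_1\lambda^1(t)=(Q')^1_1\circ\lambda^1(t)+\lambda^1(t)\circ Q$, pre-composing with $H$ and using $Q\circ H=H\circ Q''$ yields $(Q')^1_1\circ\tilde\lambda^1(t)+\tilde\lambda^1(t)\circ Q''=\widehat Q'^1_1\tilde\lambda^1(t)$. Combining both contributions gives
\[
\frac{\D}{\D t}\tilde F^1(t)\;=\;\widehat Q^1\bigl(\lambda^1(t)\vee\exp(F^1(t))\bigr)\circ H\;=\;\widehat Q'^1\bigl(\tilde\lambda^1(t)\vee\exp(\tilde F^1(t))\bigr),
\]
which is the defining ODE of a homotopy equivalence in the convolution $L_\infty$-algebra $(\Hom(\cc{\Sym}(L''[1]),L'),\widehat Q')$, with endpoints $F_0H$ at $t=0$ and $F_1H$ at $t=1$.

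The only mild obstacle is to verify that $\tilde F^1(t)$ and $\tilde\lambda^1(t)$ actually lie in the required completion of $\Hom(\cc{\Sym}(L''[1]),L')[1]\otimes\mathbb{K}[t]$ with respect to the filtration \eqref{eq:FiltrationConvLieAlg2}. This follows because $H$ is assumed compatible with the complete descending filtrations on $L''$ and $L$, so pre-composition with $H$ preserves $\mathfrak F^\bullet$ and extends continuously to the respective completions. Compared with the post-composition proof, the pre-composition case is in fact simpler, because one does not need to invoke the coderivation-along-$F(t)$ property but only the elementary functoriality of $\star$ under pre-composition with a coalgebra morphism.
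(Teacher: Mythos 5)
Your proposal is correct and follows essentially the same route as the paper's proof: it takes the path $F^1(t)\circ H$ together with $\lambda^1(t)\circ H$, differentiates, and uses that $H$ is a filtration-compatible coalgebra morphism intertwining $Q''$ and $Q$ (hence compatible with $\star$ under pre-composition) to recover the defining ODE in $(\Hom(\cc{\Sym}(L''[1]),L'),\widehat{Q})$. Your closing remark that the pre-composition case is simpler than the post-composition one also matches the paper's treatment.
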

\begin{proof}
Let $F^1(t) \in \widehat{(\Hom(\cc{\Sym}
(L[1]),L')[1])^0[t]}$ and $\lambda^1(t) \in 
\widehat{(\Hom(\cc{\Sym}(L[1]),L')[1])^{-1}[t]}$ 
describe the homotopy equivalence between $F_0$ and $F_1$. Then 
we consider 
\begin{equation*}
  F^1(t)H
	=
	F^1(t) \circ \exp_\star(H^1)
	\in
	\widehat{(\Hom(\cc{\Sym}
  (L''[1]),L')[1])^0[t]}
\end{equation*}
in the notation of the above proposition. We compute
\begin{align*}
  \frac{\D}{\D t} (F^1(t)H)
	& =
	\widehat{Q}^1(\lambda^1(t)\vee \exp(F^1(t))) \circ H \\
	& =
	(Q')^1_1 \circ \lambda^1 \circ H 
	+ \lambda^1 \circ Q \circ H
	+ \sum_{\ell=2}^\infty\frac{1}{(\ell-1)!} 
	(Q')^1_\ell \circ (\lambda^1\star F^1\star \cdots \star F^1)
	\circ H \\
	& =
	(Q')^1_1 \circ \lambda^1 \circ H 
	+ \lambda^1 \circ H\circ Q'' 
	+ \sum_{\ell=2}^\infty\frac{1}{(\ell-1)!} 
	(Q')^1_\ell \circ (\lambda^1 H\star F^1 H\star \cdots \star F^1 H) \\
	& =
	\widehat{Q}^1 (\lambda^1(t)H\vee \exp(F^1(t)H))
\end{align*}
since $H$ is a coalgebra morphism intertwining $Q''$ and $Q$ and of 
degree zero. Finally, since $\lambda^1(t)H \in 
\widehat{(\Hom(\cc{\Sym}(L''[1]),L')[1])^{-1}[t]}$ the 
statement follows.
\end{proof}

\begin{corollary}
  Let $F_0,F_1$ be two homotopic $L_\infty$-morphisms 
	from $(L,Q)$ to $(L',Q')$, and let 
	$H_0,H_1$ be two homotopic $L_\infty$-morphisms from $(L',Q')$ to 
	$(L'',Q'')$, then $H_0 F_0\sim H_1F_1$. 
\end{corollary}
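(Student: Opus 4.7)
The plan is to combine the two preceding propositions via transitivity of the homotopy equivalence relation on Maurer--Cartan elements of the convolution $L_\infty$-algebra. Concretely, I would insert $H_0 F_1$ as an intermediate morphism between $H_0 F_0$ and $H_1 F_1$, and then apply Proposition~\ref{prop:CompofHomotopicHomotopic} on one side and Proposition~\ref{prop:preCompofHomotopicHomotopic} on the other.

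First, since $F_0 \sim F_1$ as $L_\infty$-morphisms from $(L,Q)$ to $(L',Q')$, and $H_0 \colon (L',Q')\to (L'',Q'')$ is an $L_\infty$-morphism, Proposition~\ref{prop:CompofHomotopicHomotopic} (post-composition preserves homotopy) yields $H_0 F_0 \sim H_0 F_1$. Second, since $H_0 \sim H_1$ as $L_\infty$-morphisms from $(L',Q')$ to $(L'',Q'')$, and $F_1 \colon (L,Q)\to (L',Q')$ is an $L_\infty$-morphism, Proposition~\ref{prop:preCompofHomotopicHomotopic} (pre-composition preserves homotopy) gives $H_0 F_1 \sim H_1 F_1$. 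Transitivity of the homotopy equivalence relation on Maurer--Cartan elements, recalled in the discussion following Definition~\ref{def:HomEquivalenceofMC}, then delivers $H_0 F_0 \sim H_1 F_1$.

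There is essentially no obstacle here: the two previous propositions were designed precisely so that this zig-zag argument goes through, and the only nontrivial ingredient -- transitivity of $\sim$ in the convolution $L_\infty$-algebra -- is already in hand (it holds in any $L_\infty$-algebra with complete descending filtration, which $\Hom(\cc{\Sym}(L[1]),L'')$ is by Proposition~\ref{prop:CompleteFiltrationConvLinfty}). The only care needed is to check that the filtrations on the convolution $L_\infty$-algebras involved at each step really are complete, so that the exponentials and the ODE defining the homotopy make sense; but this is immediate from Proposition~\ref{prop:CompleteFiltrationConvLinfty} applied to the pairs $(L,L'')$, $(L,L'')$ again, and no new estimates are required.
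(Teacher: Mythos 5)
Your proof is correct and is exactly the intended argument: the paper states this corollary without proof as an immediate consequence of Propositions~\ref{prop:CompofHomotopicHomotopic} and \ref{prop:preCompofHomotopicHomotopic}, namely the zig-zag $H_0F_0\sim H_0F_1\sim H_1F_1$ combined with transitivity of $\sim$ in the convolution $L_\infty$-algebra. Your additional remark that completeness of the filtration on $\Hom(\cc{\Sym}(L[1]),L'')$ guarantees transitivity is a reasonable (and correct) point of care.
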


We want to end this section with an example of homotopic $L_\infty$-morphisms: 
Let $(\liealg{g},\D,[\argument{,}\argument])$ be a DGLA with complete descending 
filtration, and assume that $h\in \mathcal{F}^1\liealg{g}^0$. 

\begin{proposition}
  If $g$ is closed, then 
	\begin{equation}
		\label{eq:EtoAdh}
		e^{[g,\argument]} \colon 
		(\liealg{g},\D,[\argument{,}\argument])
		\longrightarrow
		(\liealg{g},\D,[\argument{,}\argument])
	\end{equation}
	is an DGLA automorphism that maps equivalent Maurer-Cartan elements 
	to equivalent ones. If $g$ is exact, then it is even homotopic to 
	the identity $\id_\liealg{g}$.
\end{proposition}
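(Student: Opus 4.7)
\emph{Part 1.} Since $g \in \mathcal{F}^1\liealg{g}^0$, the inner derivation $[g,\argument]$ has degree zero and strictly raises the filtration, so $e^{[g,\argument]}$ converges on the complete filtered $\liealg{g}$ and is invertible with inverse $e^{-[g,\argument]}$. Being the exponential of a graded derivation, it is a graded Lie-algebra automorphism. Compatibility with $\D$ is the identity $[\D,[g,\argument]]=[\D g,\argument]=0$, which is exactly closedness of $g$; alternatively it is the $R=0$ case of Lemma~\ref{lemma:TwistedDGLAsIsomorphic}. A DGLA automorphism $\phi$ then sends a gauge equivalence $\pi_1 = \exp([a,\argument])\acts\pi_0$ to $\phi(\pi_1) = \exp([\phi(a),\argument])\acts\phi(\pi_0)$, because $\phi$ intertwines both $\D$ and $[\argument{,}\argument]$; combining this with Theorem~\ref{thm:DGALHomvsGaugeEquiv} yields preservation of homotopy equivalence classes of Maurer--Cartan elements.

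\emph{Part 2.} Write $g = \D h$ with $h\in\mathcal{F}^1\liealg{g}^{-1}$ and work inside the convolution DGLA $\mathcal{H} = \Hom(\cc{\Sym}(\liealg{g}[1]),\liealg{g})$ from Proposition~\ref{prop:ConvLinftyStructure}. Both $\id_\liealg{g}$ and $e^{[g,\argument]}$ are Maurer--Cartan elements of $\mathcal{H}$ supported in the first Taylor slot. I would propose the interpolation
\begin{equation*}
F^1_1(t)=e^{t[g,\argument]},\qquad \lambda^1_1(t) = \pm[h,F^1_1(t)(\argument)],\qquad F^1_n(t)=\lambda^1_n(t)=0\ \text{for}\ n\ge 2,
\end{equation*}
with $t\in [0,1]$. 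Since $tg$ is closed, the first part of the proposition shows that each $F^1_1(t)$ is a DGLA automorphism, so $F^1(t)$ is a Maurer--Cartan element of $\mathcal{H}$ for every $t$, interpolating $\id$ at $t=0$ and $e^{[g,\argument]}$ at $t=1$.

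The heart of the argument is the path equation
\begin{equation*}
\tfrac{\D}{\D t}F^1(t) = \widehat{Q}^1_1\bigl(\lambda^1(t)\bigr) + \widehat{Q}^1_2\bigl(\lambda^1(t)\vee F^1(t)\bigr),
\end{equation*}
which has only these two terms because $\mathcal{H}$ is a DGLA. On $\Sym^1(\liealg{g}[1])$ the $\widehat{Q}^1_2$-term vanishes (since $\cc{\Delta_\sh}$ is zero there), and using $\D h=g$ together with $[\D,F^1_1(t)]=0$ the $\widehat{Q}^1_1$-term collapses to $[g,F^1_1(t)(\argument)]=\tfrac{\D}{\D t}F^1_1(t)$. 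On $\Sym^2(\liealg{g}[1])$ the left-hand side is zero, and the Jacobi identity together with $F^1_1(t)$ being a Lie homomorphism produces the cancellation of the contribution $\lambda^1_1(t)\circ Q^1_2$ (coming from the coderivation formula for $\widehat{Q}^1_1\lambda^1(t)$ when $\lambda^1_2(t)=0$) against $Q^1_2\circ(\lambda^1_1(t)\star F^1_1(t))$; on $\Sym^{\ge 3}$ everything vanishes for degree reasons since $\lambda^1_n=F^1_n=0$ for $n\ge 2$.

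The main obstacle is the sign bookkeeping between the grading on $\liealg{g}$ and the shifted grading on $\liealg{g}[1]$ used by the convolution DGLA: one must fix the sign in the definition of $\lambda^1_1(t)$ so that $\lambda^1(t)$ genuinely lies in degree $-1$ of $\mathcal{H}[1]$, and the Koszul signs produced by the deshuffle on $\Sym^2$ must be exactly those which allow the Jacobi identity to perform the required cancellation. Once these signs are arranged correctly, the construction exhibits $e^{[g,\argument]}$ and $\id$ as homotopic $L_\infty$-morphisms in the sense of Definition~\ref{def:homotopicMorph}.
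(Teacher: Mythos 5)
Your proposal is correct and takes essentially the same route as the paper: the identity $\id$ and $e^{[g,\argument]}$ are compared as Maurer--Cartan elements of the convolution DGLA via the path $e^{[tg,\argument]}$ together with a $\lambda(t)$ built from a primitive of $g$, the differential part reproducing $\frac{\D}{\D t}e^{[tg,\argument]}$ by closedness and the quadratic terms cancelling on $\Sym^2$ by the Jacobi identity and the automorphism property, with everything vanishing in higher symmetric degrees. The only cosmetic differences are that your $\lambda^1_1(t)=-[h,e^{t[g,\argument]}\argument]$ is the conjugate of the paper's choice $e^{[tg,\argument]}\circ[\alpha,\argument]$ (both satisfy the path equation), and that for the first part the paper records the stronger identity $e^{[g,\argument]}\pi=\exp([g,\argument])\acts\pi$, so the automorphism is even trivial on equivalence classes, whereas you deduce preservation of equivalences from functoriality of the gauge action under DGLA automorphisms plus Theorem~\ref{thm:DGALHomvsGaugeEquiv}.
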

\begin{proof}
If $g$ is closed, then $\D \circ e^{[g,\argument]} =
 e^{[g,\argument]}\circ \D$, see e.g. Lemma~\ref{lemma:TwistedDGLAsIsomorphic}. 
Let now $\pi \in \mathcal{F}^1\liealg{g}^1$ be a Maurer-Cartan element. 
Then we know from the formula for the gauge action from 
Proposition~\ref{prop:GaugeactionDGLA} that we have
\begin{equation*}
  \exp([g,\argument])\acts \pi
	=
	e^{[g,\argument]}\pi,
\end{equation*}
i.e. $e^{[g,\argument]}$ maps a Maurer-Cartan element to an equivalent one. 
Suppose now that $g = - \D \alpha$ with $\alpha \in 
\mathcal{F}^1\liealg{g}^{-1}$. Then we set
\begin{equation*}
  \pi(t)
	=
	e^{[tg,\argument]}, 
	\quad \quad
	\lambda(t)
	=
	e^{[tg,\argument]} \circ [\alpha,\argument].
\end{equation*}
We have $\pi(0)=\id_\liealg{g}$ and $\pi(1)= e^{[g,\argument]}$, 
and we want to show that $\pi(t)$ and $\lambda(t)$ encode the homotopy equivalence relation as in Definition~\ref{def:HomEquivalenceofMC}. 
Using the formulas for the convolution $L_\infty$-structure, we have to show
\begin{align*}
  e^{[tg,\argument]} \circ [g,\argument]
	=
	\frac{\D}{\D t} \pi(t)
	\stackrel{!}{=}
	\widehat{Q}^1(\lambda(t)\vee \exp(\pi(t))).
\end{align*}
For the right hand side we compute
\begin{align*}
  \widehat{Q}^1(\lambda(t)\vee \exp(\pi(t)))
	& =
	- \D \circ \lambda(t) - \lambda(t) \circ \D 
	+ \lambda(t) \circ Q^1_2
	+ Q^1_2 \circ (\lambda(t) \star \pi(t)),
\end{align*}
where $Q^1_2(x \vee y) = -(-1)^{\abs{x}} [x,y]$ for $x,y \in \liealg{g}$ 
with $x \in (\liealg{g}[1])^{\abs{x}}$. For the first two terms we get
\begin{align*}
  - \D \circ \lambda(t) - \lambda(t) \circ \D
	= 
	- \D \circ e^{[tg,\argument]} \circ [\alpha,\argument] 
	- e^{[tg,\argument]} \circ [\alpha,\argument] \circ \D 
	=
	-   e^{[tg,\argument]} \circ [\D \alpha,\argument]
	=
	 e^{[tg,\argument]} \circ [g,\argument].
\end{align*}
Thus we only have to show
\begin{equation*}
   \lambda(t) \circ Q^1_2
	+ Q^1_2 \circ (\lambda(t) \star \pi(t))
	=
	0.
\end{equation*}
For homogeneous $x \in (\liealg{g}[1])^{\abs{x}}, 
y\in (\liealg{g}[1])^{\abs{y}}$ we compute
\begin{align*}
  \lambda(t) \circ Q^1_2(x\vee y)
	& + Q^1_2 \circ (\lambda(t) \star \pi(t))(x \vee y)
	= 
	-(-1)^{\abs{x}} e^{[tg,\argument]} \circ [\alpha,\argument] ([x,y]) \\
	& -(-1)^{\abs{x} -1} [ e^{[tg,\argument]}  [\alpha,x], 
	e^{[tg,\argument]}y] 
	- (-1)^{\abs{x}\abs{y}} (-1)^{\abs{y} -1} 
	[ e^{[tg,\argument]}  [\alpha,y], 
	e^{[tg,\argument]}x] 
	=
	0,	
\end{align*}
and the proposition is shown.
\end{proof}

\subsection{Homotopy Classification of Flat $L_\infty$-algebras}
\label{sec:HomClassLinftyAlgs}
The above considerations allow us to understand the homotopy classification 
of flat $L_\infty$-algebras \cite{canonaco:1999a,kontsevich:2003a} in a better way.

\begin{definition}
\label{def:HomEquLinftyAlgs}
Two  flat $L_\infty$-algebras $(L,Q)$ and $(L',Q')$ are 
said to be 
\emph{homotopy equivalent} if there are $L_\infty$-morphisms 
$F\colon (L,Q)\to (L',Q')$ and 
$G\colon (L',Q')\to (L,Q)$ 
such that $F\circ G\sim \id_{L'}$ and 
$G\circ F\sim \id_{L}$. In that case $F$ and $G$ are said
to be quasi-inverse to each other.
\end{definition}

As in \cite[Lemma~3.8]{kraft.schnitzer:2021a:pre} we can immediately show that this definition coincides indeed with the definition of homotopy equivalence 
via $L_\infty$-quasi-isomorphisms from \cite{canonaco:1999a}.

\begin{lemma}
\label{lemma:HomEquvsQuasiIso}
Two flat $L_\infty$-algebras $(L,Q)$ and $(L',Q')$ are 
homotopy equivalent if and only if there exists an $L_\infty$-quasi-isomorphism 
between them.
\end{lemma}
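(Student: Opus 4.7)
For the direction $(\Rightarrow)$: assume $F\colon L\to L'$ and $G\colon L'\to L$ are $L_\infty$-morphisms with $F\circ G\sim\id_{L'}$ and $G\circ F\sim\id_L$. By Proposition~\ref{prop:PropertiesofHomotopicMorphisms}(i), the first Taylor coefficients $F_1^1\circ G_1^1=(F\circ G)_1^1$ and $G_1^1\circ F_1^1=(G\circ F)_1^1$ are chain homotopic to the respective identities, hence induce the identity on cohomology. Consequently $\mathrm{H}(F_1^1)$ is invertible with inverse $\mathrm{H}(G_1^1)$, so $F$ is an $L_\infty$-quasi-isomorphism.

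For the nontrivial direction $(\Leftarrow)$: let $F\colon L\to L'$ be an $L_\infty$-quasi-isomorphism. Theorem~\ref{thm:QuisInverse} supplies an $L_\infty$-quasi-inverse $G\colon L'\to L$, and the remaining task is to upgrade these to homotopies $G\circ F\sim\id_L$ and $F\circ G\sim\id_{L'}$. The plan is to reduce to the minimal case via Proposition~\ref{prop:StandardFormLinfty}: after conjugation by $L_\infty$-isomorphisms one may assume $L=L_{\min}\oplus L_{lc}$ and $L'=L'_{\min}\oplus L'_{lc}$, since homotopy of $L_\infty$-morphisms is preserved under pre- and post-composition with $L_\infty$-isomorphisms by Propositions~\ref{prop:CompofHomotopicHomotopic} and~\ref{prop:preCompofHomotopicHomotopic}. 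In this normal form, the induced map $F_{\min}\colon L_{\min}\to L'_{\min}$ on minimal summands is an $L_\infty$-quasi-isomorphism whose first Taylor coefficient is already an isomorphism of graded vector spaces (cohomology coincides with the underlying space for minimal algebras), so by Proposition~\ref{prop:Linftyiso} it is a strict $L_\infty$-isomorphism. One can then arrange $G$ so that it inverts $F_{\min}$ strictly on these summands, making both composites coincide with the identity on the minimal parts on the nose.

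The crux of the argument, and the principal obstacle, is to verify that on any linear contractible $L_\infty$-algebra $C$ the identity is homotopic to the zero $L_\infty$-endomorphism---equivalently, that the remaining discrepancy between the composites and the identity on the linear contractible summands is absorbed by such a null-homotopy. In the linear contractible case, the convolution $L_\infty$-algebra $\Hom(\cc{\Sym}(C[1]),C)[1]$ of Proposition~\ref{prop:ConvLinftyStructure} collapses to a cochain complex, because its higher Taylor coefficients $\widehat{Q}_n$ with $n\geq 2$ are built from the vanishing higher brackets of $C$; thus homotopy equivalence of Maurer-Cartan elements reduces to $\widehat{Q}_1$-cohomology. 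A contracting homotopy $h$ for the acyclic differential $Q_1$ on $C$ assembles, via $h\circ\pr_{C[1]}$, into a primitive in the convolution whose $\widehat{Q}_1$-image equals, up to sign, the Maurer-Cartan element representing $\id_C$, yielding the required null-homotopy. Combining the strict coincidence on the minimal summands with this null-homotopy on the contractible summands, and transporting back through the chosen $L_\infty$-isomorphisms via the compositional properties of $\sim$, completes the proof.
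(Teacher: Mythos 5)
Your forward direction and your overall strategy (splitting into a minimal and a linear contractible part via Proposition~\ref{prop:StandardFormLinfty}, invertibility of $F_{\min}$ via Proposition~\ref{prop:Linftyiso}, a null-homotopy built from a contracting homotopy, and the compositional properties of $\sim$) are the same as in the paper, and your sub-claim that $\id_C\sim 0$ for a linear contractible $C$ is correct as sketched: the convolution $L_\infty$-algebra is abelian because the higher structure maps of $C$ vanish, and $h\circ\pr_{C[1]}$ is a primitive of the Maurer-Cartan element $\pr_{C[1]}$ representing $\id_C$. The genuine gap is in the final assembly. With $G=i\circ(F_{\min})^{-1}\circ p'$, the composite $G\circ F$ does \emph{not} decompose as the identity on $L_{\min}$ plus an endomorphism of $L_{lc}$: all of its Taylor coefficients take values in $L_{\min}\subseteq L$, and on contractible or mixed inputs they are in general nonzero maps into $L_{\min}$, while $\id_L$ restricted to $L_{lc}$-inputs is the inclusion of $L_{lc}$. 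Hence the "discrepancy" between $G\circ F$ and $\id_L$ is not localized on the contractible summand, and the null-homotopy of $\id_{L_{lc}}$ cannot simply "absorb" it summand-wise as you claim. Moreover, even granting $\id_{L_{lc}}\sim 0$, you would still need to promote this to a homotopy $\id_L\sim i\circ p$ on the direct sum, i.e. a compatibility of $\sim$ with direct sums of $L_\infty$-morphisms, which you neither state nor prove.

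The paper closes exactly this gap differently: it constructs the homotopy $i\circ p\sim\id_L$ directly on all of $L[1]\cong V\oplus W$ via the path $P(t)(v,w)=(tv,w)$ with the constant homotopy $H(t)(v,w)=(-h_V(v),0)$; the verification that $\frac{\D}{\D t}P^1(t)=\widehat{Q}^1(H(t)\vee\exp(P(t)))$ uses precisely that the higher brackets vanish on $V$ and that there are no mixed structure maps on the direct sum. Once $i\circ p\sim\id_L$ and $i'\circ p'\sim\id_{L'}$ are available, no splitting of the composite is needed: by Propositions~\ref{prop:CompofHomotopicHomotopic} and~\ref{prop:preCompofHomotopicHomotopic} together with the strict identity $p'\circ F\circ i=F_{\min}$ one gets $F\circ G=F\circ i\circ (F_{\min})^{-1}\circ p'\sim i'\circ p'\circ F\circ i\circ (F_{\min})^{-1}\circ p'=i'\circ p'\sim\id_{L'}$, and similarly $G\circ F\sim i\circ p\sim\id_L$. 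To repair your argument, either prove that $\sim$ is compatible with direct sums of morphisms (so that $\id_{L_{lc}}\sim 0$ really yields $\id_L\sim i\circ p$) or construct $i\circ p\sim\id_L$ explicitly as above, and then replace the summand-wise correction of $G\circ F$ by this sandwich-and-cancel argument.
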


\begin{proof}
Due to Proposition~\ref{prop:StandardFormLinfty} every $L_\infty$-algebra $L$ is 
isomorphic to the product of a linear contractible one and a minimal 
one $L[1]\cong V\oplus W$. 
This means $L[1]\cong V\oplus W$ as vector spaces, such that 
$V$ is an acyclic cochain complex with differential $\D_V$ and $W$ 
is an $L_\infty$-algebra with codifferential $Q_W$ with 
$Q_{W,1}^1=0$. The codifferential $Q$ on $\cc{\Sym}(V\oplus W)$ is given on 
$v_1\vee \dots \vee v_m$ with $v_1,\dots, v_k\in V$ and 
$v_{k+1},\dots, v_m\in W$ by 
	\begin{align*}
	Q^1(v_1\vee\dots \vee v_m)=
	\begin{cases}
	-\D_V(v_1), & \text{ for } k=m=1\\
	Q_W^1(v_1\vee\dots\vee v_m), & \text{ for } k=0\\
	0, & \text{ else. } 
	\end{cases}
	\end{align*}
This implies in particular that the canonical maps
	\begin{align*}
	i\colon W\longrightarrow V \oplus W \ \text{ and } \ 
	p\colon V\oplus W\longrightarrow W 
	\end{align*}
are $L_\infty$-quasi-isomorphisms. We want to show now that $i\circ p\sim \id$ 
and therefore choose a contracting homotopy 
$h_V\colon V\to V$ with $h_V\D_V+\D_V h_V=\id_V$ and define the maps
\begin{align*}
	P(t)\colon 
	V\oplus W\ni (v,w)
	\longmapsto (tv,w)
	\in V\oplus W
\end{align*}	 
and 
\begin{align*}
	H(t)
	=
	H
  \colon V\oplus W\ni (v,w)\longmapsto (-h_V(v),0)\in V\oplus W.
\end{align*}
Note that $P(t)$ is a path of $L_\infty$ morphisms because of the 
explicit form of the codifferential. We clearly have
\begin{align*}
	\frac{\D}{\D t} P^1_1(t) 
	= 
	\pr_V
	=
	Q^1_1 \circ H(t) + H(t)\circ Q^1_1
	=
	\widehat{Q}^1_1(H(t)) 
\end{align*}
since $h_V$ is a contracting homotopy. This implies
\begin{equation*}
  \frac{\D}{\D t} P^1(t)
	=
	\widehat{Q}^1(H(t)\vee \exp(P(t))
\end{equation*} 
as $\image (H(t))\subseteq V$ and as the higher brackets $Q$ vanish 
on $V$. From  
$P(0)=i\circ p$ and $P(1)=\id$ we conclude that 
$i\circ p\sim \id$.
We choose a similar splitting for $L'[1]=V'\oplus W'$
 with the same
properties and consider an $L_\infty$-quasi-isomorphism 
$F\colon L\to L'$. In Theorem~\ref{thm:QuisInverse} we constructed 
an $L_\infty$-quasi-inverse $G= i\circ(F_{min})^{-1} \circ p'$.
Since by Proposition~\ref{prop:CompofHomotopicHomotopic} and 
Proposition~\ref{prop:preCompofHomotopicHomotopic} compositions of homotopic 
$L_\infty$-morphisms with an $L_\infty$-morphism are again homotopic, we get
	\begin{align*}
	F\circ G&
	= 
	F\circ i\circ (F_{min})^{-1}\circ p'
	\sim 
	i'\circ p'\circ  F\circ i\circ (F_{min})^{-1}\circ p'\\&
	=i'\circ F_{min}\circ (F_{min})^{-1}\circ p'=i'\circ p'\sim \id
	\end{align*}
and similarly $G\circ F \sim \id$.   

The other direction follows from Proposition~\ref{prop:PropertiesofHomotopicMorphisms}. Suppose $F\circ G \sim \id$ and 
$G\circ F \sim \id$, then we know that $F_1^1\circ G_1^1$ and 
$G_1^1\circ F_1^1$ are both chain homotopic to the identity. Therefore, 
$F$ and $G$ are $L_\infty$-quasi-isomorphisms.  
 \end{proof}
 
\begin{corollary}
Let $F\colon (L,Q)\to(L',Q')$ be a an 
$L_\infty$-quasi-isomorphism with two given quasi-inverses
$G,G'\colon (L',Q')\to (L,Q)$ in the sense of 
Definition~\ref{def:HomEquLinftyAlgs}. Then one has $G\sim G'$. 
\end{corollary}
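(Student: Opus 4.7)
The plan is to exploit the compatibility of homotopy of $L_\infty$-morphisms with composition, as established in Propositions~\ref{prop:CompofHomotopicHomotopic} and~\ref{prop:preCompofHomotopicHomotopic}, to run the standard two-sided inverse argument familiar from ordinary category theory. By hypothesis we have $F\circ G\sim \id_{L'}$, $G\circ F\sim \id_L$, $F\circ G'\sim \id_{L'}$, and $G'\circ F\sim \id_L$, and all morphisms involved are $L_\infty$-morphisms between flat $L_\infty$-algebras, so the composition in $\cc{\Sym}$ is strictly associative and composition with the identity is strictly neutral.

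First I would use $F\circ G'\sim \id_{L'}$ and post-compose with $G\colon L'\to L$. By Proposition~\ref{prop:CompofHomotopicHomotopic}, post-composition by an $L_\infty$-morphism preserves the homotopy relation, so
\begin{equation*}
  G \;=\; G\circ \id_{L'} \;\sim\; G\circ (F\circ G') \;=\; (G\circ F)\circ G'.
\end{equation*}
Next I would use $G\circ F\sim \id_L$ and pre-compose with $G'\colon L'\to L$. By Proposition~\ref{prop:preCompofHomotopicHomotopic}, pre-composition by an $L_\infty$-morphism preserves homotopy, so
\begin{equation*}
  (G\circ F)\circ G' \;\sim\; \id_L\circ G' \;=\; G'.
\end{equation*}
Transitivity of the homotopy relation (established right after \eqref{eq:EquivMCElements} as a general property of $\sim$ on Maurer-Cartan elements of the convolution $L_\infty$-algebra) then yields $G\sim G'$.

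There is no real obstacle here beyond verifying that the two propositions cited do apply in this configuration, which amounts to checking only that the source and target conventions line up: in the first step the common source/target of $F\circ G'$ and $\id_{L'}$ is $L'$, and $G$ maps out of $L'$; in the second step the common source/target of $G\circ F$ and $\id_L$ is $L$, and $G'$ maps into $L$. Strict associativity of composition of coalgebra morphisms and the fact that $F\circ \id = F = \id\circ F$ on the nose mean no additional coherence homotopy is required between the intermediate expressions.
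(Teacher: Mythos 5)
Your proof is correct and is essentially the paper's own argument: the paper's chain $G \sim G\circ(F\circ G') = (G\circ F)\circ G' \sim G'$ uses exactly the same two hypotheses ($F\circ G'\sim \id_{L'}$ and $G\circ F\sim \id_L$) together with Propositions~\ref{prop:CompofHomotopicHomotopic} and~\ref{prop:preCompofHomotopicHomotopic} and transitivity of $\sim$. Your extra remarks on strict associativity and unitality of coalgebra composition are fine but add nothing beyond what the paper leaves implicit.
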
 
 
\begin{proof}
One has 
	\begin{align*}
	G\sim G\circ (F\circ G')=(G\circ F)\circ G'\sim G'
	\end{align*}
and the statement is shown.
\end{proof} 

As a first application, we want to show that the construction of the 
morphisms for the homotopy transfer theorem \ref{thm:HTTJonas} are natural 
with respect to homotopy equivalences   

\begin{corollary}
\label{cor:IPsimId}
  In the setting of Theorem \ref{thm:HTTJonas} one has $ P \circ I = \id_A$ and 
	$I \circ P \sim \id_B$.
\end{corollary}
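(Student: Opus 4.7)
The plan is to handle the two identities separately. For the equality $P \circ I = \id_A$, I will expand the Taylor components $(P\circ I)^1_n = \sum_{k=1}^n P^1_k \circ I^k_n$. In degree $n=1$ this reads $p \circ i = \id_A$. For $n \geq 2$ the $k=1$ contribution is $p \circ h \circ L_{\infty,n}$, which vanishes by $p\circ h = 0$. For $k \geq 2$ we have $P^1_k = L_{\infty,k} \circ H_k$ with $H_k$ the coderivation extension of $-h$; simultaneously $I^k_n$ is a sum of symmetric products of Taylor coefficients $I^1_j$, and every $I^1_j$ is either $i$ (when $j=1$) or factors through $h$ (when $j\geq 2$, by Proposition~\ref{prop:Infinityinclusion}). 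Hence $H_k$ always inserts an $-h$ into a slot that is already either $i(\cdot)$ or $h(\cdot)$, producing $h\circ i=0$ or $h^2=0$. This forces $(P\circ I)^1_n=0$ for $n\geq 2$ and so the identity holds. This is precisely the observation already asserted at the end of Proposition~\ref{prop:Infinityinclusion}.

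For the homotopy $I\circ P \sim \id_B$, the strategy is to identify $I$ with a genuine homotopy quasi-inverse of $P$. By Theorem~\ref{thm:HTTJonas} the coalgebra morphism $P$ is an $L_\infty$-quasi-isomorphism, so Lemma~\ref{lemma:HomEquvsQuasiIso} yields an $L_\infty$-morphism $G\colon (A,Q_A)\to(B,Q_B)$ with $G\circ P \sim \id_B$ and $P\circ G \sim \id_A$. Using the first identity we then obtain the \emph{exact} equality
\begin{equation*}
  G \;=\; G\circ(P\circ I) \;=\; (G\circ P)\circ I.
\end{equation*}
Proposition~\ref{prop:preCompofHomotopicHomotopic} applied to the homotopy $G\circ P \sim \id_B$, pre-composed with the fixed morphism $I$, gives $(G\circ P)\circ I \sim \id_B\circ I = I$, whence $G\sim I$. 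A further application of Proposition~\ref{prop:preCompofHomotopicHomotopic}, this time pre-composing the homotopic pair $I\sim G$ with $P$, yields $I\circ P \sim G\circ P \sim \id_B$, which is the desired statement.

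The only delicate point is the combinatorial vanishing in the first identity: it relies on all three of the properties $p\circ h=0$, $h\circ i=0$, and $h^2=0$ characteristic of a special deformation retract, so a generic homotopy retract would produce extra non-cancelling terms. Everything else is purely formal: once $P\circ I=\id_A$ is in hand, the homotopy $I\circ P\sim\id_B$ is a two-line consequence of the uniqueness-up-to-homotopy of quasi-inverses, without any further calculation.
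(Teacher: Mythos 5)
Your proof is correct and follows essentially the same route as the paper: the equality $P\circ I=\id_A$ is exactly the assertion of Proposition~\ref{prop:Infinityinclusion} (which you verify in detail from $p\circ h=0$, $h\circ i=0$, $h^2=0$), and the homotopy $I\circ P\sim\id_B$ is obtained, as in the paper, by taking a homotopy quasi-inverse of $P$ via Lemma~\ref{lemma:HomEquvsQuasiIso} and cancelling it against $P\circ I=\id_A$ using the compatibility of homotopies with pre-/post-composition. The only cosmetic slip is calling $H_k$ the coderivation extension of $-h$ (that is $\tilde H_k$, while $H_k=K_k\circ\tilde H_k$), which does not affect your vanishing argument.
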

\begin{proof}
By Lemma~\ref{lemma:HomEquvsQuasiIso} $P$ admits a quasi-inverse $I'$ such that $P \circ I' \sim \id_A$ and $I'\circ P\sim\id_B$, 
which implies
\begin{equation*}
  I \circ P 
	=
	\id_B \circ I \circ P
	\sim 
	I' \circ P \circ I \circ P 
	=
	I' \circ P
	\sim 
	\id_B,
\end{equation*}
and the statement is shown.
\end{proof}
%
%
\subsection{Homotopy Equivalence between Twisted Morphisms}
\label{sec:HomEquivTwistedMorphisms}

Let now $F \colon (\liealg{g},\D,[\argument{,}\argument]) \rightarrow (\liealg{g}',
\D',[\argument{,}\argument])$ be an 
$L_\infty$-morphism between (flat) DGLAs with complete descending and 
exhaustive filtrations.  
Instead of comparing the twisted morphisms $F^\pi$ and 
$F^{\pi'}$ with respect to two equivalent Maurer-Cartan elements 
$\pi$ and $\pi'$, we consider 
for simplicity just a Maurer-Cartan element $\pi \in \mathcal{F}^1\liealg{g}^1$ 
 equivalent to zero via 
$\pi = \exp([g,\argument])\acts 0$, i.e. $\lambda(t)=g = \dot{A}(t)\in 
\mathcal{F}^1\widehat{\liealg{g}^0[t]}$. 
Then we know that $0$ and $S =F_\MC(\pi)= F^1(\cc{\exp}(\pi))\in \mathcal{F}^1(\liealg{g}')^1$
are equivalent 
Maurer-Cartan elements in $(\liealg{g}',\D')$. Let the equivalence 
be implemented by an $A'(t)\in\mathcal{F}^1\widehat{(\liealg{g}')^0[t]}$ as in Proposition~\ref{prop:HomEquvsGaugeEqu}.  
Then we have the diagram of $L_\infty$-morphisms between (flat) DGLAs
\begin{equation}
  \label{eq:TwistingofMorph}
	\begin{tikzcd}
	& (\liealg{g}',\D') \arrow[rd, "e^{[A'(1),\argument]}", bend left=12] \arrow[dd, Rightarrow,  shorten >=12pt,  shorten <=12pt] & \\
    (\liealg{g},\D) 	\arrow[ur,"F",bend left=12] \arrow[dr,swap, "e^{[A(1),\argument]}",bend right=12] & & 
    (\liealg{g}',\D' + [S,\argument])\\
	&(\liealg{g},\D + [\pi,\argument]) \arrow[ur, swap,"F^\pi", bend right=12] &
	\end{tikzcd}
\end{equation}
where $e^{[A(1),\argument]}$ and $e^{[A'(1),\argument]}$ are well-defined by 
the completeness of the filtrations. Following \cite[Proposition~3.10]{kraft.schnitzer:2021a:pre},
we show that it commutes 
up to homotopy, which is indicated by the vertical arrow. 

\begin{proposition}
  \label{prop:TwistMorphHomEqu}
  The $L_\infty$-morphisms $F$ and $e^{[-A'(1),\argument]}\circ F^{\pi} \circ 
	e^{[A(1),\argument]}$ are homotopic, i.e. 
	homotopy equivalent Maurer-Cartan elements 
	in $(\Hom(\cc{\Sym}(\liealg{g}[1])),\liealg{g}'),\widehat{Q})$.
\end{proposition}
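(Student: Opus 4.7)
The plan is to construct an explicit path $F(t)$ of $L_\infty$-morphisms, equivalently of Maurer--Cartan elements in the convolution DGLA (Proposition~\ref{prop:ConvLinftyStructure}), from $F$ to $e^{[-A'(1),\argument]}\circ F^\pi\circ e^{[A(1),\argument]}$, and then to exhibit it as a homotopy in the sense of Definition~\ref{def:HomEquivalenceofMC}.

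First I extend the data to paths. Proposition~\ref{prop:HomEquvsGaugeEqu} provides $A(t)\in\mathcal{F}^1\widehat{\liealg{g}^0[t]}$ with $A(0)=0$, $A(1)=g$, and the associated Maurer--Cartan path $\pi(t)=\exp([A(t),\argument])\acts 0$ with infinitesimal generator $\lambda_g(t)$. By Proposition~\ref{prop:FmapsEquivMCtoEquiv}, $S(t):=F_\MC(\pi(t))$ is a Maurer--Cartan path from $0$ to $S$ in $\liealg{g}'$, so Proposition~\ref{prop:HomEquvsGaugeEqu} applied in $\liealg{g}'$ yields $A'(t)\in\mathcal{F}^1\widehat{(\liealg{g}')^0[t]}$ with $A'(0)=0$, $S(t)=\exp([A'(t),\argument])\acts 0$, and generator $\lambda_{g'}(t)$. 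The proof of Proposition~\ref{prop:FmapsEquivMCtoEquiv} in fact produces the identity $\lambda_{g'}(t)=F^1\bigl(\lambda_g(t)\vee\exp\pi(t)\bigr)$, which will bridge the $\liealg{g}$-side and $\liealg{g}'$-side computations below.

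Next, set
\begin{equation*}
  F(t):=e^{[-A'(t),\argument]}\circ F^{\pi(t)}\circ e^{[A(t),\argument]}.
\end{equation*}
By Corollary~\ref{cor:GaugeEquivMCTwistsQuis} the two outer exponentials are DGLA isomorphisms $(\liealg{g},\D)\to(\liealg{g},\D+[\pi(t),\argument])$ and $(\liealg{g}',\D'+[S(t),\argument])\to(\liealg{g}',\D')$ respectively, while Proposition~\ref{prop:twistinglinftymorphisms} gives $F^{\pi(t)}$ as an $L_\infty$-morphism between the two twisted DGLAs. Hence $F(t)$ is an $L_\infty$-morphism $(\liealg{g},\D)\to(\liealg{g}',\D')$ for every $t$, equivalently a path of Maurer--Cartan elements $F^1(t)\in\mathfrak{F}^1\widehat{(\Hom(\cc{\Sym}(\liealg{g}[1]),\liealg{g}')[1])^0[t]}$ in the convolution DGLA, whose filtration is complete by Proposition~\ref{prop:CompleteFiltrationConvLinfty}. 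All auxiliary data vanish at $t=0$, giving $F^1(0)=F^1$, while $F^1(1)$ is the target composite.

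Finally, I exhibit $F^1(t)$ as a homotopy. Differentiating using identity~\eqref{eq:DiffofExp} for the two exponentials and \eqref{eq:twisteslinftymorphism} together with $\dot\pi(t)=-\D\lambda_g(t)+[\lambda_g(t),\pi(t)]$ for $F^{\pi(t)}$ yields $\dot F^1(t)$ as a sum of three explicit pieces. Matching this against $\widehat{Q}^1_1\lambda(t)+\widehat{Q}^1_2(\lambda(t)\vee F^1(t))$ via \eqref{eq:DiffonConvDGLA} and \eqref{eq:BracketonConvDGLA} identifies a natural candidate $\lambda(t)\in\mathfrak{F}^1\widehat{(\Hom(\cc{\Sym}(\liealg{g}[1]),\liealg{g}')[1])^{-1}[t]}$ assembled from $\lambda_g(t)$, $\lambda_{g'}(t)$, and the structure maps of $F^{\pi(t)}$ conjugated by the same outer exponentials as in the definition of $F(t)$. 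The hard part will be the bookkeeping: the contribution from $\dot F^{\pi(t)}$ naturally lives in the \emph{twisted} convolution DGLA of $(\liealg{g},\D+[\pi(t),\argument])$ and $(\liealg{g}',\D'+[S(t),\argument])$, while the two conjugation derivatives supply precisely the adjoint corrections that untwist it back to the original convolution DGLA, the key identity $\lambda_{g'}(t)=F^1(\lambda_g(t)\vee\exp\pi(t))$ ensuring that all terms match.
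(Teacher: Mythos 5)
Your proposal follows essentially the same route as the paper's proof: the same path $F(t)=e^{[-A'(t),\argument]}\circ F^{\pi(t)}\circ e^{[A(t),\argument]}$, the same use of the refined filtration \eqref{eq:FiltrationConvLieAlg2} to have a complete convolution DGLA, and the same key identity $\lambda'(t)=F^1(\lambda(t)\vee\exp\pi(t))$ that makes the conjugation terms cancel against the derivative of $F^{\pi(t)}$. The parts you defer to ``bookkeeping'' --- checking that $F^1(t)$ really lies in $\widehat{(\Hom(\cc{\Sym}(\liealg{g}[1]),\liealg{g}')[1])^{0}[t]}$ and computing $\frac{\D}{\D t}F^{\pi(t)}_k$ so that the homotopy generator comes out as $e^{[-A'(t),\argument]}\circ F^{\pi(t)}_{\bullet+1}(\lambda(t)\vee\argument)\circ e^{[A(t),\argument]}$ (the $\lambda'$-terms cancelling rather than entering the generator) --- are precisely the computations the paper's proof carries out explicitly.
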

The candidate for the path between $F$ and 
$e^{[-A'(1),\argument]}\circ F^{\pi} \circ 
	e^{[A(1),\argument]}$ is 
\begin{equation*}
  F(t) 
	= 
	e^{[-A'(t),\argument]}\circ F^{\pi(t)} \circ 
	e^{[A(t),\argument]}.
\end{equation*}
However, $F(t)$ is not necessarily in the completion 
$\widehat{\Hom(\cc{\Sym}(\liealg{g}[1]),\liealg{g}')^1[t]}$ 
with respect to the 
filtration from \eqref{eq:FiltrationConvLieAlg} since for example
\begin{align*}
  F(t) \;\,\mathrm{mod}\;\, \mathcal{F}^2\Hom(\cc{\Sym}(\liealg{g}[1]),\liealg{g}')[[t]]
	=
	 e^{[-A'(t),\argument]}\circ F^{\pi(t)}_1 \circ 
	e^{[A(t),\argument]}
\end{align*}
is in general not polynomial in $t$. But using the filtration from 
\eqref{eq:FiltrationConvLieAlg2} we can prove 
Proposition~\ref{prop:TwistMorphHomEqu}.

\begin{proof}[of Proposition~\ref{prop:TwistMorphHomEqu}]
  The path $F(t) = e^{[-A'(t),\argument]}\circ F^{\pi(t)} \circ 
	e^{[A(t),\argument]}$ is an element in the completion
	$\widehat{(\Hom(\cc{\Sym}(\liealg{g}[1]),\liealg{g}')[1])^{0}[t]}$ 
	with respect to the filtration from \eqref{eq:FiltrationConvLieAlg2}.
	This is clear since $A(t)\in \mathcal{F}^1 \widehat{\liealg{g}^0[t]}$, 
	$A'(t)\in \mathcal{F}^1 \widehat{(\liealg{g}')^0[t]}$ and 
	$\pi(t)\in \mathcal{F}^1 \widehat{\liealg{g}^1[t]}$ imply that
	\begin{align*}
	  \sum_{i=1}^{n-1}
	  e^{[-A'(t),\argument]}\circ F^{\pi(t)}_i \circ 
	  e^{[A(t),\argument]}
	  \;\,\mathrm{mod}\;\, \mathfrak{F}^n(\Hom(\cc{\Sym}(\liealg{g}[1]),\liealg{g}')[1])[[t]]
	\end{align*}
	is polynomial in $t$. Moreover, $F(t)$ satisfies by \eqref{eq:ODEforA}
	\begin{align*}
	  \frac{\D F(t)}{\D t}
		& = 
		-e^{[-A'(t),\argument]} \circ 
		\left[\lambda'(t) , 
	  \argument \right]
	  \circ F^{\pi(t)} \circ 
		e^{[A(t),\argument]}  
		+ 
		e^{[-A'(t),\argument]}\circ F^{\pi(t)} \circ 
		\left[\lambda(t), 
	  \argument \right] \circ 
	  e^{[A(t),\argument]} \\
		& \quad 
		+ e^{[-A'(t),\argument]}\circ \frac{\D F^{\pi(t)}}{\D t} \circ 
	  e^{[A(t),\argument]} .
	\end{align*}
	But we have
	\begin{align*}
	  \frac{\D F^{\pi(t)}_k}{\D t}&(X_1 \vee \cdots \vee X_k)
		=
		F_{k+1}^{\pi(t)}(Q^{\pi(t),1}_1(\lambda(t)) \vee X_1 \vee \cdots 
		\vee X_k) \\
		& =
		F_{k+1}^{\pi(t)}(Q^{\pi(t),k+1}_{k+1}(\lambda(t)\vee X_1\vee \cdots\vee X_k))
		+
		F_{k+1}^{\pi(t)}(\lambda(t) \vee Q^{\pi(t),k}_k(X_1 \vee \cdots \vee X_k)))\\
		& =
		Q^{S(t),1}_1 F_{k+1}^{\pi(t),1}(\lambda(t)\vee X_1 \vee \cdots \vee X_k) +
		Q^{S(t),1}_2 F_{k+1}^{\pi(t),2}(\lambda(t)\vee X_1 \vee \cdots \vee X_k) \\
		& \quad 
		-  F_{k}^{\pi(t),1}\circ Q^{\pi(t),k}_{k+1}
		(\lambda(t)\vee X_1 \vee \cdots \vee X_k) 
		+
		F_{k+1}^{\pi(t)}(\lambda(t) \vee Q^{\pi(t),k}_k(X_1 \vee \cdots \vee X_k)).
	\end{align*}
	Setting now $\lambda_k^F(t)(\cdots) 
	= F_{k+1}^{\pi(t)}( \lambda(t)\vee \cdots)$ we get
	\begin{align*}
	  \frac{\D F^{\pi(t)}_k}{\D t}
		=
		\widehat{Q}^{t,1}_1 (\lambda^F(t)) + 
		\widehat{Q}^{t,1}_2(\lambda^F(t)\vee F^{\pi(t)}) - 
		F^{\pi(t)}_k\circ[\lambda(t),\argument] +[\lambda'(t),\argument] \circ 
		F^{\pi(t)}_k.
	\end{align*}
	Thus we get
	\begin{align*}
	  \frac{\D F(t)}{\D t}
		& =
		e^{[-A'(t),\argument]}\circ \left(\widehat{Q}^{t,1}_1 (\lambda^F(t)) + 
		\widehat{Q}^{t,1}_2(\lambda^F(t)\vee F^{\pi(t)})\right) \circ 
	  e^{[A(t),\argument]} \\
		& =
		\widehat{Q}^{1}_1 (e^{[-A'(t),\argument]}\lambda^F(t)e^{[A(t),\argument]}) + 
		\widehat{Q}^{1}_2(e^{[-A'(t),\argument]}\lambda^F(t)e^{[A(t),\argument]}\vee 
		F(t))		
	\end{align*}
	since the $\exp([A(t),\argument])$ and 
	$\exp([A'(t),\argument])$ commute with the brackets and intertwine the 
	differentials. Thus $F(0) = F$ and $F(1)$ are homotopy equivalent.
\end{proof}

\begin{remark}[Application to Deformation Quantization]
  This result allowed us in \cite{kraft.schnitzer:2021a:pre} to prove that 
	Dolgushev's globalizations \cite{dolgushev:2005a,
dolgushev:2005b} of the Kontsevich formality \cite{kontsevich:2003a} with 
respect to different covariant derivatives are homotopic.
\end{remark}
Now we want to generalize the results from the above section to twisted morphisms 
between general $L_\infty$-algebras. As a first step, we have to generalize 
Lemma~\ref{lemma:TwistedDGLAsIsomorphic} and Corollary~\ref{cor:GaugeEquivMCTwistsQuis}, 
i.e. we have to show that $L_\infty$-algebras that are twisted with equivalent 
Maurer-Cartan elements are $L_\infty$-isomorphic.

\begin{lemma}
\label{lem:MorphPhitTwistedLinftyAlgs}
  Let $(L,Q)$ be a flat $L_\infty$-algebra with complete descending filtration, and let 
	$\pi(t)$ and $\lambda(t)$ encode a homotopy equivalence between two Maurer-Cartan 
	elements as in Definition~\ref{def:HomEquivalenceofMC}. For $a \in \Sym^i(L[1])$ with 
	$i \geq 0$ the recursively defined system of differential equations
	\begin{align}
	  \label{eq:ODEforPhit}
	  \begin{split}
		  \frac{\D}{\D t} (\Phi_t)^1_i(a)
			& = 
		  \sum_{k=1}^i\left(
			  [Q^{\pi(t)}, \lambda(t)\vee \argument] - Q^{\pi(t)}(\lambda(t))\vee \argument
			\right)^1_k (\Phi_t)^k_i(a)  \\
			& = 
			\sum_{k=1}^i  (Q^{\pi(t)})^1_{k+1}(\lambda(t)\vee (\Phi_t)^k_i(a)) ,
			\quad \quad 
			(\Phi_0)^1_i(a)
			= 
			\pr_{L[1]}(a)  
		\end{split}
	\end{align}
	has unique solutions $(\Phi_t)^1_i \colon \Sym^i(L[1]) \rightarrow \widehat{L[1][t]}$, 
	where $(\Phi_t)^k_i(a)$ depends indeed only on 
	$(\Phi_t)^1_j$ for $j\leq i-k+1$ as for $L_\infty$-morphisms. In fact, one has 
	$\Phi_t^1 \in \widehat{\Hom (\cc{\Sym}(L[1]),L)^1[t]}$.
\end{lemma}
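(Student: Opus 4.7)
The plan is to solve the system \eqref{eq:ODEforPhit} recursively in the symmetric degree~$i$, exploiting that for each fixed $i$ the equation is linear in the unknown $(\Phi_t)^1_i$ once the lower-order coefficients have been constructed. After producing the solution as a formal power series in~$t$, I will verify polynomiality modulo each filtration step of $\mathfrak{F}^\bullet$ in the same spirit as in the proof of Proposition~\ref{prop:PioftUnique}.

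First I would unpack the right-hand side of \eqref{eq:ODEforPhit}: the $k=1$ summand is $(Q^{\pi(t)})^1_2(\lambda(t)\vee (\Phi_t)^1_i(a))$ and is the only one that involves the unknown at level~$i$, while all summands with $k\geq 2$ depend, via the usual formulas for Taylor coefficients of a coalgebra morphism, only on $(\Phi_t)^1_j$ with $j\leq i-1$. Proceeding by induction on~$i$, those terms are known and define an inhomogeneity $g_i(t)(a)$, and the equation for $(\Phi_t)^1_i(a)$ becomes a linear ODE
\begin{equation*}
  \frac{\D}{\D t}(\Phi_t)^1_i(a)
  =
  (Q^{\pi(t)})^1_2\bigl(\lambda(t)\vee (\Phi_t)^1_i(a)\bigr) + g_i(t)(a),
  \qquad (\Phi_0)^1_i(a) = \pr_{L[1]}(a).
\end{equation*}
Writing $(\Phi_t)^1_i(a)=\sum_{k\geq 0} \phi_{i,k}(a)\, t^k$ and expanding the right-hand side as a formal series in~$t$, one reads off $\phi_{i,k+1}(a)$ uniquely in terms of $\phi_{i,\leq k}(a)$, the Taylor coefficients of $\pi(t)$ and $\lambda(t)$, and the (already constructed) lower-order maps $(\Phi_t)^1_{<i}$. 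This yields existence and uniqueness as an element of $L[1][[t]]$, and the fact that $(\Phi_t)^1_{k}$ depends on $(\Phi_t)^1_{\leq i-k+1}$ is preserved along the recursion, because this is exactly the dependency structure of Taylor coefficients of a coalgebra morphism on its projection.

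The main obstacle, as always in this filtered setting, is the final claim that the formal series $(\Phi_t)^1_i$ actually lies in the completion, equivalently (by Remark~\ref{rm:completion}) is polynomial in~$t$ modulo $\mathcal{F}^n L[1][[t]]$ for every~$n$, and that the collection $\Phi_t^1 = \sum_i (\Phi_t)^1_i$ lies in $\widehat{\Hom(\cc{\Sym}(L[1]),L)^1[t]}$ with respect to the filtration $\mathfrak{F}^\bullet$ of \eqref{eq:FiltrationConvLieAlg2}. For this I would mirror the induction at the end of the proof of Proposition~\ref{prop:PioftUnique}: reducing \eqref{eq:ODEforPhit} modulo $\mathcal{F}^n$ only involves the corresponding reductions of $\pi(t)$, $\lambda(t)$, and $(\Phi_t)^1_{<i}$, which are polynomial in~$t$ by the hypothesis on $\pi,\lambda$ and the outer induction. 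Since $\lambda(t)\in\mathcal{F}^1$, each higher-order bracket $(Q^{\pi(t)})^1_{k+1}(\lambda(t)\vee \argument)$ raises the filtration degree by at least~$1$, so only finitely many terms contribute modulo $\mathcal{F}^n$. Integrating then preserves polynomiality, giving $(\Phi_t)^1_i \in \mathcal{F}^1 \widehat{L[1][t]}$. Combining the bound in symmetric degree (one term per~$i$) with the filtration shift provided by the various $\lambda(t)$ factors, the series $\Phi_t^1 = \sum_i (\Phi_t)^1_i$ is Cauchy for $\mathfrak{F}^\bullet$ in each $t$-degree, hence lies in $\widehat{\Hom(\cc{\Sym}(L[1]),L)^1[t]}$ by Proposition~\ref{prop:CompleteFiltrationConvLinfty}. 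This concludes the plan.
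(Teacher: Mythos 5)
Your proposal is correct and takes essentially the same route as the paper's proof: a degree-by-degree recursion yielding a unique formal power series solution in $t$, followed by the filtration argument modelled on Proposition~\ref{prop:PioftUnique} that gives polynomiality modulo each $\mathcal{F}^n$ resp.\ $\mathfrak{F}^n$, using $\pi(t),\lambda(t)\in\mathcal{F}^1$. Only note that the reduction modulo $\mathfrak{F}^n$ still involves $(\Phi_t)^1_i$ itself through the $k=1$ term (taken modulo $\mathfrak{F}^{n-1}$, since the operator raises the filtration), so the induction is a joint one on $i$ and $n$, exactly as in the paper.
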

\begin{proof}
The right hand side of the differential equation \eqref{eq:ODEforPhit} depends only on 
$(\Phi_t)^1_j$ with $j\leq i$. Thus it has a unique solution 
$(\Phi_t)^1_i(a) \in \widehat{L[t]}$ since 
$[Q^{\pi(t)}, \lambda(t)\vee \argument]- Q^{\pi(t)}(\lambda(t))\vee \argument$ 
increases the filtration and since $\pi(t),\lambda(t)\in \mathcal{F}^1\widehat{L[t]}$. 
Similarly, we see that has $\Phi_t^1 \in \widehat{\Hom (\cc{\Sym}(L[1]),L)^1[t]}$: 
With the filtration $\mathfrak{F}^\bullet$ of the convolution algebra 
from \eqref{eq:FiltrationConvLieAlg2} we have 
\begin{align*}
  \frac{\D}{\D t} (\Phi_t)^1_i 
	&\hspace{-0.1cm} \equiv
	\sum_{\ell=0}^\infty \frac{1}{\ell !}\sum_{k=1}^i Q^1_{k+\ell +1}(
	\pi(t)^{\vee \ell}\vee\lambda(t) \vee (\Phi_t)^k_i(\argument))
	\;\,\mathrm{mod}\;\, \mathfrak{F}^n   \\
	& \equiv
	\sum_{\ell=0}^{n-1} \frac{1}{\ell !}\sum_{k=1}^i Q^1_{k+\ell +1}(
	(\pi(t)\;\,\mathrm{mod}\;\, \mathcal{F}^{n-1})^{\vee \ell}\vee
	(	\lambda(t)\;\,\mathrm{mod}\;\, \mathcal{F}^{n}) \vee (\Phi_t)^k_i \;\,\mathrm{mod}\;\, 
	\mathfrak{F}^{n-1} )	\;\,\mathrm{mod}\;\, \mathfrak{F}^n
\end{align*}
and thus $(\Phi_t)^1_i \;\,\mathrm{mod}\;\, \mathfrak{F}^n \in L[t]$ by induction on $i$ and $n$.
\end{proof}

Thus $\Phi_t^1$ induces for all evaluations of $t$ a coalgebra morphism $\Phi_t$ and 
we have
\begin{equation}
  \frac{\D}{\D t} \Phi_t(a)
	= 
	\left([Q^{\pi(t)}, \lambda(t)\vee \argument] - Q^{\pi(t)}(\lambda(t))\vee \argument
	\right) (\Phi_t)(a),
	\quad \quad
	\Phi_0(a)
	=
	a
\end{equation}
since $\frac{\D}{\D t}$ and $([Q^{\pi(t)}, \lambda(t)\vee \argument] - 
Q^{\pi(t)}(\lambda(t))\vee \argument)$ are coderivations with respect to 
$\Delta_\sh$ vanishing on $1$, i.e. also with respect to $\cc{\Delta_\sh}$. 
But one can even show that $\Phi_t$ is an $L_\infty$-morphism, i.e. compatible with the 
codifferentials: 

\begin{lemma}
  \label{lemma:PhitLinftyMorph}
  One has
	\begin{equation}
	  \Phi_t \circ Q^{\pi_0}
		=
		Q^{\pi(t)} \circ \Phi_t.
	\end{equation}
	In particular, $\Phi_1$ induces an $L_\infty$-isomorphism from $(L,Q^{\pi_0})$ 
	to $(L,Q^{\pi_1})$.
\end{lemma}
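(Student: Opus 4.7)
The plan is to view both $\Phi_t\circ Q^{\pi_0}$ and $Q^{\pi(t)}\circ \Phi_t$ as coderivations along the coalgebra morphism $\Phi_t$ in the sense of Theorem~\ref{thm:CofreeCocomConilpotentCoalg}. Since both equal $Q^{\pi_0}$ at $t=0$, setting $D_t := \Phi_t\circ Q^{\pi_0} - Q^{\pi(t)}\circ \Phi_t$ gives a coderivation along $\Phi_t$ with $D_0 = 0$, and the task reduces to showing that $D_t$ satisfies a homogeneous linear ODE, which forces $D_t \equiv 0$ by the same uniqueness argument already used in Lemma~\ref{lem:MorphPhitTwistedLinftyAlgs}.

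The ODE requires two inputs. First, differentiating $Q^{\pi(t)} = \exp(-\pi(t)\vee)\circ Q\circ \exp(\pi(t)\vee)$ and exploiting that the operators $\pi(t)\vee$ form a commuting family yields
\begin{equation*}
  \dot Q^{\pi(t)} = [Q^{\pi(t)},\, \dot\pi(t)\vee\argument].
\end{equation*}
Second, abbreviating by $R_t := [Q^{\pi(t)}, \lambda(t)\vee\argument] - Q^{\pi(t)}(\lambda(t))\vee\argument$ the driving operator in \eqref{eq:ODEforPhit}, a direct manipulation gives
\begin{equation*}
  \dot D_t
	=
	R_t\circ D_t - [Q^{\pi(t)},\, R_t + \dot\pi(t)\vee\argument]\circ \Phi_t.
\end{equation*}

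The crux is the cancellation $R_t + \dot\pi(t)\vee\argument = [Q^{\pi(t)}, \lambda(t)\vee\argument]$, which hinges on the observation that, as an element of $L[1]\subset\Sym(L[1])$, one has
\begin{equation*}
  Q^{\pi(t)}(\lambda(t))
	=
	(Q^{\pi(t)})^1_1(\lambda(t))
	=
	\sum_{n\geq 0}\frac{1}{n!}\, Q^1_{n+1}\bigl(\pi(t)^{\vee n}\vee \lambda(t)\bigr)
	=
	\dot\pi(t),
\end{equation*}
the last equality being precisely the defining relation \eqref{eq:EquivMCElements} of the homotopy equivalence. With this cancellation the obstructing term becomes $[Q^{\pi(t)}, [Q^{\pi(t)}, \lambda(t)\vee\argument]]\circ \Phi_t$, which vanishes by the graded Jacobi identity together with $(Q^{\pi(t)})^2 = 0$. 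Hence $\dot D_t = R_t\circ D_t$, and $D_0 = 0$ yields $D_t \equiv 0$.

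For the final claim, Proposition~\ref{prop:Linftyiso} reduces $\Phi_1$ being an $L_\infty$-isomorphism to $(\Phi_1)^1_1$ being a linear isomorphism of $L[1]$. But from \eqref{eq:ODEforPhit}, $(\Phi_t)^1_1$ satisfies a linear ODE with initial value $\id_{L[1]}$ whose driving operator $x\mapsto (Q^{\pi(t)})^1_2(\lambda(t)\vee x)$ strictly raises the filtration since $\lambda(t)\in\mathcal{F}^1\widehat{L^0[t]}$ and the structure maps respect filtrations. Consequently $(\Phi_1)^1_1 - \id$ has strictly positive filtration degree, and by completeness $(\Phi_1)^1_1$ is invertible. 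The only delicate point throughout is the bookkeeping on the completion $\widehat{L[t]}$ to justify that the ODEs have unique solutions, but this is exactly the framework already set up in Lemma~\ref{lem:MorphPhitTwistedLinftyAlgs}.
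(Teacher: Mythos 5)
Your argument is correct and is essentially the paper's own proof in a slightly repackaged form: the paper shows that $Q^{\pi(t)}\circ\Phi_t(a)$ and $\Phi_t(Q^{\pi_0}(a))$ solve the same ODE \eqref{eq:ODEforPhit} with the same initial value, while you equivalently show that their difference $D_t$ solves the homogeneous linear equation $\dot D_t = R_t\circ D_t$ with $D_0=0$, using the same key inputs ($\dot Q^{\pi(t)}=[Q^{\pi(t)},\dot\pi(t)\vee\argument]$, the identification $\dot\pi(t)=(Q^{\pi(t)})^1(\lambda(t))$ from \eqref{eq:EquivMCElements}, $(Q^{\pi(t)})^2=0$, and uniqueness in $\widehat{L[t]}$). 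The isomorphism argument via $(\Phi_1)^1_1-\id$ being filtration-raising and Proposition~\ref{prop:Linftyiso} likewise matches the paper.
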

\begin{proof}
  We compute for $a \in \Sym(L[1])$
	\begin{align*}
	  \frac{\D}{\D t} ( Q^{\pi(t)} \circ \Phi_t(a))
		& = 
		\left[Q^{\pi(t)}, \frac{\D}{\D t}\pi(t) \vee \argument\right] \Phi_t(a) 
		 +
		Q^{\pi(t)} \circ 	\left([Q^{\pi(t)}, \lambda(t)\vee \argument] - 
		Q^{\pi(t)}(\lambda(t))\vee \argument\right) \circ \Phi_t(a) \\
		& =
		\left([Q^{\pi(t)}, \lambda(t)\vee \argument] - 
		Q^{\pi(t)}(\lambda(t))\vee \argument\right)\circ Q^{\pi(t)} \circ \Phi_t(a).
	\end{align*}
	Thus $Q^{\pi(t)} \circ \Phi_t(a)$ is at $t=0$ just $Q^{\pi_0}(a)$ and satisfies 
	the differential equation \eqref{eq:ODEforPhit}. Since the solution is unique, 
	it follows $\Phi_t \circ Q^{\pi_0}(a)	=	Q^{\pi(t)} \circ \Phi_t(a)$.

	In order to show that $\Phi_1$ is an $L_\infty$-isomorphism it suffices by 
	Proposition~\ref{prop:Linftyiso} to show that $(\Phi_1)^1_1$ is an isomorphism.
	But this is clear since $(\Phi_t)^1_1 - \id \equiv 0 \;\,\mathrm{mod}\;\, \mathfrak{F}^2$ and 
	the completeness of the filtration. Moreover, by the construction of the 
	$L_\infty$-inverse we even have $((\Phi_t)^{-1})^1 \in 
	\widehat{\Hom(\cc{\Sym}(L[1]),L)^1[t]}$.
\end{proof}

\begin{example}
  If $(L,Q)$ is just a DGLA $(\liealg{g},\D,[\argument{,}\argument])$, 
	then \eqref{eq:ODEforPhit} simplifies to 
	\begin{equation*}
	  \frac{\D}{\D t} (\Phi_t)^1_1(x)
		=
		[\lambda(t),(\Phi_t)^1_1(x)],
		\quad \quad 
		(\Phi_0)^1_1(x)
		=
		x
		\quad \quad \forall \, x \in \liealg{g}.
	\end{equation*}
	For $\lambda(t)=g$ this has the solution $(\Phi_t)^1_1(x) = e^{[tg,\argument]}x$ and 
	$(\Phi_t)^1_n=0$ for $n\neq 1$, i.e. we recover the setting of 
	Lemma~\ref{lemma:TwistedDGLAsIsomorphic} and 
	Corollary~\ref{cor:GaugeEquivMCTwistsQuis}.
\end{example}

Finally, we can use this $\Phi_t$ in order to generalize 
Proposition~\ref{prop:TwistMorphHomEqu} to $L_\infty$-algebras.

\begin{proposition}
\label{prop:TwistedLinftyIsom}
  Let $(L,Q)$ be a flat $L_\infty$-algebra with complete descending filtration and let 
	$\pi\in \mathcal{F}^1L^1$ be a Maurer-Cartan element that is homotopy equivalent 
	to $0$ via 
	$\pi(t),\lambda(t)$. Moreover, let $F\colon (L,Q)\rightarrow (L',Q')$ be an 
	$L_\infty$-morphism. Then the $L_\infty$-morphisms $F$ and $(\Phi_1')^{-1}\circ F^{\pi} 
	\circ \Phi_1$ are homotopic.
\end{proposition}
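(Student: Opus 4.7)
The plan is to mimic the proof of Proposition~\ref{prop:TwistMorphHomEqu}, replacing the DGLA conjugations $e^{[A(t),\argument]}$ and $e^{[A'(t),\argument]}$ by the $L_\infty$-isomorphisms $\Phi_t$ and a dual $\Phi_t'$ supplied by Lemma~\ref{lemma:PhitLinftyMorph}. Concretely, by the proof of Proposition~\ref{prop:FmapsEquivMCtoEquiv} the paths
\begin{equation*}
  \pi'(t) := F^1(\cc{\exp}\pi(t)),
  \qquad
  \lambda'(t) := F^1(\lambda(t)\vee \exp\pi(t))
\end{equation*}
encode a homotopy equivalence in $L'$ between $0 = F_\MC(0)$ and $S = F_\MC(\pi)$, so Lemma~\ref{lemma:PhitLinftyMorph} applied inside $L'$ yields an $L_\infty$-isomorphism $\Phi_t' \colon (L',Q')\to (L',(Q')^{\pi'(t)})$ with $\Phi_0'=\id$. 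I would then consider the candidate path
\begin{equation*}
  F(t) := (\Phi_t')^{-1} \circ F^{\pi(t)} \circ \Phi_t
  \colon (L,Q)\longrightarrow (L',Q'),
\end{equation*}
which, as a composition of an $L_\infty$-iso with the $L_\infty$-morphism $F^{\pi(t)}$ from Proposition~\ref{prop:twistinglinftymorphisms} and another $L_\infty$-iso, is itself an $L_\infty$-morphism, i.e.\ a Maurer-Cartan element of the convolution $L_\infty$-algebra. At the endpoints one has $F(0) = F$ (since $\pi(0)=0$ and $\Phi_0 = \Phi_0' = \id$) and $F(1) = (\Phi_1')^{-1}\circ F^\pi\circ \Phi_1$.

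Next I would verify that $F(t)^1 \in \widehat{(\Hom(\cc{\Sym}(L[1]),L')[1])^0[t]}$ with respect to the filtration $\mathfrak{F}^\bullet$ from \eqref{eq:FiltrationConvLieAlg2}. This is a direct filtration-by-filtration induction identical to the ones in Lemma~\ref{lem:MorphPhitTwistedLinftyAlgs} and in the proof of Proposition~\ref{prop:TwistMorphHomEqu}: since $\pi(t),\lambda(t)\in \mathcal{F}^1\widehat{L[t]}$, $\pi'(t),\lambda'(t)\in \mathcal{F}^1\widehat{L'[t]}$, and the Taylor components of $\Phi_t$, $(\Phi_t')^{-1}$ and $F^{\pi(t)}$ are polynomial in $t$ modulo every stage of their respective filtrations, the same is true of $F(t)^1$.

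The main step is the derivative computation. I would split it into three pieces using $\frac{\D}{\D t}(\Phi_t')^{-1} = -(\Phi_t')^{-1}\circ \bigl(\frac{\D}{\D t}\Phi_t'\bigr)\circ (\Phi_t')^{-1}$ together with the two ODEs
\begin{align*}
  \frac{\D}{\D t}\Phi_t
  &= \bigl([Q^{\pi(t)},\lambda(t)\vee\argument] - Q^{\pi(t)}(\lambda(t))\vee\argument\bigr)\circ \Phi_t,\\
  \frac{\D}{\D t}\Phi_t'
  &= \bigl([(Q')^{\pi'(t)},\lambda'(t)\vee\argument] - (Q')^{\pi'(t)}(\lambda'(t))\vee\argument\bigr)\circ \Phi_t',
\end{align*}
and the computation from the proof of Proposition~\ref{prop:TwistMorphHomEqu} giving the derivative of $F^{\pi(t)}_k$ in terms of the insertion $F^{\pi(t)}_{k+1}(\lambda(t)\vee\argument)$ plus boundary terms containing $Q^{\pi(t),1}_1(\lambda(t))\vee\argument$ and $(Q')^{\pi'(t),1}_1(\lambda'(t))\vee\argument$. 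Using that $F^{\pi(t)}$ intertwines $Q^{\pi(t)}$ with $(Q')^{\pi'(t)}$, the extra "diagonal" $\vee\argument$-insertions produced by $\frac{\D}{\D t}\Phi_t$ and $\frac{\D}{\D t}\Phi_t'$ cancel pairwise against the shifts from $\frac{\D}{\D t}F^{\pi(t)}$, and the remainder rearranges into
\begin{equation*}
  \frac{\D}{\D t}F(t)
  = \widehat{Q}^1\bigl(\Lambda^F(t)\vee \exp(F(t)^1)\bigr),
\end{equation*}
where $\Lambda^F(t)$ is the element of $\widehat{(\Hom(\cc{\Sym}(L[1]),L')[1])^{-1}[t]}$ whose Taylor components are induced by $(\Phi_t')^{-1}\circ F^{\pi(t)}_{\bullet +1}(\lambda(t)\vee\Phi_t^\bullet(\argument))$. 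The main obstacle will be the bookkeeping of these three derivative contributions and the verification that all the extra $Q^{\pi(t)}(\lambda(t))\vee\argument$-type terms cancel, exactly as in the proof of Proposition~\ref{prop:TwistMorphHomEqu}; once that is done the statement follows from Definition~\ref{def:homotopicMorph}.
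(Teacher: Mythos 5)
Your overall strategy is exactly the paper's: the same candidate path $F(t)=(\Phi_t')^{-1}\circ F^{\pi(t)}\circ\Phi_t$, with $\Phi_t'$ built from the image paths $\pi'(t)=F^1(\cc{\exp}\pi(t))$, $\lambda'(t)=F^1(\lambda(t)\vee\exp\pi(t))$ via Lemma~\ref{lemma:PhitLinftyMorph}, the same filtration argument for well-definedness in $\widehat{\Hom(\cc{\Sym}(L[1]),L')[1][t]}$, and the same three-term product-rule computation. However, the asserted outcome of that computation is not correct: it is \emph{not} true that all the insertions coming from $\frac{\D}{\D t}\Phi_t'$ cancel against $\frac{\D}{\D t}F^{\pi(t)}$, leaving only a one-term generator built from $F^{\pi(t)}_{\bullet+1}(\lambda(t)\vee\argument)$. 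Writing $\frac{\D}{\D t}\Phi_t=\bigl([Q^{\pi(t)},\lambda(t)\vee\argument]-\dot{\pi}(t)\vee\argument\bigr)\circ\Phi_t$ (and its primed analogue) and $\frac{\D}{\D t}F^{\pi(t)}=F^{\pi(t)}\circ(\dot{\pi}(t)\vee\argument)-(\dot{\pi}'(t)\vee\argument)\circ F^{\pi(t)}$, only the two correction terms $\dot{\pi}(t)\vee\argument$ and $\dot{\pi}'(t)\vee\argument$ cancel; the commutator parts $[Q^{\pi(t)},\lambda(t)\vee\argument]$ and $[(Q')^{\pi'(t)},\lambda'(t)\vee\argument]$ both survive. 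After using the intertwining relations $\Phi_t\circ Q=Q^{\pi(t)}\circ\Phi_t$, $F^{\pi(t)}\circ Q^{\pi(t)}=(Q')^{\pi'(t)}\circ F^{\pi(t)}$ and $(\Phi_t')^{-1}\circ(Q')^{\pi'(t)}=Q'\circ(\Phi_t')^{-1}$ one obtains $\frac{\D}{\D t}F(t)=Q'\circ\Gamma_t+\Gamma_t\circ Q$ with the \emph{two-term} coderivation $\Gamma_t=(\Phi_t')^{-1}\circ(-\lambda'(t)\vee\argument)\circ F^{\pi(t)}\circ\Phi_t+(\Phi_t')^{-1}\circ F^{\pi(t)}\circ(\lambda(t)\vee\argument)\circ\Phi_t$ along $F(t)$, so the homotopy generator is $\lambda_F(t)=\pr_{L'[1]}\circ\Gamma_t$, containing the $\lambda'(t)$-term you dropped.

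The reason your "exactly as in Proposition~\ref{prop:TwistMorphHomEqu}" shortcut fails is that there the conjugators $e^{[A(t),\argument]}$, $e^{[A'(t),\argument]}$ are \emph{strict} coalgebra morphisms: then $\pr_{L'[1]}\circ(\Phi_t')^{-1}\circ(\lambda'(t)\vee\argument)$ vanishes on $\cc{\Sym}(L'[1])$ because $\lambda'(t)\vee\argument$ raises the symmetric degree to at least two, and the one-term formula coincides with $\pr_{L'[1]}\circ\Gamma_t$. For a genuine $L_\infty$-isomorphism $\Phi_t'$ with nontrivial higher Taylor components this term does not vanish; indeed the discrepancy $\widehat{Q}^1\bigl(\pr_{L'[1]}\bigl((\Phi_t')^{-1}\circ(\lambda'(t)\vee\argument)\circ F^{\pi(t)}\circ\Phi_t\bigr)\vee\exp F^1(t)\bigr)=\pr_{L'[1]}\bigl((\Phi_t')^{-1}\circ[(Q')^{\pi'(t)},\lambda'(t)\vee\argument]\circ F^{\pi(t)}\circ\Phi_t\bigr)$ is generically nonzero, so your candidate $\Lambda^F(t)$ does not satisfy the homotopy ODE. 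The gap is repairable without changing your plan: keep both terms of $\Gamma_t$ (and note it is a coderivation along $F(t)$ vanishing on $1$, so projecting gives $\frac{\D}{\D t}F^1(t)=\widehat{Q}^1(\lambda_F(t)\vee\exp F^1(t))$), which is precisely how the paper concludes.
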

\begin{proof}
The candidate for the homotopy is $F(t)=(\Phi'_t)^{-1} \circ F^{\pi(t)} \circ \Phi_t$. 
In the proof of Lemma~\ref{lemma:PhitLinftyMorph} we saw that 
$((\Phi_t')^{-1})^1 \in \widehat{\Hom(\cc{\Sym}(L'[1]),L')^1[t]}$. 
Thus it directly follows that $F^1(t)$ is indeed in the completion 
$\widehat{\Hom(\cc{\Sym}(L[1]),L')^1[t]}$. We compute
\begin{align*}
  \frac{\D}{\D t} F(t)
	& =
	- (\Phi'_t)^{-1} \circ \left(([Q^{\pi'(t)}, \lambda'(t)\vee \argument]- 
	Q^{\pi'(t)}(\lambda'(t))\vee \argument \right) \circ F^{\pi(t)} \circ \Phi_t \\
	& \quad + 
	(\Phi'_t)^{-1}  \circ F^{\pi(t)} \circ \left(([Q^{\pi(t)}, \lambda(t)\vee \argument]- 
	Q^{\pi(t)}(\lambda(t))\vee \argument \right)\circ \Phi_t 
	+
	(\Phi'_t)^{-1} \circ \frac{\D}{\D t} F^{\pi(t)} \circ \Phi_t.
\end{align*}
With $\frac{\D}{\D t} F^{\pi(t)}  = F^{\pi(t)} \circ (\frac{\D}{\D t}\pi(t) \vee \argument)
- (\frac{\D}{\D t}\pi'(t) \vee \argument) \circ F^{\pi(t)}$ we get
\begin{align*}
  \frac{\D}{\D t} F(t)
	& =
	Q' \circ \left((\Phi'_t)^{-1} \circ (- \lambda'(t)\vee\argument) \circ F^{\pi(t)}
	\circ \Phi_t + (\Phi'_t)^{-1} \circ F^{\pi(t)}\circ ( \lambda(t)\vee\argument)
	\circ \Phi_t \right)  \\
	& \quad + 
	\left((\Phi'_t)^{-1} \circ (- \lambda'(t)\vee\argument) \circ F^{\pi(t)}
	\circ \Phi_t + (\Phi'_t)^{-1} \circ F^{\pi(t)}\circ ( \lambda(t)\vee\argument)
	\circ \Phi_t \right) \circ Q.
\end{align*}
Projecting to $L[1]$ yields indeed
\begin{equation*}
  \frac{\D}{\D t} F^1(t)
	=
	\widehat{Q}^1(\lambda_F(t) \vee\exp F^1(t))
\end{equation*}
for $\lambda_F(t) = \pr_{L[1]}\circ ((\Phi'_t)^{-1} \circ (- \lambda'(t)\vee\argument) 
\circ F^{\pi(t)}\circ \Phi_t + (\Phi'_t)^{-1} \circ F^{\pi(t)}\circ 
( \lambda(t)\vee\argument)	\circ \Phi_t )$. 
Note that this is true since the term in the bracket is a coderivation with respect to 
$\Delta_\sh$ along $F(t)$ that vanishes on $1$, therefore also a coderivation along 
$\cc{\Delta_\sh}$. Moreover, $\lambda_F$ is indeed in the completion 
$\widehat{\Hom(\cc{\Sym}(L[1]),L')^0[t]}$ since $F^1(t)$ is.
\end{proof}

\subsection{Homotopy Theory of Curved $L_\infty$-Algebras}
\label{sec:HomTheoryofCurvedLinfty}

As mentioned above, we want to generalize now 
the above homotopy theory of flat $L_\infty$-algebras to the curved setting. 
Therefore, we want to interpret $L_\infty$-morphisms again as Maurer-Cartan 
elements, as we did in the flat case from \eqref{prop:ConvLinftyStructure}.
From Remark~\ref{rem:CurvedMorphLinfty} we recall the 
following result: 

\begin{proposition}
\label{prop:CurvedConvLinftyStructure}
  Let $(L,Q)$ and $(L',Q')$ be (curved) $L_\infty$-algebras with 
	complete descending filtrations such that one has $Q_0^1 \in 
	\mathcal{F}^1 L$ and $Q_0'\in \mathcal{F}^1 L'$ for the curvatures. 
	Then the coalgebra $\Sym(\Hom(\Sym(L[1]),L')[1])$ can be equipped with 
	a codifferential $\widehat{Q}$ with structure maps
	\begin{equation}
	\label{eq:CurvonConvLinftyCurved}
	  \widehat{Q}_0^1 
		=
		(1 
		\longmapsto 
		Q'_0(1) )
		\in 
		(\Hom(\mathbb{K},L')[1])^1,
	\end{equation}
  \begin{equation}
	  \label{eq:DiffonConvLinftyCurved}
	  \widehat{Q}^1_1 F
		=
		Q'^1_1 \circ F - (-1)^{\abs{F}} F \circ Q
	\end{equation}
	and 
	\begin{equation}
		\label{eq:BracketonConvLinftyCurved}
		\widehat{Q}^1_n(F_1\vee \cdots \vee F_n)
		=
		(Q')^1_n \circ 
		(F_1\star F_2\star \cdots \star F_n),
	\end{equation}
	where $\abs{F}$ denotes the degree in 
	$\Hom(\Sym(L[1]),L')[1]$. Moreover, \eqref{eq:FiltrationConvLieAlg2} 
	generalizes to a complete descending filtration 
	$\mathfrak{F} \Hom(\Sym(L[1]),L')[1]$. 
  The curved $L_\infty$-algebra $(\Hom(\Sym(L[1]),L'), \widehat{Q})$ 
	is called \emph{convolution $L_\infty$-algebra}. 
\end{proposition}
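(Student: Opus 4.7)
The proof closely parallels Proposition~\ref{prop:ConvLinftyStructure}, with the new features being the zero-component $\widehat{Q}^1_0$ and the passage from the reduced to the full symmetric coalgebra. First, I would invoke Theorem~\ref{thm:CofreeCocomConilpotentCoalg} to promote the prescribed Taylor coefficients $\widehat{Q}^1_n$ to a unique coderivation $\widehat{Q}$ of degree $+1$ on $\Sym(\Hom(\Sym(L[1]),L')[1])$. Two preliminary checks are required: each $\widehat{Q}^1_n$ must land in $\Hom(\Sym(L[1]),L')[1]$ itself rather than in a completion, and each must respect the filtration $\mathfrak{F}^\bullet$ of \eqref{eq:FiltrationConvLieAlg2}. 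The former is immediate, since on any fixed $\Sym^m(L[1])$ the convolution $(Q')^1_n \circ (F_1 \star \cdots \star F_n)$ is a finite sum; the latter uses the hypotheses $Q_0^1 \in \mathcal{F}^1 L$ and $Q'_0 \in \mathcal{F}^1 L'$, together with compatibility of $\star$ with the product filtration on $\Sym(L[1])$, to see that $\widehat{Q}^1_0 \in \mathfrak{F}^1\mathcal{H}$ and that the higher $\widehat{Q}^1_n$ preserve $\mathfrak{F}^\bullet$.

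Second, and the heart of the matter, is verifying $\widehat{Q}^2 = 0$, that is, the full family of curved $L_\infty$-relations
\begin{equation*}
  \sum_{k=0}^n \sum_{\sigma \in Sh(k,n-k)} \epsilon(\sigma) \, \widehat{Q}^1_{n-k+1}\bigl(\widehat{Q}^1_k(F_{\sigma(1)} \vee \cdots \vee F_{\sigma(k)}) \vee F_{\sigma(k+1)} \vee \cdots \vee F_{\sigma(n)}\bigr) = 0
\end{equation*}
(including the $k=0$ contributions involving $\widehat{Q}^1_0$). Unwinding the formulas \eqref{eq:CurvonConvLinftyCurved}--\eqref{eq:BracketonConvLinftyCurved}, each term splits into a post-composition with $(Q')^1$ and a pre-composition with $Q$. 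Using the cocommutativity and coassociativity of $\Delta_\sh$ together with graded commutativity of $\star$, the post-composition contributions reorganize into $(Q')^2 = 0$ applied to the appropriate convolution of the $F_i$'s, while the pre-composition contributions reorganize into the same convolution pre-composed with $Q^2 = 0$. Crucially, the curvature insertions $\widehat{Q}^1_0 = Q'_0$ are exactly the ingredients needed to complete the curved MC equation for $(L',Q')$ at all symmetric degrees, and analogously $Q_0^1$ fills in the corresponding relation for $(L,Q)$.

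Finally, I would verify the filtration claim by a direct adaptation of Proposition~\ref{prop:CompleteFiltrationConvLinfty}: $\mathfrak{F}^\bullet$ is descending and satisfies $\mathcal{H} = \mathfrak{F}^1\mathcal{H}$ by definition, compatibility with $\widehat{Q}$ has already been arranged, and completeness is inherited from that of $L'$ via the usual identification of the convolution quotients with an inverse limit. The main obstacle will be the combinatorial bookkeeping in the second step: the curvature insertions $Q'_0$ and $Q_0^1$ must be matched precisely against the zero-indexed terms of the curved MC equations for $(L',Q')$ and $(L,Q)$ with the correct shuffle signs, which is delicate but requires no genuinely new idea beyond the flat case of Proposition~\ref{prop:ConvLinftyStructure}.
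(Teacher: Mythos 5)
Your proposal is correct and follows essentially the same route as the paper, whose own proof simply defers the $L_\infty$-relations to the flat case (Proposition~\ref{prop:ConvLinftyStructure}) and the filtration statement to Proposition~\ref{prop:CompleteFiltrationConvLinfty}; your expansion of the bookkeeping, including the use of $Q_0^1\in\mathcal{F}^1L$ and $Q_0'\in\mathcal{F}^1L'$ for filtration compatibility, is exactly what is implicitly meant there. The only cosmetic remark is that in checking $\widehat{Q}^2=0$ the mixed terms (post-composition with $Q'$ against pre-composition with $Q$) cancel pairwise by the coderivation property rather than being absorbed into $(Q')^2=0$ or $Q^2=0$, and the domain curvature $Q_0$ enters only through the full coderivation identity $Q^2=0$; this does not affect the validity of your argument.
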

\begin{proof}
The fact that this defines an $L_\infty$-structure follows as in 
Proposition~\ref{prop:ConvLinftyStructure}, the fact that we 
get a complete descending filtration follows as in 
Proposition~\ref{prop:CompleteFiltrationConvLinfty}.
\end{proof}

From now on we always assume that our curved $L_\infty$-algebras $(L,Q)$ 
have a complete descending filtration and that $Q_0^1 \in \mathcal{F}^1 L$.
In this case, the above proposition immediately leads us to the following definition of curved 
$L_\infty$-morphisms and their homotopy equivalence relation, 
generalizing the observations for curved Lie algebras from 
Remark~\ref{rem:curvedMorphisms}:

\begin{definition}[Curved $L_\infty$-morphism]
  Let $(L,Q)$ and $(L',Q')$ be (curved) $L_\infty$-algebras with 
	complete descending filtrations such that one has $Q_0^1 \in 
	\mathcal{F}^1 L$ and $Q_0'\in \mathcal{F}^1 L'$ for the curvatures. 
  The Maurer-Cartan elements in $\mathfrak{F}^1\Hom(\Sym(L[1]),L')$ are 
	called \emph{curved morphisms between $(L,Q)$ and $(L',Q')$} or 
	\emph{curved $L_\infty$-morphisms}. Two curved $L_\infty$-morphisms 
	are called \emph{homotopic} if they are homotopy equivalent Maurer-Cartan 
	elements.
\end{definition}
We write for a curved $L_\infty$-morphism 
$F^1\colon (L,Q) \rightsquigarrow (L',Q')$ and $F^1\sim F'^1$ if $F^1$ and 
$F'^1$ are homotopic. 

\begin{remark}[Difficulties]
\label{rem:Difficulties}
Note that this generalization to the curved setting yields two main difficulties compared to the flat case:
\begin{itemize}
 	\item As mentioned above, in the case of curved $L_\infty$-algebras 
	      $(L,Q)$ the first structure map $Q_1^1$ does in general not square 
				to zero. Thus we do not have the notion of an 
				$L_\infty$-quasi-isomorphism as in Definition~\ref{def:Linftyquis} 
				and we can not use Proposition~\ref{prop:StandardFormLinfty}, 
				i.e. that every flat $L_\infty$-algebra is isomorphic to the 
				direct sum of a minimal one and a linear contractible one.
	      Using the homotopy classification of flat $L_\infty$-algebras 
				from Lemma~\ref{lemma:HomEquvsQuasiIso} we propose below a notion 
				of curved quasi-isomorphisms.			 
	\item Curved morphisms of $L_\infty$-algebras 
				$F^1 \in \mathfrak{F}^1(\Hom(\Sym(L[1]),L')[1])^0$ are given by 
				Taylor components $F_n^1 \colon \Sym^n (L[1]) 
				\rightarrow L'[1]$ for $n\geq 0$, where one has in particular a 
				zero component $F_0^1= F_0^1(1) = \alpha \in \mathcal{F}^1L'^1$. 
				By definition, they satisfy the Maurer-Cartan equation  
				\begin{align}
	        \label{eq:curvedmorphLinftyII}
      	  0
		      =
		      \widehat{Q}_0^1 + Q'^1_1 \circ F^1 - F^1\circ Q + 
		      \sum_{n=2}^\infty \frac{1}{n!} Q'^1_n \circ (F^1)^{\star n}.
	      \end{align}
				If $F^1_0(1)=0$ the evaluation at $1$ yields 
				$Q'_0(1) = F^1_1(Q_0(1))$ and thus $F$ is a $L_\infty$-morphism 
				of curved $L_\infty$-algebras	in the usual sense, from now on 
				called \emph{strict} $L_\infty$-morphism. 
				However, for $F_0^1(1)=\alpha\neq 0$ we no longer get an 
				induced coalgebra morphism on the symmetric coalgebras, compare 
				Theorem~\ref{thm:CofreeCocomConilpotentCoalg}. 
\end{itemize}
\end{remark}

The second point is in fact no big problem as we explain now: 
At first, note that we can still extend $F^1$ to all symmetric orders 
via $F^i = \frac{1}{i!} (F^1)^{\star i}$ and $F^0 = \pr_\mathbb{K}$. 
Writing $F = \exp_\star F^1$ we get a coalgebra morphism
\begin{equation}
\label{eq:relationFFtilde}
  F \colon
  \Sym (L[1])
  \longrightarrow 
  \widehat{\Sym}(L'[1]),
  \quad 
   X 
   \longmapsto 
   F(X)
   =
   \exp \alpha \vee \widetilde{F}(X)
\end{equation}
into the completed symmetric coalgebra, where we complete 
with respect to the induced filtration. Here 
$\widetilde{F}\colon \Sym(L[1]) \rightarrow \Sym(L'[1])$ is 
the extension of $F^1_n$ with $n\geq 1$ to a coalgebra morphism, 
i.e. in particular $\widetilde{F}(1)=1$. We sometimes identify $F^1$ 
with its extension $F$. Note that since $Q'$ is compatible with 
the filtration, it extends to the completion $\widehat{\Sym}(L'[1])$ 
and \eqref{eq:curvedmorphLinftyII} implies 
$F^1 \circ Q = Q'^1 \circ F$ as expected.

Thus we see that curved $L_\infty$-morphisms $F^1$ are still 
well-behaved and we collect some useful properties:

\begin{proposition}
\label{prop:ProvertiesofCurvedLinftyMorph}
  Let $(L,Q)$ and $(L',Q')$ be (curved) $L_\infty$-algebras with 
	complete descending filtrations such that one has $Q_0^1 \in 
	\mathcal{F}^1 L$ and $Q_0'\in \mathcal{F}^1 L'$ for the curvatures. 
	Moreover, let $F^1 \in \mathfrak{F}^1(\Hom(\Sym(L[1]),L')[1])^0$ be a 
	curved $L_\infty$-morphism. 
	\begin{propositionlist}
	  \item Extending $F^1_n$ with $n\geq 1$ to a coalgebra morphism 
		      $\widetilde{F}$, one gets for all $X\in \Sym(L[1])$
		      \begin{equation}
					  \widetilde{F}^1 \circ Q (X)
            =
            Q'^1 \circ \left(\exp (\alpha)\vee \widetilde{F}(X)\right),
          \end{equation}
					i.e. $\widetilde{F}$ is a strict morphism into the 
					twisted $L_\infty$-algebra $(L,Q'^{\alpha})$.
		\item Conversely, every strict morphism $\widetilde{F} \colon 
		      (L,Q)\rightarrow (L',Q'^\alpha)$ corresponds to a curved 
					$L_\infty$-morphism $F^1$ with $F^1_0(1) = \alpha$ and 
					$F^1_i = \widetilde{F}^1_i$ for $i>0$.
	  \item $F^1$ induces a map at the level of Maurer-Cartan elements.
		      If $\pi \in \mathcal{F}^1L^1$ is a curved Maurer-Cartan element 
					in $(L,Q)$, then
					\begin{align}
					\label{eq:CurvdMorphMC}
					  F_\MC(\pi)
						=
						F^1(\exp \pi)
						=
            \alpha + 
						\sum_{k=1}^\infty \frac{1}{k!} F^1_k(\pi \vee \cdots \vee \pi)
          \end{align}
					is a curved Maurer-Cartan element in $(L',Q')$.
		\item The map from \eqref{eq:CurvdMorphMC} is compatible with homotopy 
		      equivalences, i.e. $F^1$ induces a map $F_\MC \colon 
					\Def(L)\rightarrow 	\Def(L')$. 
		\item The case $F^1_1=\id$, $F_0^1=\alpha$ and $F_n^1=0$ for 
		      all $n \geq 2$ corresponds again to twisting by $-\alpha$. We denote 
					it by 
					\begin{equation}
						\label{eq:TwCurvedMorph}
						\tw_{\alpha}\colon 
						(L,Q) 
						\rightsquigarrow 
						(L,Q^{-\alpha}).
					\end{equation}
		\item The curved $L_\infty$-morphisms from the curved $L_\infty$-algebra 
		      $0$ to $L$ are just the set of curved Maurer-Cartan elements of $L$.
	\end{propositionlist}
\end{proposition}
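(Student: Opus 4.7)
The plan is to use item~(i) as a dictionary: a curved $L_\infty$-morphism $F^1$ with zero-component $\alpha\in\mathcal{F}^1L'^1$ is the same data as a \emph{strict} coalgebra morphism $\widetilde{F}\colon(L,Q)\to(L',Q'^{\alpha})$ into the twisted target. Once this correspondence is verified, items~(ii)--(vi) reduce to combining known facts about strict morphisms, twisting, and the elementary change of variables $Y\leftrightarrow Y+\alpha$ between the curved Maurer-Cartan sets of $(L',Q'^{\alpha})$ and $(L',Q')$.

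For (i), I would unpack \eqref{eq:curvedmorphLinftyII}: collecting Taylor components it is equivalent to $Q'^1\circ F=F^1\circ Q$, where $F=\exp_\star F^1$ is viewed as a coalgebra morphism into $\widehat{\Sym}(L'[1])$. Writing $F^1=\alpha\pr_\mathbb{K}+\widetilde{F}^1$ with $\widetilde{F}^1(1)=0$ and using graded commutativity of $\star$ factors $F=\exp_\star(\alpha\pr_\mathbb{K})\star\widetilde{F}$; since $\exp_\star(\alpha\pr_\mathbb{K})$ is supported on the group-like $1$, a brief computation reproduces identity~\eqref{eq:relationFFtilde}, namely $F(X)=\exp(\alpha)\vee\widetilde{F}(X)$. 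Substituting this into $Q'^1\circ F=F^1\circ Q$ and using the identity $(Q'^{\alpha})^1(Y)=Q'^1(\exp(\alpha)\vee Y)$, which follows from the definition of $Q'^\alpha$ by projecting to symmetric degree one, converts the Maurer-Cartan equation into $(Q'^\alpha)^1\circ\widetilde{F}=\widetilde{F}^1\circ Q$ on $\cc{\Sym}(L[1])$, while evaluation at $1$ yields the curvature compatibility $\widetilde{F}^1_1(Q_0(1))=(Q'^{\alpha})^1_0(1)$; together these are exactly the strictness conditions of $\widetilde{F}$ as $L_\infty$-morphism into $(L',Q'^{\alpha})$. Item~(ii) is the same computation run backwards.

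With the dictionary in place, (iii) follows from Lemma~\ref{lemma:twistinglinftymorphisms} applied to the strict $\widetilde{F}$: the element $\widetilde{F}^1(\exp\pi)$ is curved Maurer-Cartan in $(L',Q'^{\alpha})$, and the identity $(Q'^{\alpha})^1(\exp Y)=Q'^1(\exp(Y+\alpha))$ (the same short computation as in Lemma~\ref{lemma:CorrespCurvedandFlatMC}, but valid for any $\alpha\in\mathcal{F}^1L'[1]^0$ without the MC assumption) shows that $Y\mapsto Y+\alpha$ bijects the two curved MC sets and transports $\widetilde{F}^1(\exp\pi)$ to $\alpha+\widetilde{F}^1(\exp\pi)=F_\MC(\pi)$. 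Item~(iv) passes to homotopy classes via Proposition~\ref{prop:FmapsEquivMCtoEquiv} applied to $\widetilde{F}$, combined with the obvious fact that the shift $Y\mapsto Y+\alpha$ transports Quillen-homotopy data $(\pi(t),\lambda(t))$ from Definition~\ref{def:HomEquivalenceofMC}. For (v), with target $(L,Q^{-\alpha})$ one has $(Q^{-\alpha})^\alpha=Q$, so the associated strict morphism is $\id\colon(L,Q)\to(L,Q)$, trivially an $L_\infty$-morphism; formula~\eqref{eq:CurvdMorphMC} then specializes to $\tw_{\alpha,\MC}(\pi)=\pi+\alpha$, reproducing the twisting. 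Finally (vi) is immediate: when $L=0$ one has $\Sym(L[1])=\mathbb{K}$, so $F^1$ reduces to $F^1_0(1)=\alpha$ and~\eqref{eq:curvedmorphLinftyII} collapses term-by-term to the curved Maurer-Cartan equation for $\alpha$. The main obstacle is the combinatorial bookkeeping in~(i) needed to identify the MC equation with strictness into the twisted target; once~\eqref{eq:relationFFtilde} is in hand, every remaining item reduces to already-proved results from Sections~\ref{sec:MCandEquiv} and~\ref{subsec:Twisting}.
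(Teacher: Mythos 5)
Your proposal is correct and follows essentially the same route as the paper's proof: (i)--(ii) from the Maurer--Cartan equation \eqref{eq:curvedmorphLinftyII} together with the factorization $F(X)=\exp(\alpha)\vee\widetilde{F}(X)$ of \eqref{eq:relationFFtilde}, (iii)--(iv) from the compatibility of the strict morphism $\widetilde{F}$ with Maurer--Cartan elements and their equivalences (Lemma~\ref{lemma:twistinglinftymorphisms}, Proposition~\ref{prop:FmapsEquivMCtoEquiv}) combined with the shift $Y\mapsto Y+\alpha$ underlying Lemma~\ref{lemma:CorrespCurvedandFlatMC}, and (v)--(vi) by direct inspection. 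Your explicit remark that this shift correspondence holds for arbitrary $\alpha\in\mathcal{F}^1L'^1$, not only for Maurer--Cartan elements, is a welcome clarification of the paper's terser citation, but it is the same argument.
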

\begin{proof}
The first and second point follow directly from the Maurer-Cartan equation 
\eqref{eq:curvedmorphLinftyII} and \eqref{eq:relationFFtilde}.
Then the third and fourth point follow from Lemma~\ref{lemma:CorrespCurvedandFlatMC} since $\widetilde{F}$ 
is compatible with equivalences by Proposition~\ref{prop:FmapsEquivMCtoEquiv}.
The other statements are clear. 
\end{proof}

\begin{corollary}
  Curved $L_\infty$-morphisms $F^1$ from $(L,Q)$ to $(L',Q')$ are in 
	one-to-one correspondence with strict $L_\infty$-morphisms $\widetilde{F}$ 
	from $(L,Q)$ into $(L',Q'^\alpha)$ with $F^1_0(1)=\alpha \in \mathcal{F}^1
	L'^1$ and $F^1_i =\widetilde{F}^1_i$ for $i>0$.
\end{corollary}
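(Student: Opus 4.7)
The plan is to observe that this corollary is essentially a repackaging of parts (i) and (ii) of the preceding Proposition~\ref{prop:ProvertiesofCurvedLinftyMorph}, so the proof will be short and mostly consist of verifying that the two constructions are mutual inverses at the level of the Taylor components.

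First, I would set up the map in the forward direction. Given a curved $L_\infty$-morphism $F^1 \in \mathfrak{F}^1(\Hom(\Sym(L[1]),L')[1])^0$, extract $\alpha = F^1_0(1) \in \mathcal{F}^1 L'^1$ and define $\widetilde{F}^1_i := F^1_i$ for $i \geq 1$, extended uniquely to a coalgebra morphism $\widetilde{F}\colon \Sym(L[1]) \to \Sym(L'[1])$ with $\widetilde{F}(1)=1$ via Theorem~\ref{thm:CofreeCocomConilpotentCoalg}. Proposition~\ref{prop:ProvertiesofCurvedLinftyMorph}~(i) then gives $\widetilde{F}^1 \circ Q = Q'^1 \circ (\exp(\alpha)\vee \widetilde{F})$, and since $Q'^1 \circ (\exp(\alpha)\vee \argument) = (Q'^{\alpha})^1$ by Lemma~\ref{lemma:twistCodiff}, this is exactly the condition that $\widetilde{F}$ is a strict $L_\infty$-morphism $(L,Q) \to (L',Q'^{\alpha})$.

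Next, I would set up the inverse map. Given $\alpha \in \mathcal{F}^1 L'^1$ together with a strict $L_\infty$-morphism $\widetilde{F}\colon (L,Q) \to (L',Q'^{\alpha})$, define $F^1_0(1) := \alpha$, $F^1_i := \widetilde{F}^1_i$ for $i \geq 1$, and extend to $F^1 \in \mathfrak{F}^1(\Hom(\Sym(L[1]),L')[1])^0$. This is exactly the content of Proposition~\ref{prop:ProvertiesofCurvedLinftyMorph}~(ii), which uses that strictness of $\widetilde{F}$ with respect to $Q'^{\alpha}$ translates back, via \eqref{eq:relationFFtilde} and the same identity $Q'^1 \circ (\exp(\alpha)\vee \argument) = (Q'^{\alpha})^1$, into the Maurer-Cartan equation \eqref{eq:curvedmorphLinftyII} for $F^1$ in the convolution $L_\infty$-algebra.

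Finally, I would verify that these two constructions are mutually inverse. Both directions are literally tautological on Taylor components: starting with $F^1$, building $\widetilde{F}$, and reading off the new curved morphism returns $\alpha = F^1_0(1)$ in degree $0$ and $\widetilde{F}^1_i = F^1_i$ in positive degrees, and the same check works in the other order. Since a curved $L_\infty$-morphism is uniquely determined by its Taylor components (and likewise for a strict morphism, by Theorem~\ref{thm:CofreeCocomConilpotentCoalg}), the bijection follows. The only genuinely substantive input is the equivalence between the Maurer-Cartan equation \eqref{eq:curvedmorphLinftyII} and the strict-morphism equation with respect to $Q'^{\alpha}$, but this has already been established in the preceding proposition, so there is no real obstacle to overcome here.
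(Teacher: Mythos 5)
Your proposal is correct and matches the paper's own treatment: the corollary is stated there without a separate proof precisely because it is the immediate repackaging of parts (i) and (ii) of Proposition~\ref{prop:ProvertiesofCurvedLinftyMorph}, which is exactly the argument you give. Your identification $(Q'^{\alpha})^1 = Q'^1\circ(\exp(\alpha)\vee\argument)$ and the tautological check on Taylor components are the same ingredients the paper relies on.
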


As expected, we can compose curved $L_\infty$-morphisms between curved 
$L_\infty$-algebras:

\begin{proposition}
\label{prop:CompofCurved}
  Let $F^1 \colon (L,Q) \rightsquigarrow (L',Q')$ and $G^1\colon 
	(L',Q') \rightsquigarrow (L'',Q'')$ be curved $L_\infty$-morphisms 
	with $F_0^1 = \alpha \in \mathcal{F}^1L'^1$ and 
	$G_0^1 = \beta \in \mathcal{F}^1L''^1$. Then there exists a 
	curved $L_\infty$-morphism 
	\begin{equation}
		\label{eq:CompCurvedLinftyMorph}
		(G\circ F)^1 
		\coloneqq
		G^1 \circ F
	  \colon
		(L,Q) 
		\longrightsquigarrow{1} 
		(L'',Q'')
	\end{equation}
	with $(G\circ F)_0^1 = G^1(\exp \alpha) = 
	\beta + \widetilde{G}^1(\cc{\exp}\alpha)$ 
	and $\widetilde{G\circ F} = \widetilde{G}^\alpha \circ \widetilde{F}$, 
	and the composition is associative. Moreover, one has
	\begin{equation}
		\label{eq:ComponMC}
		(G\circ F)_\MC 
		=
		G_\MC \circ F_\MC
		\colon 
		\Def(L)
		\longrightarrow
		\Def(L'').
	\end{equation}
	at the level of equivalence classes of Maurer-Cartan elements.
\end{proposition}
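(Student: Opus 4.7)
The plan is to leverage the correspondence between curved $L_\infty$-morphisms $F^1$ with zero-th Taylor component $\alpha$ and strict morphisms $\widetilde{F}$ into the $\alpha$-twisted target given by Proposition~\ref{prop:ProvertiesofCurvedLinftyMorph}, together with the coalgebra-morphism extensions $F = \exp_\star F^1$ and $G = \exp_\star G^1$ from \eqref{eq:relationFFtilde}. First I would verify that $(G\circ F)^1 := G^1 \circ F$ defines an element of $\mathfrak{F}^1\Hom(\Sym(L[1]),L'')[1]$; since $F$ takes values in $\widehat{\Sym}(L'[1])$ with contributions in ever deeper filtration levels of $L'$, and since $G^1$ extends to the completion by continuity (respecting $\mathcal{F}^\bullet L'$), the sum $G^1(F(X))$ converges in $L''[1]$ by completeness of $\mathcal{F}^\bullet L''$.

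Second, I would verify the Maurer-Cartan equation \eqref{eq:curvedmorphLinftyII} for $(G\circ F)^1$ via the chain of identities
\begin{equation*}
  (G^1 \circ F) \circ Q
  = G^1 \circ (F \circ Q)
  = G^1 \circ (Q' \circ F)
  = (G^1 \circ Q') \circ F
  = Q''^1 \circ G \circ F,
\end{equation*}
where the two intertwining identities $F^1 \circ Q = Q'^1 \circ F$ and $G^1 \circ Q' = Q''^1 \circ G$ are the Maurer-Cartan equations for $F^1$ and $G^1$ rewritten coalgebraically (as discussed in Remark~\ref{rem:CurvedMorphLinfty}), and the final expression unravels into the curvature, linear and higher-bracket parts of \eqref{eq:curvedmorphLinftyII} using $G \circ F = \exp_\star(G^1 \circ F)$. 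Evaluating at $1 \in \mathbb{K}$ then gives $(G\circ F)_0^1 = G^1(\exp \alpha) = \beta + \widetilde{G}^1(\cc{\exp}\alpha) =: \gamma$. Comparing the factorisations
\begin{equation*}
  \exp\gamma \vee \widetilde{G\circ F}(X)
  = (G\circ F)(X)
  = G\bigl(\exp\alpha \vee \widetilde{F}(X)\bigr)
\end{equation*}
and invoking Proposition~\ref{prop:twistinglinftymorphisms} to recognise the right-hand side as the twisted morphism $\widetilde{G}^\alpha$ applied to $\widetilde{F}(X)$ (its target being $(L'',(Q''^\beta)^{\widetilde{G}_\MC(\alpha)}) = (L'',Q''^\gamma)$), the uniqueness of the coalgebra-morphism extension from Theorem~\ref{thm:CofreeCocomConilpotentCoalg} yields $\widetilde{G\circ F} = \widetilde{G}^\alpha \circ \widetilde{F}$.

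Associativity follows from the associativity of ordinary map composition combined with the coalgebra-morphism structure of the extensions; writing $(H\circ G)\circ F = H^1 \circ G \circ F = H\circ (G\circ F)$ at the level of Taylor coefficients reduces it to an identity of maps into a completion. For the Maurer-Cartan compatibility I would first establish $F(\exp\pi) = \exp(F_\MC(\pi))$ in the curved setting by computing
\begin{equation*}
  F(\exp\pi) = \exp\alpha \vee \widetilde{F}(\exp\pi) = \exp\alpha \vee \exp(\widetilde{F}^1(\cc{\exp}\pi)) = \exp(F_\MC(\pi)),
\end{equation*}
combining \eqref{eq:relationFFtilde} with Lemma~\ref{lemma:twistinglinftymorphisms}(b) applied to $\widetilde{F}$, and then deduce $(G\circ F)_\MC(\pi) = G^1(F(\exp\pi)) = G^1(\exp F_\MC(\pi)) = G_\MC(F_\MC(\pi))$; descent to $\Def(L)$ is automatic by Proposition~\ref{prop:ProvertiesofCurvedLinftyMorph}. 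The main obstacle I anticipate is keeping the various twisted targets and completions coherent, in particular confirming that the identification $\widetilde{G\circ F} = \widetilde{G}^\alpha \circ \widetilde{F}$ can be read off cleanly from the uniqueness of coalgebra-morphism extensions, rather than requiring a direct Taylor-order verification that would quickly become combinatorially unwieldy.
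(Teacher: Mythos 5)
Your proof is correct, but it is organized differently from the paper's. The paper never verifies the Maurer-Cartan equation for $G^1\circ F$ directly: it first forms the strict composition $\widetilde{G}^\alpha\circ\widetilde{F}\colon (L,Q)\to (L'',Q''^{\beta+\widetilde{G}^1(\cc{\exp}\alpha)})$, which is automatically a strict $L_\infty$-morphism by Proposition~\ref{prop:twistinglinftymorphisms}, takes the curved morphism corresponding to it under Proposition~\ref{prop:ProvertiesofCurvedLinftyMorph} as the definition of the composite, and only then checks the one-line identity $G^1\circ F=G^1\circ(\exp\alpha\vee\widetilde{F})=(G\circ F)^1$; the identity \eqref{eq:ComponMC} is then obtained by expanding $\widetilde{G\circ F}^1(\cc{\exp}\pi)$. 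You instead take $(G\circ F)^1:=G^1\circ F$ as the definition, prove the Maurer-Cartan equation \eqref{eq:curvedmorphLinftyII} directly via the intertwining chain $(G^1\circ F)\circ Q=Q''^1\circ G\circ F$ together with $G\circ F=\exp_\star(G^1\circ F)$, and recover $\widetilde{G\circ F}=\widetilde{G}^\alpha\circ\widetilde{F}$ afterwards from uniqueness of coalgebra extensions; your route to \eqref{eq:ComponMC} via $F(\exp\pi)=\exp(F_\MC(\pi))$ is if anything cleaner than the paper's computation. What your approach buys is a self-contained existence argument that does not presuppose the twisting machinery; what the paper's buys is that the Maurer-Cartan equation comes for free from results already established for strict morphisms, so all completion bookkeeping is delegated there. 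One step you should make explicit: in your chain you use the full-extension identity $F\circ Q=Q'\circ F$, whereas the Maurer-Cartan equation only yields the projected identity $F^1\circ Q=Q'^1\circ F$; the lift follows either from $Q'(\exp\alpha\vee Y)=\exp\alpha\vee Q'^\alpha(Y)$ combined with $\widetilde{F}\circ Q=Q'^\alpha\circ\widetilde{F}$, or from observing that both sides are coderivations along $F$ (valued in the completion) with the same projection to $L'[1]$ --- routine, but not completely automatic in the completed, non-counital setting, so it deserves a sentence.
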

\begin{proof}
We saw in Proposition~\ref{prop:ProvertiesofCurvedLinftyMorph} that 
$F$ and $G$ correspond to strict $L_\infty$-morphisms
\begin{equation*}
  \widetilde{F}
	\colon
	(L,Q)
	\longrightarrow
	(L',Q'^\alpha)
	\quad \text{ and }  \quad 
	\widetilde{G}
	\colon
	(L',Q')
	\longrightarrow
	(L'',(Q'')^\beta).
\end{equation*}
In particular, we can twist $\widetilde{G}$ with 
$\alpha$ and obtain
\begin{equation*}
  (L,Q)
	\stackrel{\widetilde{F}}{\longrightarrow}
	(L',Q'^\alpha)
	\stackrel{\widetilde{G}^\alpha}{\longrightarrow}
	(L'',(Q'')^{\beta + \widetilde{G}^1(\cc{\exp}\alpha)}).
\end{equation*}
Thus $(G\circ F)^1_i = (\widetilde{G}^\alpha \circ \widetilde{F})^1_i$ 
for $i>0$ and $(G\circ F)^1_0 = \beta + \widetilde{G}^1(\cc{\exp}\alpha)$ 
defines indeed a curved $L_\infty$-morphism. Moreover, we have 
$G^1 \circ F = G^1 \circ \exp_\star F^1 = G^1 \circ \exp(\alpha) \vee \widetilde{F} = (G\circ F)^1$. It is easy to check that the composition 
is associative. Therefore, let us now look at the induced map at the level 
of Maurer-Cartan elements: by \eqref{eq:CurvdMorphMC} $(G\circ F)_\MC$ maps 
$\pi \in \mathcal{F}^1L^1$ to 
\begin{align*}
  (G\circ F)_\MC (\pi)
	& =
  \beta + \widetilde{G}^1(\cc{\exp}\alpha) 
	+ \widetilde{G\circ F}^1(\cc{\exp}\pi)
	=
	\beta + \widetilde{G}^1(\cc{\exp}\alpha) + 
	\widetilde{G}^1 ( \exp \alpha \vee \widetilde{F}(\cc{\exp}\pi)) \\
	& = 
	G^1	(\exp \alpha \vee \exp \widetilde{F}^1(\cc{\exp}\pi)) 
	=
	G_\MC \circ F_\MC(\pi)
\end{align*}
as desired.
\end{proof}
We see that if both $F,G$ are strict morphisms, then $F=\widetilde{F}$ and $G=\widetilde{G}$ and the above composition is just the usual one. 
Moreover, we have the following observation: 

\begin{corollary}
\label{cor:CompwithTwist}
  Let $F^1 \colon (L,Q) \rightsquigarrow (L',Q')$ be a curved 
	$L_\infty$-morphism with $F_0^1=\alpha \in \mathcal{F}^1L'^1$. 
	Then one has for $\beta \in \mathcal{F}^1L'^1$
	\begin{equation}
	  \label{eq:curvLinftyFlatTwisted}
		\tw_\alpha \circ \tw_\beta
		=
		\tw_{\alpha+\beta},
		\quad \quad 
		F^1 
		=
		\tw_\alpha \circ \widetilde{F},
		\quad \text{ and }\quad
		\tw_{-\alpha} \circ  F 
		=
		\widetilde{F}^1
	\end{equation}
\end{corollary}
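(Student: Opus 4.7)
The plan is to apply Proposition \ref{prop:CompofCurved} directly in each case. The key observation is that the strict coalgebra morphism $\widetilde{\tw_\gamma}$ associated to any twist $\tw_\gamma$ has $\widetilde{\tw_\gamma}{}^1_1 = \id$ and $\widetilde{\tw_\gamma}{}^1_n = 0$ for $n \geq 2$, so as a coalgebra morphism on $\Sym(L[1])$ it is simply the identity (its target merely carries a different twisted codifferential). Moreover, twisting $\widetilde{\tw_\gamma}$ by any element does not change its Taylor coefficients, by formula \eqref{eq:twisteslinftymorphism}, since the only nonvanishing coefficient is $\widetilde{\tw_\gamma}{}_1^1$. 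This degeneracy will make each verification essentially formal.

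For the first identity, note that $\tw_\beta \colon (L,Q) \rightsquigarrow (L,Q^{-\beta})$ and $\tw_\alpha \colon (L,Q^{-\beta}) \rightsquigarrow (L,(Q^{-\beta})^{-\alpha}) = (L,Q^{-(\alpha+\beta)})$, so the composition is well-defined. Applying Proposition \ref{prop:CompofCurved} with $F = \tw_\beta$ and $G = \tw_\alpha$, the curvature of the composite is $\alpha + \widetilde{\tw_\alpha}{}^1(\cc{\exp}\beta) = \alpha + \beta$ (only the linear term contributes), and its strict part is $\widetilde{\tw_\alpha \circ \tw_\beta} = \widetilde{\tw_\alpha}^\beta \circ \widetilde{\tw_\beta} = \id$. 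By the characterization preceding Proposition \ref{prop:CompofCurved}, this Taylor data coincides with $\tw_{\alpha+\beta}$.

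For the second identity, take $F = \widetilde{F}$ (strict, so $F_0^1 = 0$) and $G = \tw_\alpha$ in Proposition \ref{prop:CompofCurved}. The curvature of the composite is $\alpha + \widetilde{\tw_\alpha}{}^1(\cc{\exp}0) = \alpha$, and its strict part is $\widetilde{\tw_\alpha \circ \widetilde{F}} = \widetilde{\tw_\alpha}^0 \circ \widetilde{F} = \widetilde{F}$. Since $F^1$ is characterized by $F_0^1 = \alpha$ together with $F_i^1 = \widetilde{F}_i^1$ for $i > 0$, the composite $\tw_\alpha \circ \widetilde{F}$ equals $F^1$. The third identity follows symmetrically: with $F = F^1$ (so $F_0^1 = \alpha$) and $G = \tw_{-\alpha}$, the formula yields curvature $-\alpha + \widetilde{\tw_{-\alpha}}{}^1(\cc{\exp}\alpha) = -\alpha + \alpha = 0$, and strict part $\widetilde{\tw_{-\alpha}}^\alpha \circ \widetilde{F} = \widetilde{F}$, so the composite is the strict morphism $\widetilde{F}^1$.

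There is no real obstacle here: once one records that the extended coalgebra morphism underlying every $\tw_\gamma$ is the identity and is stable under twisting, the three identities reduce to bookkeeping using the explicit composition formula of Proposition \ref{prop:CompofCurved}. The only point requiring minor care is to track which source/target each twist lives over, so that the compositions make sense as curved $L_\infty$-morphisms between the correctly twisted $L_\infty$-algebras.
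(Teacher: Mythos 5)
Your proof is correct and follows essentially the same route as the paper: compute the Taylor coefficients of the compositions via Proposition~\ref{prop:CompofCurved}, using that every $\tw_\gamma$ has only the identity as nonzero higher coefficient (and is unchanged by twisting). The only cosmetic difference is that the paper deduces the third identity from the first two and associativity, whereas you verify it directly — a negligible variation.
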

\begin{proof}
By the fifth point of Proposition~\ref{prop:ProvertiesofCurvedLinftyMorph} 
we know that $\tw_\alpha$ corresponds to twisting with $-\alpha$, i.e.
\begin{equation*}
  \tw_\alpha \colon 
	(L',Q'^\alpha)
	\longrightsquigarrow{0.8}
	( L',Q'^{\alpha-\alpha})
	=
	(L',Q'), 
	\quad \quad
	(\tw_\alpha)^1_0 = \alpha,
	\quad 
	(\tw_\alpha)^1_1 = \id.
\end{equation*}
We compute $(\tw_\alpha \circ \tw_\beta)^1_0 = \alpha + \beta$ 
and $(\tw_\alpha \circ \tw_\beta)^1_i = \delta^1_i \id$ for $i\geq 0$, 
thus $\tw_\alpha \circ \tw_\beta= \tw_{\alpha + \beta}$ is show. 
Then Proposition~\ref{prop:CompofCurved} gives the desired 
$F^1 = \tw_\alpha \circ \widetilde{F}$, and the last identity follows 
with the first.
\end{proof}

In a next step, we want to investigate if the (curved) homotopy 
equivalence is compatible with the interpretation of curved morphisms 
as strict $L_\infty$-morphisms into a twisted codomain. At first, let 
$F^1$ and $F'^1$ be homotopic via $F^1(t)$ and $\lambda^1(t)$, i.e. 
\begin{align}
\label{eq:HomEquCurvMorph}
   \frac{\D}{\D t} F^1(t)
	& =
	\widehat{Q}^1(\lambda^1(t)\vee \exp(F^1(t)))
  =
	\widehat{Q}^1(\lambda^1(t)\vee \exp(F^1_0(t)(1))\vee 
	\exp(\widetilde{F}^1(t)))
\end{align}
with $F^1(0) = F^1$ and $F^1(1)= F'^1$.
This implies for the component $F^1_0(t)=F^1_0(t)(1)$:
\begin{equation*}
  \frac{\D}{\D t} F^1_0(t)
	=
	\lambda^1 (t) \circ Q_0^1
	+
	Q'^1 \circ (\lambda^1_0(t) \vee \exp F^1_0(t)).
\end{equation*}
Thus we see that $\alpha_0 \neq \alpha_1$ is possible, and 
we can in general not directly compare $\widetilde{F}_0$ and 
$\widetilde{F}_1$ since they can have different codomains. However, we 
directly see:

\begin{proposition}
\label{prop:RelationHomEquiv}
  Let $F^1,F'^1 \colon (L,Q)\rightsquigarrow (L',Q')$ be two curved 
	$L_\infty$-morphisms with $\alpha = F_0^1=F'^1_0$. 
	Then $F^1\sim F'^1$ if and only if $\widetilde{F}^1\sim \widetilde{F'}^1$.
\end{proposition}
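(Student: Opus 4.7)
The plan is to realize the proposition as an instance of the correspondence between Maurer--Cartan elements under twisting, applied inside the curved convolution $L_\infty$-algebra $(\mathcal{H},\widehat{Q})$ with $\mathcal{H}=\Hom(\Sym(L[1]),L')[1]$. I introduce the constant element $C_\alpha\in\mathfrak{F}^1\mathcal{H}^0$ with $C_\alpha(1)=\alpha$ and $C_\alpha|_{\cc{\Sym}(L[1])}=0$. By Corollary~\ref{cor:CompwithTwist} the assignment $G\mapsto G-C_\alpha$ exactly subtracts $\alpha$ from the zero component of a curved morphism while leaving all higher components unchanged; in particular $\widetilde{F}^1=F^1-C_\alpha$ and $\widetilde{F'}^1=F'^1-C_\alpha$.

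The main technical step is the identity
\begin{equation*}
\widehat{Q}^{C_\alpha}=\widehat{Q}^\alpha,
\end{equation*}
where the left-hand side is the twist of $\widehat{Q}$ by $C_\alpha$ in the sense of Lemma~\ref{lemma:twistCodiff}, and the right-hand side is the convolution codifferential of Proposition~\ref{prop:CurvedConvLinftyStructure} with target $(L',Q'^\alpha)$. Since $C_\alpha$ vanishes on $\cc{\Sym}(L[1])$ one checks directly that $(C_\alpha^{\star k}\star G)(X)=\alpha^{\vee k}\vee G(X)$, and comparing Taylor coefficients on both sides via $Q'^{\alpha,1}_n(x_1\vee\cdots\vee x_n)=\sum_{k\ge 0}\tfrac{1}{k!}Q'^1_{n+k}(\alpha^{\vee k}\vee x_1\vee\cdots\vee x_n)$ yields the claim.

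Granted this, given a homotopy $(F^1(t),\lambda^1(t))$ from $F^1$ to $F'^1$ in $(\mathcal{H},\widehat{Q})$, I set $\widetilde{F}^1(t):=F^1(t)-C_\alpha$ and $\widetilde{\lambda}^1(t):=\lambda^1(t)$; both remain in the appropriate completed spaces, and the boundary values are $\widetilde{F}^1(0)=\widetilde{F}^1$, $\widetilde{F}^1(1)=\widetilde{F'}^1$. Since $C_\alpha$ and $\widetilde{F}^1(t)$ are both of even degree in $\mathcal{H}[1]$ they commute with respect to $\vee$, so $\exp F^1(t)=\exp C_\alpha\vee\exp\widetilde{F}^1(t)$. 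Moreover the coderivation $\widehat{Q}$ sends $\Sym(\mathcal{H}[1])$ into $\bigoplus_{i\ge 1}\Sym^i(\mathcal{H}[1])$, hence multiplication by $e^{-C_\alpha}$ does not alter the $\Sym^1$-projection of $\widehat{Q}(\lambda^1(t)\vee\exp F^1(t))$. Combining these observations gives
\begin{equation*}
\widehat{Q}^{\alpha,1}\bigl(\widetilde{\lambda}^1(t)\vee\exp\widetilde{F}^1(t)\bigr)
=\widehat{Q}^{C_\alpha,1}\bigl(\lambda^1(t)\vee\exp\widetilde{F}^1(t)\bigr)
=\widehat{Q}^1\bigl(\lambda^1(t)\vee\exp F^1(t)\bigr)
=\tfrac{\D}{\D t}\widetilde{F}^1(t),
\end{equation*}
so $(\widetilde{F}^1(t),\widetilde{\lambda}^1(t))$ realizes $\widetilde{F}^1\sim\widetilde{F'}^1$ in $(\mathcal{H},\widehat{Q}^\alpha)$. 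The reverse implication is symmetric, by shifting with $G\mapsto G+C_\alpha$.

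The main obstacle is the explicit verification of $\widehat{Q}^{C_\alpha}=\widehat{Q}^\alpha$, which demands careful bookkeeping of Taylor coefficients of the convolution codifferential; after that step the argument is essentially a transcription of Lemma~\ref{lemma:CorrespCurvedandFlatMC}, extended to twisting by elements that need not themselves be Maurer--Cartan, which the same computation covers.
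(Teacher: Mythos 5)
Your proposal is correct and is essentially the paper's own argument: the paper reduces the claim via Corollary~\ref{cor:CompwithTwist} to showing that post-composition with $\tw_\beta$ preserves homotopies, and its computation inserts $\exp(-\beta)\vee\exp(\beta)$ into $\widehat{Q}^1(\lambda^1(t)\vee\exp F^1(t))$ and recognizes the convolution codifferential with twisted target --- which is exactly your shift of the path by the constant element $C_\alpha$ (keeping the same $\lambda^1(t)$) together with the identity $\widehat{Q}^{C_\alpha}=\widehat{Q}^\alpha$ that the paper uses implicitly and you verify explicitly. The only difference is packaging: the paper phrases the shift as composition with $\tw_{\pm\alpha}$, while you work directly with Maurer--Cartan elements in the convolution $L_\infty$-algebra.
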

\begin{proof}
By Corollary~\ref{cor:CompwithTwist} we only have to show that 
$F^1 \sim F'^1$ implies $(\tw_\beta \circ F) \sim (\tw_\beta \circ F')$ 
for all $\beta \in \mathcal{F}^1L'^1$ and all curved $L_\infty$-mophisms 
with $F_0^1=F'^1_0$, where 
\begin{equation*}
  (\tw_\beta \circ F), (\tw_\beta \circ F') \colon 
	(L,Q)
	\longrightsquigarrow{1}
	(L',Q'^{-\beta}).
\end{equation*}
Thus assume that $F^1(t),\lambda^1(t)$ 
encode the equivalence between $F^1$ and $F'^1$, then we have by  
\eqref{eq:HomEquCurvMorph}
\begin{align*}
   \frac{\D}{\D t} F^1(t)
	& =
	\widehat{Q}^1(\lambda^1(t)\vee \exp(F^1(t)))
	\quad \text{ with }
	F^1(0) = F^1
	\text{ and }
	F^1(1)= F'^1.
\end{align*}
With $G^1(t) = \tw_\beta \circ F^1(t)$ we get 
\begin{align*}
  \frac{\D}{\D t} G^1(t)
	& =
  \frac{\D}{\D t} F^1(t)
	=
	\widehat{Q}^1(\lambda^1(t)\vee \exp(F^1(t))) \\
	& = 
	\widehat{Q}^1(\lambda^1(t)\vee \exp(-\beta)\vee \exp(\beta)
	\vee \exp(F^1(t))) \\
	&	=
	\widehat{Q^{-\beta}}^1(\lambda^1(t) \vee\exp(G^1(t))) ,
\end{align*}
where $\widehat{Q^{-\beta}}$ is the codifferential on the curved 
convolution $L_\infty$-algebra induced by $(L,Q)$ and $(L',Q'^{-\beta})$. 
Thus we get the homotopy equivalence between $\tw_\beta \circ F$ 
and $\tw_\beta \circ F'$.
\end{proof}

Moreover, \eqref{eq:HomEquCurvMorph} implies:
\begin{corollary}
  Let $F^1$ and $F'^1$ be homotopic curved $L_\infty$-morphisms from 
	$(L,Q)$ to $(L',Q')$ with $\alpha_0 = F^1_0$ and 
	$\alpha' =F'^1_0$. If $Q_0^1=0$ and if $\alpha$ is a 
	Maurer-Cartan element, then so is $\alpha'$ and both are equivalent.	
\end{corollary}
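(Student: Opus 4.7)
The plan is to extract from the homotopy witnessing $F^1\sim F'^1$ in the curved convolution $L_\infty$-algebra (Proposition~\ref{prop:CurvedConvLinftyStructure}) the associated homotopy datum between its zero-th components, and then to observe that, once $Q_0^1=0$ is imposed, the resulting equation is precisely the defining equation \eqref{eq:EquivMCElements} of a homotopy equivalence of curved Maurer--Cartan elements in $(L',Q')$.

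Concretely, I would unpack the hypothesis: there exist paths $F^1(t)$ and $\lambda^1(t)$ in the appropriate completions of $\mathfrak{F}^1(\Hom(\Sym(L[1]),L')[1])^0[t]$ and $\mathfrak{F}^1(\Hom(\Sym(L[1]),L')[1])^{-1}[t]$, with $F^1(0)=F^1$, $F^1(1)=F'^1$, satisfying
\[
\frac{\D}{\D t}F^1(t)\;=\;\widehat{Q}^1(\lambda^1(t)\vee\exp F^1(t)).
\]
Setting $\alpha(t):=F^1_0(t)(1)\in \mathcal{F}^1 {L'}^1$ and $\mu(t):=\lambda^1_0(t)(1)\in\mathcal{F}^1 {L'}^0$, I would then evaluate the above at $1\in\Sym^0(L[1])=\mathbb{K}$. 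Since $\Delta_\sh(1)=1\otimes 1$, the convolution products $(\lambda^1(t)\star F^1(t)^{\star k})(1)$ simplify to $\mu(t)\vee\alpha(t)^{\vee k}$. In the term $\widehat{Q}^1_1(\lambda^1(t))=Q'^1_1\circ\lambda^1(t)+\lambda^1(t)\circ Q$ the second summand evaluated at $1$ becomes $\lambda^1(t)(Q_0^1(1))$, which vanishes by the assumption $Q_0^1=0$. Hence
\[
\frac{\D}{\D t}\alpha(t)\;=\;\sum_{k\geq 0}\frac{1}{k!}Q'^1_{k+1}(\mu(t)\vee\alpha(t)^{\vee k})\;=\;Q'^1(\mu(t)\vee\exp\alpha(t)),
\]
with $\alpha(0)=\alpha$ and $\alpha(1)=\alpha'$, which is exactly equation \eqref{eq:EquivMCElements} for $(L',Q')$.

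To conclude, since $\alpha$ is a curved Maurer--Cartan element by hypothesis, Proposition~\ref{prop:PioftUnique} shows that the unique solution $\alpha(t)$ of this ODE with initial datum $\alpha$ lies in $\Mc^1(L')$ for every $t$; in particular $\alpha'=\alpha(1)$ is a curved Maurer--Cartan element, and the pair $(\alpha(t),\mu(t))$ witnesses $\alpha\sim\alpha'$. No genuine obstacle arises; the only point needing a brief check is that $\alpha(t)$ and $\mu(t)$ really lie in $\mathcal{F}^1\widehat{{L'}^1[t]}$ and $\mathcal{F}^1\widehat{{L'}^0[t]}$ respectively, which is immediate from the definition of the filtration $\mathfrak{F}^\bullet$ in \eqref{eq:FiltrationConvLieAlg2}, since evaluation at $1$ is continuous for this filtration and compatible with the degree.
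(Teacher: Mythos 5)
Your proposal is correct and follows essentially the same route as the paper: the paper obtains, directly from \eqref{eq:HomEquCurvMorph}, the equation $\frac{\D}{\D t}F^1_0(t)=\lambda^1(t)\circ Q_0^1+Q'^1\circ(\lambda^1_0(t)\vee\exp F^1_0(t))$ for the zero-components, which for $Q_0^1=0$ is precisely the Maurer--Cartan homotopy equation \eqref{eq:EquivMCElements} in $(L',Q')$, and then the conclusion follows via Proposition~\ref{prop:PioftUnique}. Your evaluation at $1$ and the filtration check are exactly the (implicit) details behind the paper's one-line derivation.
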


Analogously to the flat case in Proposition~\ref{prop:PropertiesofHomotopicMorphisms} we can show that homotopic 
curved $L_\infty$-morphisms induce the same maps on the equivalence classes 
of Maurer-Cartan elements.

\begin{proposition}
\label{prop:HomCurvedMorphonDef}
  Let $F^1,F'^1 \colon (L,Q)\rightsquigarrow (L',Q')$ be two curved 
	$L_\infty$-morphisms. If $F^1$ and $F'^1$ are homotopic, then they 
	induce the same maps from $\Def(L)$ to $\Def(L')$, i.e. 
	$F_\MC = F'_\MC$.
\end{proposition}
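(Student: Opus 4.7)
The plan is to adapt the argument from the third item of Proposition~\ref{prop:PropertiesofHomotopicMorphisms} to the curved setting. Let $F^1(t)$ and $\lambda^1(t)$ encode the homotopy equivalence between $F^1$ and $F'^1$ in the curved convolution $L_\infty$-algebra of Proposition~\ref{prop:CurvedConvLinftyStructure}, i.e.\ $F^1(0) = F^1$, $F^1(1) = F'^1$ and
\begin{equation*}
  \frac{\D}{\D t} F^1(t) = \widehat{Q}^1\bigl(\lambda^1(t) \vee \exp F^1(t)\bigr).
\end{equation*}
For a curved Maurer-Cartan element $\pi \in \mathcal{F}^1 L^1$, I would set
\begin{equation*}
  \pi(t) := F^1(t)(\exp \pi), \qquad \lambda(t) := \lambda^1(t)(\exp \pi).
\end{equation*}
Because of the formula \eqref{eq:CurvdMorphMC} we have $\pi(0) = F_\MC(\pi)$ and $\pi(1) = F'_\MC(\pi)$, so it suffices to show that the pair $(\pi(t),\lambda(t))$ realises a homotopy equivalence in $(L',Q')$ in the sense of Definition~\ref{def:HomEquivalenceofMC}.

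The key calculation unfolds $\frac{\D}{\D t}\pi(t)$ via the Taylor coefficients \eqref{eq:CurvonConvLinftyCurved}--\eqref{eq:BracketonConvLinftyCurved}. Since $\pi$ is a curved Maurer-Cartan element, Lemma~\ref{lemma:twistinglinftymorphisms} gives $Q(\exp \pi) = 0$, and together with the cocommutativity identity $\Delta_\sh^{n}(\exp \pi) = (\exp \pi)^{\otimes(n+1)}$ (which is what collapses the convolutions) this yields
\begin{equation*}
  \widehat{Q}^1_1(\lambda^1(t))(\exp \pi) = Q'^1_1(\lambda(t))
  \quad \text{and} \quad
  \widehat{Q}^1_{n+1}\bigl(\lambda^1(t) \vee F^1(t)^{\vee n}\bigr)(\exp \pi)
  = Q'^1_{n+1}\bigl(\lambda(t) \vee \pi(t)^{\vee n}\bigr)
\end{equation*}
for $n \geq 1$. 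Summing with weights $\tfrac{1}{n!}$ assembles into
\begin{equation*}
  \frac{\D}{\D t}\pi(t) = Q'^1\bigl(\lambda(t) \vee \exp \pi(t)\bigr),
\end{equation*}
which is exactly the Maurer-Cartan homotopy equation \eqref{eq:EquivMCElements}. Note that the curvature component $\widehat{Q}^1_0 = Q'_0$ does not interfere, because it is swallowed by the constant piece $1 \in \exp\pi$ and accounted for by $F^1_0(t)(1) = \alpha(t) \in \mathcal{F}^1 L'^1$, which is itself a path governed by the same equation.

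The main obstacle is verifying that $\pi(t) \in \mathcal{F}^1\widehat{L'^1[t]}$ and $\lambda(t) \in \mathcal{F}^1\widehat{L'^0[t]}$, so that the pair actually encodes a homotopy in the sense required by Definition~\ref{def:HomEquivalenceofMC}. This reduces to filtration bookkeeping in the convolution algebra: by \eqref{eq:FiltrationConvLieAlg2}, any $f \in \mathfrak{F}^k \Hom(\Sym(L[1]),L')$ decomposes into summands that either raise the filtration by $m$ or vanish on $\Sym^{<n}(L[1])$ with $n+m = k$, and in either case $f(\exp \pi) \in \mathcal{F}^k L'$ because $\pi^{\vee j} \in \mathcal{F}^j\Sym^j(L[1])$. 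Combined with polynomiality of $F^1(t)$ and $\lambda^1(t)$ modulo each $\mathfrak{F}^n$, this transfers to polynomiality of $\pi(t)$ and $\lambda(t)$ modulo each $\mathcal{F}^n L'$, as needed. Finally, Proposition~\ref{prop:PioftUnique} ensures that $\pi(t)$ remains a curved Maurer-Cartan element throughout, so $F_\MC(\pi) \sim F'_\MC(\pi)$ in $\Def(L')$ and hence $F_\MC = F'_\MC$ as maps $\Def(L) \to \Def(L')$.
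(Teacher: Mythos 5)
Your proposal is correct and follows essentially the same route as the paper: the paper's proof also evaluates the homotopy equation $\frac{\D}{\D t} F^1(t) = \widehat{Q}^1(\lambda^1(t)\vee \exp(F^1(t)))$ on $\exp(\pi)$ and reads off that $\pi(t)=F^1(t)(\exp\pi)$, $\lambda(t)=\lambda^1(t)(\exp\pi)$ encode a homotopy between $F_\MC(\pi)$ and $F'_\MC(\pi)$. The only difference is that you spell out the collapsing of the convolution structure maps and the filtration bookkeeping explicitly, which the paper leaves implicit.
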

\begin{proof}
Let us assume that $F^1(t),\lambda^1(t)$ 
encode the equivalence between $F^1$ and $F'^1$, then we have by  
\eqref{eq:HomEquCurvMorph}
\begin{align*}
   \frac{\D}{\D t} F^1(t)
	& =
	\widehat{Q}^1(\lambda^1(t)\vee \exp(F^1(t)))
	\quad \text{ with }
	F^1(0) = F^1
	\text{ and }
	F^1(1)= F'^1.
\end{align*}
Applying this to $\exp( \pi)$ for $\pi \in \Mc^1(L)$ gives
\begin{align*}
   \frac{\D}{\D t} F^1(t)\exp( \pi)
	& =
	\widehat{Q}^1(\lambda^1(t)\exp( \pi)\vee \exp(F^1(t)\exp( \pi))),
\end{align*}
i.e. $\pi(t) = F^1(t)(\exp\pi) $ and $\lambda(t) = \lambda^1(t)(\exp\pi) $ 
encode the homotopy equivalence between 
$F_\MC(\pi)=F^1(\exp\pi) $ and $F'_\MC(\pi')=(F')^1(\exp\pi)$. Thus $F^1$ and $F'^1$ map indeed 
Maurer-Cartan elements to equivalent ones.
\end{proof}

In the proof of Proposition~\ref{prop:RelationHomEquiv} we saw that 
$\tw_\alpha \circ \widetilde{F}$ is homotopic to 
$\tw_\alpha \circ \widetilde{F'}$ if $\widetilde{F}$ and 
$\widetilde{F'}$ are homotopic. We want to generalize this in 
the spirit of Proposition~\ref{prop:CompofHomotopicHomotopic} and 
Proposition~\ref{prop:preCompofHomotopicHomotopic} and show that general
pre- and post-compositions of homotopic curved $L_\infty$-morphisms with a 
curved $L_\infty$-morphism are again homotopic.

\begin{proposition}
\label{prop:CompofCurvedHomotopicHomotopic}
   Let $F^1,F'^1 \colon (L,Q)\rightsquigarrow (L',Q')$ be two curved 
	$L_\infty$-morphisms and assume $F^1\sim F'^1$.
	\begin{propositionlist}
	  \item If $H^1 \colon (L',Q') \rightsquigarrow (L'',Q'')$ 
		      is a curved $L_\infty$-morphism, then $(H^1\circ F) \sim (H^1\circ F')$.
		\item  If $H^1 \colon (L'',Q'') \rightsquigarrow (L,Q)$ 
		      is a curved $L_\infty$-morphism, then $ (F^1 \circ H)\sim (F'^1\circ H)$.
	\end{propositionlist}
\end{proposition}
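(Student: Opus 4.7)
The plan is to mimic the flat-case arguments of Propositions~\ref{prop:CompofHomotopicHomotopic} and~\ref{prop:preCompofHomotopicHomotopic}, with the flat convolution $L_\infty$-algebras replaced by the curved convolutions of Proposition~\ref{prop:CurvedConvLinftyStructure}. Denote by $\widehat{Q}$ the codifferential on the source convolution $\Hom(\Sym(L[1]),L')$ and by $\widehat{R}$ the codifferential on the target convolution ($\Hom(\Sym(L[1]),L'')$ in~\textit{(i)}, $\Hom(\Sym(L''[1]),L')$ in~\textit{(ii)}). Let $F^1(t)\in \widehat{\mathfrak{F}^1(\Hom(\Sym(L[1]),L')[1])^0[t]}$ and $\lambda^1(t)\in \widehat{\mathfrak{F}^1(\Hom(\Sym(L[1]),L')[1])^{-1}[t]}$ encode $F^1 \sim F'^1$, and set $F(t) = \exp_\star F^1(t)\colon \Sym(L[1])\to \widehat{\Sym}(L'[1])$ and $H = \exp_\star H^1$.

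For part~\textit{(i)} I take the candidate path $G^1(t) := H^1\circ F(t)$; filtration compatibility of $H^1$ and $F^1(t)$ places it in the appropriate completion. Using the identity $\tfrac{\D}{\D t} F(t) = \tfrac{\D}{\D t}F^1(t)\star F(t)$, which holds by graded commutativity of $\star$ since $F^1(t)$ has shifted total degree zero, and expanding the homotopy equation as $\tfrac{\D}{\D t} F^1(t) = Q'^1\circ(\lambda^1(t)\star F(t)) + \lambda^1(t)\circ Q$, I invoke the curved Maurer--Cartan identity $H^1\circ Q'=Q''^1\circ H$ for $H^1$ (where $Q''^1\circ H$ denotes the full expansion $\sum_{n\geq 0}\tfrac{1}{n!}\,Q''^1_n\circ (H^1)^{\star n}$, absorbing the curvature $Q''_0$ in its $n=0$ term) to pass the $Q'^1$-contributions across. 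Rearranging step-by-step as in the flat case, I expect
\begin{equation*}
  \frac{\D}{\D t} G^1(t)
  =
  \widehat{R}^1\bigl(\mu^1(t)\vee \exp G^1(t)\bigr),
  \qquad
  \mu^1(t) := H^1\circ(\lambda^1(t)\star F(t)),
\end{equation*}
witnessing $(H\circ F)\sim(H\circ F')$. Part~\textit{(ii)} is dual: the candidate $G^1(t) := F^1(t)\circ H$ satisfies an analogous equation with $\mu^1_{\mathrm{new}}(t)=\lambda^1(t)\circ H$, using that $H$ is a coalgebra morphism intertwining $Q''$ and $Q$ in the curved Maurer--Cartan sense.

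The main technical obstacle, compared to the flat case, is that in the curved setting the curvature $Q''_0$ (respectively $Q_0$ for part~\textit{(ii)}) is hidden inside $Q''^1\circ H$ via its $n=0$ piece, and one must verify that these curvature contributions cancel appropriately throughout the combinatorial rearrangement: in particular they must not produce a spurious $\widehat{R}^1_0$-contribution on the right-hand side of the target homotopy equation, which is consistent since $\mu^1\vee\exp G^1$ has no $\mathbb{K}$-component in the symmetric coalgebra of the target convolution. Once this bookkeeping is done, the combinatorial flow of the flat-case proofs carries over verbatim.
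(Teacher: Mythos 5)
Your proposal is correct and is essentially the paper's own argument: the paper proves this proposition simply by remarking that the flat-case proofs of Propositions~\ref{prop:CompofHomotopicHomotopic} and~\ref{prop:preCompofHomotopicHomotopic} only use bialgebraic properties (coalgebra-morphism and coderivation identities for $\exp_\star$, cogenerator reconstruction) that persist in the curved, filtration-complete setting. Your explicit paths $H^1\circ F(t)$ and $F^1(t)\circ H$, the witnesses $\mu^1(t)=H^1\circ(\lambda^1(t)\star F(t))$ resp.\ $\lambda^1(t)\circ H$, and the observation that the curvature enters only through $H^1\circ Q'=Q''^1\circ H$ (resp.\ $Q\circ H=H\circ Q''$) without producing a $\widehat{R}^1_0$-term, are exactly the bookkeeping that makes this carry-over work.
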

\begin{proof}
The statements follow as in the flat case in Proposition~\ref{prop:CompofHomotopicHomotopic} and 
Proposition~\ref{prop:preCompofHomotopicHomotopic} since we used there 
only bialgebraic properties that still hold in our setting by the completeness of the filtration.
\end{proof}

\begin{corollary}
  Let $F^1,F'^1$ be two homotopic curved $L_\infty$-morphisms 
	from $(L,Q)$ to $(L',Q')$, and let 
	$H^1,H'^1$ be two homotopic curved $L_\infty$-morphisms from $(L',Q')$ to 
	$(L'',Q'')$, then $(H^1 \circ F)\sim (H'^1 \circ F')$. 
\end{corollary}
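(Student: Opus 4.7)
The plan is to chain the two halves of Proposition~\ref{prop:CompofCurvedHomotopicHomotopic} together with the transitivity of the homotopy equivalence relation on Maurer-Cartan elements (which was recorded just after \eqref{eq:EquivMCElements} in Section~\ref{sec:MCinLinftyAlgs}). Concretely, I would insert the intermediate curved $L_\infty$-morphism $H^1 \circ F'$ between $H^1 \circ F$ and $H'^1 \circ F'$ and argue in two steps.

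First, applying the second part of Proposition~\ref{prop:CompofCurvedHomotopicHomotopic} to the fixed curved $L_\infty$-morphism $H^1$ and the homotopic pair $F^1 \sim F'^1$, I would conclude
\begin{equation*}
  H^1 \circ F
  \sim
  H^1 \circ F'.
\end{equation*}
Second, applying the first part of the same proposition to the fixed curved $L_\infty$-morphism $F'^1$ and the homotopic pair $H^1 \sim H'^1$, I would conclude
\begin{equation*}
  H^1 \circ F'
  \sim
  H'^1 \circ F'.
\end{equation*}
Combining these two relations by transitivity of $\sim$ on $\Mc^1(\Hom(\Sym(L[1]),L''))$ gives the desired $H^1\circ F \sim H'^1 \circ F'$.

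There is essentially no obstacle here: both ingredients have just been proved, associativity of the composition was established in Proposition~\ref{prop:CompofCurved} so the bracketings I implicitly use are unambiguous, and transitivity of the homotopy equivalence on Maurer-Cartan elements in the (complete, filtered) curved convolution $L_\infty$-algebra $(\Hom(\Sym(L[1]),L''),\widehat{Q})$ from Proposition~\ref{prop:CurvedConvLinftyStructure} is available without any modification. Hence the corollary is a direct diagram-chase.
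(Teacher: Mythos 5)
Your argument is correct and is exactly the intended one: interpolate through $H^1\circ F'$, apply the two parts of Proposition~\ref{prop:CompofCurvedHomotopicHomotopic} (together with Proposition~\ref{prop:CompofCurved} for well-definedness of the compositions), and conclude by transitivity of $\sim$ in the curved convolution $L_\infty$-algebra. Only note that your labels are swapped: the step $H^1\circ F\sim H^1\circ F'$ uses the \emph{first} part (post-composition with fixed $H^1$), while $H^1\circ F'\sim H'^1\circ F'$ uses the \emph{second} part (pre-composition with fixed $F'^1$).
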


Let us now address the first difficulty from Remark~\ref{rem:Difficulties}, 
namely the fact that we do not have an obvious notion of 
curved $L_\infty$-quasi-isomorphism since curved $L_\infty$-algebras 
$(L,Q)$ do not induce a cochain complex $(L,Q^1_1)$. 
Recall that we showed for the flat case in 
Lemma~\ref{lemma:HomEquvsQuasiIso} that there exists an 
$L_\infty$-quasi-isomorphism between two flat $L_\infty$-algebras 
$(L,Q)$ and $(L',Q')$ if and only if there are $L_\infty$-morphisms 
$F\colon (L,Q)\to (L',Q')$ and $G\colon (L',Q')\to (L,Q)$ 
such that $F\circ G\sim \id_{L'}$ and $G\circ F\sim \id_{L}$. 
Thus we define:

\begin{definition}[Curved $L_\infty$-quasi-isomorphism]
\label{def:CurvedQuis}
Let $F^1 \colon (L,Q) \rightsquigarrow (L',Q')$ be a curved 
$L_\infty$-morphism between curved $L_\infty$-algebras. One 
calls $F$ \emph{curved $L_\infty$-quasi-isomorphism} if and only if 
there exists a curved $L_\infty$-morphism $G^1\colon (L',Q')
\rightsquigarrow (L,Q)$ 
such that $F^1\circ G\sim \id_{L'}$ and $G^1\circ F\sim \id_{L}$. In 
this case, $F^1$ and $G^1$ are said to be quasi-inverse to each other.
\end{definition}

Concerning the behaviour on Maurer-Cartan elements we directly get 
analogue of Theorem~\ref{thm:lquisbijondef}.

\begin{corollary}
  Let $F^1 \colon (L,Q) \rightsquigarrow (L',Q')$ be a 
	curved $L_\infty$-quasi-isomorphism. Then the induced map 
	$F_\MC \colon\Def(L)\rightarrow \Def(L')$ is a bijection.
\end{corollary}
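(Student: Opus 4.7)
The plan is to assemble this corollary directly from three results proved just above in the excerpt, without any fresh computation: the compatibility of composition with the Maurer--Cartan map (Proposition~\ref{prop:CompofCurved}), the fact that homotopic curved $L_\infty$-morphisms induce the same map on equivalence classes of Maurer--Cartan elements (Proposition~\ref{prop:HomCurvedMorphonDef}), and the observation that $\id_L$ induces $\id_{\Def(L)}$.

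First I would invoke Definition~\ref{def:CurvedQuis} to produce a curved $L_\infty$-quasi-inverse $G^1 \colon (L',Q') \rightsquigarrow (L,Q)$ with $F \circ G \sim \id_{L'}$ and $G \circ F \sim \id_L$. By Proposition~\ref{prop:ProvertiesofCurvedLinftyMorph}, both $F^1$ and $G^1$ yield well-defined maps $F_\MC\colon \Def(L)\rightarrow \Def(L')$ and $G_\MC\colon \Def(L')\rightarrow \Def(L)$.

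Next I would apply Proposition~\ref{prop:CompofCurved}, which gives $(F\circ G)_\MC = F_\MC \circ G_\MC$ and $(G\circ F)_\MC = G_\MC \circ F_\MC$. Combining this with Proposition~\ref{prop:HomCurvedMorphonDef}, the homotopy equivalences $F\circ G \sim \id_{L'}$ and $G\circ F\sim \id_L$ yield
\begin{equation*}
  F_\MC \circ G_\MC
  =
  (\id_{L'})_\MC
  \quad \text{ and } \quad
  G_\MC \circ F_\MC
  =
  (\id_L)_\MC.
\end{equation*}

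The final step is to note $(\id_L)_\MC = \id_{\Def(L)}$ (and similarly for $L'$): the identity is a strict morphism with $(\id_L)^1_0 = 0$, $(\id_L)^1_1 = \id$, and all higher Taylor components zero, so by \eqref{eq:CurvdMorphMC} one has $(\id_L)_\MC([\pi]) = [\pi]$. This gives $F_\MC \circ G_\MC = \id_{\Def(L')}$ and $G_\MC \circ F_\MC = \id_{\Def(L)}$, exhibiting $G_\MC$ as a two-sided inverse of $F_\MC$, hence $F_\MC$ is a bijection. There is no serious obstacle here; the work was already done in establishing Propositions~\ref{prop:CompofCurved} and~\ref{prop:HomCurvedMorphonDef}.
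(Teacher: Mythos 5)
Your proposal is correct and follows essentially the same route as the paper's proof: take the quasi-inverse $G$, use Proposition~\ref{prop:HomCurvedMorphonDef} to see that $(F\circ G)_\MC$ and $(G\circ F)_\MC$ are the identities on the deformation spaces, and use Proposition~\ref{prop:CompofCurved} to split these as $F_\MC\circ G_\MC$ and $G_\MC\circ F_\MC$. Your only addition is the explicit check that $(\id_L)_\MC=\id_{\Def(L)}$, which the paper leaves implicit.
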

\begin{proof}
Let $G^1$ be the quasi-inverse of $F^1$, then we know by 
Proposition~\ref{prop:HomCurvedMorphonDef} that 
$(F\circ G)_\MC$ and $(G \circ F)_\MC$ are the identity maps 
on $\Def(L')$ resp. $\Def(L)$. But 
Proposition~\ref{prop:CompofCurved} implies that 
$(F\circ G)_\MC = F_\MC \circ G_\MC$ and 
$(G\circ F)_\MC = G_\MC \circ F_\MC$, which implies 
that $F_\MC$ and $G_\MC$ are bijections.
\end{proof}

We can also generalize the twisting procedure for 
$L_\infty$-morphisms from the strict case in 
Proposition~\ref{prop:twistinglinftymorphisms} to our curved 
$L_\infty$-morphisms, which is straightforward:

\begin{proposition}
\label{prop:TwistingofCurvedMorph}
  Let $F^1 \colon (L,Q)\rightsquigarrow (L',Q')$ be a curved 
	$L_\infty$-morphism with $F_0^1=\alpha$ and let 
	$\pi \in \mathcal{F}^1L^1$. Then the curved $L_\infty$-morphism
	\begin{equation}
	  (F^\pi)^1 
		=
		\tw_{- \widetilde{F}^1(\exp \pi)} \circ 
		F \circ \tw_{\pi} \colon 
		(L,Q^\pi)
		\longrightsquigarrow{1}
		(L',Q'^{\widetilde{F}^1(\exp \pi)})
	\end{equation}
	has structure maps $(F^\pi)^1_i = \sum_{k=0}^\infty \frac{1}{k!} F^1_{k+i}(
	\pi^{\vee k}\vee \argument )$ for $i>0$ and $(F^\pi)^1_0=\alpha$.
\end{proposition}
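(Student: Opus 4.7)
The plan is to apply the composition law of Proposition~\ref{prop:CompofCurved} twice, using that the outer and inner twist factors have identity coalgebra part so that the whole composite reduces to a known twist of the strict morphism $\widetilde F$. First I would check domain--codomain compatibility via Proposition~\ref{prop:ProvertiesofCurvedLinftyMorph}: since $\tw_\beta\colon(L,R)\rightsquigarrow(L,R^{-\beta})$, $\tw_\pi$ maps $(L,Q^\pi)$ to $(L,(Q^\pi)^{-\pi})=(L,Q)$, and $\tw_{-\widetilde F^1(\exp\pi)}$ maps $(L',Q')$ to $(L',(Q')^{\widetilde F^1(\exp\pi)})$, so the three factors compose to a curved $L_\infty$-morphism with the advertised source and target.

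For the main computation I would first handle the inner composition $F\circ\tw_\pi$ using Proposition~\ref{prop:CompofCurved}. Since $\tw_\pi$ has curvature component $\pi$ and $\widetilde{\tw_\pi}=\id$ on $\Sym(L[1])$, the formula $\widetilde{G\circ F}=\widetilde G^\alpha\circ\widetilde F$ applied with outer factor $F$ and inner factor $\tw_\pi$ gives $\widetilde{F\circ\tw_\pi}=\widetilde F^\pi$, the strict twist of $\widetilde F\colon(L,Q)\to(L',Q')$ by $\pi$ in the sense of Proposition~\ref{prop:twistinglinftymorphisms}. The curvature component is $(F\circ\tw_\pi)_0^1=F^1(\exp\pi)=\alpha+\widetilde F^1(\cc{\exp}(\pi))=\alpha+\widetilde F^1(\exp\pi)$, the last equality because $\widetilde F(1)=1$ forces $\widetilde F^1(1)=0$. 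By Corollary~\ref{cor:CompwithTwist} this identifies
\begin{equation*}
  F\circ\tw_\pi
	\;=\;
	\tw_{\alpha+\widetilde F^1(\exp\pi)}\circ\widetilde F^\pi.
\end{equation*}

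Composing with the outer factor and using the additivity $\tw_\gamma\circ\tw_\delta=\tw_{\gamma+\delta}$ of Corollary~\ref{cor:CompwithTwist} then collapses the two adjacent twists:
\begin{equation*}
  (F^\pi)^1
	\;=\;
	\tw_{-\widetilde F^1(\exp\pi)}\circ\tw_{\alpha+\widetilde F^1(\exp\pi)}\circ\widetilde F^\pi
	\;=\;
	\tw_\alpha\circ\widetilde F^\pi.
\end{equation*}
Applying the identification $F^1=\tw_\alpha\circ\widetilde F$ of Corollary~\ref{cor:CompwithTwist} to the right-hand side reads off $(F^\pi)_0^1=\alpha$ together with $(F^\pi)_i^1=(\widetilde F^\pi)_i^1$ for $i>0$. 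The explicit twist formula~\eqref{eq:twisteslinftymorphism} for the strict morphism $\widetilde F$, combined with $\widetilde F_j^1=F_j^1$ for $j\geq 1$, then produces $(F^\pi)_i^1=\sum_{k\geq 0}\frac{1}{k!}F^1_{k+i}(\pi^{\vee k}\vee\argument)$ for $i>0$, as claimed.

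Conceptually there is no real obstacle; the only thing to watch is the bookkeeping between $\exp\pi$ and $\cc{\exp}(\pi)$ when reading off curvature components, together with convergence of the infinite series, both handled automatically by $\pi\in\mathcal F^1L^1$ and the completeness of the descending filtrations in force throughout Section~\ref{subsec:CurvedLinftyAlgs}.
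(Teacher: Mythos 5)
Your proposal is correct and takes essentially the same route as the paper's proof: both regard $(F^\pi)^1$ as the composite curved morphism through the chain $(L,Q^\pi)\rightsquigarrow(L,Q)\rightsquigarrow(L',Q')\rightsquigarrow(L',Q'^{\widetilde{F}^1(\exp\pi)})$ and read off the structure maps from the composition law of Proposition~\ref{prop:CompofCurved}. Your intermediate step of collapsing the twists via Corollary~\ref{cor:CompwithTwist} to obtain $(F^\pi)^1=\tw_\alpha\circ\widetilde{F}^\pi$ merely spells out what the paper records more tersely as $(F^\pi)^1_0=-\widetilde{F}^1(\exp\pi)+\alpha+\widetilde{F}^1(\exp\pi)=\alpha$ and $(F^\pi)^1_i=(F\circ\tw_\pi)^1_i=(\widetilde{F}^\pi)^1_i$ for $i>0$.
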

\begin{proof}
At first, it is clear that the composition $(F^\pi)^1 
=	\tw_{- F^1(\exp \pi)} \circ 	F \circ \tw_{\pi}$ is a curved 
$L_\infty$-morphism
\begin{equation*}
  (L,Q^\pi)
	\stackrel{\tw_\pi}{\longrightsquigarrow{1}}
	(L,Q)
	\stackrel{F}{\longrightsquigarrow{1}}
  (L',Q')
	\stackrel{\tw_{- \widetilde{F}^1(\exp \pi)}}{\longrightsquigarrow{1}}
	(L',Q'^{\widetilde{F}^1(\exp\pi)}).
\end{equation*}
For the structure maps, we see that $(F^\pi)^1_0 = -\widetilde{F}^1(\exp \pi) 
+ \alpha + \widetilde{F}^1(\exp \pi) = \alpha$ and for $i>0$ we get 
\begin{equation*}
  (F^\pi)^1_i 
	= 
	(F \circ \tw_\pi)^1_i 
	=
	(\widetilde{F}^\pi)^1_i,
\end{equation*}
which implies in particular 
$\widetilde{F^\pi}= \widetilde{F}^\pi$.
\end{proof}

\begin{remark}
We collect a few immediate observations:
\begin{remarklist}
\item We directly see that $(F^0)^1 = F^1$, i.e. twisting by zero does 
      not change the morphism. 
\item The above definition for the twisted curved $L_\infty$-morphism 
      $(F^\pi)^1$ recovers the results for the strict case from 
			Proposition~\ref{prop:twistinglinftymorphisms}: If $F^1$ is strict, 
			i.e. $F^1_0=0$ and $F = \widetilde{F}$, then we get indeed $F^\pi 
			= \widetilde{F}^\pi= \widetilde{F^\pi}$. 
\item If $F^1_0 \neq 0 $ and if we twist with a Maurer-Cartan element, 
      then the image does no longer have to be a flat $L_\infty$-algebra, 
			since $\widetilde{F}^1(\exp\pi)$ does not have 
			to be a Maurer-Cartan element. 
\item The twist of a curved $L_\infty$-morphism is always a curved 
      $L_\infty$-morphism, i.e. we can not obtain a strict one by twisting.
\end{remarklist}
\end{remark}

Finally, note that the only thing we did not generalize yet are the 
results from Section~\ref{sec:HomEquivTwistedMorphisms}, where 
we showed that strict $L_\infty$-morphisms between flat 
$L_\infty$-algebras that are twisted with equivalent Maurer-Cartan 
elements are homotopic. Note that in Proposition~\ref{prop:TwistMorphHomEqu} 
and Proposition~\ref{prop:TwistedLinftyIsom} we considered only the case of 
a Maurer-Cartan element $\pi$ equivalent to zero, which was sufficient for 
the flat case since it implies the statement for all generic 
equivalent Maurer-Cartan elements $\pi$ and $\pi'$.
\begin{itemize}
	\item Let us consider at first the context of strict $L_\infty$-morphisms 
	      of curved $L_\infty$-algebras. Let $\pi \sim \pi'$ be two equivalent 
				curved Maurer-Cartan elements in $(L,Q)$. Then we know from 
				Lemma~\ref{lemma:twistCodiff} that 
				$(L,Q^\pi)$ and $(L,Q^{\pi'})$ are flat $L_\infty$-algebras, 
				and from Lemma~\ref{lemma:CorrespCurvedandFlatMC} that 
				$\pi'-\pi \sim 0$ in $(L,Q^\pi)$. Thus we can directly apply 
				Proposition~\ref{prop:TwistedLinftyIsom}.
	\item In the context of curved $L_\infty$-morphisms it is more 
	      difficult to generalize these results since we saw in 
				Proposition~\ref{prop:TwistingofCurvedMorph} that if we twist 
				a curved $L_\infty$-morphism with a Maurer-Cartan element, then 
				the codomain does not need to be a flat $L_\infty$-algebra. 
				More explicitly, let $F^1 \colon (L,Q) \rightsquigarrow (L',Q')$ 
				be a curved $L_\infty$-morphism and let $\pi,\pi'\in \mathcal{F}^1L^1$ 
				be two equivalent Maurer-Cartan elements. Then we end up with 
				the following diagram:
				\begin{equation}
				\label{eq:CurvedTwistDiagram}
          \begin{tikzpicture}
		       \node (LQpi) at (0,0) {{$(L,Q^\pi)$ }};
	         \node (LprimeFpi) at (6,0)
						{{$(L',Q'^{\widetilde{F}^1(\exp\pi)})$ }};
		       	
		     \node  (LQpiprime) at (0,-2.5)
		     {{$(L,Q^{\pi'})$}};
         \node (LprimeFpiprime) at (6,-2.5)
		     {{$(L',Q'^{\widetilde{F}^1(\exp\pi')})$}};
				\draw [->] (LQpi) -- (LQpiprime)  node[midway,left]
					{$\Phi_1$};
				\draw  [->,line join=round,
           decorate, decoration={
           zigzag,
           segment length=4,
           amplitude=.9,post=lineto,
           post length=2pt
         }] (LprimeFpi) -- (LprimeFpiprime)  node[midway,right]
				{$\tw_\alpha \circ \Phi'_1 \circ \tw_{-\alpha}$};
	       \draw [->,line join=round,
           decorate, decoration={
           zigzag,
           segment length=4,
           amplitude=.9,post=lineto,
           post length=2pt
         }]  (LQpiprime) -- (LprimeFpiprime)  node[midway,above] 
				{$(F^{\pi'})^1$};
		    \draw [->,line join=round,
          decorate, decoration={
          zigzag,
          segment length=4,
          amplitude=.9,post=lineto,
          post length=2pt
         }]  (LQpi) -- (LprimeFpi)  node[midway,above] {$(F^\pi)^1$};
	     \end{tikzpicture} 
	    \end{equation}
	where $\Phi_1$ and $\Phi'_1$ are strict $L_\infty$-isomorphisms 
	by Lemma~\ref{lemma:PhitLinftyMorph}. Note that in particular 
	\begin{equation*}
	  \Phi_1' \colon 
		(L',Q'^{\alpha +\widetilde{F}^1(\exp\pi)})
		\longrightarrow
		(L',Q'^{\alpha +\widetilde{F}^1(\exp\pi')})
	\end{equation*}
	is a well-defined strict $L_\infty$-isomorphism
	since $\alpha +\widetilde{F}^1(\exp\pi) = F_\MC(\pi)$ and 
	$\alpha +\widetilde{F}^1(\exp\pi') = F_\MC(\pi')$ are equivalent 
	Maurer-Cartan elements.
\end{itemize}
 
We can show that Diagram~\eqref{eq:CurvedTwistDiagram} commutes up to 
homotopy:
\begin{proposition}
 The curved $L_\infty$-morphisms $(F^\pi)^1$ and $\tw_\alpha \circ (
 \Phi'_1)^{-1} \circ \tw_{-\alpha} \circ F^{\pi'} \circ \Phi_1$ 
 are homotopic.
\end{proposition}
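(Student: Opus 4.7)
The strategy is to reduce to the strict case between flat $L_\infty$-algebras, apply Proposition~\ref{prop:TwistedLinftyIsom}, and then post-compose with the twist $\tw_\alpha$.

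First I would use Corollary~\ref{cor:CompwithTwist} to rewrite both morphisms in terms of strict data. Since $(F^\pi)_0^1 = \alpha$ by Proposition~\ref{prop:TwistingofCurvedMorph}, one has $F^\pi = \tw_\alpha \circ \widetilde{F^\pi} = \tw_\alpha \circ \widetilde{F}^\pi$, and likewise $\tw_{-\alpha} \circ F^{\pi'} = \widetilde{F}^{\pi'}$. Thus it suffices to prove the homotopy $\tw_\alpha \circ \widetilde{F}^\pi \sim \tw_\alpha \circ (\Phi'_1)^{-1} \circ \widetilde{F}^{\pi'} \circ \Phi_1$.

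Next I would move into the flat setting. The strict morphism $\widetilde{F}^\pi\colon (L,Q^\pi) \to (L',Q'^{F_\MC(\pi)})$ goes between flat $L_\infty$-algebras, because $\pi$ is a curved Maurer-Cartan element of $(L,Q)$ and $F_\MC(\pi)$ is one of $(L',Q')$ by Proposition~\ref{prop:ProvertiesofCurvedLinftyMorph}. By Lemma~\ref{lemma:CorrespCurvedandFlatMC}, $\pi'-\pi$ is a flat Maurer-Cartan element in $(L,Q^\pi)$ homotopy equivalent to $0$ via the path $\pi(t)-\pi$ together with $\lambda(t)$. I can then apply Proposition~\ref{prop:TwistedLinftyIsom} to $\widetilde{F}^\pi$ with this homotopy: using the functorial identity $(\widetilde{F}^\pi)^{\pi'-\pi} = \widetilde{F}^{\pi+(\pi'-\pi)} = \widetilde{F}^{\pi'}$ of the twisting procedure, one obtains
\begin{equation*}
  \widetilde{F}^\pi \sim (\Phi'_1)^{-1} \circ \widetilde{F}^{\pi'} \circ \Phi_1.
\end{equation*}
A short computation, using that $\widetilde{F}^\pi$ is strict, shows the push-forward of the source path is $(\widetilde{F}^\pi)_\MC(\pi(t)-\pi) = F_\MC(\pi(t)) - F_\MC(\pi)$ with companion $\widetilde{F}^{\pi,1}(\lambda(t)\vee \exp(\pi(t)-\pi))$, so by uniqueness of the defining ODE in Lemma~\ref{lem:MorphPhitTwistedLinftyAlgs} the isomorphisms $\Phi_1,\Phi'_1$ produced by Proposition~\ref{prop:TwistedLinftyIsom} agree with those in the diagram~\eqref{eq:CurvedTwistDiagram}.

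Finally I would post-compose with the curved morphism $\tw_\alpha\colon (L',Q'^{F_\MC(\pi)}) \rightsquigarrow (L',Q'^{\widetilde{F}^1(\exp\pi)})$ and invoke Proposition~\ref{prop:CompofCurvedHomotopicHomotopic}\,(i). This is legitimate because for strict morphisms between flat $L_\infty$-algebras (i.e.\ with vanishing $0$-component) the convolution product $F^1 \star G^1$ computed via $\Delta_\sh$ coincides with the one via $\cc{\Delta_\sh}$, so the flat homotopy constructed in the previous step is simultaneously a homotopy in the curved convolution $L_\infty$-algebra from $(L,Q^\pi)$ to $(L',Q'^{\widetilde{F}^1(\exp\pi)})$. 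The resulting homotopy
\begin{equation*}
  F^\pi = \tw_\alpha \circ \widetilde{F}^\pi \sim \tw_\alpha \circ (\Phi'_1)^{-1} \circ \widetilde{F}^{\pi'} \circ \Phi_1 = \tw_\alpha \circ (\Phi'_1)^{-1} \circ \tw_{-\alpha} \circ F^{\pi'} \circ \Phi_1
\end{equation*}
is exactly the claim.

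The main obstacle is the identification in the third step: showing that the $\Phi'_1$ obtained by applying Proposition~\ref{prop:TwistedLinftyIsom} to $\widetilde{F}^\pi$ (which builds $\Phi'_t$ from the image data pushed through $\widetilde{F}^\pi$) is literally the same strict isomorphism as the $\Phi'_1$ appearing in the diagram~\eqref{eq:CurvedTwistDiagram} (constructed from the intrinsic homotopy of $F_\MC(\pi)\sim F_\MC(\pi')$ in $(L',Q')$, translated to the flat setting). This comes down to checking that both pairs of $(\pi'(t),\lambda'(t))$ on the $L'$-side agree and then invoking uniqueness in Lemma~\ref{lem:MorphPhitTwistedLinftyAlgs}; all other steps are purely formal manipulations of the twisting and composition formulas already developed.
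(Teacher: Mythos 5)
Your proposal is correct and takes essentially the same route as the paper: strip off $\tw_\alpha$ via Corollary~\ref{cor:CompwithTwist} so that the claim reduces to the strict flat statement $\widetilde{F}^\pi\sim(\Phi'_1)^{-1}\circ\widetilde{F}^{\pi'}\circ\Phi_1$, which is exactly Proposition~\ref{prop:TwistedLinftyIsom} applied to $\widetilde{F}^\pi$ with $\pi'-\pi\sim 0$ in $(L,Q^\pi)$. The only cosmetic difference is that the paper packages your post-composition step as Proposition~\ref{prop:RelationHomEquiv} (whose proof is precisely composition with $\tw_\beta$) and takes for granted the identification of $\Phi'_1$ with the isomorphism produced inside Proposition~\ref{prop:TwistedLinftyIsom}, which you verify explicitly.
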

\begin{proof}
We know from Corollary~\ref{cor:CompwithTwist} that we have 
\begin{align*}
  (F^\pi)^1
	=
	\tw_\alpha \circ \widetilde{F^\pi},
	\quad \text{ and } \quad 
	(F^{\pi'})^1
	=
	\tw_\alpha \circ \widetilde{F^{\pi'}}
\end{align*}
which implies
\begin{align*}
  \tw_\alpha \circ 
	( \Phi'_1)^{-1} \circ \tw_{-\alpha} \circ F^{\pi'} \circ \Phi_1
	& = 
	 \tw_\alpha \circ 
	( \Phi'_1)^{-1} \circ \widetilde{F^{\pi'}} \circ \Phi_1.
\end{align*}
Moreover, Proposition~\ref{prop:RelationHomEquiv} implies 
\begin{align*}
  \left((F^\pi)^1 
	\sim 
	\tw_\alpha \circ 
	( \Phi'_1)^{-1} \circ \tw_{-\alpha} \circ F^{\pi'} \circ \Phi_1 \right) 
	\quad \quad \Longleftrightarrow \quad \quad 
	\left( \widetilde{F^\pi}
	\sim 
	( \Phi'_1)^{-1} \circ \widetilde{F^{\pi'}} \circ \Phi_1 \right).
\end{align*}
But the right hand side is exactly 
Proposition~\ref{prop:TwistedLinftyIsom} since 
we know $\widetilde{F^\pi} = \widetilde{F}^\pi$ and 
$\widetilde{F^{\pi'}} = \widetilde{F}^{\pi'}$.
\end{proof}

%
%
%

%
\section{$L_\infty$-modules}
\label{sec:LinftyModules}

After having introduced the notion of $L_\infty$-algebras, we want 
to understand the basics of their representation theory, i.e. 
the notion of $L_\infty$-modules, 
see e.g. \cite{dolgushev:2005b,dolgushev:2006a,esposito.dekleijn:2021a}. 
For example, they play an important role in the formality theorem for 
Hochschild chains~\cite{dolgushev:2006a,shoikhet:2003a}. 

\subsection{Definition and First Properties}

\begin{definition}[$L_\infty$-module]
  Let $(L,Q)$ be a (curved) $L_\infty$-algebra. An \emph{$L_\infty$-module} over $(L,Q)$ 
	is a graded vector space $M$ over $\mathbb{K}$ equipped with a codifferential 
	$\phi$ of degree $1$ on the 
	cofreely cogenerated comodule $\Sym(L[1]) \otimes M$ over $(\Sym(L[1]),Q)$. 
\end{definition}

On the total space of the comodule $\Sym(L[1]) \otimes M$ one has the coaction
\begin{equation}
  a \colon 
	\Sym(L[1]) \otimes M 
	\longrightarrow 
	\Sym(L[1]) \otimes (\Sym(L[1]) \otimes M)
\end{equation}
that is defined by 
\begin{align*}
	  a(\gamma_1 \vee \cdots \vee \gamma_n \otimes m)
		& = 
		(\Delta_\sh \otimes \id) (\gamma_1 \vee \cdots \vee \gamma_n \otimes m) \\
		& =
		\sum_{k=0}^{n} \sum_{\sigma \in Sh(k,n-k)} 
		\epsilon(\sigma) \gamma_{\sigma(1)}\vee \cdots \vee \gamma_{\sigma(k)} 
		\otimes \left( \gamma_{\sigma(k+1)}\vee \cdots \vee \gamma_{\sigma(n)} 
		\otimes m\right),
\end{align*}
where $\gamma_i \in L$ and $m\in M$ are homogeneous. For example, we have 
$a(1 \otimes m)=1 \otimes 1 \otimes m$ and $a(\gamma \otimes m) = 
\gamma \otimes 1\otimes m + 1 \otimes\gamma \otimes m$. 
By the coassociativity of $\Delta_\sh$ it follows directly  
$(\id \otimes a)a(X)= (\Delta_\sh \otimes \id)a(X)$ for all 
$X\in \Sym(L[1]) \otimes M$, thus the well-definedness of the coaction. 

\begin{remark}
  Note that in the flat setting one can restrict to modules over 
	$\cc{\Sym}(L[1])$ instead of $\Sym(L[1])$. In this case 
	the sum in the definition of 
	$a$ starts at $k=1$. For example, one has then in the flat case 
	$\ker a = M$, which is not true in the curved setting, see \cite[Section~2.2]{dolgushev:2006a}.
\end{remark}

By definition, an $L_\infty$-module structure is a codifferential 
$\phi$ of $\Sym(L[1]) \otimes M$, which means
\begin{equation}
  a \circ \phi(X)
	=
	(\id \otimes \phi)(a X) + (Q \otimes \id\otimes \id) (aX).
\end{equation}
In terms of homogeneous elements this takes the form
\begin{align*}
  \begin{split}
	  \phi(\gamma_1 &\vee \cdots \vee \gamma_n \otimes m)
		= 
		Q(\gamma_1 \vee \cdots \vee \gamma_n ) \otimes m \\
		& + 
		\sum_{k=0}^n\sum_{\sigma\in Sh(k,n-k)} 
		(-1)^{\sum_{i=1}^k \abs{\gamma_{\sigma(i)}}} \epsilon(\sigma)
		\gamma_{\sigma(1)} \vee \cdots \vee \gamma_{\sigma(k)} \vee 
		\phi_{n-k}^1(\gamma_{\sigma(k+1)} \vee \cdots \vee \gamma_{\sigma(n)}\otimes m)
	\end{split}
\end{align*}
with 
\begin{equation}
  \phi_n^1 = \phi_n \colon 
	\Sym^n (L[1]) \otimes M 
	\longrightarrow 
	M[1], 
	\quad n \geq 0.
\end{equation}
In other words,
\begin{equation}
  \label{eq:ComodStructure}
  \phi
	=
	Q \otimes \id + (\id \otimes \phi^1) \circ (\Delta_\sh \otimes \id).
\end{equation}
For example, this implies
\begin{equation*}
  \phi(\gamma \otimes m)
	=
	Q(\gamma)\otimes m + 1 \otimes \phi_1^1(\gamma \otimes m) 
	+ (-1)^{\abs{\gamma}} \gamma \otimes \phi_0^1(m).
\end{equation*}
The condition $\phi^2=0$ translates to quadratic relations, see e.g.
\cite[Formula~(2.25)]{dolgushev:2006a}. We compute: 
\begin{align}
\label{eq:phisquared}
 \begin{split}
  \phi \circ \phi
	& = 
	(Q\otimes \phi^1)\circ (\Delta_\sh \otimes \id)
	+ 
	(\id \otimes \phi^1)\circ (Q \otimes \id \otimes \id + 
	\id \otimes Q \otimes \id)\circ (\Delta_\sh\otimes\id) \\
	& \quad 
	+ 	Q\circ Q \otimes \id+(\id \otimes \phi^1)\circ (\Delta_\sh\otimes \id )\circ 
	(\id \otimes \phi^1)\circ(\Delta_\sh\otimes\id)\\
	& =
	(\id \otimes \phi^1\circ (Q\otimes \id+ 
	(\id \otimes\phi^1)\circ(\Delta_\sh\otimes\id))) \circ (\Delta_\sh \otimes \id).
	\end{split}
\end{align}
In particular, this gives
\begin{equation*}
  \phi_0(1 \otimes \phi_0(1\otimes m))
	+ \phi_1((Q_0(1)\otimes m)
	=
	0
\end{equation*} 
and 
\begin{align*}
  \phi_0(1 \otimes \phi_1(\gamma \otimes m))
	+  \phi_1(Q_1(\gamma)\otimes m)
	+ \phi_2(Q_0(1)\vee \gamma \otimes m)
  +(-1)^{\abs{\gamma}} \phi_1(\gamma \otimes \phi_0(1 \otimes m))
	=
	0
\end{align*}
for $\gamma \in L[1]^{\abs{\gamma}}$. In the flat setting, i.e. if $Q_0(1)=0$, the map 
$\phi_0$ is indeed a differential on $M$ and $\phi_1$ is closed with 
respect to the induced differential on $\Hom(L[1]\otimes M ,M)$.

\begin{example}[DGLA module]
  \label{ex:dglamodule}
  The simplest example is as expected a DG module $(M,b,\rho)$ over a DGLA 
	$(\liealg{g},\D,[\argument{,}\argument])$. The only non-vanishing 
	structure maps of $\phi$ are $\phi_0 = -b$ and $\phi_1(\gamma\otimes m) 
	= -(-1)^{\abs{\gamma}}\rho(\gamma)m$, where $\rho$ is the action of 
	$\liealg{g}$ on $M$.
\end{example}

\begin{example}
  Another basic example comes from $L_\infty$-morphisms. Let $F\colon (L,Q) 
	\rightarrow (L',Q')$ be an $L_\infty$-morphism, then it induces on $L'$ 
	the structure of an $L_\infty$-module over $L$ via
	\begin{equation*}
	  \phi_k( \gamma_1 \vee \cdots \vee \gamma_k \otimes m)
		=
		Q'^1(F(\gamma_1 \vee \cdots \vee \gamma_k)\vee m)
	\end{equation*}
	for $\gamma_i \in L, m\in L'$.
\end{example}

As in the case of $L_\infty$-algebras, $L_\infty$-module structures can 
be interpreted as Maurer-Cartan elements in a convolution-like algebra:

\begin{proposition}
  Let $M$ be a graded vector space over $\mathbb{K}$ and $(L,Q)$ an 
	$L_\infty$-algebra. Then the vector space $
	\liealg{h}_{M,L}=	\Hom(\Sym(L[1])\otimes M, M)$
	of graded linear maps can be equipped with the structure of a 
	DGLA with differential
	\begin{equation}
	  \del \phi
		=
		-(-1)^{\abs{\phi}} \phi \circ (Q\otimes \id),
	\end{equation}
	where $\abs{\argument}$ denotes the degree in $\Hom(\Sym(L[1])\otimes M, M)$,
	and bracket induced by the product
	\begin{equation}
	  \label{eq:ProductLinftyModDGLA}
	  \phi \bullet \psi
		=
		\phi \circ (\id \otimes \psi)\circ (\Delta_\sh \otimes \id).
	\end{equation}
	The Maurer-Cartan elements $\phi$ of this DGLA can be identified with 
	$L_\infty$-module structures $\widehat{\phi}$ on $M$.
\end{proposition}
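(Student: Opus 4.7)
The plan is to identify $\liealg{h}_{M,L}$ with the space of coderivations of the cofreely cogenerated comodule $\Sym(L[1])\otimes M$ over the coalgebra $(\Sym(L[1]),\Delta_\sh)$, and to transport the natural DGLA structure from that space.

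First, to every $\phi\in\liealg{h}_{M,L}$ I would associate the comodule coderivation $\phi^\flat := (\id\otimes\phi)\circ(\Delta_\sh\otimes\id)\colon \Sym(L[1])\otimes M\to \Sym(L[1])\otimes M$. By an argument analogous to Corollary~\ref{cor:coderivationcogeneratorscocom}, the map $\phi\mapsto\phi^\flat$ is a bijection onto the space $\mathcal{D}$ of comodule coderivations, with inverse given by postcomposition with $\epsilon\otimes\id$. A direct computation using the coassociativity of $\Delta_\sh$ and the counit axiom then yields $\phi^\flat\circ\psi^\flat = (\phi\bullet\psi)^\flat$, so $\bullet$ is a graded associative product on $\liealg{h}_{M,L}$ and its graded commutator $[\phi,\psi] := \phi\bullet\psi - (-1)^{\abs{\phi}\abs{\psi}}\psi\bullet\phi$ is a graded Lie bracket.

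Second, I would handle the differential $\del$ conceptually. Although $Q\otimes\id$ is itself not in $\mathcal{D}$ (it is a coderivation along $Q$, not along $0$), a short calculation exploiting that $Q$ is a coderivation of $\Delta_\sh$ shows that the inner derivation $[Q\otimes\id,\argument]$ preserves $\mathcal{D}$; transporting back via the bijection one verifies $[Q\otimes\id,\phi^\flat] = (\del\phi)^\flat$ precisely for the sign convention $\del\phi = -(-1)^{\abs{\phi}}\phi\circ(Q\otimes\id)$. Then $\del^2=0$ follows from $(Q\otimes\id)^2 = Q^2\otimes\id = 0$ together with the graded Jacobi identity, and the derivation property of $\del$ on $[\argument,\argument]$ is the standard consequence of Jacobi applied to an inner derivation.

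Third, for the Maurer-Cartan characterization, equation \eqref{eq:ComodStructure} shows that every codifferential of degree $+1$ on the comodule compatible with $Q$ is uniquely of the form $\widehat{\phi} = Q\otimes\id + \phi^\flat$ for some $\phi\in\liealg{h}_{M,L}$ of degree $+1$. Applying $\epsilon\otimes\id$ to the identity \eqref{eq:phisquared} for $\widehat{\phi}\circ\widehat{\phi}$ yields the projection equation $\phi\circ(Q\otimes\id) + \phi\bullet\phi = 0$; in the degree $+1$ convention, where $[\phi,\phi]=2\,\phi\bullet\phi$ and $\del\phi = \phi\circ(Q\otimes\id)$, this reads exactly as $\del\phi + \tfrac12[\phi,\phi] = 0$. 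The uniqueness half of the universal property shows conversely that the projected equation forces $\widehat{\phi}\circ\widehat{\phi}=0$, establishing the desired bijection between Maurer-Cartan elements and $L_\infty$-module structures.

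The main obstacle is the careful bookkeeping of Koszul signs, most notably verifying $\phi^\flat\circ\psi^\flat = (\phi\bullet\psi)^\flat$ with the correct sign convention and pinning down the sign in $\del$ so that the identification with $[Q\otimes\id,\argument]$ on $\mathcal{D}$ is exact; once these computations are in place, the associativity of $\bullet$, the derivation property of $\del$, and the Maurer-Cartan equivalence all follow structurally from standard graded Lie algebra arguments.
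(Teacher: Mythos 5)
Your proposal is correct, but it is organized differently from the paper's argument: the paper verifies the DGLA axioms by direct computation (associativity of $\bullet$ from coassociativity of $\Delta_\sh$, the Leibniz rule for $\del$ from the coderivation property of $Q$, and the Maurer--Cartan identification by the explicit formula \eqref{eq:phisquared} for $\phi\circ\phi$), whereas you transport the structure along the cofreeness bijection $\phi\mapsto\phi^\flat=(\id\otimes\phi)\circ(\Delta_\sh\otimes\id)$. Your route buys conceptual economy: once you know $\phi^\flat\circ\psi^\flat=(\phi\bullet\psi)^\flat$ (injectivity being clear from the counit), associativity of $\bullet$ is inherited from composition, $\del^2=0$ and the derivation property come for free from $[Q\otimes\id,\argument]$ being an inner derivation with $(Q\otimes\id)^2=Q^2\otimes\id=0$, and the sign in $\del$ is forced by the identity $[Q\otimes\id,\phi^\flat]=(\del\phi)^\flat$, which indeed uses exactly the coderivation property of $Q$ that the paper uses in its Leibniz computation; for a degree-one $\phi$ one then gets $\widehat{\phi}\circ\widehat{\phi}=\bigl(\del\phi+\phi\bullet\phi\bigr)^\flat$, so injectivity of $\flat$ gives the Maurer--Cartan equivalence without even invoking the uniqueness argument you cite (though that argument is also valid, since $\widehat{\phi}^2$ is a coderivation along $Q^2=0$, hence a comodule map determined by its projection). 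The price is that the sign bookkeeping you flag is concentrated in the two identities for $\flat$, which you should write out once; also note a terminological slip: the image of $\flat$ consists of comodule \emph{morphisms} (equivalently coderivations along the zero coderivation), not coderivations in a Leibniz sense, which is precisely why it is closed under composition, and this is the same identification the paper uses later in Proposition~\ref{prop:LinftymodMorphasMC}. With these small points made explicit, your proof is complete and slightly more structural than the one in the text.
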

\begin{proof}
The product \eqref{eq:ProductLinftyModDGLA} is associative and thus induces a 
Lie bracket:
\begin{align*}
  \phi \bullet (\psi \bullet \eta)
	& = 
	\phi \circ (\id \otimes (\psi \circ (\id \otimes \eta)\circ(\Delta_\sh\otimes\id)))
	\circ (\Delta_\sh \otimes\id) \\
	& =
	\phi \circ (\id \otimes \psi)\circ (\id \otimes \id \otimes \eta) 
	(\Delta_\sh^2 \otimes \id) 
	=
	(\phi \bullet \psi)\bullet \eta.
\end{align*}
The identity $\del^2=0$ is clear. The compatibility of $\del$ with the product 
follows from
\begin{align*}
  \del (\phi \bullet \psi)
	& =
	-(-1)^{\abs{\phi}+\abs{\psi}} 
	\phi \circ (\id \otimes \psi)\circ (\Delta_\sh \otimes \id)(Q \otimes \id) \\
	& =
	-(-1)^{\abs{\phi}+\abs{\psi}} 
	\phi \circ (\id \otimes \psi)\circ (Q\otimes \id \otimes \id + 
	\id \otimes Q \otimes \id)(\Delta_\sh \otimes \id) \\
	& =
	\del \phi \bullet \psi +
	(-1)^{\abs{\phi}} \phi \bullet \del \psi.
\end{align*}
Let now $\phi$ be a Maurer-Cartan element, i.e. 
\begin{align*}
  0
	=
	\del \phi + \phi \bullet \phi
	=
  \phi \circ (Q\otimes \id) + 
	\phi \circ (\id \otimes \phi)\circ (\Delta_\sh \otimes \id) .
\end{align*}
But by \eqref{eq:phisquared} this is equivalent to $\phi$ inducing 
a codifferential $\widehat{\phi}= 
Q \otimes \id + (\id \otimes \phi) \circ (\Delta_\sh \otimes \id)$.
\end{proof}

As usual, if $L$ and $M$ are equipped with filtrations, we require the maps 
in $\liealg{h}_{M,L}$ to be compatible with the filtrations. 

\begin{remark}
  \label{rem:FiltrationModConvDGLA} 
	For a flat $L_\infty$-algebra $(L,Q)$ the DGLA 
$\liealg{h}_{M,L}$ itself has again complete descending filtration
\begin{align}
  \begin{split}
  \liealg{h}_{M,L}
	=
	& \mathcal{F}^0\liealg{h}_{M,L}
	\supset
	\mathcal{F}^1\liealg{h}_{M,L}
	\supset \cdots \supset
	\mathcal{F}^k\liealg{h}_{M,L}
	\supset \cdots \\
	\mathcal{F}^k\liealg{h}_{M,L}
	& =
	\left\{ f \in \Hom(\Sym(L[1])\otimes M,M) \mid 
	f \at{\Sym^{<k}(L[1])\otimes M}=0\right\},
	\end{split}
\end{align}
analogously to \eqref{eq:FiltrationConvLieAlg}. In the curved setting 
we have to assume that $L$ and $M$ have complete descending filtrations 
and that $Q_0(1)\in \mathcal{F}^1L$. This induces a filtration on 
$\liealg{h}_{M,L}$ analogously to \eqref{eq:FiltrationConvLieAlg2}, taking 
the filtrations on $L$ and $M$ into account.
\end{remark}

This allows us to give another equivalent definition of $L_\infty$-modules, now 
in terms of $L_\infty$-morphisms:

\begin{lemma} 
\label{lemma:LinfModLinftMorph}
Let $\phi$ be a coderivation of degree one on the 
comodule $\Sym(L[1])\otimes M$ over $(\Sym(L[1]),Q)$ with 
Taylor coefficients $\phi_n$, $n \geq 0$. Then $\phi$ 
is a codifferential, i.e. defines an $L_\infty$-module structure on $M$, 
if and only if the maps $\Phi_k\colon \Sym (L[1])\to \End(M)$ defined by 
	\begin{align}\label{eq: Ten-Hom-adj}
	\Phi(X_1\vee\dots\vee X_k)(m):=\phi_k(X_1\vee\dots\vee X_k\tensor m)
    \end{align}	
	for $X_i \in L[1], m\in M$ 
are the Taylor coefficients of an $L_\infty$-morphism, where 
the $L_\infty$-structure on $\End(M)$ is the one induced by the Lie bracket 
from Example~\ref{ex:EndDGLA} and zero differential.
\end{lemma}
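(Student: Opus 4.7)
The plan is to reduce the claim to a tensor–hom adjunction argument that identifies the DGLA governing $L_\infty$-module structures with the convolution DGLA that governs $L_\infty$-morphisms into $\End(M)$. Concretely, I would work with the $\mathbb{K}$-linear bijection
\begin{equation*}
  \Hom(\Sym(L[1])\otimes M, M)
  \;\longleftrightarrow\;
  \Hom(\Sym(L[1]), \End(M)),
  \qquad
  \phi \longleftrightarrow \Phi,
\end{equation*}
given precisely by \eqref{eq: Ten-Hom-adj}. Since the correspondence is degree-preserving, $\phi$ has Taylor components $\phi_k\colon \Sym^k(L[1])\otimes M\to M$ of degree one iff $\Phi$ has Taylor components $\Phi_k\colon \Sym^k(L[1])\to \End(M)$ of degree one, so what remains is to match the two Maurer–Cartan conditions.

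The first step is to equip $\End(M)$ with its DGLA structure from Example~\ref{ex:EndDGLA}, here with zero differential, and to consider the associated convolution DGLA on $\Hom(\Sym(L[1]),\End(M))$ whose Maurer-Cartan elements are the $L_\infty$-morphisms $(L,Q)\to \End(M)$ by Proposition~\ref{prop:ConvLinftyStructure}. I would then verify that the adjunction transports structures as follows. Under $\phi\leftrightarrow\Phi$, the differential $\del \phi = -(-1)^{\abs{\phi}}\phi\circ (Q\otimes \id)$ of the DGLA $\liealg{h}_{M,L}$ corresponds to $-(-1)^{\abs{\Phi}}\Phi\circ Q$, which is exactly $\widehat{Q}^1_1\Phi$ since $\End(M)$ carries the zero differential. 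Next, a direct computation shows that the product
\begin{equation*}
  \phi\bullet\psi
  =
  \phi\circ (\id\otimes \psi)\circ (\Delta_\sh \otimes \id)
\end{equation*}
corresponds under the adjunction to the convolution product
\begin{equation*}
  \Phi\star \Psi
  =
  \mu_{\End(M)}\circ (\Phi\otimes \Psi)\circ \Delta_\sh,
\end{equation*}
built from the associative composition in $\End(M)$. Taking graded commutators on both sides, the bracket $[\phi,\psi]_\bullet$ is identified with $\Phi\star\Psi - (-1)^{\abs{\Phi}\abs{\Psi}}\Psi\star\Phi$, which after the shift in the convolution $L_\infty$-algebra agrees with the sign-adjusted bracket $\widehat{Q}^1_2(\Phi\vee\Psi)$ appearing in Proposition~\ref{prop:ConvLinftyStructure}, since $Q'_2$ in the sense of Example~\ref{ex:DGLAasLinfty} recovers the Lie bracket of $\End(M)$ up to the usual sign $-(-1)^{\abs{\argument}}$.

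Having established that the adjunction is an isomorphism of DGLAs, the proof concludes in one line: $\phi$ is a codifferential, i.e.\ a Maurer-Cartan element of $(\liealg{h}_{M,L},\del,[\argument{,}\argument]_\bullet)$ by equation \eqref{eq:phisquared}, if and only if $\Phi$ is a Maurer-Cartan element of the convolution DGLA, if and only if $\Phi$ defines an $L_\infty$-morphism $(L,Q)\to (\End(M),0,[\argument{,}\argument])$ by Proposition~\ref{prop:ConvLinftyStructure}. I expect the main obstacle to be the bookkeeping of Koszul signs when moving $\psi$ past the second tensor factor in $\phi\bullet\psi$ and when comparing the sign conventions of Example~\ref{ex:DGLAasLinfty}, Example~\ref{ex:EndDGLA}, and the definition of $\widehat{Q}^1_n$ in Proposition~\ref{prop:ConvLinftyStructure}; once these are aligned, the structural argument is essentially immediate.
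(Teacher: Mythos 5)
Your proposal follows essentially the same route as the paper's proof: the graded tensor-hom adjunction $\Hom(\Sym(L[1])\otimes M,M)\simeq \Hom(\Sym(L[1]),\End(M))$ given by \eqref{eq: Ten-Hom-adj}, under which the DGLA structure governing $L_\infty$-module structures is identified (up to the expected signs) with the convolution structure governing $L_\infty$-morphisms into $(\End(M),0,[\argument{,}\argument])$, so that Maurer-Cartan elements, i.e.\ codifferentials on one side and $L_\infty$-morphisms on the other, correspond; you merely spell out the transport of the differential, product and bracket that the paper compresses into ``one can check''. The only point to watch is that, because $\phi_0$ and hence $\Phi_0$ are allowed, you should invoke the convolution structure on the full $\Hom(\Sym(L[1]),\End(M))$ (as in the curved/unreduced setting) rather than the reduced one of Proposition~\ref{prop:ConvLinftyStructure}, which is the same bookkeeping the paper itself glosses over.
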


\begin{proof}
First we notice that if we consider the \emph{graded} tensor-hom adjunction, we get 
	\begin{align*}
	\Hom(\Sym(L[1])\otimes M, M)\simeq \Hom (\Sym(L[1]), \End(M))
	\end{align*}
and the isomorphism is given by equation \eqref{eq: Ten-Hom-adj}. 
One can check that the induced $L_\infty$-algebra structure on $\Hom (\Sym(L[1]), \End(M))$ coincides up to a sign with the convolution algebra 
structure of the two $L_\infty$-algebras.  	
\end{proof}

Next we want to recall the definition of morphisms between $L_\infty$-modules.

\begin{definition}[Morphism of $L_\infty$-modules]
  Let $(L,Q)$ be an $L_\infty$-algebra with two $L_\infty$-modules 
	$(M,\phi^M)$ and $(N,\phi^N)$ over $L$. Then a morphism between $L_\infty$-modules 
	is a morphism $\kappa$ between the comodules $\Sym(L[1]) \otimes M$ 
	and $\Sym(L[1]) \otimes N$, i.e. 
	\begin{equation}
	  (\id \otimes \kappa) \circ a^M 
		=
		a^N \circ \kappa,
	\end{equation}
	such that $\kappa \circ \phi^M = \phi^N \circ \kappa$.
\end{definition}
One can again show that $\kappa$ is uniquely determined by its structure 
maps
\begin{equation}
  \kappa_n \colon
	\Sym^n (L[1]) \otimes M 
	\longrightarrow N
\end{equation}
via 
\begin{equation}
  \kappa(\gamma_1 \vee \cdots \vee \gamma_n \otimes m)
	=
	\sum_{k=0}^n \sum_{\sigma\in Sh(k,n-k)} \epsilon(\sigma)  
	\gamma_{\sigma(1)} \vee \cdots \vee \gamma_{\sigma(k)} \otimes 
	\kappa_{n-k}(\gamma_{\sigma(k+1)} \vee \cdots \vee \gamma_{\sigma(n)}\otimes m).
\end{equation}
The compatibility with the coderivations in the flat case implies in particular 
$\kappa_0 \circ \phi^M_0 = \phi^N_0 \circ \kappa_0$, see 
\cite[Formula~(2.29)]{dolgushev:2006a} for the general relation.

\begin{definition}
  A \emph{quasi-isomorphism} $\kappa$ of $L_\infty$-modules over flat 
	$L_\infty$-algebras is a morphism 
	with the zeroth structure map $\kappa_0$ being a quasi-isomorphism of 
	complexes $(M,\phi^M_0)$ and $(N,\phi^N_0)$.
\end{definition}

Moreover, as for $L_\infty$-morphisms between $L_\infty$-algebras 
in Proposition~\ref{prop:Linftyiso} we see that an 
$L_\infty$-module morphism is an isomorphism if and only if 
the first structure map $\kappa_0 \colon M\to N$ is an isomorphism:

\begin{proposition}
\label{prop:LinftyModIso}
  Let $\kappa \colon (M,\phi^M)\to(N,\phi^N)$ be a morphism of 
	$L_\infty$-modules over $(L,Q)$. Then $\kappa$ is an 
	$L_\infty$-module isomorphism if and only if $\kappa_0 \colon 
	M\to N$ is an isomorphism.
\end{proposition}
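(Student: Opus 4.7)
The plan is to mimic the proof of Proposition~\ref{prop:Linftyiso} for $L_\infty$-morphisms, adapting it to the comodule setting. The argument splits into the easy direction, a recursive construction of the inverse, and a final compatibility check.

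First I would handle the easy direction: if $\kappa$ is an $L_\infty$-module isomorphism with inverse $\lambda$, then applying the identity $\lambda\circ\kappa = \id_M$ (an equality of comodule morphisms on $\Sym(L[1])\otimes M$) and projecting to the $\Sym^0(L[1])\otimes M = M$ summand of the target over the input $1\otimes m$ yields $\lambda_0\circ\kappa_0 = \id_M$, and symmetrically $\kappa_0\circ\lambda_0 = \id_N$; hence $\kappa_0$ is a linear isomorphism.

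For the converse, assume $\kappa_0$ is an isomorphism. The key observation is that the $(n,n)$-component $\kappa^n_n\colon \Sym^n(L[1])\otimes M \to \Sym^n(L[1])\otimes N$ of the comodule morphism $\kappa$ is just $\id_{\Sym^n(L[1])}\otimes \kappa_0$, which is invertible. I would then construct a left inverse $\lambda$ to $\kappa$ recursively through its structure maps: set $\lambda_0 = \kappa_0^{-1}$ and, assuming $\lambda_0,\dots,\lambda_{n-1}$ have been defined, determine $\lambda_n\colon \Sym^n(L[1])\otimes N \to M$ from the requirement that the $M$-component of $\lambda\circ\kappa$ vanishes on $\Sym^n(L[1])\otimes M$, namely
\begin{equation*}
  \lambda_n \circ \kappa^n_n
  =
  -\sum_{i=0}^{n-1} \lambda_i \circ \kappa^i_n,
\end{equation*}
which is solvable for $\lambda_n$ by invertibility of $\kappa^n_n$. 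The resulting $\lambda$ is automatically a comodule morphism since it is built from its structure maps via the universal property of the cofreely cogenerated comodule, and by construction $\lambda\circ\kappa=\id_M$. The same recursive procedure applied to $\lambda$ (whose zeroth coefficient $\kappa_0^{-1}$ is invertible) produces a further coalgebra morphism $\kappa'$ with $\kappa'\circ\lambda = \id$, and the standard two-out-of-three trick $\kappa' = \kappa'\circ\lambda\circ\kappa = \kappa$ shows $\lambda$ is a genuine two-sided inverse.

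Finally, I would check that $\lambda$ is compatible with the codifferentials, i.e.\ that $\lambda\circ\phi^N = \phi^M\circ\lambda$. This is automatic from
\begin{equation*}
  \lambda\circ\phi^N
  =
  \lambda\circ\phi^N\circ\kappa\circ\lambda
  =
  \lambda\circ\kappa\circ\phi^M\circ\lambda
  =
  \phi^M\circ\lambda,
\end{equation*}
using that $\kappa$ is an $L_\infty$-module morphism. The only mildly technical step is the combinatorial bookkeeping showing that $\kappa^n_n = \id_{\Sym^n(L[1])}\otimes\kappa_0$; this follows from the explicit shuffle formula for $\kappa$ upon extracting the summand with $k=n$, and is where the cogenerator description of the comodule plays the same role as in Proposition~\ref{prop:Linftyiso}.
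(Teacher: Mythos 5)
Your proposal is correct and follows essentially the same route as the paper, which simply declares the proof to be completely analogous to the $L_\infty$-algebra case of Proposition~\ref{prop:Linftyiso}: the same easy direction, the same recursive construction of the inverse using the invertibility of $\kappa^n_n=\id_{\Sym^n(L[1])}\otimes\kappa_0$, the same two-out-of-three trick, and the same conjugation argument for compatibility with the codifferentials. The only cosmetic slip is calling $\kappa'$ a coalgebra morphism rather than a comodule morphism.
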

\begin{proof}
The proof is completely analogue to the $L_\infty$-algebra case in 
Proposition~\ref{prop:Linftyiso}.
\end{proof}

As expected, morphisms of $L_\infty$-modules can be again interpreted 
as Maurer-Cartan elements of a convolution DGLA, now even a commutative one, 
i.e. morphisms of $L_\infty$-modules are just closed elements of degree one 
in a cochain complex:

\begin{proposition}
  \label{prop:LinftymodMorphasMC}
  Let $(M,\phi^M)$ and $(N,\phi^N)$ be two $L_\infty$-modules over $(L,Q)$. 
	The vector space $\Hom(\Sym(L[1])\otimes M, N)[-1]$ can be equipped with 
	the structure of an abelian DGLA $(\Hom(\Sym(L[1])\otimes M, N)[-1], \del,0)$ 
	with differential
  \begin{equation}
    \del X
	  =
	  \phi^{N,1} \circ(\id\otimes X)\circ (\Delta_\sh \otimes \id)
	  +(-1)^{\abs{X}} X \circ \phi^M
  \end{equation}
	and zero bracket. Maurer-Cartan elements, i.e. closed elements 
	$\kappa^1$ of degree one, 
  can be identified with morphisms of $L_\infty$-modules via
  \begin{equation}
	  \label{eq:MorphofLinftyMod}
    \kappa
		=
		(\id \otimes \kappa^1)\circ(\Delta_\sh\otimes \id) \colon 
	  \Sym(L[1])\otimes M
	  \longrightarrow
	  \Sym(L[1])\otimes N.
  \end{equation}
\end{proposition}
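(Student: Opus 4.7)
My plan is to verify two things. First, that $(\Hom(\Sym(L[1])\otimes M, N)[-1], \del, 0)$ is an abelian DGLA: since the bracket vanishes, the Leibniz and Jacobi axioms hold automatically and only $\del^2 = 0$ must be checked; the Maurer-Cartan equation then collapses to $\del \kappa^1 = 0$ for degree-one elements $\kappa^1$ (i.e.\ degree-zero $\mathbb{K}$-linear maps $\Sym(L[1])\otimes M \to N$). Second, that such closed degree-one elements correspond bijectively via $\kappa = (\id \otimes \kappa^1)(\Delta_\sh \otimes \id_M)$ to $L_\infty$-module morphisms.

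For $\del^2 = 0$, I would expand $\del\circ\del$ using the defining formula. The resulting terms split into pieces involving $(\phi^N)^2$ and $(\phi^M)^2$, both of which vanish since $(M,\phi^M)$ and $(N,\phi^N)$ are $L_\infty$-modules, plus cross terms that rearrange to zero using the coderivation property $(\Delta_\sh \otimes \id)\phi^M = (Q \otimes \id \otimes \id + \id \otimes \phi^M)(\Delta_\sh \otimes \id)$ and its counterpart for $\phi^N$. This is the standard verification that the internal Hom of two differential graded objects inherits a square-zero differential.

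The bijection rests on a cofreeness property for the comodule $\Sym(L[1])\otimes N$: setting $\pr_N = \epsilon \otimes \id_N \colon \Sym(L[1])\otimes N \to N$, the assignment $\kappa \mapsto \kappa^1 := \pr_N \circ \kappa$ gives a bijection between degree-zero comodule morphisms $\kappa \colon \Sym(L[1])\otimes M \to \Sym(L[1])\otimes N$ and degree-zero linear maps $\Sym(L[1])\otimes M \to N$, with inverse given by the stated formula. Well-definedness of the inverse as a comodule morphism follows from the coassociativity of $\Delta_\sh$, and the bijection follows from the left counit identity $(\epsilon \otimes \id) \circ \Delta_\sh = \id_{\Sym(L[1])}$, which both shows $\pr_N \circ (\id \otimes \kappa^1)(\Delta_\sh \otimes \id_M) = \kappa^1$ and allows one to reconstruct any comodule morphism from its $N$-projection.

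It remains to show that a comodule morphism $\kappa$ commutes with the codifferentials if and only if $\del \kappa^1 = 0$. The crucial observation is that, although $\phi^M$ and $\phi^N$ are only coderivations along the coderivation $Q$ of $\Sym(L[1])$, the difference $\phi^N\kappa - \kappa\phi^M$ is a genuine comodule morphism: applying $(\Delta_\sh \otimes \id_N)$ to it and invoking the coderivation properties of $\phi^M$ and $\phi^N$ together with the comodule-morphism property of $\kappa$, the two $Q$-contributions cancel, leaving the comodule-morphism identity $(\Delta_\sh \otimes \id_N)(\phi^N\kappa - \kappa\phi^M) = (\id \otimes (\phi^N\kappa - \kappa\phi^M))(\Delta_\sh \otimes \id_M)$. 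By the cofreeness just established, $\phi^N\kappa = \kappa\phi^M$ is therefore equivalent to $\pr_N(\phi^N\kappa - \kappa\phi^M) = 0$, and a direct computation using $\phi^N = Q \otimes \id + (\id \otimes \phi^{N,1})(\Delta_\sh \otimes \id_N)$ together with $(\epsilon \otimes \id)\Delta_\sh = \id$ identifies this projection with $\del \kappa^1$. The main technical hurdle will be careful Koszul-sign bookkeeping under the degree-shift convention $[-1]$; the conceptual core, however, is the cancellation of $Q$-terms that elevates $\phi^N\kappa - \kappa\phi^M$ from a coderivation to a genuine comodule morphism, mirroring the convolution-algebra identification of $L_\infty$-morphisms with Maurer-Cartan elements earlier in the paper.
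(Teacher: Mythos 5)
Your proposal is correct and follows essentially the same route as the paper: check $\del^2=0$ directly from the codifferential identities, verify that $\kappa=(\id\otimes\kappa^1)\circ(\Delta_\sh\otimes\id)$ is a comodule morphism via coassociativity, and identify $\del\kappa^1=0$ with $\phi^N\circ\kappa=\kappa\circ\phi^M$. The paper states this last equivalence without elaboration, and your justification — the $Q$-contributions cancel so that $\phi^N\kappa-\kappa\phi^M$ is itself colinear, hence by cofreeness it vanishes iff its projection to $N$, which is exactly $\del\kappa^1$, vanishes — is precisely the argument the paper leaves implicit.
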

\begin{proof}
The fact that $\del^2=0$ follows since $\phi^M$ and $\phi^N$ are 
codifferentials. Explicitly, we have 
\begin{align*}
  \del^2 X
	& =
	\phi^{N,1}\circ (\id \otimes \phi^{N,1} \circ(\id\otimes X)\circ (\Delta_\sh \otimes \id))\circ (\Delta_\sh \otimes \id ) 
	+(-1)^{\abs{X}+\abs{X}+1}X \circ \phi^M\circ\phi^M \\
	& \quad 
	+(-1)^{\abs{X}+1} \phi^{N,1} \circ (\id\otimes X)\circ(\Delta_\sh\otimes\id)\circ 
	\phi^M 
	\\
	& \quad 
	+ (-1)^{\abs{X}} \phi^{N,1} \circ (\id \otimes X\circ\phi^M)\circ(\Delta_\sh\otimes 
	\id) \\
	& =
	\phi^{N,1}\circ (\id \otimes \phi^{N,1} \circ(\id\otimes X)\circ (\Delta_\sh \otimes \id))\circ (\Delta_\sh \otimes \id ) \\
	& \quad 
	+(-1)^{\abs{X}+1} \phi^{N,1} \circ (\id\otimes X)\circ(\id \otimes \phi^M + Q \otimes\id\otimes\id)\circ(\Delta_\sh\otimes\id)
	\\
	& \quad 
	+ (-1)^{\abs{X}} \phi^{N,1} \circ (\id \otimes X\circ\phi^M)\circ(\Delta_\sh\otimes 
	\id) \\
	& =
	\phi^{N,1}\circ (\id \otimes \phi^{N,1} )\circ (\id \otimes \id 
	\otimes X) \circ (\Delta_\sh \otimes\id\otimes \id)\circ (\Delta_\sh\otimes\id)
	+\phi^{N,1}\circ (Q\otimes X)\circ(\Delta_\sh\otimes\id) \\
	& =
	\phi^{N,1}\circ (\id \otimes \phi^{N,1} ) \circ (\Delta_\sh \otimes\id)\circ 
	(\id 	\otimes X)\circ(\Delta_\sh\otimes\id)
	+\phi^{N,1}\circ (Q\otimes X)\circ(\Delta_\sh\otimes\id)
	=
	0.
\end{align*}
Moreover, \eqref{eq:MorphofLinftyMod} yields a comodule morphism since
\begin{align*}
  (\id \otimes \kappa) \circ a
	& =
	(\id \otimes( (\id \otimes \kappa^1)\circ(\Delta_\sh \otimes \id)))\circ (\Delta_\sh \otimes\id)
	=
	(\id \otimes \id \otimes \kappa^1)\circ(\id \otimes \Delta_\sh \otimes \id)
	\circ (\Delta_\sh \otimes \id) \\
	& =
	(\Delta_\sh \otimes \id) \circ (\id \otimes \kappa^1)\circ(\Delta_\sh \otimes \id)
	=
	a \circ \kappa.
\end{align*} 
If $\kappa^1$ is of degree one in the shifted complex, then the induced comodule 
morphism is of degree zero and $\del \kappa^1=0$ is equivalent to 
$\phi^N\circ \kappa=\kappa\circ \phi^M$.
\end{proof}

\subsection{Homotopy Equivalence of Morphisms of $L_\infty$-Modules}

This allows us to define a homotopy equivalence relation between morphisms of 
$L_\infty$-modules. Analogously to the case of $L_\infty$-algebras, it is 
defined via the gauge equivalence in the convolution DGLA.
Since the convolution DGLA has only the 
differential as non-vanishing structure map, there is no filtration needed for the 
well-definedness of the gauge action.

\begin{definition}
  Let $(M,\phi^M)$ and $(N,\phi^N)$ be two $L_\infty$-modules over $(L,Q)$. 
	Two morphisms $\kappa_1,\kappa_2$ of $L_\infty$-modules are called 
	\emph{homotopic} if they are gauge equivalent Maurer-Cartan elements in 
	$(\Hom(\Sym(L[1])\otimes M, N)[-1], \del,0)$, i.e. if
	\begin{equation}
    \kappa_2
	  =
	  \kappa_1- \del h
  \end{equation}
  for some $h$ of degree zero. In other words, $[\kappa_1]=[\kappa_2]$ in 
	$\mathrm{H}^1(\Hom(\Sym(L[1])\otimes M,N)[-1],\del)$.
\end{definition}

The twisting procedures from Section~\ref{subsec:Twisting} can be 
transferred to $L_\infty$-modules, see \cite[Proposition~3]{dolgushev:2006a} for the 
flat case and \cite{esposito.dekleijn:2021a} for the curved setting.

\begin{proposition}
  \label{prop:twistlinftymodules}
  Let $(L,Q)$ be an $L_\infty$-algebra with $L_\infty$-module $(M,\phi)$ 
	and let $\pi \in \Mc^1(L)$. 
	\begin{propositionlist}
	  \item For any $X \in \Sym(L[1]) \otimes M$
		      \begin{equation}
					  a(\exp(\pi \vee)X)
						=
						\exp(X\vee) \otimes \exp(\pi\vee)(aX).
					\end{equation}
					
		\item The map
		      \begin{equation}
					  \phi^\pi = \exp(-\pi \vee)\phi \exp(\pi \vee)
					\end{equation}
					is a coderivation of $\Sym(L[1])\otimes M$ along $Q^\pi$ squaring to zero.
					
		\item If $\kappa\colon M \rightarrow N$ is an $L_\infty$-morphism of 
		      $L_\infty$-modules over $L$, then 
					\begin{equation}
					  \kappa ^\pi 
						=
						\exp(-\pi \vee) \kappa \exp(\pi \vee)
					\end{equation}
					is an $L_\infty$-morphism of the twisted modules over the twisted 
					$L_\infty$-algebra.
	\end{propositionlist}
\end{proposition}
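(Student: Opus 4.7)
The strategy is to reduce all three statements to the single bialgebra identity that $\Delta_\sh$ is an algebra morphism and that $\exp(\pi)$ is grouplike, i.e.\ $\Delta_\sh(\exp\pi)=\exp\pi\otimes\exp\pi$, exactly as in the proofs of Lemma~\ref{lemma:twistCodiff} and Proposition~\ref{prop:twistinglinftymorphisms}. Throughout we use the completeness of the filtration (with $\pi\in\mathcal{F}^1L^1$) to make all exponential series well-defined on $\Sym(L[1])\otimes M$.

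For (i), I unfold $a=\Delta_\sh\otimes\id$ and use that $\Delta_\sh$ is an algebra morphism for $\vee$, so for $X=y\otimes m$ one has
\begin{equation*}
  a(\exp(\pi\vee)X)
  =
  \Delta_\sh(\exp(\pi)\vee y)\otimes m
  =
  \bigl((\exp\pi\otimes\exp\pi)\vee\Delta_\sh(y)\bigr)\otimes m
  =
  (\exp(\pi\vee)\otimes\exp(\pi\vee))(aX),
\end{equation*}
where in the right-hand factor $\exp(\pi\vee)$ is interpreted as acting on the $\Sym(L[1])$-slot of $\Sym(L[1])\otimes M$. (I will also note that the statement as printed contains a harmless typo: $\exp(X\vee)$ should read $\exp(\pi\vee)$.)

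For (ii), I verify the coderivation property by the same telescoping calculation used to show that $Q^\pi$ is a coderivation on $\Sym(L[1])$. Using (i) on both sides,
\begin{align*}
  a\circ\phi^\pi
  &=
  (\exp(-\pi\vee)\otimes\exp(-\pi\vee))\circ a\circ\phi\circ\exp(\pi\vee)\\
  &=
  (\exp(-\pi\vee)\otimes\exp(-\pi\vee))\circ(Q\otimes\id\otimes\id+\id\otimes\phi)\circ(\exp(\pi\vee)\otimes\exp(\pi\vee))\circ a.
\end{align*}
Since the two tensor factors of $\exp(\pm\pi\vee)$ act independently, on the first summand the left slot conjugates $Q$ into $Q^\pi$ while the right slot gives the identity, and on the second summand the left slot cancels while the right slot conjugates $\phi$ into $\phi^\pi$. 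This yields $a\circ\phi^\pi=(Q^\pi\otimes\id\otimes\id+\id\otimes\phi^\pi)\circ a$, as required, and $(\phi^\pi)^2=\exp(-\pi\vee)\phi^2\exp(\pi\vee)=0$ is immediate.

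For (iii), exactly the same conjugation argument applies. Using (i), $\kappa^\pi$ is a comodule morphism because $(\id\otimes\kappa^\pi)\circ a^M$ and $a^N\circ\kappa^\pi$ both equal $(\exp(-\pi\vee)\otimes\exp(-\pi\vee))\circ(\id\otimes\kappa)\circ a^M\circ\exp(\pi\vee)$ after matching factors, and the intertwining property
\begin{equation*}
  \kappa^\pi\circ\phi^{M,\pi}
  =\exp(-\pi\vee)\kappa\phi^M\exp(\pi\vee)
  =\exp(-\pi\vee)\phi^N\kappa\exp(\pi\vee)
  =\phi^{N,\pi}\circ\kappa^\pi
\end{equation*}
follows from $\kappa\phi^M=\phi^N\kappa$. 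The only real content is (i); once that is in hand, (ii) and (iii) are purely formal. I expect the main bookkeeping difficulty to be ensuring that the two slots in the tensor product $\Sym(L[1])\otimes(\Sym(L[1])\otimes M)$ are handled separately throughout, so that the left slot always sees the conjugation producing $Q^\pi$ and the right slot always sees the cancellation or the conjugation producing $\phi^\pi$ resp.\ $\kappa^\pi$.
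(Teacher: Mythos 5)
Your proof is correct and is exactly the route the paper intends: the paper's own proof merely states that the first claim is clear and the other two follow directly, and your conjugation argument based on the grouplike property $\Delta_\sh(\exp\pi)=\exp\pi\otimes\exp\pi$ is the same mechanism used in Lemma~\ref{lemma:twistCodiff} and Proposition~\ref{prop:twistinglinftymorphisms}. Your remark that $\exp(X\vee)$ in the printed statement should read $\exp(\pi\vee)$ is also correct.
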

\begin{proof}
The first claim is clear. The second and the third claim follow directly. 
\end{proof}

The twisted structure maps are as expected given by
\begin{equation}
  \label{eq:twistenlmoduecoderivation}
  \phi^\pi_n(\gamma_1\vee \cdots \vee \gamma_n\otimes m)
	=
	\sum_{k=0}^\infty\frac{1}{k!}
	\phi_{n+k}(\pi\vee \cdots\vee \pi \vee \gamma_1 \vee 
	\cdots \vee \gamma_n \otimes m)
\end{equation}
and
\begin{equation}
  \label{eq:twistedlinftymodule}
  \kappa^\pi_n(\gamma_1\vee \cdots \vee \gamma_n\otimes m)
	=
	\sum_{k=0}^\infty \frac{1}{k!}
	\kappa_{n+k}(\pi\vee \cdots\vee \pi \vee \gamma_1 \vee 
	\cdots \vee \gamma_n \otimes m).
\end{equation}

We want to investigate the relation between equivalent Maurer-Cartan elements, 
i.e. equivalent $L_\infty$-module structures. To this end, assume that we 
have a complete descending filtration $\mathcal{F}^\bullet$ on the convolution DGLA $\liealg{h}_{M,L}=\Hom(\Sym(L[1])\otimes M, M)$, compare 
Remark~\ref{rem:FiltrationModConvDGLA}.

\begin{proposition}
  \label{prop:EquModulStructuresIsomorphic}
  Let $\phi_0,\phi_1 \in \mathcal{F}^1\liealg{h}_{M,L}^1$ be two equivalent 
	Maurer-Cartan elements with equivalence described by 
	\begin{equation}
	  \phi_1
		=
		e^{[h,\argument]}\phi_0 - \frac{e^{[h,\argument]}-\id}{[h,\argument]}\del h, 
		\quad \quad 
		h \in \mathcal{F}^1\liealg{h}_{M,L}^0.
	\end{equation}
	Then $A_h = (\id \otimes e^h)\circ(\Delta_\sh \otimes \id)$ is an $L_\infty$-isomorphism 
	between the $L_\infty$-modules $(\Sym(L[1])\otimes M, \widehat{\phi_0})$ 
	and $(\Sym(L[1])\otimes M, \widehat{\phi_1})$.
\end{proposition}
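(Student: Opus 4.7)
The strategy is to reformulate the gauge-equivalence hypothesis, which is phrased in terms of the Lie bracket on $\liealg{h}_{M,L}$, as an identity in the underlying differential graded associative algebra $(\liealg{h}_{M,L},\del,\bullet)$, and then to verify that this identity is precisely what is needed for the compatibility $A_h\circ\widehat{\phi_0}=\widehat{\phi_1}\circ A_h$ after projecting to $M$.

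First, I would verify directly from the coassociativity of $\Delta_\sh$ that $A_h$ is a morphism of comodules over $(\Sym(L[1]),\Delta_\sh)$: with Sweedler notation $A_h(\alpha\otimes m)=\alpha_{(1)}\otimes e^h(\alpha_{(2)}\otimes m)$, both $a\circ A_h$ and $(\id\otimes A_h)\circ a$ collapse to $\alpha_{(1)}\otimes\alpha_{(2)}\otimes e^h(\alpha_{(3)}\otimes m)$. Its zeroth structure map is $(A_h)_0(m)=e^h(1\otimes m)=m+\text{(higher filtration)}$, which is invertible by a geometric series thanks to completeness; hence by Proposition~\ref{prop:LinftyModIso} it will suffice to prove the intertwining $A_h\circ\widehat{\phi_0}=\widehat{\phi_1}\circ A_h$.

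The key algebraic observation is that, since $(\liealg{h}_{M,L},\bullet)$ is a complete filtered associative algebra with unit $\mathbf{1}(\alpha\otimes m)=\epsilon(\alpha)m$, the exponential $e^h=\sum_{n\ge 0}\frac{1}{n!}h^{\bullet n}$ is well-defined and invertible with inverse $e^{-h}$, and one has the standard identities
\begin{equation*}
  e^{[h,\argument]}\phi_0
  =
  e^h\bullet\phi_0\bullet e^{-h},
  \qquad
  \frac{e^{[h,\argument]}-\id}{[h,\argument]}\del h
  =
  \del(e^h)\bullet e^{-h},
\end{equation*}
both verified by a formal expansion in monomials in $h$ and $\del h$. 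Substituting these into the hypothesis and right-multiplying by $e^h$ yields the equivalent conjugation-type identity
\begin{equation*}
  \phi_1\bullet e^h
  =
  e^h\bullet\phi_0-\del(e^h).
\end{equation*}

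Finally, set $D\coloneqq A_h\circ\widehat{\phi_0}-\widehat{\phi_1}\circ A_h$. Since $A_h$ is a comodule morphism and both codifferentials share the common $Q\otimes\id$ part, a short computation gives $a\circ D=(\id\otimes D)\circ a$, so $D$ is a degree-one comodule morphism of $(\Sym(L[1])\otimes M,a)$ and is uniquely determined by its projection $\pr_M\circ D$, via the comodule analogue of Theorem~\ref{thm:CofreeCocomConilpotentCoalg}. Using $\pr_M\circ A_h=e^h$, $\pr_M\circ\widehat{\phi_i}=\phi_i$, and $\epsilon\circ Q=0$ from Proposition~\ref{prop:epsilonDzero}, a direct Sweedler computation yields $\pr_M\circ A_h\widehat{\phi_0}=e^h\bullet\phi_0-\del(e^h)$ and $\pr_M\circ\widehat{\phi_1}A_h=\phi_1\bullet e^h$, which coincide by the identity above; hence $D=0$. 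The main obstacle is the algebraic translation of the Lie-theoretic gauge formula into the associative identity $\phi_1\bullet e^h=e^h\bullet\phi_0-\del(e^h)$, since this requires careful tracking of noncommutative monomials in $h$ and $\del h$; everything else is bookkeeping with the comodule structure and the cofreeness argument.
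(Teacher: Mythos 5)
Your proposal is correct and follows essentially the same route as the paper: you translate the gauge formula into the associative identity $\phi_1\bullet e^h=e^h\bullet\phi_0-\del(e^h)$ via $e^{[h,\argument]}\phi_0=e^h\bullet\phi_0\bullet e^{-h}$ and $\del(e^h)\bullet e^{-h}=\frac{e^{[h,\argument]}-\id}{[h,\argument]}\del h$, which is exactly the computation in the paper's proof of Proposition~\ref{prop:EquModulStructuresIsomorphic}. The only differences are cosmetic: you verify the intertwining by projecting to the cogenerators $M$ and invoking cofreeness of the comodule (the paper expands the full composites directly), and you make explicit the invertibility of $(A_h)_0=e^h(1\otimes\argument)$ via Proposition~\ref{prop:LinftyModIso}, which the paper leaves implicit.
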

\begin{proof}
$A_h$ is a comodule morphism by 
Proposition~\ref{prop:LinftymodMorphasMC}.
Concerning the compatibility with the codifferentials we compute
\begin{align*}
  A_h \widehat{\phi_0}
	& =
	(\id \otimes e^h)\circ(\Delta_\sh \otimes \id)\circ (Q\otimes \id) 
	+ (\id \otimes e^h)\circ(\Delta_\sh \otimes \id)\circ(\id \otimes\phi_0)\circ 
	(\Delta_\sh \otimes \id)  \\
	& =
	(Q\otimes \id) \circ A_h + 
	(\id \otimes (-\del e^h + e^h\bullet \phi_0))\circ (\Delta_\sh \otimes \id) \\
	& =
	(Q\otimes \id) \circ A_h + 
	\left(\id \otimes \left(- \frac{e^{[h,\argument]}-\id}{[h,\argument]} 
	\del h \bullet e^h
	+ e^{[h,\argument]}\phi_0  \bullet e^h \right)\right) 	\circ(\Delta_\sh \otimes \id) \\
	& = 
	(Q\otimes \id) \circ A_h + \left(\id \otimes (\phi_1 \bullet e^h)\right)\circ
	(\Delta_\sh \otimes \id)
	=
	\widehat{\phi_1} A_h
\end{align*}
and the statement is shown.
\end{proof}

Note that an $L_\infty$-morphism $F\colon (L,Q)\rightarrow (L',Q')$ induces 
a map 
\begin{equation} 
\label{eq:PullBackLinftyMod}
  F^* \colon
	\liealg{h}_{M,L'} \ni
	\phi 
	\longmapsto 
	F^*\phi 
	=
	\phi \circ (F \otimes \id) \in
	\liealg{h}_{M,L}
\end{equation}
via the pull-back. It is compatible with the DGLA structure:

\begin{lemma}
\label{lem:PullbackLinftyMod}
  The map  $F^* \colon \liealg{h}_{M,L'} \rightarrow \liealg{h}_{M,L}$ is a 
	DGLA morphism.
\end{lemma}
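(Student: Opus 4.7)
The plan is to verify separately that $F^*$ commutes with the differentials and that it preserves the brackets, which here means preserving the non-associative product $\bullet$ (since the bracket is derived from $\bullet$ by graded commutator). Both verifications are direct computations that hinge on exactly two properties of the $L_\infty$-morphism $F \colon \cc{\Sym}(L[1])\to \cc{\Sym}(L'[1])$: the coalgebra morphism property $(F\otimes F)\circ \Delta_\sh = \Delta_\sh\circ F$, and the compatibility with the codifferentials $F\circ Q = Q'\circ F$. Note that since $F$ has degree zero, $|F^*\phi|=|\phi|$ and no sign bookkeeping is needed beyond the signs already present in the defining formulas.

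First I would check compatibility with the differentials. Unfolding the definitions,
\begin{equation*}
  \del_L(F^*\phi)
  = -(-1)^{|\phi|}\, \phi\circ (F\otimes \id_M)\circ (Q\otimes \id_M)
  = -(-1)^{|\phi|}\, \phi\circ (FQ\otimes \id_M),
\end{equation*}
and using $FQ = Q'F$ this equals $-(-1)^{|\phi|}\,\phi\circ (Q'\otimes \id_M)\circ (F\otimes \id_M)  = F^*(\del_{L'}\phi)$, as desired.

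Second I would check multiplicativity with respect to $\bullet$. Writing $\Delta_\sh^L$ for the shuffle coproduct on $\Sym(L[1])$ and similarly for $L'$, one computes
\begin{equation*}
  (F^*\phi)\bullet (F^*\psi)
  = \phi\circ (F\otimes \id_M)\circ \bigl(\id\otimes (\psi\circ (F\otimes \id_M))\bigr)\circ (\Delta_\sh^L\otimes \id_M)
  = \phi\circ (F\otimes \psi)\circ ((F\otimes F)\circ \Delta_\sh^L\otimes \id_M),
\end{equation*}
while
\begin{equation*}
  F^*(\phi\bullet \psi)
  = \phi\circ (\id\otimes \psi)\circ (\Delta_\sh^{L'}\otimes \id_M)\circ (F\otimes \id_M)
  = \phi\circ (F\otimes \psi)\circ (\Delta_\sh^{L'}\circ F\otimes \id_M).
\end{equation*}
The coalgebra morphism identity $(F\otimes F)\circ \Delta_\sh^L = \Delta_\sh^{L'}\circ F$ makes the two expressions coincide.

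There is no real obstacle: both checks are essentially bookkeeping that uses only the defining properties of an $L_\infty$-morphism, together with the fact that $F$ preserves degree. The mild subtlety is the Koszul sign that appears when $\id\otimes \psi$ is applied to $\Delta_\sh^L(X)\otimes m$, but since $F$ has degree zero this sign is the same on both sides and drops out, so multiplicativity of $F^*$ with respect to $\bullet$ (and hence with respect to the induced graded Lie bracket) is immediate. In particular, $F^*$ is automatically a morphism of graded Lie algebras.
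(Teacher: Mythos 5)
Your proof is correct and takes essentially the same route as the paper: compatibility with the differential follows from $F\circ Q = Q'\circ F$, and multiplicativity for $\bullet$ (hence for the bracket) follows from the coalgebra morphism property of $F$. Only a harmless notational slip: in your displays an extra $F$ appears — the middle factor should be $(\id\otimes F)\circ\Delta_\sh^L$ when you write $\phi\circ(F\otimes\psi)\circ(\cdots)$, or equivalently keep $(F\otimes F)\circ\Delta_\sh^L$ but precede it by $(\id\otimes\psi)$ — as written the compositions do not typecheck, though the argument itself is unaffected.
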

\begin{proof}
Concerning the differential we get 
\begin{align*}
  F^*\del' \phi
  =
	-(-1)^{\abs{\phi}} \phi \circ (Q'\otimes \id)\circ (F\otimes \id)
	=
	-(-1)^{\abs{\phi}} \phi \circ (F\otimes \id)\circ (Q\otimes \id)
	=
	\del F^*\phi.
\end{align*}
Similarly, we get for the product $\bullet$
\begin{align*}
  F^*(\phi \bullet \psi)
	& =
	F^*(\phi \circ (\id \otimes \psi)\circ (\Delta_\sh \otimes \id)) 
	= 
	(\phi \circ (\id \otimes \psi)\circ (F\otimes F \otimes \id) 	\circ 
	(\Delta_\sh \otimes \id)\\
	& = F^*\phi \bullet F^*\psi
\end{align*}
and the statement is shown.
\end{proof}

\begin{remark}
  Using Lemma~\ref{lemma:LinfModLinftMorph} we can identify 
	the DGLA $\liealg{h}_{M,L}=	\Hom(\Sym(L[1])\otimes M, M)$ with 
	$\Hom (\Sym(L[1]), \End(M)) $ equipped with the convolution 
	$L_\infty$-structure. Under this identification the above 
	pull-back is indeed just the usual pull-back. All the constructions below 
	work in both points of view, however, we formulate them in terms of 
	$\liealg{h}_{M,L}$.
\end{remark}

We want to investigate the relation between pull-backs with homotopic 
$L_\infty$-morphisms, where 
we assume for simplicity that our $L_\infty$-algebras are flat.  
Let therefore $F(t)$ and $\lambda(t)$ encode the homotopy equivalence between 
two $L_\infty$-morphisms from $L$ to $L'$ in the convolution $L_\infty$-algebra 
$(\Hom(\cc{\Sym}(L[1]),	L'),\widehat{Q})$ with 
\begin{equation*}
  \frac{\D}{\D t} F^1(t)
  =
	\widehat{Q}(\lambda^1(t) \vee \exp(F^1(t))),
\end{equation*}
compare Definition~\ref{def:homotopicMorph}.
This can be rewritten in the following way: 

\begin{proposition}
  \label{prop:CoalgHomotopyGamma}
	The map $\Gamma =	\lambda^1(t) \star F(t)= 
	\vee \circ (\lambda^1(t) \otimes F(t)) \circ 
	\Delta_\sh$ satisfies
	\begin{equation}
	  \frac{\D}{\D t} F(t)
		=
		Q' \circ \Gamma + \Gamma \circ Q
	\end{equation}
	and
	\begin{equation}
	  \label{eq:GammaCoderivation}
	  \Delta_\sh \circ \Gamma
		=
		(\Gamma \otimes F + F \otimes \Gamma) \circ \Delta_\sh.
	\end{equation}
\end{proposition}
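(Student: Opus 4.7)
The plan is to establish both identities by exploiting the universal property of the cofree cocommutative coalgebra: coderivations along a fixed coalgebra morphism are uniquely determined by their projection to the cogenerators (Theorem~\ref{thm:CofreeCocomConilpotentCoalg}(2) and Corollary~\ref{cor:coderivationcogeneratorscocom}).

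For equation \eqref{eq:GammaCoderivation}, I would first recall that $F(t)$ is, for each $t$, a coalgebra morphism, since by Proposition~\ref{prop:ConvLinftyStructure} the Maurer-Cartan elements of the convolution $L_\infty$-algebra are precisely the $L_\infty$-morphisms. Then Theorem~\ref{thm:CofreeCocomConilpotentCoalg}(2), applied with $\Phi=F(t)$ and $\D=\lambda^1(t)$, yields that the convolution $\lambda^1(t)\star F(t)$ is the unique coderivation along $F(t)$ whose projection to $L'[1]$ equals $\lambda^1(t)$. This is exactly the content of \eqref{eq:GammaCoderivation}.

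For the first equation, I would verify that both sides are coderivations along $F(t)$ with the same projection to $L'[1]$. On the left, differentiating $(F\otimes F)\circ\Delta_\sh = \Delta_\sh\circ F$ in $t$ shows that $\frac{\D}{\D t}F$ is a coderivation along $F$. On the right, using that $\Gamma$ is a coderivation along $F$ by (b) and that $Q'\circ F = F\circ Q$ (since $F(t)$ is an $L_\infty$-morphism), a short Koszul-sign computation of $\Delta_\sh\circ(Q'\Gamma + \Gamma Q)$ yields four ``diagonal'' terms $(Q'\Gamma+\Gamma Q)\otimes F + F\otimes(Q'\Gamma+\Gamma Q)$ plus four ``cross'' terms of the form $\pm(Q'F-FQ)\otimes\Gamma$ and $\pm\Gamma\otimes(Q'F-FQ)$, which vanish. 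To match projections, note that by (b) the projection of $\Gamma$ to $L'[1]$ equals $\lambda^1$, so $\pr\circ(Q'\Gamma+\Gamma Q) = Q'^1\circ\Gamma + \lambda^1\circ Q$. Expanding the right-hand side of the defining equation $\frac{\D}{\D t}F^1 = \widehat{Q}^1(\lambda^1\vee\exp(F^1))$ via the structure maps \eqref{eq:DiffonConvLinfty} and \eqref{eq:BracketonConvLinfty}, together with $|\lambda^1|=-1$ in the shifted grading, gives
\begin{equation*}
\widehat{Q}^1(\lambda^1\vee\exp(F^1)) = \lambda^1\circ Q + \sum_{n\geq 0}\frac{1}{n!}Q'^1_{n+1}\circ(\lambda^1\star (F^1)^{\star n}) = \lambda^1\circ Q + Q'^1\circ\Gamma,
\end{equation*}
so the projections agree. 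Uniqueness in Theorem~\ref{thm:CofreeCocomConilpotentCoalg}(2) then forces equality.

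The main technical obstacle is the sign bookkeeping in two places: verifying that the cross terms in $\Delta_\sh\circ(Q'\Gamma + \Gamma Q)$ really cancel (where the degrees $|Q'|=+1$, $|\Gamma|=-1$, $|F|=0$ must be tracked with Koszul signs), and unfolding $\widehat{Q}^1_1(\lambda^1)$ into $Q'^1_1\circ\lambda^1+\lambda^1\circ Q$ with the correct sign coming from $(-1)^{|\lambda^1|}=-1$. Both are routine once the grading conventions are fixed, but they are where the argument must be handled with care.
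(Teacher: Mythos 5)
Your proposal is correct and follows essentially the same route as the paper's proof: use Theorem~\ref{thm:CofreeCocomConilpotentCoalg} to see that $\Gamma$ is the coderivation along $F(t)$ cogenerated by $\lambda^1(t)$, note that both $\frac{\D}{\D t}F(t)$ and $Q'\circ\Gamma+\Gamma\circ Q$ are coderivations along $F(t)$, and then compare their projections to $L'[1]$ by expanding $\widehat{Q}^1(\lambda^1(t)\vee\exp(F^1(t)))$ into $Q'^1\circ\Gamma+\lambda^1\circ Q$. You merely spell out some sign and coderivation verifications (e.g.\ the cancellation of the cross terms using $Q'F=FQ$) that the paper asserts without detail, so there is nothing to correct.
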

\begin{proof}
By Theorem~\ref{thm:CofreeCocomConilpotentCoalg} we know that $\Gamma$ is a coderivation 
along $F$ and \eqref{eq:GammaCoderivation} is clear. 
Moreover, both $\frac{\D}{\D t}F$ and 
$(Q' \circ \Gamma + \Gamma \circ Q)$ are coderivations along $F$ and thus it 
suffices to show
\begin{equation*}
  \frac{\D }{\D t} F^1(t)
	=
	Q'^1 \circ \Gamma + \lambda^1 \circ Q.
\end{equation*}
But we know
\begin{align*}
  \frac{\D}{\D t} F^1(t)
  & =
	\widehat{Q}^1(\lambda^1(t) \vee \exp(F^1(t)) 
	= 
	\sum_{i=1}^\infty  \frac{1}{(i-1)!} Q'^1_i \circ (\lambda^1(t) \star 
	F^1(t)\star\cdots \star F^1(t))
	+ \lambda^1(t) \circ Q \\
	& =
	Q'^1 \circ \Gamma + \lambda^1 \circ Q
\end{align*}
and the proposition is shown.
\end{proof}

This allows us to show that homotopic $L_\infty$-morphisms give homotopic pull-back morphisms.

\begin{theorem}
  Let $F_0$ and $F_1$ be two homotopic $L_\infty$-morphisms 
	between the flat $L_\infty$-algebras $(L,Q)$ to $(L',Q')$ and assume that 
	$\liealg{h}_{M,L'}$ and $\liealg{h}_{M,L}$ are equipped with complete filtrations 
	as described in Remark~\ref{rem:FiltrationModConvDGLA}. 
	Then the induced DGLA morphisms 
	\begin{equation}
	  F_0^*, F_1^* \colon 
		\liealg{h}_{M,L'} 
		\longrightarrow
		\liealg{h}_{M,L}
	\end{equation}
	are homotopic.
\end{theorem}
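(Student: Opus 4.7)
The natural candidate for a homotopy between $F_0^*$ and $F_1^*$ is the path $G(t) := F(t)^*$, where $F(t)$ is the coalgebra morphism implementing the homotopy $F_0 \sim F_1$, built from $F^1(t) \in \widehat{(\Hom(\cc{\Sym}(L[1]),L')[1])^0[t]}$ and $\lambda^1(t) \in \widehat{(\Hom(\cc{\Sym}(L[1]),L')[1])^{-1}[t]}$. By Lemma~\ref{lem:PullbackLinftyMod} each $G(t) = F(t)^*$ is a DGLA morphism, hence a strict $L_\infty$-morphism $\liealg{h}_{M,L'} \to \liealg{h}_{M,L}$. Using the completeness assumption on the filtrations of $\liealg{h}_{M,L}$ and $\liealg{h}_{M,L'}$ from Remark~\ref{rem:FiltrationModConvDGLA}, together with the fact that $F^1(t)$ lies in the completion for the convolution between $L$ and $L'$, one checks that $G^1(t)$ lies in the analogously completed convolution space between $\liealg{h}_{M,L'}$ and $\liealg{h}_{M,L}$.

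The candidate for the homotopy parameter is obtained by replacing $F$ with the coderivation $\Gamma(t) := \lambda^1(t) \star F(t)$ of Proposition~\ref{prop:CoalgHomotopyGamma}: on generators set
\begin{equation*}
  \lambda_G^1(t)(\phi) := -(-1)^{|\phi|}\, \phi \circ \bigl(\Gamma(t) \otimes \id_M\bigr), \qquad \phi \in \liealg{h}_{M,L'},
\end{equation*}
and extend by zero on higher symmetric products. A filtration argument analogous to the one for $G(t)$ shows that $\lambda_G^1(t)$ sits in the appropriate $(-1)$-degree completed hom-space. Then $G(0) = F_0^*$, $G(1) = F_1^*$ and the whole question reduces to the verification of the homotopy equation
\begin{equation*}
  \frac{d}{dt} G^1(t) = \widehat{Q}_h^1\bigl(\lambda_G^1(t) \vee \exp G^1(t)\bigr),
\end{equation*}
where $\widehat{Q}_h$ is the codifferential of the convolution $L_\infty$-algebra between the DGLAs $\liealg{h}_{M,L'}$ and $\liealg{h}_{M,L}$; since the target is a DGLA, the right-hand side reduces to $\widehat{Q}_{h,1}^1(\lambda_G^1) + \widehat{Q}_{h,2}^1(\lambda_G^1 \vee G^1)$.

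Plugging in $\phi \in \liealg{h}_{M,L'}$ and using Proposition~\ref{prop:CoalgHomotopyGamma}, the left-hand side becomes
\begin{equation*}
  \frac{d}{dt} G^1(t)(\phi) \;=\; \phi \circ \bigl((Q'\Gamma + \Gamma Q) \otimes \id_M\bigr).
\end{equation*}
The right-hand side is expanded using the coderivation identity $\Delta_\sh \circ \Gamma = (\Gamma \otimes F + F \otimes \Gamma) \circ \Delta_\sh$ from the same proposition, together with the explicit formula for the associative product $\bullet$ in the DGLA $\liealg{h}_{M,L}$ from Proposition~\ref{prop:LinftymodMorphasMC}: the $\Gamma \otimes F$ contribution reproduces the bracket piece $\widehat{Q}_{h,2}^1(\lambda_G^1 \vee G^1)\phi$, while the $F \otimes \Gamma$ contribution, combined with the differentials $\del$ on $\liealg{h}_{M,L}$ and $\del'$ on $\liealg{h}_{M,L'}$ appearing in $\widehat{Q}_{h,1}^1(\lambda_G^1)\phi$, reconstructs $\phi \circ ((Q'\Gamma + \Gamma Q) \otimes \id_M)$, once one uses that $F(t)$ intertwines $Q$ and $Q'$.

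The main obstacle is the sign bookkeeping required to match both sides: one has to keep track of the Koszul signs produced by passing $\phi$ past $\Gamma \otimes \id_M$, of the $-(-1)^{|F|}$ entering the DGLA convolution bracket, and of the degree conventions $|\Gamma| = -1$, $|\lambda^1| = -1$ in $\Hom(\cc{\Sym}(L[1]),L'[1])$. Structurally, however, this is simply the contravariant analogue of the pre-composition argument in Proposition~\ref{prop:preCompofHomotopicHomotopic}, with $\Gamma \otimes \id_M$ and $F \otimes \id_M$ playing the roles of the homotopy and morphism there; no new algebraic input is needed beyond the coderivation identity of Proposition~\ref{prop:CoalgHomotopyGamma}.
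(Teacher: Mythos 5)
Your proposal takes exactly the route of the paper's proof: the path is $G(t)=F(t)^*$, the homotopy parameter is (up to a sign convention) $\Gamma_t^*\phi=(-1)^{\abs{\phi}}\,\phi\circ(\Gamma(t)\otimes\id_M)$ with $\Gamma(t)=\lambda^1(t)\star F(t)$ from Proposition~\ref{prop:CoalgHomotopyGamma}, and the verification is meant to rest on that proposition, the explicit formula for $\bullet$ from Proposition~\ref{prop:LinftymodMorphasMC}, Lemma~\ref{lem:PullbackLinftyMod}, and a completeness argument. So the architecture is the same; the issue is that your description of how the two sides of the homotopy equation match is not how the computation actually organizes itself, and as written one step would not go through. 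When you ``plug in $\phi\in\liealg{h}_{M,L'}$'', i.e. test the equation on $\Sym^1(\liealg{h}_{M,L'}[1])$, the bracket piece $\widehat{Q}^1_{2}(\lambda_G^1\vee G^1)(\phi)$ vanishes identically, since $(\lambda_G^1\star G^1)(\phi)=\vee\circ(\lambda_G^1\otimes G^1)\circ\cc{\Delta_\sh}(\phi)=0$ on a single cogenerator; there is nothing for a ``$\Gamma\otimes F$ contribution'' to reproduce there. On $\Sym^1$ the identity $\frac{\D}{\D t}G^1(t)(\phi)=\phi\circ((Q'\circ\Gamma+\Gamma\circ Q)\otimes\id_M)$ follows from the differential part of $\widehat{Q}^1_1(\lambda_G^1)$ alone, i.e. from $-\del\,\lambda_G^1(\phi)$ and $\lambda_G^1(\del'\phi)$ — pure sign bookkeeping, with no use of the coderivation identity and no use of the fact that $F(t)$ intertwines $Q$ and $Q'$ (the latter is never needed in the paper's proof).

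The genuinely nontrivial step, which your sketch omits, is the check of the homotopy equation on $\Sym^{\geq 2}(\liealg{h}_{M,L'}[1])$. There the left-hand side vanishes, because each $F(t)^*$ is a strict DGLA morphism and so $G^1(t)$, hence also $\frac{\D}{\D t}G^1(t)$, is concentrated in symmetric degree one; consequently one must show that the two quadratic contributions on the right, namely $\lambda_G^1\circ Q^1_{\liealg{h}_{M,L'},2}$ (coming from $\widehat{Q}^1_1(\lambda_G^1)$ evaluated on two cogenerators) and $\widehat{Q}^1_{2}(\lambda_G^1\vee G^1)=Q^1_{\liealg{h}_{M,L},2}\circ(\lambda_G^1\vee G^1)\circ\Delta_\sh$, cancel each other. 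This is precisely where the coderivation identity of Proposition~\ref{prop:CoalgHomotopyGamma} and the formula for $\bullet$ enter, via the derivation-type identity $\Gamma_t^*(\phi\bullet\psi)=\Gamma_t^*\phi\bullet F_t^*\psi-(-1)^{\abs{\phi}}F_t^*\phi\bullet\Gamma_t^*\psi$. With the verification reorganized in this way (linear part matches the derivative; quadratic parts cancel among themselves), and with your sign for $\lambda_G^1$ adjusted to a consistent convention, your argument becomes the paper's proof; as stated, the matching of terms you describe on a single $\phi$ does not hold and the $\Sym^{\geq 2}$ cancellation is missing.
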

\begin{proof}
Let $F(t)$ and $\lambda^1(t)$ encode the homotopy equivalence between 
$F_0$ and $F_1$. We show that 
\begin{equation*}
  \frac{\D}{\D t} F_t^*
	=
	\widehat{Q}^1(\Gamma_t^* \vee \exp F_t^*).
\end{equation*}
Here $\Gamma_t^* \phi = (-1)^{\abs{\phi}} \phi \circ (\Gamma(t) \otimes \id)$, 
$F_t^*\phi = \phi \circ (F(t)\otimes\id)$, where $\abs{\phi}$ denotes now 
the degree in $\liealg{h}_{M,L'}[1]$. Moreover,
$\widehat{Q}$ is the codifferential on the convolution DGLA
$\Sym(\Hom(\cc{\Sym}(\liealg{h}_{M,L'}[1]),\liealg{h}_{M,L})[1])$. It is given 
by 
\begin{equation*}
  \widehat{Q}_1^1 X
	=
	- \del X - (-1)^{\abs{X}} X \circ Q_{\liealg{h}_{M,L'}}
\end{equation*}
and the bracket is induced by the bracket on $\liealg{h}_{M,L}$.
Using Proposition~\ref{prop:CoalgHomotopyGamma}, we have 
for $\phi \in \liealg{h}_{M,L'}$
\begin{align*}
  \frac{\D}{\D t} F_t^*\phi
	=
	\frac{\D}{\D t}  \phi \circ (F(t) \otimes \id) 
	=
	\phi \circ
	((Q' \circ \Gamma + \Gamma \circ Q) \otimes\id).
\end{align*}
Moreover, we get
\begin{align*}
  (\widehat{Q}^1_1\Gamma_t^*)^1_1 \phi
	=
	- \del \Gamma_t^* \phi + \Gamma_t^*\circ (-\del')\phi  
	=
	\phi \circ ((\Gamma \circ Q + Q' \circ\Gamma )\otimes \id)  .
\end{align*}
The higher orders of $\widehat{Q}^1(\Gamma_t^* \vee \exp F_t^*)$ are given by 
\begin{align*}
  \Gamma_t^* \circ Q_{\liealg{h}_{M,L'},2}^1
	+ Q_{\liealg{h}_{M,L},2}^1 \circ (\Gamma_t^* \vee F_t^*) \circ \Delta_\sh.
\end{align*}
We can compute
\begin{align*}
  \Gamma_t^*(\phi \bullet \psi)
	& = 
	(-1)^{\abs{\phi} +\abs{\psi}+1} \phi \circ (\id \otimes \psi)\circ 
	(\Delta_\sh \otimes \id)\circ (\Gamma \otimes \id) \\
	& =
	(-1)^{\abs{\phi} +\abs{\psi}+1} \phi \circ (\id \otimes \psi)\circ 
	((\Gamma \otimes F + F \otimes \Gamma) \otimes \id)(\Delta_\sh \otimes \id) \\
	& =
	\Gamma_t^*\phi \bullet F_t^*\psi
	- (-1)^{\abs{\phi}} F_t^*\phi \bullet \Gamma_t^* \psi
\end{align*}
which yields
\begin{align*}
   \Gamma_t^* \circ Q_{\liealg{h}_{M,L'},2}^1 (\phi \vee \psi)
	& =
	-(-1)^{\abs{\phi}} 
	\Gamma_t^*(\phi \bullet \psi - (-1)^{(\abs{\phi}-1)(\abs{\psi}-1)} 
	\psi \bullet \phi) \\
	& =
	-(-1)^{\abs{\phi}} \left(\Gamma_t^*\phi \bullet F_t^*\psi
	- (-1)^{\abs{\phi}} F_t^*\phi \bullet \Gamma_t^* \psi\right) \\
	& \quad
	- (-1)^{(\abs{\phi} -1)\abs{\psi}}\left(
	\Gamma_t^*\psi \bullet F_t^*\phi
	- (-1)^{\abs{\psi}} F_t^*\psi \bullet \Gamma_t^* \phi
	\right) \\
	& =
	-Q_{\liealg{h}_{M,L},2}^1 (\Gamma_t^*\phi \vee F_t^*\psi)
	-(-1)^{\abs{\phi}\abs{\psi}}Q_{\liealg{h}_{M,L},2}^1 
	(\Gamma_t^*\psi \vee F_t^*\phi)  \\
	& =
	- Q_{\liealg{h}_{M,L},2}^1 \circ (\Gamma_t^* \vee F_t^*) \circ \Delta_\sh
	(\phi \vee \psi).
\end{align*}
Thus the higher terms cancel each other. We only have to check that 
$F_t^*$ and $\Gamma_t^*$ are in the completion of the polynomials in $t$ 
with respect to the filtration. But this is clear since $F(t)$ and 
$\Gamma(t)$ have these properties.
\end{proof}

With Proposition~\ref{prop:EquModulStructuresIsomorphic} this immediately 
yields:

\begin{corollary}
  The pull-backs of an $L_\infty$-module structure 
	$\phi \in \mathcal{F}^1\mathfrak{h}^1_{M,L'}$ on $M$ over $L'$ 
	via two homotopic $L_\infty$-morphisms $F_0,F_1\colon (L,Q) 
	\rightarrow (L',Q')$ yield two isomorphic $L_\infty$-module 
	structures on $M$ over $L$.
\end{corollary}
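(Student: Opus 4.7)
The plan is to chain together three results already in hand: the theorem immediately preceding this corollary, Proposition~\ref{prop:PropertiesofHomotopicMorphisms}(iii), and Proposition~\ref{prop:EquModulStructuresIsomorphic}. First, by Lemma~\ref{lem:PullbackLinftyMod} the pull-backs $F_0^*, F_1^* \colon \liealg{h}_{M,L'} \to \liealg{h}_{M,L}$ are DGLA morphisms, and in particular strict $L_\infty$-morphisms between the two convolution DGLAs. The theorem just proved asserts precisely that they are homotopic as Maurer-Cartan elements in the convolution $L_\infty$-algebra $\Hom(\cc{\Sym}(\liealg{h}_{M,L'}[1]), \liealg{h}_{M,L})$.

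Next I would invoke Proposition~\ref{prop:PropertiesofHomotopicMorphisms}(iii): homotopic $L_\infty$-morphisms induce the same map on equivalence classes of Maurer-Cartan elements, so $(F_0^*)_{\MC} = (F_1^*)_{\MC}$ as maps $\Def(\liealg{h}_{M,L'}) \to \Def(\liealg{h}_{M,L})$. Since each $F_i^*$ is strict, only its first Taylor coefficient is nonzero, and the general formula \eqref{eq:mclinftymorphism} degenerates to the pull-back itself. Applying this to the Maurer-Cartan element $\phi \in \mathcal{F}^1 \liealg{h}_{M,L'}^1$ that encodes the given module structure (via Lemma~\ref{lemma:LinfModLinftMorph}), we conclude that $F_0^*\phi$ and $F_1^*\phi$ are gauge equivalent Maurer-Cartan elements in $\liealg{h}_{M,L}$.

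Finally, Proposition~\ref{prop:EquModulStructuresIsomorphic} converts this gauge equivalence into an explicit $L_\infty$-module isomorphism: if $h \in \mathcal{F}^1\liealg{h}_{M,L}^0$ implements the equivalence between $F_0^*\phi$ and $F_1^*\phi$, then the comodule morphism $A_h = (\id \otimes e^h) \circ (\Delta_\sh \otimes \id)$ intertwines the codifferentials $\widehat{F_0^*\phi}$ and $\widehat{F_1^*\phi}$ on $\Sym(L[1]) \otimes M$, giving the desired isomorphism of $L_\infty$-modules over $(L,Q)$.

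The argument is essentially bookkeeping once the three components are assembled, so I do not expect any serious obstacle. The one point worth double-checking is that the filtration hypotheses required by Proposition~\ref{prop:EquModulStructuresIsomorphic} are preserved under pull-back, but this is automatic: each $F_i^*$ sends $\mathcal{F}^k\liealg{h}_{M,L'}$ into $\mathcal{F}^k\liealg{h}_{M,L}$ because $F_i$ is compatible with the filtration of $L$, and $\phi \in \mathcal{F}^1$ by hypothesis, so $F_i^*\phi \in \mathcal{F}^1\liealg{h}_{M,L}^1$ as needed.
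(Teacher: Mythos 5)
Your proposal is correct and follows essentially the same route as the paper: the preceding theorem gives $F_0^*\sim F_1^*$, homotopic morphisms induce the same map on equivalence classes of Maurer--Cartan elements (so $F_0^*\phi$ and $F_1^*\phi$ are equivalent, hence gauge equivalent in the complete filtered DGLA $\liealg{h}_{M,L}$), and Proposition~\ref{prop:EquModulStructuresIsomorphic} then produces the explicit module isomorphism $A_h$. Your closing check on the filtrations is the same implicit hypothesis the paper relies on, so there is nothing to add.
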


\begin{remark}
This statement is similar to the following theorem from algebraic topology, 
see e.g. \cite[Theorem~2.1]{cohen:script}: Let $p \colon E \rightarrow B$ be a fiber 
bundle and let $f_0, f_1 \colon X \rightarrow B$ be two homotopic maps. Then the pull-back bundles 
are isomorphic. It would be interesting to see if there are other statements that can be transferred 
to our algebraic setting.
\end{remark}

Analogously to our computations in Section~\ref{sec:HomEquivTwistedMorphisms} 
for the case of flat $L_\infty$-algebras and 
Section~\ref{sec:HomTheoryofCurvedLinfty} for curved 
$L_\infty$-algebras, we want to show now that $L_\infty$-module 
morphisms that are twisted with equivalent Maurer-Cartan elements are 
homotopic.

At first, one can check that one has a version of 
Proposition~\ref{prop:TwistMorphHomEqu} for DGLAs $(\liealg{g},\D,[\argument{,}\argument])$ 
and DGLA-modules $(M,b,\rho)$ and $(N,b',\rho')$ with complete filtrations. Let $\pi \in \mathcal{F}^1\liealg{g}^1$ be equivalent to zero via $\pi = \exp([A(1),\argument]) \acts 0$ with 
$g\in \liealg{g}^0$. Then
\begin{equation}
\label{eq:DGLAModTwistEquMC}
  \kappa(t)
	=
  e^{-\rho'(A(t))} \circ 
	\kappa^{\pi(t),1}\circ\left(e^{[A(t),\argument]}\otimes e^{\rho(A(t))} \right)
\end{equation}
is a path between the $L_\infty$-module morphism $\kappa$ and the twisted 
morphism $e^{-\rho'(A(1))} \circ \kappa^{\pi(1),1}\circ\left(e^{[A(1),\argument]}\otimes 
e^{\rho(A(1))} \right)$. Moreover, it satisfies
\begin{equation*}
  \frac{\D}{\D t} \kappa(t)
	=
  \del \left(
	 e^{-\rho'(A(t))} \circ 
	\kappa^{\pi(t),1}(\lambda(t) \vee \argument)
	\circ\left(e^{[A(t),\argument]}\otimes e^{\rho(A(t))} \right)\right).
\end{equation*}

This can be generalized to general $L_\infty$-modules over 
$L_\infty$-algebras. Let $(L,Q)$ be a (flat) $L_\infty$-algebra 
and $\pi \in \mathcal{F}^1L^1$ be a Maurer-Cartan element equivalent 
to zero via $\pi(t),\lambda(t)$. Let us assume for simplicity that 
$\lambda(t)=\lambda$ is constant, which is possible by 
Remark~\ref{rem:QuillenandGaugeEquivalence}.
Let moreover $\kappa \colon (M,\phi)\to (N,\phi')$ be an $L_\infty$-module 
morphism of $L_\infty$-modules over $(L,Q)$. Then we know from 
Proposition~\ref{prop:twistlinftymodules} that $\kappa^\pi \colon 
(M,\phi^\pi)\to (N,\phi'^\pi)$ is an $L_\infty$-module morphism of 
$L_\infty$-modules over $(L,Q^\pi)$. 

By Proposition~\ref{lemma:PhitLinftyMorph} we have an $L_\infty$-isomorphism 
$\Phi_t \colon (L,Q) \to (L,Q^{\pi(t)})$ satisfying
\begin{equation*}
  \frac{\D}{\D t} \Phi_t
	= 
  \left(
	  [Q^{\pi(t)}, \lambda\vee \argument] - 
		Q^{\pi(t)}(\lambda)\vee \argument
 \right) \circ \Phi_t  
\end{equation*}
By Lemma~\ref{lem:PullbackLinftyMod} we can use $\Phi_t$ to pull-back 
the $L_\infty$-module structures $\phi^{\pi(t)}$ and $\phi'^{\pi(t)}$ 
on $M$ resp. $N$. Analogously, one can pull-back $\kappa^{\pi(t)}$ and 
one obtains an $L_\infty$-module morphism
\begin{equation*}
  \Phi_t^* \kappa^{\pi(t)} 
  \colon 
  (M,\Phi_t^* \phi^{\pi(t)}) 
  \longrightarrow
	(N, \Phi_t^*\phi'^{\pi(t)})
\end{equation*}
over $(L,Q)$, where $(\Phi_t^*\kappa^{\pi(t)})^1 = 
(\kappa^{\pi(t)})^1 \circ (\Phi_t \otimes \id)$, analogously to 
the case \eqref{eq:PullBackLinftyMod} for the codifferential.

\begin{proposition}
  In the above setting there exists an $L_\infty$-module isomorphism 
	$\Psi_{M,t} \colon (M,\phi) \to (M,\Phi_t^* \phi^{\pi(t)})$ over 
	$(L,Q)$ with structure maps 
	$(\Psi_t)_i^1 \colon \Sym^i(L[1])\otimes M \to M$ for $i\geq 0$ 
	defined by
	\begin{equation}
		\label{eq:PsitTwistedMods}
		\frac{\D}{\D t} (\Psi_t)^1_i 
		=
		\left((\Phi_t^*\phi^{\pi(t)}) \circ (\lambda \vee \argument) 
		\right)^1
		\circ (\Psi_t)_i,
		\quad
		\quad
		(\Psi_0)^1_i = \pr_M.
	\end{equation}
\end{proposition}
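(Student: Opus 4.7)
The strategy is a direct adaptation of Lemma~\ref{lem:MorphPhitTwistedLinftyAlgs} and Lemma~\ref{lemma:PhitLinftyMorph} to the comodule setting: first construct $\Psi_t$ as a comodule morphism by solving the ODE inductively, then verify compatibility with the codifferentials via uniqueness, and finally conclude invertibility from Proposition~\ref{prop:LinftyModIso}.

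First I would establish existence and uniqueness of $(\Psi_t)^1_i$ in the completion $\widehat{\Hom(\Sym^i(L[1])\otimes M, M)[t]}$ by induction on $i$. Writing $(\Psi_t)_i = \sum_{k=0}^i (\Psi_t)^k_i$ as a comodule morphism, the $k$-th component is determined by $(\Psi_t)^1_{i-k}$, so the right-hand side of the defining ODE depends on $(\Psi_t)^1_i$ itself only via the $k=0$ term, but always precomposed with $\lambda\vee\argument$, which raises the filtration $\mathfrak{F}^\bullet$ on $\mathfrak{h}_{M,L}$ because $\lambda \in \mathcal{F}^1\widehat{L^0[t]}$. A double induction on $(i,n)$ modulo $\mathfrak{F}^n$, exactly as in Lemma~\ref{lem:MorphPhitTwistedLinftyAlgs}, then produces a unique polynomial solution modulo each $\mathfrak{F}^n$, and hence a unique element of the completion. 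Extending $(\Psi_t)^1$ to the comodule morphism $\Psi_t$ by the cofree cogenerator property, the scalar ODE lifts to
\begin{equation*}
  \frac{\D}{\D t}\Psi_t \;=\; \Phi_t^*\phi^{\pi(t)} \circ (\lambda \vee \argument) \circ \Psi_t, \qquad \Psi_0 = \id,
\end{equation*}
because both sides are comodule morphisms over $(\Sym(L[1]),Q)$ with the same projection to $M$ (here $\lambda\vee\argument$ is a coderivation along the identity, and composition with the codifferential $\Phi_t^*\phi^{\pi(t)}$ on the left and with the comodule morphism $\Psi_t$ on the right stays in the comodule category).

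Next I would show $\Psi_t\circ \phi = \Phi_t^*\phi^{\pi(t)}\circ \Psi_t$ by a uniqueness trick. Set $A(t) = \Phi_t^*\phi^{\pi(t)}\circ \Psi_t$ and $B(t) = \Psi_t\circ \phi$, so $A(0) = \phi = B(0)$. Since $\phi$ is $t$-independent, $B$ satisfies $\frac{\D}{\D t}B = \Phi_t^*\phi^{\pi(t)}\circ(\lambda\vee\argument)\circ B$. For $A$ one computes
\begin{equation*}
  \tfrac{\D}{\D t}A \;=\; \bigl(\tfrac{\D}{\D t}\Phi_t^*\phi^{\pi(t)}\bigr)\circ \Psi_t \;+\; \Phi_t^*\phi^{\pi(t)}\circ \Phi_t^*\phi^{\pi(t)}\circ(\lambda\vee\argument)\circ \Psi_t,
\end{equation*}
and the second term vanishes because $\Phi_t^*\phi^{\pi(t)}$ is a codifferential. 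The remaining work is the key identity
\begin{equation*}
  \tfrac{\D}{\D t}\Phi_t^*\phi^{\pi(t)} \;=\; \bigl[\Phi_t^*\phi^{\pi(t)},\, \lambda\vee\argument\bigr],
\end{equation*}
from which $\frac{\D}{\D t}A = [\Phi_t^*\phi^{\pi(t)},\lambda\vee\argument]\circ\Phi_t^*\phi^{\pi(t)}\circ\Psi_t = \Phi_t^*\phi^{\pi(t)}\circ(\lambda\vee\argument)\circ A$ (using $(\Phi_t^*\phi^{\pi(t)})^2=0$ once more), so $A$ and $B$ satisfy the same linear ODE and must coincide.

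The hard step is the key identity above: it is the module analogue of the fact that $\Phi_t$ transports $Q$ to $Q^{\pi(t)}$ (Lemma~\ref{lemma:PhitLinftyMorph}), now at the level of the pulled-back module codifferential. I would derive it by combining the derivative of the twist $\phi^{\pi(t)} = \exp(-\pi(t)\vee)\phi\exp(\pi(t)\vee)$, which produces a term of the form $[\dot\pi(t)\vee\argument,\phi^{\pi(t)}]$, with the ODE $\frac{\D}{\D t}\Phi_t = \bigl([Q^{\pi(t)},\lambda\vee\argument]-Q^{\pi(t)}(\lambda)\vee\argument\bigr)\Phi_t$ and the equation $\frac{\D}{\D t}\pi(t) = Q^{\pi(t),1}(\lambda\vee\exp\pi(t))$; the various contributions should telescope by the Maurer-Cartan property of $\pi(t)$, mirroring the cancellations in the proof of Lemma~\ref{lemma:PhitLinftyMorph}. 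Once compatibility is in hand, invertibility of $\Psi_t$ follows from Proposition~\ref{prop:LinftyModIso}: the structure map $(\Psi_0)^1_0 = \id_M$ is invertible, and the ODE forces $(\Psi_t)^1_0 - \id_M$ to take values in $\mathcal{F}^1 M$ for every $t$ (again because $\lambda$ raises filtration), so completeness of the filtration on $M$ gives invertibility of $(\Psi_t)^1_0$ for all $t$.
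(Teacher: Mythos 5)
Your overall plan --- compare $A(t)=\Phi_t^*\phi^{\pi(t)}\circ\Psi_t$ with $B(t)=\Psi_t\circ\phi$ via a linear ODE and uniqueness, and get invertibility from Proposition~\ref{prop:LinftyModIso} and the filtration --- is the same structure as the paper's proof, and your treatment of well-definedness (double induction as in Lemma~\ref{lem:MorphPhitTwistedLinftyAlgs}) and of invertibility is fine. However, the two identities you use to establish the ODE for $A(t)$ are both false. First, the defining equation does \emph{not} lift to the comodule level as $\frac{\D}{\D t}\Psi_t=\Phi_t^*\phi^{\pi(t)}\circ(\lambda\vee\argument)\circ\Psi_t$: the left-hand side is again a comodule morphism (the comodule extension of $\frac{\D}{\D t}\Psi_t^1$), whereas the right-hand side is a codifferential composed with a coderivation composed with a comodule morphism, and compatibility with the coaction produces defect terms in which $Q$ or the multiplication by $\lambda$ lands in the left tensor factor; so it is not a comodule morphism and cannot equal $\frac{\D}{\D t}\Psi_t$. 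The correct statement is only that $\frac{\D}{\D t}\Psi_t$ is the comodule extension of $\pr_M\circ\Phi_t^*\phi^{\pi(t)}\circ(\lambda\vee\argument)\circ\Psi_t$. Consequently your argument that $\Phi_t^*\phi^{\pi(t)}\circ\frac{\D}{\D t}\Psi_t$ vanishes by $(\Phi_t^*\phi^{\pi(t)})^2=0$ breaks down; in the actual computation this contribution, coming from $\frac{\D}{\D t}\Psi_t^1$, is nonzero and is precisely what is needed for the cancellation.

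Second, the ``key identity'' $\frac{\D}{\D t}\Phi_t^*\phi^{\pi(t)}=[\Phi_t^*\phi^{\pi(t)},\lambda\vee\argument]$ cannot be derived because it is not true. From $\frac{\D}{\D t}\pi(t)=(Q^{\pi(t)})^1_1(\lambda)$ and the ODE for $\Phi_t$ one gets $\frac{\D}{\D t}(\Phi_t^*\phi^{\pi(t)})^1=(\phi^{\pi(t)})^1\circ\bigl([Q^{\pi(t)},\lambda\vee\argument]\circ\Phi_t\otimes\id_M\bigr)$, while the projection of your commutator is $(\phi^{\pi(t)})^1\circ\bigl((\lambda\vee\argument)\circ\Phi_t\otimes\id_M\bigr)$; they differ by $(\phi^{\pi(t)})^1\bigl(Q^{\pi(t)}(\lambda\vee\Phi_t(\argument))\otimes\id_M\bigr)$, which does not vanish. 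This extra piece is only absorbed \emph{after} composing with $\Psi_t$, by applying $(\Phi_t^*\phi^{\pi(t)})^2=0$ to the particular arguments $\lambda\vee X^{(1)}\otimes\Psi_t^1(X^{(2)}\otimes m)$ together with $\Phi_t\circ(\lambda\vee\argument)=(\lambda\vee\argument)\circ\Phi_t$; this is exactly how the paper's proof proceeds, working throughout with the projections $(\Phi_t^*\phi^{\pi(t)})^1\circ\Psi_t$ and $\Psi_t^1\circ\phi$ rather than with the operators themselves. Finally, even granting your commutator identity, the displayed step $\frac{\D}{\D t}A=[\Phi_t^*\phi^{\pi(t)},\lambda\vee\argument]\circ\Phi_t^*\phi^{\pi(t)}\circ\Psi_t$ does not follow: differentiating $A$ would give $[\Phi_t^*\phi^{\pi(t)},\lambda\vee\argument]\circ\Psi_t$ plus the term discussed above, and the summand $(\lambda\vee\argument)\circ A$ does not cancel (it is killed only by $\pr_M$, after which the equation is no longer the one you claim $B$ satisfies). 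So the central claim that $A$ and $B$ solve the same linear ODE is not established, and the proposal has a genuine gap exactly where the paper's explicit Sweedler-notation computation does the work.
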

\begin{proof}
The well-definedness of $\Psi_t$ follows analogously to the 
well-definedness of $\Phi_t$ in Lemma~\ref{lem:MorphPhitTwistedLinftyAlgs}. 
It remains to show that it is indeed a $L_\infty$-module morphism. 
To this end, we compute for $X \in \Sym^i(L[1])$, $m\in M$
\begin{align*}
  \frac{\D}{\D t} & (\Phi_t^*\phi^{\pi(t)})^1 \circ 
	\Psi_t (X\otimes m)
	=
	 \frac{\D}{\D t} \phi^1 \circ (\exp(\pi(t))\vee \Phi_t \otimes \id_M) 
	\circ (X^{(1)} \otimes \Psi_t^1 (X^{(2)} \otimes m))  \\
	& =
	(\phi^{\pi(t)})^1 \circ ( Q^{\pi(t)}(\lambda)\vee 
	\Phi_t \otimes \id_M) 
	\circ (X^{(1)} \otimes \Psi_t^1 (X^{(2)} \otimes m)) \\
	&  \quad
	+(\phi^{\pi(t)})^1 \circ ( \left(
	  [Q^{\pi(t)}, \lambda\vee \argument] - 
		Q^{\pi(t)}(\lambda)\vee \argument
 \right) \circ \Phi_t  \otimes \id_M)  \circ (X^{(1)} 
\otimes \Psi_t^1 (X^{(2)} \otimes m))   \\
  & \quad
  +
	(\phi^{\pi(t)})^1 \circ ( \Phi_t \otimes \id_M) 
	\circ \left(X^{(1)} \otimes\left((\Phi_t^*\phi^{\pi(t)}) \circ 
	(\lambda\vee \argument) 	\right)^1
		\circ ( X^{(2)} \otimes \Psi_t^1 (X^{(3)} \otimes m))\right) \\
		& = 
	  (\phi^{\pi(t)})^1 \circ \left( \left(
	  [Q^{\pi(t)}, \lambda\vee \argument] \Phi_tX^{(1)}\right)
		\otimes \Psi_t^1 (X^{(2)} \otimes m)\right)   \\
  & \quad
  +
	(\phi^{\pi(t)})^1 \circ  
	\left(\Phi_t X^{(1)} \otimes\left((\phi^{\pi(t)})^1 \circ 
	\Phi_t(\lambda \vee  X^{(2)} )\otimes \Psi_t^1 (X^{(3)} \otimes m)
	\right) \right).
\end{align*}
Using $(\Phi_t^*\phi^{\pi(t)})^2 (\lambda \vee X^{(1)} \otimes 
\Psi_t^1(X^{(2)}\otimes m))=0$ and the fact that 
$\Phi_t \circ(\lambda \vee\argument) = (\lambda \vee \argument)\circ 
\Phi_t$, we see that this coincides with
\begin{align*}
  \frac{\D}{\D t} (\Phi_t^*\phi^{\pi(t)})^1 \circ 
	\Psi_t (X\otimes m)
	& = 
	(\phi^{\pi(t)})^1 \circ ( \Phi_t \otimes \id ) 
	\circ (\lambda \vee \argument) \circ
	\big( Q(X^{(1)}) \otimes \Psi_t^1(X^{(2)}\otimes m) \\
	& \quad
	+ X^{(1)} \otimes (\phi^{\pi(t)})^1 (\Phi_t X^{(2)} \otimes
	\Psi_t^1(X^{(3)}\otimes m))\big) \\
	& =
	\left((\Phi_t^*\phi^{\pi(t)}) \circ (\lambda \vee \argument) 
		\right)^1
		\circ (\Phi_t^*\phi^{\pi(t)}) \circ 
	\Psi_t (X\otimes m)	.
\end{align*}
Therefore, $(\Phi_t^*\phi^{\pi(t)})^1 \circ 
\Psi_t (X\otimes m)$ coincides with $\Psi^1_t \circ \phi (X \otimes m)$ 
since 
\begin{align*}
  \frac{\D}{\D t} \Psi^1_t \circ \phi (X \otimes m)
	= 
	\left((\Phi_t^*\phi^{\pi(t)}) \circ (\lambda \vee \argument) 
	\right)^1 \circ (\Psi_t \circ \phi (X\otimes m)),
\end{align*}
i.e. both expressions satisfy the same differential equation, and 
both coincide at $t=0$.
The fact that $\Psi_t$ is even an $L_\infty$-module isomorphism follows 
since $(\Psi_t)_0\colon M \to M$ is an isomorphism compare 
Proposition~\ref{prop:LinftyModIso}.
\end{proof}

This allows us to generalize the case of DGLA modules from 
\eqref{eq:DGLAModTwistEquMC} and we can show:

\begin{proposition}
  The $L_\infty$-module morphisms $\kappa$ and 
	$\Psi_{N,1}^{-1} \circ (\Phi^*_1\kappa^{\pi}) \circ \Psi_{M,1}$ 
	from $(M,\phi)$ to $(N,\phi')$ over $(L,Q)$ are homotopic.
\end{proposition}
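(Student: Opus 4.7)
The plan is to mimic the strategy of Proposition~\ref{prop:TwistedLinftyIsom}, adapted to the module setting where the convolution DGLA $(\Hom(\Sym(L[1])\otimes M,N)[-1],\del,0)$ is abelian. Therefore homotopy equivalence of two $L_\infty$-module morphisms $\kappa_0,\kappa_1$ amounts to the existence of a degree $0$ element $h$ with $\kappa_1-\kappa_0=-\del h$. So it suffices to build a smooth path $t\mapsto \kappa(t)^1\in\widehat{(\Hom(\Sym(L[1])\otimes M,N)[-1])^1[t]}$ with $\kappa(0)=\kappa$, $\kappa(1)=\Psi_{N,1}^{-1}\circ(\Phi_1^\ast\kappa^\pi)\circ\Psi_{M,1}$, and $\frac{\D}{\D t}\kappa(t)=-\del H(t)$ for some $H(t)\in\widehat{(\Hom(\Sym(L[1])\otimes M,N)[-1])^0[t]}$; integrating over $[0,1]$ then yields the required primitive $h=\int_0^1 H(t)\D t$.

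The natural candidate for the path is
\begin{equation*}
  \kappa(t)\coloneqq\Psi_{N,t}^{-1}\circ(\Phi_t^\ast\kappa^{\pi(t)})\circ\Psi_{M,t},
\end{equation*}
which by the preceding propositions is a well-defined $L_\infty$-module morphism $(M,\phi)\to(N,\phi')$ over $(L,Q)$ for every $t$. Completeness of the filtration on $\liealg h_{M,N}$, together with the fact that $\pi(t),\lambda$, $\Phi_t$ and $\Psi_{\bullet,t}$ all live in the corresponding completed polynomial spaces (as shown in Lemma~\ref{lem:MorphPhitTwistedLinftyAlgs} and its module analogue \eqref{eq:PsitTwistedMods}), ensures $\kappa(t)$ lies in $\widehat{\Hom(\Sym(L[1])\otimes M,N)^0[t]}$. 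The boundary values $\kappa(0)=\kappa$ and $\kappa(1)=\Psi_{N,1}^{-1}\circ(\Phi_1^\ast\kappa^\pi)\circ\Psi_{M,1}$ are immediate from $\Phi_0=\id$, $\Psi_{\bullet,0}=\id$ and $\pi(0)=0$.

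Next I would differentiate $\kappa(t)$ in $t$, using four building blocks:
\begin{equation*}
  \tfrac{\D}{\D t}\kappa^{\pi(t)}=\kappa^{\pi(t)}\circ(\lambda\vee\argument)-(\lambda\vee\argument)\circ\kappa^{\pi(t)},\qquad \tfrac{\D}{\D t}\Phi_t=\bigl([Q^{\pi(t)},\lambda\vee\argument]-Q^{\pi(t)}(\lambda)\vee\argument\bigr)\circ\Phi_t,
\end{equation*}
together with \eqref{eq:PsitTwistedMods} for $\Psi_{M,t}$ and the analogous equation for $\Psi_{N,t}^{-1}$ (which differs by a sign coming from differentiating $\Psi\circ\Psi^{-1}=\id$). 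Because $\kappa^{\pi(t)}$ intertwines $\phi^{\pi(t)}$ and $\phi'^{\pi(t)}$, and because $\Phi_t$ intertwines $Q$ with $Q^{\pi(t)}$, most terms will combine into two contributions: one of the form $\phi'^1\circ(\id\otimes H(t))\circ(\Delta_\sh\otimes\id)$ and one of the form $H(t)\circ\phi$, with
\begin{equation*}
  H(t)\coloneqq\Psi_{N,t}^{-1}\circ(\Phi_t^\ast\kappa^{\pi(t)})\circ(\lambda\vee\argument)\circ\Psi_{M,t}-\Psi_{N,t}^{-1}\circ(\lambda\vee\argument)\circ(\Phi_t^\ast\kappa^{\pi(t)})\circ\Psi_{M,t}
\end{equation*}
(or an equivalent expression involving the $\lambda$-contractions of the structure maps). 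The sum of these two contributions is precisely $-\del H(t)$, which is the desired exactness statement.

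The main obstacle will be the bookkeeping of signs and the verification that all the higher-order cross terms arising from $\frac{\D}{\D t}(\Phi_t^\ast\kappa^{\pi(t)})$ (which involves both the twisting operators $\lambda\vee\argument$ and the inner derivations $[Q^{\pi(t)},\lambda\vee\argument]$) cancel against the corresponding terms contributed by $\frac{\D}{\D t}\Psi_{M,t}$ and $\frac{\D}{\D t}\Psi_{N,t}^{-1}$, leaving only the exact piece $-\del H(t)$. This is purely a coalgebraic calculation, entirely analogous to the proof of Proposition~\ref{prop:TwistedLinftyIsom}, but it must be carried out carefully because here three different $L_\infty$-morphisms ($\Phi_t$, $\Psi_{M,t}$, $\Psi_{N,t}$) and a module morphism $\kappa^{\pi(t)}$ are coupled; as in the proofs of Proposition~\ref{prop:CompofHomotopicHomotopic} and Proposition~\ref{prop:preCompofHomotopicHomotopic}, the key algebraic input will be the coderivation property of $\lambda\vee\argument$ together with the compatibility of all relevant maps with $\Delta_\sh$.
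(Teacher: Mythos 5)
Your proposal follows essentially the same route as the paper's proof: the same path $\kappa(t)=\Psi_{N,t}^{-1}\circ(\Phi_t^*\kappa^{\pi(t)})\circ\Psi_{M,t}$, the same primitive $H(t)$ (the difference of the two $\lambda$-insertions), and the same conclusion that $\frac{\D}{\D t}\kappa^1(t)$ is $\del$-exact in the abelian convolution DGLA, so integrating gives the homotopy. One correction to your stated building blocks: the derivative of the twist is $\frac{\D}{\D t}\kappa^{\pi(t)}=\kappa^{\pi(t)}\circ(\dot{\pi}(t)\vee\argument)-(\dot{\pi}(t)\vee\argument)\circ\kappa^{\pi(t)}$ with $\dot{\pi}(t)=(Q^{\pi(t)})^1_1(\lambda)$, not with $\lambda\vee\argument$; the clean operator $[Q,\lambda\vee\argument]$ only appears after combining this with $\frac{\D}{\D t}\Phi_t$ inside $\Phi_t^*\kappa^{\pi(t)}$ and using $Q^{\pi(t)}\circ\Phi_t=\Phi_t\circ Q$ and $\Phi_t\circ(\lambda\vee\argument)=(\lambda\vee\argument)\circ\Phi_t$, exactly as in the paper, and with that fix the cancellations you describe go through.
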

\begin{proof}
We set $\kappa(t)= \Psi_{N,t}^{-1}\circ 
(\Phi_t^*\kappa^{\pi(t)})\circ \Psi_{M,t}$ and compute
\begin{align*}
  \frac{\D}{\D t}& \Psi_{M,t}
	= 
	\left(\id \otimes (\Phi^*_t\phi^{\pi(t)})^1 \right)\circ
	( \id \otimes (\lambda \vee \argument)\otimes\id) \circ 
	(\id \otimes \Psi_{M,t}) \circ 
	(\Delta_\sh\otimes \id)  \\
	& =
	\left(\id \otimes (\Phi^*_t\phi^{\pi(t)})^1 \right)\circ
	(\Delta_\sh\otimes \id)  \circ \lambda \vee \Psi_{M,t}
	+ \left((\lambda\vee \argument) \otimes (\Phi^*_t\phi^{\pi(t)})^1 \right)
	\circ (\Delta_\sh\otimes \id)\circ \Psi_{M,t} \\
	& =
	(\Phi^*_t\phi^{\pi(t)}) \circ \lambda \vee \Psi_{M,t} 
	- Q \circ \lambda \vee \Psi_{M,t} 
	+ (\lambda\vee \argument) \circ (\Phi^*_t\phi^{\pi(t)})
	\circ \Psi_{M,t} 
	- \lambda \vee Q \circ \Psi_{M,t}  \\
	& =
	(\Phi^*_t\phi^{\pi(t)})   \circ \lambda \vee \Psi_{M,t}
	+ (\lambda\vee \argument) \circ (\Phi^*_t\phi^{\pi(t)})
	\circ \Psi_{M,t} 
	- [Q,\lambda \vee \argument] \circ \Psi_{M,t}
\end{align*}
and similarly
\begin{align*}
  \frac{\D}{\D t} \Psi_{N,t}^{-1}
	& =
	- \Psi_{N,t}^{-1} \circ( \Phi^*_t\phi'^{\pi(t)})   
	\circ (\lambda \vee \argument)
	- \Psi_{N,t}^{-1} \circ (\lambda\vee \argument) \circ 
	(\Phi^*_t\phi'^{\pi(t)})
	+	\Psi_{N,t}^{-1} \circ [Q,\lambda \vee \argument] .
\end{align*}
In addition, we have
\begin{align*}
  \frac{\D}{\D t}& \Phi_t^*\kappa^{\pi(t)}
	=
	\frac{\D}{\D t} (\id \otimes \kappa^1)\circ
	(\id \otimes \exp(\pi(t))\vee \Phi_t \otimes \id)\circ 
	(\Delta_\sh\otimes\id) \\
	& =
	 (\id \otimes (\kappa^{\pi(t)})^1)\circ
	(\id \otimes Q^{\pi(t)}(\lambda) \vee \Phi_t \otimes \id)\circ 
	(\Delta_\sh\otimes\id) \\
	& \quad
	+
	(\id \otimes (\kappa^{\pi(t)})^1)\circ
	\left(\id \otimes \left(
	  [Q^{\pi(t)}, \lambda\vee \argument] - 
		Q^{\pi(t)}(\lambda)\vee \argument
 \right) \circ \Phi_t   \otimes \id\right)\circ 
	(\Delta_\sh\otimes\id)\\
	& =
	(\id \otimes (\kappa^{\pi(t)})^1)\circ
	\left(\id \otimes  [Q^{\pi(t)}, \lambda\vee \argument] \circ 
	\Phi_t   \otimes \id\right)\circ (\Delta_\sh\otimes\id).
\end{align*}
With $\Phi_t \circ (\lambda \vee \argument) = (\lambda\vee \argument)\circ 
\Phi_t$ and $Q^{\pi(t)}\circ \Phi_t = \Phi_t \circ Q$, this gives
\begin{align*}
  \frac{\D}{\D t}& \Phi_t^*\kappa^{\pi(t)}
	=
	(\id \otimes (\kappa^{\pi(t)})^1)\circ
	\left(\id \otimes \Phi_t \circ [Q, \lambda\vee \argument]  
	 \otimes \id\right)\circ (\Delta_\sh\otimes\id) \\
	& =
	(\Phi_t^*\kappa^{\pi(t)}) \circ [Q,\lambda\vee \argument] 
	- 
	[Q,\lambda\vee \argument] \circ (\Phi_t^*\kappa^{\pi(t)}).
\end{align*}
Summarizing, we have shown
\begin{align*}
  \frac{\D}{\D t} \kappa(t)
	& =
	\phi'\circ \left( - \Psi_{N,t}^{-1}\circ (\lambda\vee\argument)\circ 
	(\Phi_t^*\kappa^{\pi(t)})\circ \Psi_{M,t} 
	+
	\Psi_{N,t}^{-1}\circ 
	(\Phi_t^*\kappa^{\pi(t)})\circ(\lambda\vee\argument) \circ  \Psi_{M,t} 
	\right) \\
	& \quad
	+
	\left( - \Psi_{N,t}^{-1}\circ (\lambda\vee\argument)\circ 
	(\Phi_t^*\kappa^{\pi(t)})\circ \Psi_{M,t} 
	+
	\Psi_{N,t}^{-1}\circ 
	(\Phi_t^*\kappa^{\pi(t)})\circ(\lambda\vee\argument) \circ  \Psi_{M,t} 
	\right) \circ \phi
\end{align*}
which implies
\begin{align*}
  \frac{\D}{\D t} \kappa^1(t)
	& =
	\del \left( - (\Psi_{N,t}^{-1})^1\circ (\lambda\vee\argument)\circ 
	(\Phi_t^*\kappa^{\pi(t)})\circ \Psi_{M,t} 
	+
	(\Psi_{N,t}^{-1})^1\circ 
	(\Phi_t^*\kappa^{\pi(t)})\circ(\lambda\vee\argument) \circ  \Psi_{M,t} 
	\right)  
\end{align*}
as desired.
\end{proof}

\begin{remark}[Application to Deformation Quantization II]
  These results imply that Dolgushev's globalizations 
	\cite{dolgushev:2006a} of Shoikhet's formality for Hochschild 
	chains \cite{shoikhet:2003a} with respect to different covariant 
	derivatives are homotopic, completely analogously to the 
	cochain case in \cite{kraft.schnitzer:2021a:pre}. 
\end{remark}

%
\bibliographystyle{nchairx}

\end{document}